\renewcommand{\@seccntformat}[1]{\csname the#1\endcsname.\quad}
\DeclareMathOperator*{\argmin}{arg\,min}
\newcommand{\re}{\mathbb{R}}
\newcommand{\na}{\mathbb{N}}
\newcommand{\rt}{\mathcal{R}}
\newcommand{\diverg}{{\rm div}}
\newcommand{\dom}{{\rm dom}}
\newcommand{\supp}{{\rm supp}}
\newcommand{\norm}[2]{\left\lVert#2\right\rVert_{#1}}
\newcommand{\scalprod}[3]{\left\langle #2,#3\right\rangle_{#1}}
\numberwithin{equation}{section}
\newtheorem{mydef}{Definition}[section]
\newtheorem{thrm}{Theorem}[section]
\newtheorem{lem}{Lemma}[section]
\begin{document}

\title[TV regularisation in measurement and image space for PET]{Total Variation Regularisation in Measurement and Image space for PET reconstruction}
\author{M Burger$^1$, J M\"uller$^2$, E Papoutsellis$^3$ and C B Sch{\"o}nlieb$^3$}

\address{$^1$Institute for Computational Applied Mathematics and Cells-in-Motion Cluster of Excellence (EXC 1003 – CiM), University of M\"unster, Germany}
\address{$^2$Department of Nuclear Medicine, University Hospital M\"unster}
\address{$^3$Department of Applied Mathematics and Theoretical Physics, University of Cambridge}

\eads{\mailto{martin.burger@wwu.de}, \mailto{jahn.mueller@uni-muenster.de}, \mailto{ep374@cam.ac.uk}, \mailto{cbs31@cam.ac.uk}}

\begin{abstract}
The aim of this paper is to test and analyse a novel technique for image reconstruction in positron emission tomography, which is based on (total variation) regularisation on both the image space and the projection space. We formulate our variational problem considering both total variation penalty terms on the image and on an idealised sinogram to be reconstructed from a given Poisson distributed noisy sinogram. We prove existence, uniqueness and stability results for the proposed model and provide some analytical insight into the structures favoured by joint regularisation. 

For the numerical solution of  the corresponding discretised problem we employ the split Bregman algorithm and extensively test the approach in comparison to standard total variation regularisation on the image. The numerical results show that an additional penalty on the sinogram performs better on reconstructing images with thin structures.  
\end{abstract}

\section{Introduction}

Positron emission tomography (PET) is a medical imaging technique for studying functional characteristics of the human body, used in brain imaging, neurology, oncology and recently also in cardiology. The patient is injected with a dose of radioactive tracer isotope which concentrates in tissues of interest in the body. Typically, cells in the tissue which are more active have a higher metabolism, i.e., need more energy, and hence will absorb more tracer isotope than cells which are less active. The isotope suffers radioactive decay which invokes it to emit a positron. As soon as the emitted positron meets an electron a pair of gamma rays is sent out into approximately opposite directions and is picked up by the PET-scanner. The collection of all these pairs builds the PET measurement $g$ from which the distribution $u$ of the relevant radiopharmaceutical shall be reconstructed.

As a (yet simplified) mathematical model the PET measurement can be interpreted as a sample of
\begin{equation}\label{PETmeas}
f = e^{-\int_L h\; dt} \int_L u\; dt,
\end{equation}
where the above integral is the Radon transform $\mathcal R$ of $u$ along the line $L$ connecting the emission point of the gamma rays and the detector, see Figure \ref{fig1}; the above exponential characterises the damping due to the ''attenuation`` function $h$ (which is, e.g., known from CT \cite[Chapter~7]{WA04}). The function $f(L)$ is called the sinogram of $u$. Since the attenuation can be corrected beforehand we shall ignore the attenuation term in the solution of the inverse problem (corresponding to $h\equiv 0$) in this paper. The basic mathematical problem for the reconstruction of the distribution $u$, is the inversion of the Radon transform. In PET, this inversion is complicated by the presence of undersampling and noise \cite{WA04}. The PET data usually is corrupted by Poisson noise, also called photon noise, due to the photon counting process during the PET scan.

In this paper, we propose a novel technique for reconstructing an image $u$ from noisy PET measurements $g$ by a variational regularisation approach using total variation (TV) regularisation \cite{Rudin} on both the image $u$ and the sinogram $\rt u$. More precisely, let $\Sigma^{n}=\{ (\theta,s)\in\mathcal{S}^{n-1}\times\re\}$ be the projection space (see Figure \ref{fig1}) and $\mathbb R^2$ the physical space. The Radon transform $\rt:L^{1}(\re^2)\rightarrow L^{1}(\Sigma)$ of $u\in L^1(\re^2)$ is given by 
\begin{equation}
	\rt u(\theta,s)=\int_{\re^{2}}u(x)\delta(s-x\cdot\theta)dx
	\label{m2}
\end{equation}

Given measurements $g\in L^2(\Sigma)$, we reconstruct $u\in BV(\re^2)$ by solving
\begin{equation}
\argmin_{u\in BV(\re^2), ~u\geq0\textrm{ a.e. in }\re^2}\left\{\alpha|Du|(\re^2)+\beta |D(\rt u)|(\Sigma)+\frac{1}{2}\int_{\Sigma}\frac{(g-\rt u)^{2}}{g}\right\}
\label{v1}
\end{equation}
Here $BV(\re^2)$ is the space of functions of bounded variation, see \cite{Ambrosio}, and $\alpha,\beta$ are positive parameters. The terms $|Du|(\re^2)$ and $|D(\rt u)|(\Sigma)$ are TV regularisations on the image $u$ and the sinogram $\rt u$ respectively, that is
$$
|Du|(\re^2)= \sup_{{\bf g} \in C_0^\infty(\re^2;\mathbb{R}^2), \|g\|_\infty \leq 1} \int_{\re^2}  u ~\nabla \cdot {\bf g} ~dx,\quad |D\rt u|(\Sigma)= \sup_{{\bf g} \in C_0^\infty(\Sigma;\mathbb{R}^2), \|g\|_\infty \leq 1} \int_\Sigma  \rt u ~\nabla \cdot {\bf g} ~dx.
$$
The data fidelity $\int_{\Sigma}(g-\rt u)^{2}/g$ is a weighted $L^2$ norm that constitutes a standard approximation of the Poisson noise model given by the Kullback-Leibler divergence, compare \cite[Chapter~4]{AlexPhD} for instance.

\begin{figure}[h!]
\begin{center}
\includegraphics[scale=0.9]{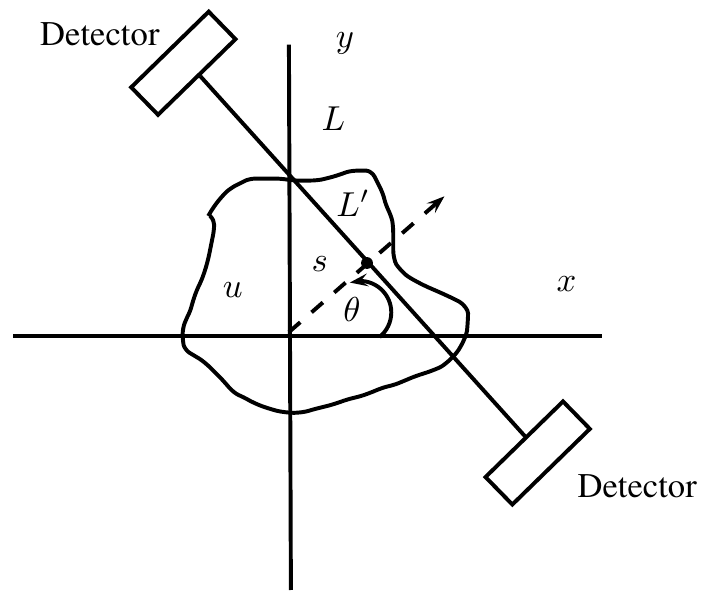}
\caption{PET scan geometry. The image function $u$ is encoded in integrals $f(L)=f(\theta,s)$ along lines $L$ from the emission point to the detectors, cf. \eqref{PETmeas}. The lines $L$ are defined by an angle $\theta$ and distance $s$ to the origin. ($L^{'}$ is the distance from the emission point through the object.) }
\label{fig1}
\end{center}
\end{figure}

PET reconstruction using TV regularisation is not new. However, typically the TV regularisation is applied to the image function $u$ only. By additionally regularising the sinogram $\rt u$ using a total variation penalty in projection space we will show that under certain conditions images of higher quality can be reconstructed. In particular, this is the case in the presence of high noise in $g$ and when aiming to preserve thin and elongated structures in $u$.

\subsection{Related methods}
Our approach \eqref{v1} is inspired by an alternating regularisation procedure for PET first introduced by Barbano et al. in \cite{Carola}. Given possible under sampled and noisy PET measurements $g\in\re^{n\times m}$ an image $u^*$ is reconstructed by solving
\begin{equation}
\min_{\{(s,u):~ s\in\re^{n\times m},u=\rt^{-1}s\}}\alpha\norm{1}{\nabla s}+\beta\norm{1}{u}+\frac{\lambda}{2}\norm{2}{g-s}^{2},
\label{f1}
\end{equation}
where $\rt^{-1}$ is the inverse Radon transform approximated by the filtered backprojection and $\alpha,\beta$ and $\lambda$ are positive weighting parameters. Note that here a regularised reconstruction $u^*$ is computed by smoothing both the image $u$ and the sinogram $s$. Indeed, the regularisation in \eqref{f1} is given by the total variation regulariser $\norm{1}{\nabla s}$, see \cite{Rudin}, that acts on the sinogram $s$ only. The image $u$ is forced to be sparse by an $\ell_1$ penalty. The main focus of \cite{Carola} is to study the effect of total variation regularisation on the sinogram, rather than the image as usually done in variational PET reconstruction \cite{Alex,Alex1,Brune,Brune1}. Therefore, in their numerical experiment the effect of the image regularisation is kept low by choosing an appropriate weighting $\alpha \gg \beta$. In \cite{Carola} it is proved that \eqref{f1} has a unique solution $(s^{*}, u^*)$. Moreover, the authors show the effect, the total variation regularisation of the sinogram $s$, has on the reconstructed image $u$ by a computational experiment on a simulated data set. 

The main idea of adding an total variation regularisation on the projection space originated in the works of Thirion \cite{Thirion}, and Prince et al. \cite{Prince}. In \cite{Thirion} the author proposes to connect edge detection of the tomographic image to finding continuous lines in the sinogram. That is, a point on a line in the sinogram corresponds to an edge in the object space with a fixed orientation and distance from the origin, see Figure \ref{fig1}. 
Moreover, in \cite{Prince}  Prince and Willsky focus on reconstructing tomographic images by using a Markov random field prior on the sinogram, in particular in the presence of data with a low signal-to-noise ratio (SNR) and limited angle or sparse-angle measurement configurations. Their approach leads to the computation of a smoothed sinogram from which the image $u$ is reconstructed using filtered back projection.

\subsection{State of the art - direct and iterative PET reconstruction} In \eqref{v1} we reconstruct an image from PET measurements by smoothing both in measurement and image space. This indeed combines the philosophies of the two main approaches for image reconstruction from PET measurements: (i) Direct methods and (ii) iterative / variational method. While in direct methods the PET measurements are smoothed by an appropriate filter and then inverted (cf. e.g. \cite{Natt1,Fokas}), iterative methods (respectively variational methods solved iteratively) are based on the standard Bayesian modelling approach in inverse problems in which prior knowledge in terms of regularity is expressed for the image function $u$ (rather than the measurements $f$). The possibility to include statistical noise models is a main advantage of iterative and variational methods, on which we shall focus in the following.

In iterative methods for PET reconstruction 
the noise distribution is accounted for by modelling the randomness in the numbers of detected gamma counts. The most popular iterative approach for PET reconstruction is the expectation-maximization (EM) algorithm. To recall, the problem of image reconstruction can be formulated as a solution of the linear and ill-conditioned operator equation: $$g=K u$$ where $g$ is the Poisson distributed data and $K$ is a finite-dimensional sampling of the Radon transform.
Typically, we may assume that the data are realizations of random variables $X_{i}$ and we consider the detected values $g_{i}$ as a realization of a random variable $X_{i}$, for $i=1,...,N$. It is reasonable to maximize the conditional probability $P(u|g)$, which by the Bayes' Law is:
$$P(u|g)=\frac{P(g|u)P(u)}{P(g)}$$ 
It is equivalent to maximize $P(g|u)P(u)$, since the denominator does not depend on $u$. Moreover, the random variables of the measured data are Poisson distributed with expected values given by $(K u)_{i}$ and 
\begin{equation}
	P(g|u)=\prod_{i=1}^{N}\frac{(Ku)_{i}^{g_{i}}}{g_{i}!}e^{-(Ku)_{i}}
	\label{poisson}
\end{equation}
The Bayesian approach allows to consider additional information to our model with an appropriate prior probability of the image $u$, see \cite{Chan}, \cite{Le}. The most frequently used prior densities are the Gibbs priors, i.e., 
\begin{equation}
	P(u)=e^{-\alpha J(u)}
	\label{Gibbs}
\end{equation}
 where $\alpha>0$ is a regularisation parameter and $J(u)$ is a convex energy functional. Instead of maximising $P(g|u)P(u)$, we minimise $-\log(P(g|u)P(u))$. Hence, we seek a minimiser of the following problem
\begin{equation}
\argmin_{u\geq0}\left\{\sum_{i=1}^{N}((Ku)_{i}-g_{i}\log(Ku)_{i})+\alpha J(u)\right\},
\label{discreteEM}
\end{equation}
where the first term is the so-called Kullback-Leibler divergence of $u$ and $g$. This often serves as a motivation to consider the continuous variational problem
\begin{equation}
\argmin_{u\geq0}\left\{\int (Ku-g\log(Ku))+\alpha J(u)\right\}
\label{m3}
\end{equation}
In the case where $J\equiv0$, the first optimality condition in \eqref{m3} yields the following iterative scheme, known as EM algorithm
$$
u^{k+1}=\frac{u^{k}}{K^{*}1}K^{*}(\frac{g}{Ku_{k}})
$$ 
Additionally imposing prior information on the solution, e.g., that the solutions has a small total variation, leads to an extension of the EM algorithm, e.g., the EM-TV algorithm \cite{bardsley2010theoretical,Brune}. See also \cite{Alex,Alex1,bardsley2010hierarchical,willett2010poisson} for related approaches and \cite{Brune,Brune1} for extensions of EM-TV to Bregmanized total variation regularization.\par
%

\subsection*{Outline} The rest of the paper is organised as follows: in the next section we prove existence, uniqueness and stability results for our variational model in the continuous setting. In section \ref{sec:numerics}, we focus on solving numerically our problem using the split Bregman method and present our numerical simulations in section 4. 


\section{TV regularisation on image and sinogram}

In this section, we will discuss the well-posedness of our minimisation problem \eqref{v1}. To do so, we first rewrite \eqref{v1} for image functions $u$ that are defined on a bounded and open domain $\Omega\subset\mathbb R^2$ including sufficiently large balls around zero.  We consider the following problem
\begin{equation}
\argmin_{u\in BV(\Omega), ~u\geq0\textrm{ a.e. in }\Omega}\left\{F(u) = \alpha|Du|(\Omega)+\beta |D(\rt u)|(\Sigma)+\frac{1}{2}\int_{\Sigma}\frac{(g-\rt u)^{2}}{g}\right\}
\label{v1_1}
\end{equation}
We enforce prior information in terms of regularisation on both the image and its sinogram. Note, that the TV regularisation on the sinogram in \eqref{v1_1} has a different effect on the reconstructed image compared to regularising in image space ($\beta=0$) only. Of course, regularisation of the image $u$ enforces a certain regularisation of the sinogram $\rt u$ as well. However, because of the nonlinear character of the total variation regularisation, TV regularisation of the sinogram is not equivalent to regularisation on the image and vice-versa. In \eqref{v1_1} the two types of TV regularisation impose different structures in the subgradients of the two terms. This is also emphasised in Section \ref{sec:bregmanerr} where the source condition \eqref{source} and the elements $p_{1},p_{2}$ are described. Indeed, beyond the topology imposed by the regulariser the structure imposed by the sub gradients of the total variation regularisers are crucial for the properties of a solution of \eqref{v1_1}. In particular, in what follows we will see that the additional TV regularisation on the sinogram can have a positive effect when reconstructing smooth, thin structures as can be observed in the myocardium for instance. This effect will be both motivated analytically by the characterisation of a solution of \eqref{v1_1} for the simple case when $g$ is the sinogram of a disc for instance, as well as experimentally verified by testing the method against some representative examples in the numerical part of the paper.

\par We start with some first observations that are crucial ingredients of the well-posedness analysis for \eqref{v1_1}. In order not to divide by zero in the weighted $L^2$ norm in \eqref{v1_1}, we first assume that there exists constant $c_{1}>0$ such that 
\begin{equation}
	0<c_{1}\leq g(\theta,s) \leq\norm{L^{\infty}(\Sigma)}{g}
	\label{positiveRadon}
\end{equation}
The constraint \eqref{positiveRadon} is not significantly restrictive in most medical experiments. Since $u$ is assumed to be nonnegative, this basically can be achieved if the lines in the Radon transform are confined to those intersecting the support of $u$, at least in a discretised setting.

Moreover, to justify the definition of $F(u)$ in \eqref{v1_1} over the admissible set $\{u\in BV(\Omega), ~u\geq0\textrm{ a.e. in }\Omega\}$ in Theorem \ref{thrm3} we show that the Radon transform of $u$ is again in $BV$. To do so it is important to assume that the object we wish to recover is compactly supported. Hence, we may assume that $\supp ~u\subset B_{r}\subset\Omega$, where $B_{r}$ is the ball with radius r centered at the origin. Consequently, \eqref{m2} implies that $\rt u(\theta,s)=0$, when $s\notin[-r,r]$ and the projection space becomes:
\begin{equation}
	\Sigma=\left\{(\theta,s):-r\leq s\leq r, 0\leq\theta<\pi\right\}
	\label{v4}
\end{equation}
If it is not stated otherwise, we will always assume that the reconstructed image is compactly supported. Note that, we allow negative values on the s variable and that we do not consider the Radon transform for $\theta=\pi$. Likewise, we may allow that $s\geq0$ and $0\leq\theta<2\pi$. Hence, we consider the Radon space $\Sigma$ as the surface of a half cylinder with radius 1. 
Moreover, since Dirac $\delta$ function is even, equation \eqref{m2} implies that the coordinates ($-s,\phi$) and ($s,\phi+\pi$) correspond to the same point in the Radon space. \par

\subsection{BV-continuity of the Radon transform}
Our first result deals with a continuity property for the Radon transform as a mapping operator for functions with bounded variation. A similar result is proved by M. Bergounioux and E. Tr\'elat \cite{Berg} in the three dimensional case and for bounded and axially symmetric objects.
In what follows we do not need this symmetry assumption, but prove that the Radon transform is BV continuous for compactly supported $u$ in two space dimensions.

\begin{thrm}
Let $u\in BV(\Omega)$ and the ball $B_{r}$ with radius r be its compact support, then $\rt u\in BV(\Sigma)$ and the Radon transform is BV continuous on the subspace of functions supported in $B_r$.
\begin{proof} 
It is well known that the Radon transform is L$^{1}$ continuous and the following estimate holds for $n\geq2$:
\begin{equation}
	\norm{L^{1}(\Sigma^{n})}{\rt u}\leq|S^{n-1}|\norm{L^{1}(\re^{n})}{u}
	\label{radon}
\end{equation}
Hence, to prove BV-continuity we need to prove that the variation of $\rt u$ over $\Sigma$ is finite and bounded by the BV norm of $u$, i.e.,
$$V(\rt u,\Sigma)=\sup\left\{\int_{\Omega}\rt u(\theta,s)\diverg g(\theta,s)d\theta ds:\mbox{ }g\in (C^{1}_{c}(\Sigma))^{2}\mbox{ , }\norm{\infty}{g}\leq1\right\}<\infty$$ The following equations can easily be derived by the geometry depicted in Figure \ref{fig1}, where $(x,y)$ is the annihilation point and $t$ runs through the line $L$:
\begin{align}
x=s\cos\theta-t\sin\theta\label{eqq1}\\
y=s\sin\theta+t\cos\theta\label{eqq2}
\end{align}
We may also assume that $t\in[-r,r]$. Therefore,
\begin{align*}
\int_{\Omega}\rt u(\theta,s)\diverg g(\theta,s)d\theta ds&=\int_{0}^{\pi}\int_{-r}^{r}\int_{-r}^{r}u(s\cos\theta-t\sin\theta,s\sin\theta+t\cos\theta)\\
                                                     &\diverg g(\theta,s)\mbox{ }dtdsd\theta\\
                                                     &=\int_{0}^{\pi}\int_{-r}^{r}\int_{-r}^{r}u(x,y)\left[\nabla g_{1}\cdot\vec{\alpha}+\nabla g_{2}\cdot\vec{\theta}\right]dxdyd\theta
\end{align*}
where $\vec{\alpha}=(-y,x)$, $\vec{\theta}=(\cos\theta,\sin\theta)$ and in the above calculations we have used equations \eqref{eqq1},\eqref{eqq2}. Define, $\vec{G}(x,y)=(G_{1}(x,y),G_{2}(x,y)))$ with 
\begin{align*}
G_{1}(x,y)&=\int_{0}^{\pi}-yg_{1}(\theta,x\cos\theta+y\sin\theta)+g_{2}(\theta,x\cos\theta+y\sin\theta)\cos\theta d\theta\\
G_{2}(x,y)&=\int_{0}^{\pi}xg_{1}(\theta,x\cos\theta+y\sin\theta)+g_{2}(\theta,x\cos\theta+y\sin\theta)\sin\theta d\theta
\end{align*}
then,
$$\diverg G(x,y)=\frac{\partial g_{1}}{\partial x}+\frac{\partial g_{2}}{\partial y}=\int_{0}^{\pi}\left(\nabla g_{1}\cdot\vec{\alpha}+\nabla g_{2}\cdot\vec{\theta}\right)d\theta$$ The function $G$ lies in $C^{1}(\re^{2})$ and if we restrict $G$ on $\Omega$ and consider $G\chi_{B_{r}}$ then $G\in C_{c}^{1}(\Omega)$. Moreover, 
$$|G_{1}(x,y)|\leq\int_{0}^{\pi}|y||g_{1}|+|g_{2}|\leq\pi\norm{\infty}{g}(1+|y|)\leq\pi\norm{\infty}{g}(1+r)=C$$
and $|G_{2}(x,y)|\leq C$. If we set 
$$A=\int_{-r}^{r}\int_{-r}^{r}u(x,y)\diverg G(x,y)dxdy=C\int_{-r}^{r}\int_{-r}^{r}u(x,y)\diverg(\frac{G(x,y)}{C})dxdy$$ and $$B=\int_{\Sigma}\rt u(\theta,s)\diverg g(\theta,s)dsd\theta$$ then, taking the supremum over all $G\in C_{c}^{1}(\Omega)$ with $\norm{\infty}{G/C}\leq1$, we have that $B=C\cdot V(u,\Omega)$. Similarly, for all $g\in (C_{c}^{1}(\Sigma))^{2}$ with $\norm{\infty}{g}\leq1$, we conclude that $$V(\rt u,\Sigma)\leq\pi(1+r)V(u,\Omega)<\infty$$ Therefore, $\rt u\in BV(\Sigma)$ and the variation coincides with the total variation $|D(\rt u)|(\Sigma)$. By the corresponding norm defined on the BV space and equation \eqref{radon}, we deduce that $$\norm{BV(\Sigma)}{\rt u}\leq\pi(1+r)\norm{BV(\Omega)}{u}.$$
\end{proof}
\label{thrm3}
\end{thrm}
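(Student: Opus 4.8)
The plan is to exploit the classical $L^1$-continuity of the Radon transform, namely $\norm{L^1(\Sigma)}{\rt u}\le |S^{1}|\,\norm{L^1(\re^2)}{u}$, which already controls the $L^1$-part of the $BV$-norm of $\rt u$, so that it only remains to bound the distributional total variation
\[
V(\rt u,\Sigma)=\sup\Big\{\int_\Sigma \rt u\,\diverg g\,ds\,d\theta:\ g\in (C_c^1(\Sigma))^2,\ \norm{\infty}{g}\le 1\Big\}
\]
by $V(u,\Omega)$ up to a constant depending only on $r$.

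First I would fix a competitor $g=(g_1,g_2)$ in this supremum and rewrite $\int_\Sigma \rt u\,\diverg g$ as an integral over the plane. Inserting the line-integral representation of $\rt u(\theta,s)$ and parametrising the line through $(x,y)=(s\cos\theta-t\sin\theta,\ s\sin\theta+t\cos\theta)$, the compact support of $u$ confines both $s$ and $t$ to $[-r,r]$, and the substitution $(s,t)\mapsto(x,y)$ is a rotation with unit Jacobian. Expressing the $\theta$- and $s$-derivatives of $g_i(\theta,\,x\cos\theta+y\sin\theta)$ via the chain rule, I expect to obtain
\[
\int_\Sigma \rt u\,\diverg g\,ds\,d\theta=\int_0^\pi\!\!\int_{B_r} u(x,y)\big[\nabla g_1\cdot\vec\alpha+\nabla g_2\cdot\vec\theta\,\big]\,dx\,dy\,d\theta,
\]
with $\vec\alpha=(-y,x)$ and $\vec\theta=(\cos\theta,\sin\theta)$, the gradients being the $s$-derivatives evaluated along the line.

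The key step is then to realise the bracketed quantity as the divergence of one planar vector field. I would set $G=(G_1,G_2)$ with $G_1(x,y)=\int_0^\pi\big(-y\,g_1+g_2\cos\theta\big)\,d\theta$ and $G_2(x,y)=\int_0^\pi\big(x\,g_1+g_2\sin\theta\big)\,d\theta$, each $g_i$ evaluated at $(\theta,\,x\cos\theta+y\sin\theta)$; differentiating under the integral sign one checks $\diverg G=\int_0^\pi(\nabla g_1\cdot\vec\alpha+\nabla g_2\cdot\vec\theta)\,d\theta$, so that $\int_\Sigma \rt u\,\diverg g=\int_{B_r} u\,\diverg G$. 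Since $g_i\in C_c^1$, $G$ is $C^1$, and on $B_r$ the crude bound $|G_i(x,y)|\le\pi\,\norm{\infty}{g}\,(1+r)=:C$ holds. After cutting $G$ off so that it is compactly supported in $\Omega$, the field $G/C$ is an admissible competitor for $V(u,\Omega)$; taking the supremum over all admissible $g$ yields $V(\rt u,\Sigma)\le\pi(1+r)\,V(u,\Omega)<\infty$, hence $\rt u\in BV(\Sigma)$. Combining with the $L^1$ estimate gives $\norm{BV(\Sigma)}{\rt u}\le\pi(1+r)\,\norm{BV(\Omega)}{u}$, and BV-continuity of the restriction of $\rt$ to functions supported in $B_r$ then follows from its linearity.

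The main obstacle I anticipate is the bookkeeping in the change of variables — correctly separating the $\vec\alpha$- and $\vec\theta$-contributions arising from the $s$- and $\theta$-derivatives of $g_i(\theta,\,x\cos\theta+y\sin\theta)$ — together with the minor regularity point that the hard cutoff $G\chi_{B_r}$ is not itself $C^1$: one should instead enlarge $B_r$ slightly inside $\Omega$ and multiply by a smooth cutoff (or argue by approximation), which is routine but ought to be mentioned. The remaining estimates are a direct application of the duality definition of the total variation.
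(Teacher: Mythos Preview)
Your proposal follows essentially the same route as the paper's proof: the same change of variables $(s,t)\mapsto(x,y)$, the same construction of the planar field $G=(G_1,G_2)$ with $\diverg G=\int_0^\pi(\nabla g_1\cdot\vec\alpha+\nabla g_2\cdot\vec\theta)\,d\theta$, the same $L^\infty$ bound $|G_i|\le\pi(1+r)\norm{\infty}{g}$, and the same conclusion $V(\rt u,\Sigma)\le\pi(1+r)V(u,\Omega)$. Your remark that the crude cutoff $G\chi_{B_r}$ is not genuinely $C^1$ and should be replaced by a smooth cutoff is a valid refinement of a point the paper states somewhat loosely.
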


\subsection{Existence and Uniqueness}

Next, we show existence and uniqueness of the minimiser for the problem \eqref{v1_1}. 

\begin{thrm}\label{exist}
Let $\alpha>0$, $\beta\geq0$ and $g\in L^{\infty}(\Sigma)$ a strictly positive function. Then the functional $F(u)$ in \eqref{v1_1} is lower semicontinuous and strictly convex and the minimisation problem \eqref{v1_1} attains a unique solution $u\in BV(\Omega)\cap L_+^1(\Omega)$.
\end{thrm}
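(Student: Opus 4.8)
The plan is to argue by the direct method of the calculus of variations: show that $F$ is proper and convex, extract from a minimising sequence a subsequence converging in $L^1(\Omega)$, pass to the limit using lower semicontinuity of the three terms, and finally read off uniqueness from strict convexity. Properness is immediate since $F(0)=\tfrac12\int_\Sigma g<\infty$ by \eqref{positiveRadon} and the boundedness of $\Sigma$ in \eqref{v4}; convexity is clear because $|D\,\cdot\,|(\Omega)$ is a seminorm, $|D(\rt\,\cdot\,)|(\Sigma)$ is a seminorm precomposed with the linear map $\rt$, and $v\mapsto\tfrac12\int_\Sigma(g-v)^2/g$ is convex.

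The core of the proof is compactness of a minimising sequence $(u_n)$, say with $F(u_n)\le M$. The regulariser gives $\alpha\,|Du_n|(\Omega)\le M$, so the total variations are bounded. From $\tfrac12\int_\Sigma(g-\rt u_n)^2/g\le M$ and $g\le\norm{L^\infty(\Sigma)}{g}$ one gets $\norm{L^2(\Sigma)}{g-\rt u_n}^2\le 2M\norm{L^\infty(\Sigma)}{g}$, so $(\rt u_n)$ is bounded in $L^2(\Sigma)$ and, as $\Sigma$ has finite measure, also in $L^1(\Sigma)$. Here the constraint $u_n\ge0$ is essential: by Fubini, $\norm{L^1(\Sigma)}{\rt u_n}=\int_0^\pi\!\int_{-r}^r\rt u_n(\theta,s)\,ds\,d\theta=\pi\,\norm{L^1(\Omega)}{u_n}$, so $(u_n)$ is bounded in $BV(\Omega)$. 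Since $u$ is supported in $B_r\subset\subset\Omega$, one may (extending by zero) replace $\Omega$ by a ball, on which the embedding $BV\hookrightarrow\hookrightarrow L^1$ is compact; thus a subsequence, not relabelled, converges $u_n\to u^*$ in $L^1(\Omega)$ and pointwise a.e.

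For the passage to the limit, $u^*\ge0$ a.e. and $u^*\in L^1_+(\Omega)$ as an a.e.\ limit of nonnegative integrable functions; lower semicontinuity of the total variation under $L^1$ convergence gives $|Du^*|(\Omega)\le\liminf_n|Du_n|(\Omega)<\infty$, so $u^*\in BV(\Omega)$. By \eqref{radon}, $\rt u_n\to\rt u^*$ in $L^1(\Sigma)$, so by Theorem \ref{thrm3} and lower semicontinuity of the total variation, $|D(\rt u^*)|(\Sigma)\le\liminf_n|D(\rt u_n)|(\Sigma)$. Passing to a further subsequence with $\rt u_n\to\rt u^*$ a.e.\ on $\Sigma$, Fatou's lemma applied to the nonnegative integrand $(g-\rt u_n)^2/g$ yields lower semicontinuity of the data term (equivalently, one may invoke weak $L^2(\Sigma)$ lower semicontinuity, available from the $L^2$ bound above). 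Summing the three estimates, $F(u^*)\le\liminf_n F(u_n)=\inf F$, so $u^*$ is a minimiser in $BV(\Omega)\cap L^1_+(\Omega)$.

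Finally, uniqueness follows from strict convexity of $F$ on the convex admissible set. The data term $v\mapsto\tfrac12\int_\Sigma(g-v)^2/g$ is strictly convex on $L^2(\Sigma)$, since $t\mapsto(g(\theta,s)-t)^2/g(\theta,s)$ is strictly convex for each $(\theta,s)$, the weight $1/g$ being positive by \eqref{positiveRadon}. The Radon transform is injective on compactly supported $L^1$ functions, so for admissible $u_1\neq u_2$ we have $\rt u_1\neq\rt u_2$ on a set of positive measure; hence the data term is strictly convex along the segment joining $u_1$ and $u_2$, and, the two TV terms being convex, $F$ is strictly convex. A strictly convex functional has at most one minimiser, which with the existence part gives the unique solution. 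The one genuinely delicate point is the $BV$ bound on the minimising sequence: neither term of $F$ controls $\norm{L^1(\Omega)}{u}$ by itself, and it is precisely the sign constraint $u\ge0$ (which turns $\norm{L^1(\Sigma)}{\rt u}$ into $\pi\,\norm{L^1(\Omega)}{u}$) together with the compact support assumption that closes the gap.
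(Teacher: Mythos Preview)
Your proof is correct and follows the same overall direct-method architecture as the paper, but your coercivity step is genuinely different and worth noting. To bound $\norm{L^1(\Omega)}{u_n}$, you use nonnegativity together with Fubini to get the identity $\norm{L^1(\Sigma)}{\rt u_n}=\pi\,\norm{L^1(\Omega)}{u_n}$ and then pull the right-hand side back from the $L^2(\Sigma)$ bound on $\rt u_n$. The paper instead splits $u_n=v_n+\overline{u_n}$ with $v_n$ of zero mean, applies Poincar\'e--Wirtinger to control $\norm{L^2}{v_n}$ by $|Du_n|(\Omega)$, and bounds the constant part via $\norm{L^2(\Sigma)}{\rt\overline{u_n}}=|\overline{u_n}|\,\norm{L^2(\Sigma)}{\rt\chi_\Omega}$ with $\rt\chi_\Omega\neq 0$. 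Your route is more elementary and makes transparent exactly where the constraint $u\ge 0$ enters; the paper's route is the standard BV argument and, interestingly, does not rely on the sign constraint for coercivity (so it would survive if the positivity constraint were dropped). For the data term you use Fatou after passing to an a.e.\ convergent subsequence, while the paper uses weak $L^2(\Sigma)$ lower semicontinuity; both are fine. Your uniqueness argument via injectivity of $\rt$ on compactly supported $L^1$ functions is exactly what the paper does through the Slice--Projection theorem.
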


\begin{proof}Let $(u_{n})_{n}\in BV(\Omega)$ be a minimising sequence of nonnegative functions,
then in particular there exists a constant $C_{1}>0$ such that 
\begin{equation}
F(u)=\alpha|Du_{n}|(\Omega)+\beta |D(\rt u_{n})|(\Sigma)+\frac{1}{2}\int_{\Sigma}\frac{(g-\rt u_{n})^{2}}{g}<C_{1}
\end{equation}
Let $\overline{u_{n}}=\frac{1}{|\Omega|}\int_{\Omega}u_{n}dx$, then by the Poincar\'e-Wirtinger inequality \cite{Ambrosio}, we can find a constant $C_{2}>0$ such that 
\begin{equation}
	\norm{L^{2}(\Omega)}{u_{n}-\overline{u_{n}}}\leq C_{2}|Du_{n}|(\Omega)
	\label{v5}
\end{equation}
Therefore $$\norm{L^{2}(\Omega)}{u_{n}}\leq C_{2}|Du_{n}|(\Omega)+|\int_{\Omega}u_{n}dx|$$ Following the proof of \cite{Luminita}, we set $v_{n}=u_{n}-\overline{u_{n}}$ and since 

\begin{align*}
C_{1}&\geq\int_{\Sigma}\frac{(g-\rt u_{n})^{2}}{g}\geq\frac{1}{\norm{L^{\infty}(\Sigma)}{g}}\norm{L^{2}(\Sigma)}{g-\rt u_{n}}^{2}
\end{align*}

one can prove that 

\begin{align*}
\norm{L^{2}(\Sigma)}{\rt\overline{u_{n}}}&\leq C_{1}\norm{L^{\infty}(\Sigma)}{g}+\norm{L^{2}(\Sigma)}{\rt v_{n}}+\norm{L^{2}(\Sigma)}{g}\\
                                         &\leq \widetilde{C_{1}}\norm{L^{\infty}(\Sigma)}{g}+\norm{L^{2}(\Sigma)}{\rt v_{n}}
\end{align*}

and
$$|\frac{1}{|\Omega|}\int_{\Omega}u_{n}dx|\cdot\norm{L^{2}(\Sigma)}{\rt\chi_{\Omega}}=\norm{L^{2}(\Sigma)}{\rt\overline{u_{n}}}$$ Without loss of generality, we may assume that the image domain $\Omega$ is a unit square, then $\rt\chi_{\Omega}\neq0$, see \cite[Chapter~8]{Poul} and we conclude that $|\int_{\Omega}u_{n}dx|$ is uniformly bounded. Hence, $u_{n}$ is  $L^{1}(\Omega)$ bounded ( $L^{2}(\Omega)$ bounded with $|\Omega|<\infty$). Moreover, since the Radon transform is $L^{2}$ continuous for functions with compact support (cf. \cite{Hertle},\cite{Natt1}) and using \eqref{radon}, we have the following:

Since, $(u_{n})_{n\in\na}$ is bounded in  $L^{1}(\Omega)$ and $|Du_{n}|(\Omega)<\infty$ i.e., is $BV(\Omega)$ bounded, we obtain a subsequence $(u_{n_{k}})_{k\in\na},u\in BV(\Omega)$ such that $u_{n_{k}}$ converges weakly$^{*}$ to u. Also, $u_{n_{k}}$ converges weakly to u in $L^{2}(\Omega)$. Then,
\begin{align}
&\rt u_{n_{k}}\rightarrow\rt u\mbox{  in  }L^{1}(\Sigma)\label{limit1}\\
&\rt u_{n_{k}}\rightharpoonup\rt u\mbox{  in  }L^{2}(\Sigma)\label{limit2}
\end{align}

Then, $$|D(\rt u)|(\Sigma)\leq\liminf_{k\rightarrow\infty}|D(\rt u_{n_{k}})|(\Sigma)$$ 
and the weak lower semicontinuity of the $L^{2}$ norm and the lower semicontinuity of total variation semi-norm for both the image and the projection space imply that
$$F(u)\leq\liminf_{k\rightarrow\infty}F(u_{n_{k}})$$

To prove uniqueness let $0\leq u_{1}, u_{2}\in BV(\Omega)$ be two minimisers. If $\rt u_{1}\neq\rt u_{2}$, then the strict convexity of the weighted $L^{2}$ fidelity term  together with the convexity of the total variation of $\mathcal R u$ implies that: 
$$
F\left(\frac{u_1+u_2}{2}\right)<\frac{F(u_1)}{2}+\frac{F(u_2)}{2}=\inf_{\substack{u \in BV(\Omega)\\ u \geq 0 \mbox{a.e.} }} F(u)
$$
which is a contradiction. Hence, $\rt u_{1}=\rt u_{2}$ and using the well-known \textit{Slice-Projection} theorem i.e., 

$$\mathcal{F}(\rt_{\theta} u(s)) = (2\pi)^{\frac{n-1}{2}}\mathcal{F}_{n} (u(s\theta))$$  where the right hand side denotes the n-dimensional Fourier transform, we conclude that $u_{1}=u_{2}$, see also \cite{And}, \cite{Natt1} for more details. 

\end{proof}


\subsection{Stability}

Further, we discuss the stability of problem \eqref{v1_1} in terms of a small perturbation on the data. Following the approach of Acar and Vogel in \cite{Acar}, we consider a perturbation on the projection space i.e.,
\begin{equation}
	g_{n}=g+\tau_{n}\mbox{  with  }\norm{L^{2}(\Sigma)}{\tau_{n}}\rightarrow0
\end{equation}
and define the corresponding minimisation problem on the perturbed functionals:
\begin{equation}
	\argmin_{u\geq0\mbox{ a.e, }\\u\in BV(\Omega)}\left\{F^{n}(u)=\alpha|Du|(\Omega)+\beta |D(\rt u)|(\Sigma)+\frac{1}{2}\int_{\Sigma}\frac{(g_n-\rt u)^{2}}{g_n}\right\}
	\label{v6}
\end{equation}
For \eqref{v6} to be well-defined we assume an $L^\infty$ bound on $\tau_n$ such that $g_n$ is still positive. More precisely we assume that
\begin{equation}
	0<c_{1}\leq g_{n}(\theta,s) \leq\norm{L^{\infty}(\Sigma)}{g}+\varepsilon\mbox{,  for all }n\geq1,
	\label{positiveRadon1}
\end{equation}
which is the same as assuming that the perturbations $\tau_n$ are bounded from above by a small enough constant. Then, from the previous section, we have that both $F^{n}$ and $F$ are lower semicontinuous, strictly convex with unique minimisers $u_{n}$ and $u^{*}$ respectively. In a sense, we will prove that for a small change on our data g, our solution's behaviour does not change significantly. Before, we proceed with the stability analysis we need to ensure that the functional is indeed BV-coercive. 
That is coercive with respect to the bounded variation norm $\norm{BV(\Omega)}{u}=\norm{L^{1}(\Omega)}{u}+|Du|(\Omega)$, rather than the total variation semi-norm only. 

\begin{lem} \label{stab}
Let $g\in L^{\infty}(\Sigma)$ a strictly positive and bounded function, then the functional $F$ in \eqref{v1_1} is BV coercive i.e., there exists a constant $C>0$ such that 
\begin{equation}
	F(u)\geq C\norm{BV(\Omega)}{u}
\end{equation}

\begin{proof} Let $u\geq 0$ a.e with $u\in BV(\Omega)$ and consider $v=u-\overline{u}$. Then, by H\"older and Poincar\'e inequalities, one can prove that
$$\norm{L^{p}(\Omega)}{v}\leq C_{1}|Dv|(\Omega)$$ and the corresponding estimate for the BV norm holds:
\begin{equation}
\norm{BV(\Omega)}{u}\leq\norm{L^{1}(\Omega)}{\overline{u}}+(C_{1}+1)|Dv|(\Omega)\label{v7}
\end{equation}
Note that in the above calculations we have used the fact that $|Du|(\Omega)=|Dv|(\Omega)$. Moreover, we know that there exists a constant $C_{2}>0$ such that $$\norm{L^{2}(\Sigma)}{\rt\overline{u}}=C_{2}\norm{L^{1}(\Omega)}{\overline{u}}$$ since $\rt\chi_{\Omega}\neq0$ (see proof of Theorem \ref{exist}). Hence, we can derive the following bound:
\begin{equation}
F(u)\geq\alpha|Dv|(\Omega)+\frac{C_{2}\norm{L^{1}(\Omega)}{\overline{u}}}{2\norm{L^{\infty}(\Sigma)}{g}}\left(C_{2}\norm{L^{1}(\Omega)}{\overline{u}}-2\left(\norm{}{\rt}C_{1}|Dv|(\Omega)+\norm{L^{\infty}(\Sigma)}{g}\right)\right)
\label{v8}
\end{equation}
Setting $$A=C_{2}\norm{L^{1}(\Omega)}{\overline{u}}-2\left(C_{1}\norm{}{\rt}|Dv|(\Omega)+\norm{L^{\infty}(\Sigma)}{g}\right)$$ we consider 2 cases:
\begin{enumerate}[(a)]
\item If $A\geq1$, then using \eqref{v7},\eqref{v8}, one can prove that 
\begin{equation}
	F(u)\left(\frac{C_{1}+1}{\alpha}+\frac{2\norm{L^{\infty}(\Sigma)}{g}}{C_{2}}\right)\geq\norm{BV(\Omega)}{u}
	\label{coercive1}
\end{equation}
\item If $A\leq1$, then $$\norm{L^{1}(\Omega)}{\overline{u}}\leq\frac{1+2\left(\norm{}{\rt}C_{1}|Dv|+\norm{L^{\infty}(\Omega)}{g}\right)}{C_{2}}$$ 
\end{enumerate}
and using equation \eqref{v7} we derive that:
\begin{equation}
\norm{BV(\Omega)}{u}-\frac{1+2\norm{L^{2}(\Sigma)}{g}}{C_{2}}\leq\big(\frac{2C_{1}\norm{}{\rt}}{C_{2}}+C_{1}+1\big)|Dv|\leq\frac{K}{\alpha}F(u)
\label{coercive2}
\end{equation}
where $K=\frac{2C_{1}\norm{}{\rt}}{C_{2}}+C_{1}+1$. From equations \eqref{coercive1}, \eqref{coercive2} we have that the functional $F$, is BV coercive.
\end{proof}
\end{lem}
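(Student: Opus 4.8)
The plan is to split $u$ into its mean and its oscillation, control the oscillation by the image regulariser through the Poincar\'e inequality, and control the mean by the data fidelity through the nonvanishing of $\rt\chi_\Omega$. Concretely, set $\overline{u}=\frac{1}{|\Omega|}\int_\Omega u\,dx$ and $v=u-\overline{u}$; then $|Dv|(\Omega)=|Du|(\Omega)$, and the Poincar\'e--Wirtinger inequality on the bounded domain $\Omega$ gives $\norm{L^2(\Omega)}{v}\le C_1|Dv|(\Omega)$ (and likewise for $\norm{L^1(\Omega)}{v}$ since $|\Omega|<\infty$), hence
\[
\norm{BV(\Omega)}{u}\le\norm{L^1(\Omega)}{\overline{u}}+(C_1+1)|Dv|(\Omega).
\]
Since $\alpha|Du|(\Omega)=\alpha|Dv|(\Omega)\le F(u)$, the lemma reduces to bounding $\norm{L^1(\Omega)}{\overline{u}}$ by $F(u)$ up to quantities already dominated by $F(u)$.

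For the mean I would use the fidelity term: from $g\le\norm{L^\infty(\Sigma)}{g}$ pointwise,
\[
F(u)\ge\tfrac12\int_\Sigma\frac{(g-\rt u)^2}{g}\ge\frac{1}{2\norm{L^\infty(\Sigma)}{g}}\norm{L^2(\Sigma)}{g-\rt u}^2,
\]
so $\rt u$ is controlled in $L^2(\Sigma)$ by $F(u)$ and $g$. Now decompose $\rt u=\overline{u}\,\rt\chi_\Omega+\rt v$ by linearity and invoke $L^2$-continuity of the Radon transform for compactly supported functions ($\norm{L^2(\Sigma)}{\rt v}\le\norm{}{\rt}\norm{L^2(\Omega)}{v}\le\norm{}{\rt}C_1|Dv|(\Omega)$), the sign condition $u\ge0$ (so $\overline u\ge0$ and $\norm{L^1(\Omega)}{\overline u}$ is proportional to $\overline u$), and the decisive fact $\norm{L^2(\Sigma)}{\rt\chi_\Omega}=:c_0>0$ --- which holds because $\Omega$ may be chosen, for instance the unit square, so that $\rt\chi_\Omega\not\equiv0$. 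This yields $\norm{L^2(\Sigma)}{\rt\overline u}=C_2\norm{L^1(\Omega)}{\overline u}$ with $C_2>0$ and, by reverse triangle inequalities, $\norm{L^2(\Sigma)}{g-\rt u}\ge C_2\norm{L^1(\Omega)}{\overline u}-\norm{}{\rt}C_1|Dv|(\Omega)-\norm{L^\infty(\Sigma)}{g}$ whenever the right-hand side is nonnegative.

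Plugging this back into the fidelity bound gives, with $A:=C_2\norm{L^1(\Omega)}{\overline u}-2(\norm{}{\rt}C_1|Dv|(\Omega)+\norm{L^\infty(\Sigma)}{g})$, an inequality of the form
\[
F(u)\ge\alpha|Dv|(\Omega)+\frac{C_2\norm{L^1(\Omega)}{\overline u}}{2\norm{L^\infty(\Sigma)}{g}}\,A.
\]
The main obstacle is that the fidelity controls $\norm{L^1(\Omega)}{\overline u}$ only quadratically, so a linear lower bound cannot be read off directly; I would finish, following Acar and Vogel, by a case split. If $A\ge1$, the displayed inequality already gives $F(u)\ge\alpha|Dv|(\Omega)+\frac{C_2}{2\norm{L^\infty(\Sigma)}{g}}\norm{L^1(\Omega)}{\overline u}$, which together with the Poincar\'e estimate produces $F(u)\ge C\norm{BV(\Omega)}{u}$ with an explicit constant. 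If $A\le1$, then $\norm{L^1(\Omega)}{\overline u}$ is bounded by an absolute constant plus a multiple of $|Dv|(\Omega)\le F(u)/\alpha$, and the Poincar\'e estimate again yields the coercivity inequality (up to a harmless additive constant). All the estimates except $c_0>0$ are routine; that geometric input on $\Omega$, guaranteeing $C_2>0$, is the one substantive ingredient.
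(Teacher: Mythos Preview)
Your proposal is correct and follows essentially the same route as the paper: split $u=\overline u+v$, control $|Dv|(\Omega)$ directly by the $\alpha$-term, control $\norm{L^1(\Omega)}{\overline u}$ via the fidelity through $\norm{L^2(\Sigma)}{\rt\chi_\Omega}>0$ and the $L^2$-continuity of $\rt$, and then perform the Acar--Vogel case split on the quantity $A$. Your parenthetical remark that case $A\le1$ only yields coercivity ``up to a harmless additive constant'' matches the paper exactly (its inequality \eqref{coercive2} reads $\norm{BV(\Omega)}{u}-\mathrm{const}\le\tfrac{K}{\alpha}F(u)$), so both arguments in fact establish boundedness of sublevel sets rather than the literal linear inequality in the lemma's display.
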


Moreover, we can prove that given constants $C>0$ and $\varepsilon>0$, there exists $n_{0}\in\na$ such that
\begin{equation}
	|F^{n}(u)-F(u)|\leq\epsilon\mbox{  for  }n\geq n_{0}\mbox{  and  }\norm{BV(\Omega)}{u}\leq C.
	\label{cons}
\end{equation}
Indeed,
\begin{align*}
|F^{n}(u)-F(u)|&=\frac{1}{2c_{1}}\left(\norm{L^{2}(\Sigma)}{g+\tau_{n}-\rt u}^{2}-\norm{L^{2}(\Sigma)}{g-\rt u}^{2}\right)\\                
 &\leq \frac{1}{2c_{1}}\left(\norm{L^{2}(\Sigma)}{\tau_{n}}^{2}+2\scalprod{ }{\tau_{n}}{g-\rt u}\right)\\ 
        &\leq\frac{1}{2c_{1}}\norm{L^{2}(\Sigma)}{\tau_{n}}\left(\norm{L^{2}(\Sigma)}{\tau_{n}}+2\norm{L^{2}(\Sigma)}{g}+2\norm{L^{2}(\Sigma)}{\rt u}\right)
\end{align*}
The continuity of Radon transform in $L^{2}$ for functions with compact support, i.e., $$\norm{L^{2}}{\rt u}^{2}\leq|S^{n-1}|(2r)^{n-1}\norm{L^{2}}{u}^{2}$$ and $BV\hookrightarrow L^{2}$ continuously, imply that we can find an appropriate constant such that \eqref{cons} is valid. 

With these preparations we can prove the following weak stability result for minimisers of \eqref{v1_1}.

\begin{thrm} Let $0<u_{n},u^{*}\in BV(\Omega)$ be the minimisers of the functionals $F^{n}$ and $F$ defined in \eqref{v6} and \eqref{v1_1} respectively. Then 
\begin{equation}
u_{n}\rightharpoonup u^{*}\mbox{  in  }L^{2}
\label{convergence1}
\end{equation}
\begin{proof}
Observe that $F^{n}(u_{n})\leq F^{n}(u^{*})$ and using \eqref{cons} we have that $$\liminf_{n\rightarrow\infty}F^{n}(u_{n})\leq\limsup_{n\rightarrow\infty}F^{n}(u_{n})\leq F(u^{*})<\infty$$ Lemma \ref{stab} implies that $(u_{n})_{n\in\na}$ is BV bounded. Assume that \eqref{convergence1} is not true, then there exists a subsequence $u_{n_{k}}$ which converges weakly to some $u\neq u^{*}$ in $L^{2}$. Hence,
\begin{align*}
F(u)&\leq\liminf_{n\rightarrow\infty}F(u_{n_{k}})\\
        &=\lim_{k\rightarrow\infty}(F^{n_{k}}(u_{n_{k}})-F(u_{n_{k}}))+\liminf_{k\rightarrow\infty}F^{n_{k}}(u_{n_{k}})\\
        &\leq F(u^{*})
\end{align*}
which is a contradiction to the uniqueness of minimiser of $F$.
\end{proof}  
\end{thrm}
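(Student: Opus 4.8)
The plan is a standard direct-method stability argument (in the spirit of $\Gamma$-convergence): first derive a uniform $BV$ bound on the minimisers $u_n$ from the uniform coercivity of the $F^n$, then extract a weak-$*$ convergent subsequence, identify its limit as a minimiser of $F$ by combining lower semicontinuity with the uniform closeness estimate \eqref{cons}, and finally upgrade subsequential convergence to convergence of the whole sequence by a contradiction argument exploiting the uniqueness in Theorem \ref{exist}.

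\textbf{Uniform $BV$ bound.} Since $u_n$ minimises $F^n$ over nonnegative $BV$ functions and $u^*\ge 0$, $u^*\in BV(\Omega)$, we have $F^n(u_n)\le F^n(u^*)$; applying \eqref{cons} to the \emph{fixed} function $u^*$ gives $F^n(u^*)\to F(u^*)<\infty$, so $\sup_n F^n(u_n)=:M<\infty$. Next I would observe that the proof of Lemma \ref{stab} uses the data only through the lower bound $c_1$ and an upper bound for $\|g\|_{L^\infty(\Sigma)}$; by \eqref{positiveRadon1} the perturbed data $g_n$ satisfy the same $c_1$ and the uniform upper bound $\|g\|_{L^\infty(\Sigma)}+\varepsilon$, hence each $F^n$ is $BV$-coercive with a constant $C>0$ that does not depend on $n$. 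Therefore $\sup_n\|u_n\|_{BV(\Omega)}\le M/C<\infty$; consequently $(u_n)$ is bounded in $L^2(\Omega)$ (as $|\Omega|<\infty$) and, by $L^2$-continuity of $\rt$ on compactly supported functions, $(\rt u_n)$ is bounded in $L^2(\Sigma)$.

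\textbf{Limit identification and conclusion.} Suppose \eqref{convergence1} fails. Then there exist $\phi\in L^2(\Omega)$, $\delta>0$ and a subsequence with $|\langle u_{n_k}-u^*,\phi\rangle_{L^2(\Omega)}|\ge\delta$ for all $k$. By the uniform $BV$ bound, weak-$*$ $BV$ compactness and the compact embedding $BV(\Omega)\hookrightarrow L^1(\Omega)$, a further (non-relabelled) subsequence converges weakly-$*$ in $BV(\Omega)$ and weakly in $L^2(\Omega)$ to some $u\in BV(\Omega)$ with $u\ge 0$ a.e.\ (the nonnegative cone is weakly closed), and necessarily $u\ne u^*$. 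Writing $F(u_{n_k})=F^{n_k}(u_{n_k})-\bigl(F^{n_k}(u_{n_k})-F(u_{n_k})\bigr)$, the first term is $\le F^{n_k}(u^*)\to F(u^*)$ and the bracket tends to $0$ by \eqref{cons} (legitimate now that $(u_{n_k})$ lies in a $BV$-bounded set), so $\limsup_k F(u_{n_k})\le F(u^*)$. Combining this with the lower semicontinuity of $F$ along weak-$*$ $BV$ convergent sequences established in Theorem \ref{exist} (using \eqref{limit1}--\eqref{limit2} for the sinogram term and weak $L^2$ lower semicontinuity of the fidelity) yields $F(u)\le\liminf_k F(u_{n_k})\le F(u^*)$, so $u$ is a minimiser of $F$ distinct from $u^*$, contradicting uniqueness. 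Hence \eqref{convergence1} holds; equivalently, every subsequence has a further subsequence converging weakly in $L^2$ to $u^*$, so the whole sequence does.

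\textbf{Main obstacle.} The delicate point is the \emph{uniformity in $n$} of the coercivity constant $C$: one has to re-inspect the chain \eqref{v7}--\eqref{coercive2} in Lemma \ref{stab} and check that every constant appearing there depends on the data only through $c_1$ and an upper bound for $\|g_n\|_{L^\infty(\Sigma)}$, both of which are uniform by \eqref{positiveRadon1}. A lesser but real point is to respect the logical order: \eqref{cons} may be invoked on $u_n$ only \emph{after} the uniform $BV$ bound has been established, whereas on the fixed competitor $u^*$ it is available from the outset.
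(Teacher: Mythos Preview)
Your proposal is correct and follows essentially the same route as the paper: bound $F^n(u_n)$ via $F^n(u^*)$ and \eqref{cons}, deduce a uniform $BV$ bound from coercivity, then run a subsequence/lower-semicontinuity/uniqueness contradiction. In fact you are more careful than the paper on the one genuinely delicate point you flag---the paper simply invokes Lemma~\ref{stab} (which is stated for $F$) to bound $\|u_n\|_{BV}$ from a bound on $F^n(u_n)$, whereas you correctly argue that the proof of Lemma~\ref{stab} yields a coercivity constant for $F^n$ that is uniform in $n$ thanks to \eqref{positiveRadon1}.
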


\subsection{Error analysis using the Bregman distance}\label{sec:bregmanerr}

In the following we discuss a similar approach as presented in \cite{Martin1} for deriving an error estimate for our model \eqref{v1_1} in terms of the Bregman distance. Let us note that what follows holds for the more general minimisation problem
\begin{equation}
	\argmin_{u\in X}\left\{F(u)=\alpha J(u)+\beta J(\rt u)+\frac{1}{2}\int_{\Sigma}\frac{(g-\rt u)^{2}}{g}\right\},
	\label{alternative}
\end{equation}
where $J:X\rightarrow\re$ is a convex functional and $X$ is a Banach space such that $\rt: X \rightarrow L^2(\Sigma)\cap X$ is a bounded operator. Before we proceed with proving an error estimate for \eqref{alternative}, we first recall the terminology of a minimising solution, the source-condition and the Bregman distance for a convex functional.
\begin{mydef}\label{def:minsol}
An element $\widetilde{u}\in X$ is called a minimising solution of $\rt u=g$ with respect to the functional $J:X\rightarrow\re$ if:
\begin{enumerate}[(i)]
\item $\rt \widetilde{u}=g$
\item $J(\widetilde{u})\leq J(v)\mbox{   ,}\forall v\in X\mbox{, }\rt v=g$
\end{enumerate}
\end{mydef}
We consider
the following source condition for an element $\widetilde u$
\begin{equation}
	\exists \widetilde{w}\in L^{2}(\Sigma)\mbox{  such that  }\rt^{*}\widetilde{w}\in\partial J(\widetilde{u}),
	\label{source}
\end{equation}
where $\partial J(u)$ is the subdifferential of $J$ at $u$, see \cite{Ekeland}. 

Next, we recall the Bregman distance for a convex functional $J$ together with some of its basic properties as it was introduced in \cite{Bregman}.
\begin{mydef}
Let $u,v\in X$ and $J:X\rightarrow\re$ convex functional, then the Bregman distance related to $J$, with $J(u)<\infty$, for all $u\in X$ is 
\begin{equation}
	D_{J}^{p}(u,v):=J(u)-J(v)-\scalprod{}{p}{u-v}\mbox{  ,  }p\in\partial J(v)
\end{equation}
\end{mydef}


Now, we can derive an estimate for the difference of a minimising solution $\widetilde u$ in Definition \ref{def:minsol} and a regularised solution $\widehat u$ of \eqref{alternative}. \par

Let $\alpha>0,\beta\geq0$ and the data $g$ fulfil \eqref{positiveRadon}. Then, for a minimiser $\widehat{u}$ of \eqref{alternative} and the exact solution $\widetilde{u}$ satisfying $\rt\widetilde{u}=f$ with a fixed noise bound $\norm{L^{2}(\Sigma)}{g-f}\leq\delta$ from the exact data $f$, we have
\begin{align*}
&\alpha J(\widehat{u})+ \beta J(\rt \widehat{u})+\frac{\norm{2}{g-\rt\widehat{u}}^{2}}{2\norm{L^{\infty}}{g}}\leq\alpha J(\widetilde{u})+ \beta J(\rt \widetilde{u})+\frac{\delta^{2}}{2c_{1}}\Leftrightarrow\\
&\alpha D_{J}^{p_{1}}(\widehat{u},\widetilde{u})+\alpha<p_{1},\widehat{u}-\widetilde{u}>+\beta D_{J}^{p_{2}}(\rt\widehat{u},f)+\beta<p_{2},\rt\widehat{u}-f>+\frac{\norm{2}{g-\rt\widehat{u}}^{2}}{2\norm{L^{\infty}(\Sigma)}{g}}\leq\frac{\delta^{2}}{2c_{1}}
\end{align*}
where, we have used the corresponding Bregman distances related to the functional $J$ regarding the image and the sinogram regularisation. Moreover, we require that 
\begin{equation}
	\partial( J(u)+J(\rt u))=\partial J(u) +\partial (J(\rt u))
	\label{sum_sub}
\end{equation}
holds, subject to the assumption that the related \textit{effective domains} have a common point, that is
\begin{equation}
	\dom J(u)\cap \dom J(\rt u)\neq\emptyset\mbox{ for some }u\in X
	\label{sum_sub_ass}
\end{equation}
In our case, this is valid due to Theorem \ref{thrm3}. Let
 \begin{center}
\begin{enumerate}[(i)]
\item $p_{1}\in\partial J(\widetilde{u})$ 
\item $p_{2}\in\partial (J(\rt\widetilde{u}))=\rt^{*}(\partial J(\rt \widetilde{u}))\in \rt^{*} w_{2}$
\end{enumerate}
\end{center}
Moreover, assume that the source condition \eqref{source} is satisfied with respect to $J$, that is
$$
\exists p_{1}\in\partial J(\widetilde{u})\mbox{   s.t   }p_{1}=\rt^{*}w_{1}\mbox{  ,  }w_{1}\in L^{2}(\Sigma)
$$
Then, by generalised Young's inequality, for every $\varepsilon>0$ we have
$$
ab\leq\frac{a^2}{2\varepsilon}+\frac{\varepsilon b^{2}}{2}
$$ 
and we conclude that
\begin{align*}
&\alpha D_{J}^{p_{1}}(\widehat{u},\widetilde{u})+\beta D_{J}^{p_{2}}(\rt\widehat{u},f)+<\alpha w_{1}+\beta p_{2},\rt\widehat{u}-f>+
\frac{\norm{2}{g-\rt\widehat{u}}^{2}}{2\norm{L^{\infty}(\Sigma)}{g}}\leq\frac{\delta^{2}}{2c_{1}}\Leftrightarrow\\
&\alpha D_{J}^{p_{1}}(\widehat{u},\widetilde{u})+\beta D_{J}^{p_{2}}(\rt\widehat{u},f)+\frac{\norm{2}{g-\rt\widehat{u}}^{2}}{2\norm{L^{\infty}(\Sigma)}{g}}
\leq\frac{\delta^{2}}{2c_{1}}+<\alpha w_{1}+\beta p_{2},f-\rt\widehat{u}+g-g>\Leftrightarrow\\
&\alpha D_{J}^{p_{1}}(\widehat{u},\widetilde{u})+\beta D_{J}^{p_{2}}(\rt\widehat{u},f)+\frac{\norm{2}{g-\rt\widehat{u}}^{2}}{2\norm{L^{\infty}(\Sigma)}{g}}
\leq \frac{\delta^{2}}{2c_{1}}+\frac{\norm{2}{\alpha w_{1}+\beta p_{2}}^{2}}{\varepsilon}+\frac{\varepsilon}{2}\norm{2}{g-\rt\widehat{u}}^{2}+\frac{\varepsilon\delta^{2}}{2}\Leftrightarrow\\
\end{align*}

Hence, for $\varepsilon=\norm{L^{\infty}(\Sigma)}{g}^{-1}>0$ we have
$$
D_{J}^{p_{1}}(\widehat{u},\widetilde{u})+\frac{\beta}{\alpha} D_{J}^{p_{2}}(\rt\widehat{u},f)\leq\frac{\widetilde{c_{1}}\delta^{2}}{\alpha}+\alpha\norm{L^{\infty}(\Sigma)}{g}\norm{2}{w_{1}+\frac{\beta}{\alpha}\rt^{*}w_{2}}^{2}
$$

We have proved the following theorem:
\begin{thrm}\label{thmbregstab} Let $\delta>0$ be the noise bound related to the exact data $f$ and the noise data $g$. Moreover let \eqref{sum_sub} hold. If $\widehat{u}$ is a minimiser of \eqref{alternative} and $\widetilde{u}$ the exact solution of $\rt\widetilde{u}=f$ which satisfies the source condition \eqref{source}, then for $\alpha>0,\beta\geq0$ we have the following estimate:
\begin{equation}
D_{J}^{p_{1}}(\widehat{u},\widetilde{u})+\frac{\beta}{\alpha}D_{J}^{p_{2}}(\rt\widehat{u},f)\leq\frac{\widetilde{c_{1}}\delta^{2}}{\alpha}+\alpha \norm{L^{\infty}(\Sigma)}{g}\norm{2}{w_{1}+\frac{\beta}{\alpha}\rt^{*}w_{2}}^{2}
\label{bregmanerror}
\end{equation}
where $\widetilde{c_{1}}=\frac{c_{1}+\norm{L^{\infty}(\Sigma)}{g}}{2c_{1}\norm{L^{\infty}(\Sigma)}{g}}$.
\end{thrm}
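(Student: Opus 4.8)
The plan is to imitate the argument of \cite{Martin1}, adapted to the presence of the two regularisers $\alpha J(u)$ and $\beta J(\rt u)$: start from the minimality defining $\widehat u$, rewrite the resulting inequality in terms of Bregman distances, and absorb the arising linear terms by a weighted Young inequality. \emph{First step.} Since $\widehat u$ minimises \eqref{alternative} and $\widetilde u$ is feasible, $F(\widehat u)\le F(\widetilde u)$. On the right-hand side I would estimate the fidelity term using $\rt\widetilde u=f$, the noise bound $\norm{L^{2}(\Sigma)}{g-f}\le\delta$ and the lower bound $g\ge c_1$ from \eqref{positiveRadon}, giving $\tfrac12\int_\Sigma(g-f)^2/g\le\delta^2/(2c_1)$; on the left-hand side I would keep the fidelity term but bound it below by $\norm{L^{2}(\Sigma)}{g-\rt\widehat u}^2/(2\norm{L^{\infty}(\Sigma)}{g})$. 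This produces exactly the first displayed inequality preceding the theorem.

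\emph{Second step (Bregman rewriting).} I would fix $p_1\in\partial J(\widetilde u)$ from the source condition \eqref{source}, so $p_1=\rt^{*}w_1$ with $w_1\in L^2(\Sigma)$, and an element $w_2\in\partial J(\rt\widetilde u)$, and write $D_J^{p_2}(\rt\widehat u,f)$ for the Bregman distance of $J$ at $f=\rt\widetilde u$ relative to $w_2$, so that $p_2:=\rt^{*}w_2\in\partial(J\circ\rt)(\widetilde u)$. Substituting the identities $J(\widehat u)=J(\widetilde u)+\scalprod{}{p_1}{\widehat u-\widetilde u}+D_J^{p_1}(\widehat u,\widetilde u)$ and $J(\rt\widehat u)=J(f)+\scalprod{L^{2}(\Sigma)}{w_2}{\rt\widehat u-f}+D_J^{p_2}(\rt\widehat u,f)$ into the inequality of the first step, cancelling $\alpha J(\widetilde u)+\beta J(f)$, and using $\scalprod{}{\rt^{*}w_i}{\widehat u-\widetilde u}=\scalprod{L^{2}(\Sigma)}{w_i}{\rt\widehat u-f}$ to move everything onto the measurement side, I arrive at
\[
\alpha D_J^{p_1}(\widehat u,\widetilde u)+\beta D_J^{p_2}(\rt\widehat u,f)+\frac{\norm{L^{2}(\Sigma)}{g-\rt\widehat u}^{2}}{2\norm{L^{\infty}(\Sigma)}{g}}\le\frac{\delta^{2}}{2c_1}+\scalprod{L^{2}(\Sigma)}{\alpha w_1+\beta w_2}{f-\rt\widehat u}.
\]

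\emph{Third step (Young and the choice of $\varepsilon$).} I would split $f-\rt\widehat u=(f-g)+(g-\rt\widehat u)$, bound each of the two resulting inner products by the generalised Young inequality $ab\le a^{2}/(2\varepsilon)+\varepsilon b^{2}/2$, and use $\norm{L^{2}(\Sigma)}{f-g}\le\delta$ again. The pivotal choice $\varepsilon=\norm{L^{\infty}(\Sigma)}{g}^{-1}$ makes the term $\tfrac{\varepsilon}{2}\norm{L^{2}(\Sigma)}{g-\rt\widehat u}^{2}$ cancel against the fidelity lower bound on the left. Collecting $\delta^{2}/(2c_1)+\delta^{2}/(2\norm{L^{\infty}(\Sigma)}{g})=\widetilde{c_1}\delta^{2}$ with $\widetilde{c_1}=(c_1+\norm{L^{\infty}(\Sigma)}{g})/(2c_1\norm{L^{\infty}(\Sigma)}{g})$, and dividing through by $\alpha$, gives precisely \eqref{bregmanerror}.

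\emph{Main obstacle.} The fidelity manipulations and the Young step are entirely routine; the only point needing care is the convex-analytic bookkeeping in the second step: the validity of the sum rule \eqref{sum_sub} and of the chain rule $\partial(J\circ\rt)(\widetilde u)=\rt^{*}\partial J(\rt\widetilde u)$, together with the non-emptiness of $\partial J(\rt\widetilde u)$ so that $p_2$ and the Bregman distance $D_J^{p_2}(\rt\widehat u,f)$ are meaningful. The qualification \eqref{sum_sub_ass} on the effective domains — which holds in our setting by Theorem \ref{thrm3}, since $\rt$ maps the relevant $BV$-subspace into $\dom J$ — is exactly what removes this obstacle.
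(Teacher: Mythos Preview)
Your proposal is correct and follows essentially the same route as the paper: start from $F(\widehat u)\le F(\widetilde u)$ with the two-sided bounds $c_1\le g\le\norm{L^{\infty}(\Sigma)}{g}$ on the fidelity, rewrite via Bregman distances using the source condition and the chain rule for $\partial(J\circ\rt)$, split $f-\rt\widehat u=(f-g)+(g-\rt\widehat u)$, apply Young with $\varepsilon=\norm{L^{\infty}(\Sigma)}{g}^{-1}$ to cancel the residual, and collect the $\delta^{2}$ terms into $\widetilde{c_1}\delta^{2}$. Your bookkeeping of the subgradients (distinguishing $w_2\in\partial J(f)$ from $p_2=\rt^{*}w_2$) is in fact cleaner than the paper's, and the obstacle you flag---the subdifferential sum/chain rules---is exactly what the hypothesis \eqref{sum_sub} and Theorem~\ref{thrm3} are there to handle.
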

For $\beta=0$ Theorem \ref{thmbregstab} recovers the same estimates presented in \cite[Theorems~1,2]{Martin}. In the case $\beta>0$ the additional term $\rt^{*}w_{2}$, due to the source condition for total variation regularisation on the sinogram, might give room for further improvement. It is a matter of future research to improve the estimate in \eqref{bregmanerror}, where we believe that in certain cases the term related to the sinogram regularisation produces a better bound compared to no penalisation on the projection space.

\subsection{An explicit example of TV regularisation on the sinogram}\label{explicit}

Before we continue with the numerical presentation, we discuss how a regularised solution in the projection space behaves in terms of an appropriate positive parameter $\beta$. In particular, we derive an explicit solution of the weighted ROF minimisation problem for the sinogram

\begin{equation}
	\argmin_{v\geq0\mbox{ a.e }}\left\{J(v) = \beta|Dv|(\Sigma)+\frac{1}{2}\int_{\Sigma}\frac{(g-v)^2}{g}\right\},
	\label{barosROF}
\end{equation}

where we consider,
\begin{align}
  &u(x,y)=
	\begin{cases}
  1,&\mbox{ if  }x^2+y^2\leq r\\
  0,&\mbox{ otherwise}
\end{cases}\label{char1}\\
 & g(\theta,s) = \rt_{\theta} u(s)=
 \begin{cases}
 2\sqrt{r^2-s^2},&\mbox{ for }|s|<r\\
 0,              &\mbox{ otherwise}
 \label{char2}
 \end{cases}
\end{align}

In Figure \eqref{sxima_exact_solution:sin}, the given sinogram $g$ and the corresponding regularised solution $v$ of \eqref{barosROF} for $\beta=10$ is shown. We make the following Ansatz for a solution of \eqref{barosROF}
\begin{equation}
	v(s)=
	\begin{cases}
	\delta = g(\kappa),&\mbox{ for }|s|\leq \kappa,\\
	g(s), &\mbox{ for } \kappa<|s|<r,\\
	0,&\mbox{ otherwise.}
	\end{cases}
	\label{candidate}
\end{equation}


\begin{figure}[h!]
\begin{center}
\begin{subfigure}[h]{0.3\textwidth}
                \centering                                                  
                \includegraphics[scale=0.25]{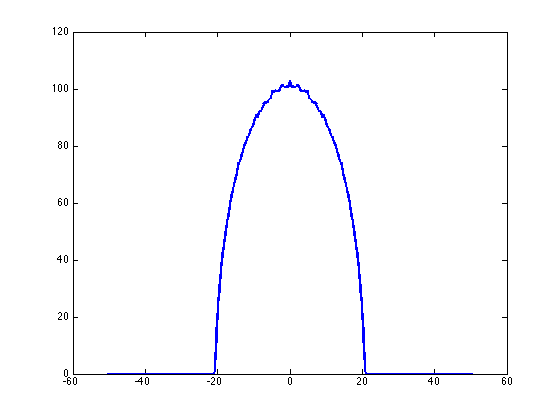}
                \caption{Given sinogram $g$} 
\end{subfigure}
\begin{subfigure}[h]{0.3\textwidth}
                \centering 
                \includegraphics[scale=0.26]{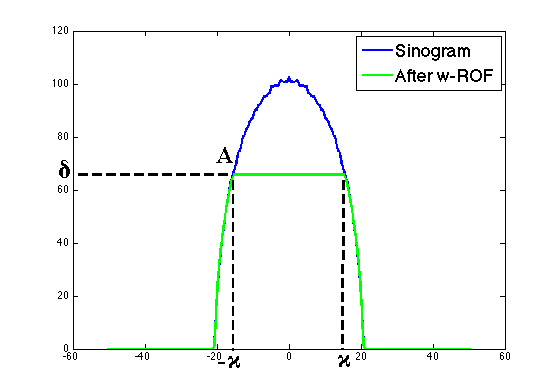}
                \caption{TV regularised sinogram $v$} 
                \label{sxima_exact_solution:sin}
\end{subfigure}          
\caption{The original sinogram $g$ with $r=50.5$, plotted at $45$ degrees in (a) and the regularised sinogram $v$ with $\beta=10$ in (b).}
\label{sxima_exact_solution}
\end{center}
\end{figure}

Note that, since $g\in C(-r,r)$, a solution $v$ of \eqref{barosROF} is in $C(-r,r)$ and hence also in $W^{1,1}(-r,r)$ \cite{caselles2007discontinuity}. Therefore, $|Dv|(\Sigma) = \int_\Sigma |\nabla v| ~ dx$. Then, if we plug-in \eqref{candidate} in \eqref{barosROF}, we obtain
\begin{align*}
	& \argmin_{v\geq0\mbox{ a.e }}\left\{\beta\norm{1}{\nabla v}+\frac{1}{2}\int_{\Sigma}\frac{(g-v)^2}{g}\right\} \\
	& = 
	\argmin_{v\geq0\mbox{ a.e }}\left\{4\beta\sqrt{r^2-\kappa^2}+\int_{0}^{\kappa}\left(2\sqrt{r^2-s^2}+\frac{v^2}{2\sqrt{r^2-s^2}}-2v\right)ds\right\}
\end{align*}


which can be simplified to
\begin{equation}	\argmin_{\kappa}\left\{(4\beta-3\kappa)\sqrt{r^2-\kappa^2}+(3r^2-2\kappa^2)\mathrm{arcsin}(\frac{\kappa}{r})\right\}
	\label{exactminimisation}
\end{equation}



Numerically solving \eqref{exactminimisation} under the constraint $0<|\kappa|<r$, we obtain a value for $\kappa$ that we can substitute in \eqref{candidate} and find the corresponding value of our solution after the regularisation. 
We solve \eqref{exactminimisation} with MATLAB's built-in routine \textit{fminbnd} in $\kappa\in[0,r)$. In Figure \ref{beta_delta}, we present how the $\beta$ parameter relates to the constant height value $\delta$ of the computed regularised solution. Clearly, for small values of $\beta$, there is no significant effect of the total variation regularisation but as we increase $\beta$ we have that $\delta$ decreases to zero,  while $\kappa$ tends to $r$. 
%
%
\begin{figure}[h!]
\begin{center}                                                  
\includegraphics[scale=0.35]{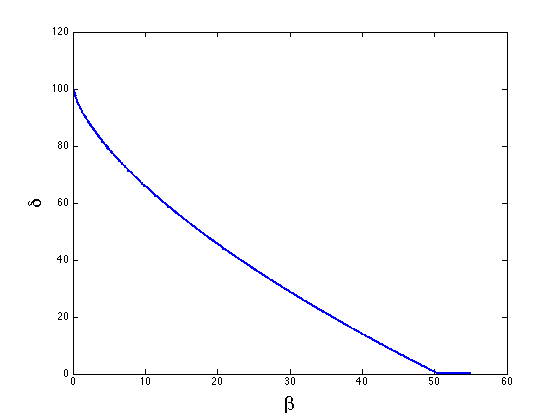}
\end{center}         
\caption{The relation  between the regularisation parameter $\beta$ and $\delta$ in \eqref{candidate}, computed using \eqref{exactminimisation} for the example in Figure \ref{sxima_exact_solution}. The parameter $\beta$ varies from $0.001$ to $55$ with step size $0.1$.}
\label{beta_delta}
\end{figure}

Before we apply the inverse Radon transform on \eqref{candidate} and find the corresponding solution in the image space, we need to verify its optimality. The following theorem ensures that the candidate solution \eqref{candidate} for the problem \eqref{barosROF} is indeed optimal.

\begin{thrm}
The unique solution of the minimisation problem \eqref{barosROF} is defined by \eqref{candidate}.
\begin{proof} The optimality condition on \eqref{barosROF} implies that:

\begin{equation}
0\in \partial J(u) \Leftrightarrow \beta q +\frac{v-g}{\sqrt{g}}=0\mbox{,  }q\in\partial |Du|(\Omega)
\label{opt_weighted}
\end{equation}

We can characterise the subdifferential of total variation, see  \cite{benning2012ground}, as

\begin{equation}
\partial |Du|(\Sigma)=\left\{\diverg p: p\in C^{\infty}_{o}(\Sigma), \norm{\infty}{p}\leq1, \scalprod{}{\diverg p}{v}=|Du|(\Sigma)\right\}
\label{tvsub}
\end{equation}

Therefore, in our case \eqref{opt_weighted} becomes 
\begin{equation}
\beta p'(s)+\frac{v(s)-g(s)}{\sqrt{g(s)}}=0\mbox{,  in  }s\in\Sigma
\end{equation}

with $-1\leq p(s)\leq 1$ and $\int_{\Sigma}p'(s)v(s)=\int_{\Sigma}|v'(s)|$. If v is either increasing or decreasing on an interval $I\subset\Sigma$, then through integration by parts one obtains $p(s)v'(s)=|v'(s)|$ which immediately implies that $p'=0$ and $v=g$ on I. However, when $v\neq g$ on an interval $I'\subset\Sigma$, then $p'\neq 0$ which is true only if $v'(s)=0$ on $I'$, i.e., v is constant.

\end{proof}
\end{thrm}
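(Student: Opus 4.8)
The plan is to verify the Ansatz \eqref{candidate} \emph{a posteriori} by producing an explicit dual certificate, rather than deriving the minimiser from scratch. Uniqueness is essentially free: the weighted fidelity $\tfrac12\int_\Sigma (g-v)^2/g$ is strictly convex in $v$ (it suffices that $g>0$ a.e., which holds here even though $g$ vanishes at $s=\pm r$), and $|Dv|(\Sigma)$ is convex, so $J$ is strictly convex and has at most one minimiser; existence follows by the direct method as in Theorem \ref{exist}, using that $g\in C(-r,r)$ forces any minimiser into $W^{1,1}(-r,r)$ (as already noted via \cite{caselles2007discontinuity}), while finiteness of the fidelity makes competitors agree with $g$ near $s=\pm r$ so the mild degeneracy there is harmless. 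It therefore suffices to show that the particular $v$ in \eqref{candidate}, with $\delta=g(\kappa)$ and $\kappa\in(0,r)$ the minimiser of the reduced one-dimensional functional $\Phi(\kappa)=(4\beta-3\kappa)\sqrt{r^2-\kappa^2}+(3r^2-2\kappa^2)\arcsin(\kappa/r)$ from \eqref{exactminimisation}, satisfies $0\in\partial J(v)$; by convexity this makes it \emph{the} minimiser.

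By \eqref{tvsub} and the pointwise Euler--Lagrange equation $\beta\,\xi'(s)+\tfrac{v(s)-g(s)}{g(s)}=0$ (the gradient of the weighted fidelity being $(v-g)/g$), I need a scalar field $\xi$ with $\|\xi\|_\infty\le1$ and $\int_\Sigma\xi'\,v=|Dv|(\Sigma)$, equivalently $\xi\,v'=-|v'|$ a.e. I would build $\xi$ region by region. On the two tails $\kappa<|s|<r$ one has $v=g$, hence $\xi'=0$ there; and since $g$ is strictly monotone on each tail ($v'=g'<0$ for $s>\kappa$, $v'=g'>0$ for $s<-\kappa$), the sign condition forces $\xi\equiv+1$ on $(\kappa,r)$ and $\xi\equiv-1$ on $(-r,-\kappa)$. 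On the plateau $|s|\le\kappa$ one has $v\equiv\delta$, so $v'=0$ imposes no sign constraint and the Euler--Lagrange equation \emph{defines} $\xi$ by $\xi(s)=-1+\tfrac1\beta\int_{-\kappa}^{s}\big(1-\tfrac{\delta}{g(t)}\big)\,dt$; since $g\ge g(\kappa)=\delta$ on $[-\kappa,\kappa]$ the integrand is nonnegative, so $\xi$ is nondecreasing with $\xi(-\kappa)=-1$, and using evenness of $g$ together with $\int_0^{\kappa}\big(1-\delta/g\big)\,dt=\kappa-\tfrac{\delta}{2}\arcsin(\kappa/r)$ one gets $\xi(\kappa)=-1+\tfrac2\beta\big(\kappa-\tfrac\delta2\arcsin(\kappa/r)\big)$.

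The crux is the matching at $s=\pm\kappa$: continuity of $\xi$ forces $\xi(\kappa)=+1$ and $\xi(-\kappa)=-1$, which collapses to the single identity $\kappa-\tfrac\delta2\arcsin(\kappa/r)=\beta$, i.e.\ (recalling $\delta=2\sqrt{r^2-\kappa^2}$) the identity $\beta=\kappa-\sqrt{r^2-\kappa^2}\,\arcsin(\kappa/r)$ --- and a short computation shows this is exactly the stationarity condition $\Phi'(\kappa)=4\kappa\big[\tfrac{\kappa-\beta}{\sqrt{r^2-\kappa^2}}-\arcsin(\kappa/r)\big]=0$ that defines $\kappa$. Thus the value of $\kappa$ read off from \eqref{exactminimisation} is precisely what makes the inner and outer pieces of $\xi$ join continuously. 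Granting this identity, $\xi$ is continuous on $\Sigma$ with values in $[-1,1]$ (odd and monotone between $\mp1$ on $[-\kappa,\kappa]$, constant $\pm1$ outside) and in fact $C^1$, because $\xi'(\pm\kappa)=1-\delta/g(\pm\kappa)=0$ from both sides; hence $\xi$ furnishes an admissible element of $\partial|Dv|(\Sigma)$ (mollify if the $C^\infty$ class in \eqref{tvsub} is to be taken literally). One then reads off $0=\beta\xi'+\tfrac{v-g}{g}\in\partial J(v)$ --- the constraint $v\ge0$ is inactive since $v>0$ a.e., so it contributes no multiplier --- and uniqueness finishes the proof.

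I expect the matching step to be the only genuine obstacle: showing that the sign requirement $\xi=\pm1$ on the two untouched tails is compatible with the Euler--Lagrange equation on the plateau, and that this compatibility holds \emph{precisely when} $\kappa$ is the stationary point of \eqref{exactminimisation}. The remaining items --- nonnegativity and $W^{1,1}$-regularity of $v$, the $L^\infty$ bound and $C^1$ smoothness of $\xi$, the degenerate regimes $\kappa=0$ (which corresponds to $\beta=0$ and $v=g$) and $\kappa=r$ (large $\beta$, $v\equiv0$), and the harmless vanishing of $g$ at $s=\pm r$ where $v=g$ so the fidelity integrand vanishes too --- are routine.
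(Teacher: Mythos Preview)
Your argument is correct and is in fact more complete than the paper's. The paper's proof is a short \emph{structural} argument: from the Euler--Lagrange equation $\beta p'+\tfrac{v-g}{g}=0$ (the paper actually writes $\tfrac{v-g}{\sqrt{g}}$, which looks like a slip; you have the right gradient) and the extremality condition $p\,v'=|v'|$, it observes that on any interval where $v$ is strictly monotone one must have $p\equiv\pm1$, hence $p'=0$, hence $v=g$; and conversely if $v\neq g$ then $p'\neq0$, which forces $v'=0$. This pins down the \emph{form} of the minimiser --- flat plateau glued to pieces of $g$ --- but the paper does not revisit the optimality condition to locate the interface $\kappa$ or the height $\delta$; it relies implicitly on the preceding paragraph, where the Ansatz was substituted into $J$ and minimised over $\kappa$.

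Your route is the \emph{constructive} dual-certificate approach: fix the candidate $v$ from \eqref{candidate}, build $\xi$ region by region, and show that the continuity requirement $\xi(\kappa^-)=\xi(\kappa^+)=+1$ is \emph{exactly} the stationarity condition $\Phi'(\kappa)=0$ of the reduced functional \eqref{exactminimisation}. Your computation $\Phi'(\kappa)=4\kappa\bigl[(\kappa-\beta)/\sqrt{r^2-\kappa^2}-\arcsin(\kappa/r)\bigr]$ and the matching identity $\beta=\kappa-\sqrt{r^2-\kappa^2}\,\arcsin(\kappa/r)$ are both correct, and the observation that $\xi'(\pm\kappa)=0$ so $\xi\in C^1$ is a nice touch. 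What your approach buys is a self-contained verification that the \emph{specific} $\kappa$ singled out by \eqref{exactminimisation} is the right one, together with the explicit bound $\|\xi\|_\infty\le1$ (monotonicity of $\xi$ on the plateau from $1-\delta/g\ge0$); what the paper's approach buys is brevity and a direct explanation of \emph{why} the minimiser must look like a plateau-plus-data profile in the first place. The two are complementary: theirs gives necessity of the shape, yours gives sufficiency for the particular parameters.
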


For computing the regularised image that corresponds to a solution of \eqref{barosROF} we first note that the rotational symmetry of the object in image space allows to simplify the Radon transform and its inverse. In this case the Radon transform coincides with the so-called \textit{Abel transform}, cf.  \cite[Chapter~8]{Poul}. More precisely, if $u$ is a radial function and $u(x,y)=f(\sqrt{x^{2}+y^{2}})$ we have 
\begin{equation}
	\rt_{\theta}u(s)=2\int_{s}^{\infty}\frac{f(r)r}{\sqrt{r^{2}-s^{2}}}dr
	\label{Abel}
\end{equation}

Using \eqref{Abel}, we can recover analytically the solution $u$ for a regularised sinogram \eqref{candidate}. 
%
The Abel transform and the inverse Abel transform in this case are
\begin{align}
	\mathcal{A}(u(\widetilde{r}))(x)&=2\int_{x}^{\infty}\frac{\widetilde{r}u(\widetilde{r})}{\sqrt{\widetilde{r}^{2}-x^{2}}}d\widetilde{r}\label{Abel1}\\
	u(\widetilde{r})&=-\frac{1}{\widetilde{r}\pi}\frac{d}{d\widetilde{r}}\int_{\widetilde{r}}^{\infty}\frac{r\mathcal{A}(u(\widetilde{r}))(x)}{\sqrt{x^{2}-r^{2}}}dx\label{invAbel}
\end{align}

\begin{figure}[h!]
\begin{center}                                                  
\includegraphics[scale=0.8]{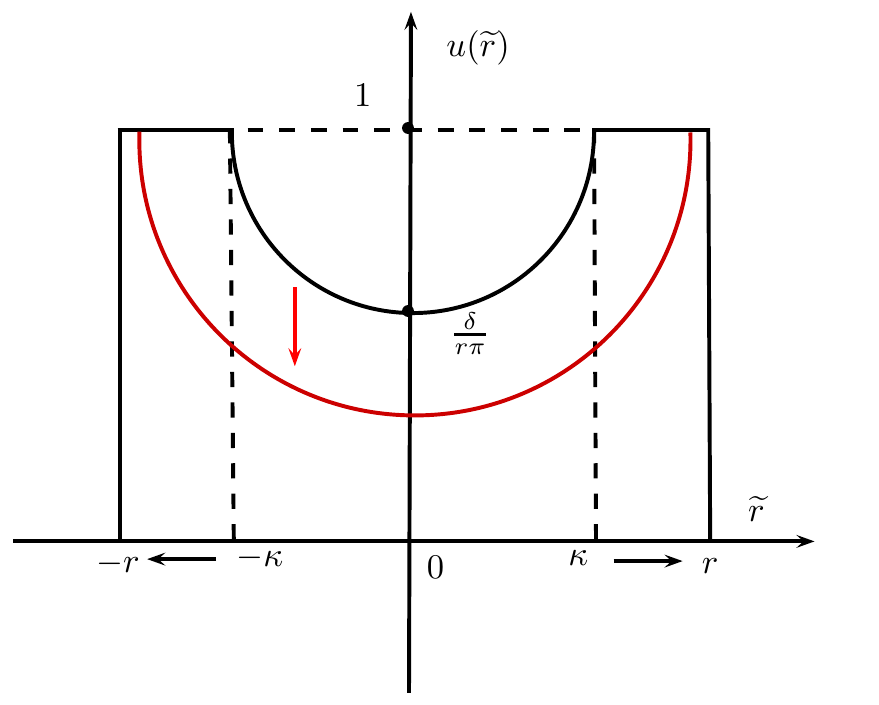}
\end{center}         
\caption{The solution $u(\widetilde{r})$ (solid line) given in \eqref{final_exact} inside the interval $[-r,r]$ and zero outside. The black and the red curve constitute the regularised solution for a smaller and larger value of $\beta$, respectively. The larger $\beta$ the more the solution concentrates around the boundaries of the disc.}
\label{u-analytic}
\end{figure}

Setting $u(\widetilde{r})=1$ and replacing the upper limit of the integral $\infty$ by $r$ in \eqref{Abel1}, the expression in \eqref{Abel1} matches the expression for the Radon transform in \eqref{char2}. Therefore, we plug-in \eqref{candidate} in \eqref{invAbel} and focus on the constant part of the sinogram for $-\kappa\leq\widetilde{r}\leq\kappa$,
\begin{align}
u(\widetilde{r})&=-\frac{1}{\widetilde{r}\pi}\frac{d}{d\widetilde{r}}\int_{\widetilde{r}}^{r}\frac{x\delta}{\sqrt{x^{2}-\widetilde{r}^{2}}}dx
                =\frac{\delta}{\pi\sqrt{r^2-\widetilde{r}^{2}}}\label{final_exact}
\end{align}
We observe that the reconstructed image is affected by the initial loss of contrast $\delta$ of the sinogram regularisation in \eqref{candidate} and depends radially on $\widetilde{r}$. In Figure \ref{u-analytic}, we present the regularised solution $u$ for two values of $\beta$. Recall, as we increase $\beta$ (red curve), we have that $\delta\rightarrow0$ and $\kappa\rightarrow r$.


\section{Numerical Implementation}\label{sec:numerics}

In this section we discuss the numerical solution of the minimisation problem \eqref{v1_1}. We employ the split Bregman technique \cite{Osher} which separates the problem into two subproblems -- one in image space and one in projection space -- that are solved iteratively in an alternating fashion. In order to present the numerical solution we start with formulating \eqref{v1_1} in a discrete setting. 

\subsection{Discrete Setting} 

Let $(u_{i,j})$, $i=1,\ldots, m$, $j=1,\ldots, n$ be the discretised image defined on a rectangular grid of size $m\times n$, $m,n>0$, and $(v_{i,j})$, $i=1,\ldots, k$, $j=1,\ldots, l$ the discretisation for an element in the sinogram space $\Sigma=[0,\pi)\times[-r,r]$ where k denotes the number of lines and l the number of angles. The values $u_{i,j}$ and $v_{i,j}$ are defined on two-dimensional grids. They are rearranged into one-dimensional vectors $u\in\re^{nm}$ and $v\in\re^{kl}$ by appending the columns of the array to each other, starting from the leftmost. Then, the discrete gradient for $u\in\re^{m\times n}$ is a matrix $\nabla \in \re^{2nm\times nm}$ which is the standard forward difference approximation of the gradient in the continuum. More precisely, applying the discrete gradient to $u$ gives $\nabla u=((\nabla u)_{1},(\nabla u)_{2})\in \re^{2nm}$ with

\begin{align*}
(\nabla u)_{1}(i,j)&=
                \begin{cases}
                 u(i,j+1)-u(i,j),&\mbox{    if   }1\leq i\leq n, 1\leq j< m,\\
                 0,             &\mbox{    if   }1\leq i\leq n, j=m.
                 \end{cases}\\                
(\nabla u)_{2}(i,j)&=
                \begin{cases}
                 u(i+1,j)-u(i,j),&\mbox{    if   }1\leq i< n, 1\leq j\leq m,\\
                 0,              &\mbox{    if   }i=n, 1\leq j\leq m.
                 \end{cases}
\end{align*}
The discrete divergence is defined as its adjoint, cf. \cite{Cham}, and is given by
\begin{align*}
\diverg :\re^{2nm}\rightarrow\re^{nm}&\mbox{   with  } \diverg(z)\cdot u=-z\cdot\nabla u
\end{align*}

Further, to approximate the Radon transform $\mathcal R$ we introduce the discrete Radon transform as a mapping $R: \re^{nm}\rightarrow \re^{kl}$ and its inverse $R^{-1}: \re^{kl} \rightarrow \re^{nm}$. In the numerical implementation the discrete Radon transform is represented by a sparse matrix $R\in\re^{kl\times nm}$ which acts on $u\in\re^{nm}$ to obtain a sinogram image $v\in\re^{kl}$. Defining $x(\theta_{\hat i},s_{\hat j})$, $\hat i=1,\ldots, k$, $\hat j = 1,\ldots, l$, the line defined by $\theta_{\hat i}, s_{\hat j}$, we can define for $i=1,\ldots, m$ and $j=1,\ldots,n$
\begin{equation}
\psi_{i,j}(\theta_{\hat i},s_{\hat j})=
\begin{cases}
1&\mbox{,  where the line } x(\theta_{\hat i},s_{\hat j}) \mbox{ goes through the pixel }  (i,j)  \\
0&\mbox{,  otherwise.} 
\end{cases}
\end{equation}
Using this notation and the linearity of the Radon transform, we define the discrete Radon transform as
\begin{equation}
R u(\theta_{\hat i},s_{\hat j})=\sum_{i=1}^{m}\sum_{j=1}^n u_{i,j} \rt\psi_{i,j}(\theta_{\hat i},s_{\hat j}) 
\label{radon_matrix}
\end{equation} 
where $\rt \psi_{i,j}(\theta_{\hat i},s_{\hat j})$ is equal to the length of the intersection of the projection line with the pixel $(i,j)$.

With these discrete quantities we define the discrete functional $F$ by
\begin{equation}
	F(u)=a\norm{1}{\nabla u}+\beta \norm{1}{(\nabla R u)}+\frac{1}{2}\sum_{k,l}\frac{(g-\rt u)^{2}}{g},
	\label{n1}
\end{equation}
and the discrete version of the minimisation problem \eqref{v1_1}
\begin{equation}
\min_{u\in \re^{m\times n}} F(u).
\label{n1a}
\end{equation}

\subsection{Split Bregman Algorithm}\label{sec:splitbreg}
To solve the problems defined in \eqref{n1} we employ the Bregman iteration \cite{Osher1} combined with a splitting technique. The resulting algorithm is called Split Bregman method which is proposed in \cite{Osher} to efficiently solve total variation and $\ell_1$ regularised image processing problems. The idea of this splitting procedure is to replace a complex and costly minimisation problem by a sequence of simple and cheaply to solve minimisation problems and to set up an iteration in which they are solved alternatingly. Note, that the Split Bregman method can be equivalently phrased in terms of an augmented Lagrange method and Douglas-Rachford splitting, cf. \cite{setzer2009split,esser2010general,setzer2011operator}. 
%
We follow \cite{Osher} to adapt the Split Bregman algorithm to the solution of \eqref{v1_1}. To do so, we consider
\begin{equation}
	\min_{\{u:\ u\geq0\mbox{ a.e.}\}}\alpha\norm{1}{\nabla u}+\beta \norm{1}{\nabla(R u)}+\frac{1}{2}\sum_{k,l}\frac{(g-R u)^{2}}{g}.
	\label{n5}
\end{equation}

We start with replacing \eqref{n5} by an equivalent constrained minimisation problem for two unknowns, the image $u\in \re^{m\times n}$ and the sinogram $v\in\re^{k\times l}$, related to each other by $v=R u$. This gives
\begin{equation}
	\min_{\{(u,v):\ u\geq0\mbox{ a.e.}\}}\alpha\norm{1}{\nabla u}+\beta \norm{1}{\nabla v}+\frac{1}{2}\sum_{k,l}\frac{(g-v)^{2}}{g}\mbox{   s.t   }v=R u.
	\label{n6}
\end{equation}
For computational efficiency reasons, we introduce three additional variables
\begin{equation}
	z=\nabla u, w=\nabla v\mbox{ and }u=\widetilde{u}
	\label{constraints}
\end{equation}
and rephrase \eqref{n6} again into
\begin{equation}
\min_{\{(u,\widetilde u,v,z,w):\ \widetilde u\geq0\mbox{ a.e.},~\mbox{satisfying \eqref{constraints}}\}}\alpha\norm{1}{z}+\beta \norm{1}{w}+\frac{1}{2}\sum_{k,l}\frac{(g-v)^{2}}{g}
\label{n6a}
\end{equation}
Then, we could iteratively solve the constrained minimisation problem \eqref{n6a} by Bregman iteration. Starting with initial conditions $b_1^0\in\re^{k\times l},b_2^0\in(\re^{k\times l})^{2},b_3^0\in(\re^{m\times n})^{2},b_4^0\in(\re^{m\times n})$ we iteratively solve for $k=0,1,\ldots$
\begin{align*}
	\argmin_{u,\widetilde{u},v,z,w}&\bigg\{\alpha\norm{1}{z}+\beta\norm{1}{w}+\sum\frac{(g-v)^{2}}{g}+\iota_{(\widetilde{u}>0)}+ \frac{\lambda_{1}}{2}\norm{2}{b_{1}^{k}+\rt u-v}^{2}+\frac{\lambda_{2}}{2}\norm{2}{b_{2}^{k}+\nabla v-w}^{2}\\
	                             &+ \frac{\lambda_{3}}{2}\norm{2}{b_{3}^{k}+\nabla u-z}^{2}+\frac{\lambda_{4}}{2}\norm{2}{b_{4}^{k}+u-\widetilde{u}}^{2}\bigg\}\\
	                             b_{1}^{k+1}&=b_{1}^{k}+\rt u^{k+1}-v^{k+1} \qquad
b_{2}^{k+1}=b_{2}^{k}+\nabla v^{k+1}-w^{k+1}\\
b_{3}^{k+1}&=b_{3}^{k}+\nabla u^{k+1}-z^{k+1} \qquad
b_{4}^{k+1}=b_{4}^{k}+u^{k+1}-\widetilde{u}^{k+1},
\end{align*}
with Lagrange multipliers $(\lambda_{i})_{i=1}^{4}>0$, $b_{1}^{k}\in\re^{k\times l}$, $b_{2}^{k}\in(\re^{k\times l})^{2}$, $b_{3}^{k}\in(\re^{m\times n})^{2}$ and $b_{4}^{k}\in(\re^{m\times n})$ and $\iota_{(\widetilde{u}>0)}$ being the characteristic function for the positivity constraint on $\widetilde u$. To progress, in each iteration above we would need to solve a minimisation problem in all $u,\widetilde{u},v,z,w$ at the same time which is numerically very involved. Instead, we use the split Bregman idea of \cite{Osher} and in each iteration solve a sequence of decoupled problems in $u,\widetilde{u},v,z,w$, that is
\begin{align}
v^{k+1}&=\argmin_{v}\bigg\{ \frac{1}{2}\sum\frac{(g-v)^{2}}{g}+\frac{\lambda_{1}}{2}\norm{2}{b_{1}^{k}+R u^{k}-v}^{2}\notag\\       
       &+\frac{\lambda_{2}}{2}\norm{2}{b_{2}^{k}+\nabla v -w^{k}}^{2}\bigg\}\label{n11}\\
u^{k+1}&=\argmin_{u}\bigg\{ \frac{\lambda_{1}}{2}\norm{2}{b_{1}^{k}+R u-v^{k+1}}^{2}+\frac{\lambda_{3}}{2}\norm{2}{b_{3}^{k}+\nabla u-z^{k}}^{2}\notag\\  
       &+\frac{\lambda_{4}}{2}\norm{2}{b_{4}^{k}+u-\widetilde{u}^{k}}^{2}\bigg\}\label{n13}\\
\widetilde{u}^{k+1}&=\argmin_{\widetilde{u}}\bigg\{\iota_{(\widetilde{u}>0)}+\frac{\lambda_{4}}{2}\norm{2}{b_{4}^{k}+u^{k+1}-\widetilde{u}}^{2}\bigg\}\label{n13a}\\
z^{k+1}&=\argmin_{z}\bigg\{\alpha\norm{1}{z}+\frac{\lambda_{3}}{2}\norm{2}{b_{3}^{k}+\nabla u^{k+1} - z}^{2}\bigg\}\label{n14}\\
w^{k+1}&=\argmin_{w}\bigg\{\beta\norm{1}{w}+\frac{\lambda_{2}}{2}\norm{2}{b_{2}^{k}+\nabla v^{k+1} - w}^{2}\bigg\}\label{n12}\\
b_{1}^{k+1}&=b_{1}^{k}+R u^{k+1}-v^{k+1}\label{b20}\\
b_{2}^{k+1}&=b_{2}^{k}+\nabla v^{k+1}-w^{k+1}\label{b21}\\
b_{3}^{k+1}&=b_{3}^{k}+\nabla u^{k+1}-z^{k+1}\label{b22}\\
b_{4}^{k+1}&=b_{4}^{k}+u^{k+1}-\widetilde{u}^{k+1}\label{b23}
\end{align} 
This procedure leads to five minimisation problems that have to be solved sequentially in each iteration. Every one of them either has an explicit solution or involves the solution of a linear system of equations that can be efficiently solved with an iterative method such as conjugate gradient. We iterate until 
$$
\frac{\norm{2}{\widetilde{u}^{K+1}-\widetilde{u}^{K}}^{2}}{\norm{2}{\widetilde{u}^{K+1}}}<10^{-4}
$$
and take $v^{K+1}$ as the regularised sinogram and $\widetilde{u}^{K+1}$ as the reconstructed image. Let us go into more detail on the solution of each minimisation problem.\\

\noindent {\bf Solution of \eqref{n11}:} To solve \eqref{n11} we derive the corresponding Euler-Lagrange equation for $v$ and obtain a linear system of equations with $k\cdot l$ unknowns $v_{i,j},\ i=1,\ldots, k, \ j=1,\ldots,l$ which reads
\begin{align}
	\eqref{n11}\Rightarrow& ((1+\lambda_{1})g -\lambda_{2}g\diverg\cdot\nabla)v=g+\lambda_{1}g(b_{1}^{k}+R u^{k})+\lambda_{2}g\diverg(b_{2}^{k}-w^{k})\label{sinogram_subproblem}
\end{align}
The system \eqref{sinogram_subproblem} is solved by a conjugate gradient method.\\

\noindent {\bf Solution of \eqref{n13}:} The Euler-Lagrange equation of \eqref{n13}  for $u$ reads
\begin{equation}
	\eqref{n13}\Rightarrow  (\lambda_{1}R^{*}R-\lambda_{3}\diverg\cdot\nabla+\lambda_{4})u=\lambda_{1}R^{*}(v^{k+1}-b_{1}^{k})+\lambda_{3}\diverg(b_{3}^{k}-z^{k})-\lambda_{4}(b_{4}-\widetilde{u}^{k})
	\label{image_subproblem}
\end{equation}
where $R^{*}$ is the adjoint of $R$, that is the discrete backprojection. As before, the system \eqref{image_subproblem} is solved by a conjugate gradient method.\\

\noindent {\bf Solution of \eqref{n13a}:} The solution of \eqref{n13a} is given by
$$
\widetilde u^{k+1} = \max\{b_4^{k+1}+u^{k+1},0\}.
$$

\noindent {\bf Solution of \eqref{n14} and \eqref{n12}:} Finally, the solution of the minimisation problems \eqref{n14},\eqref{n12} can be obtained exactly through soft shrinkage. That is,
\begin{align}
	z^{k+1}&=\mathcal{S}_{\frac{\alpha}{\lambda_3}}(b_3^k+\nabla u^{k+1}):=\max\left(\norm{2}{b_3^k+\nabla u^{k+1}}-\frac{\alpha}{\lambda_3},0\right)\frac{b_3^k+\nabla u^{k+1}}{\norm{2}{b_3^k+\nabla u^{k+1}}}\label{shrinkage}\\
	w^{k+1}&=\mathcal{S}_{\frac{\beta}{\lambda_2}}(b_2^k+\nabla v^{k+1}):=\max\left(\norm{2}{b_2^k+\nabla v^{k+1}}-\frac{\beta}{\lambda_2},0\right)\frac{b_2^k+\nabla v^{k+1}}{\norm{2}{b_2^k+\nabla v^{k+1}}} 
\end{align}


\section{Numerical Results}
\label{numerical}
In this section, we present our results on both simulated and real PET data. The Radon matrix that we described in \eqref{radon_matrix} is fixed and produces sinograms of size $192\times192$, that is the sinogram is given in 192 projection lines, $192^{o}$ degrees with $1^{o}$ degree incrementation and the corresponding reconstructed image is of size $175\times175$ pixels. We corrupt the sinograms with Poisson noise of different levels. In order to create noisy images corrupted by Poisson noise, we apply the MATLAB routine \textit{imnoise} (sinogram, \textit{poisson}). MATLAB's \textit{imnoise} function acts in the following way: for an image of double precision, the input pixel values are interpreted as means of a Poisson distribution scaled by a factor of $10^{-12}$. For example, if an input pixel has the value $5.5*10^{-12}$ then the corresponding output pixel will be generated from a Poisson distribution with mean of $5.5$ and afterwards scaled back to its original range by $10^{12}$.  The factor 10$^{12}$ is fixed to represent the maximal number of detectable photons. Our simulated sinograms are in $[0,1]$ intensity and in order to create different noise levels, we have to rescale the initial sinogram with a suitable factor before applying \textit{imnoise} and then scale it back with the same factor, i.e., \textit{Noisy Sinogram } = \textit{scale $*$ imnoise} ( $\frac{sinogram}{scale}$, $poisson$ ).


To simulate realistic sinograms with higher noise level, we use $10^{13}$ as a scaling factor, see for example Figure \ref{sxima4}.  The real data was obtained from the hardware phantom "Wilhelm", a self-built phantom modelled of the human body. Beside the activity in the heart a small source is placed in the phantom to simulate a lesion, see section \ref{petrecon} for more information.\\

Before presenting our results we give some specifics on how equations \eqref{n11}-\eqref{n12} are solved and how parameters are chosen. Both linear systems \eqref{sinogram_subproblem} and \eqref{image_subproblem} are solved using MATLAB's built-in function \textit{cg} which performs a conjugate gradient method. As a stopping criterium we either stop after at most $200$ iterations or if the relative residual is smaller than $10^{-3}$. As it is observed in \cite{Osher}, it seems optimal to apply only a few steps of an iterative solver for both subproblems \eqref{sinogram_subproblem} and \eqref{image_subproblem} since the error in the split Bregman algorithm is updated in every iteration. 

The Lagrange multiplies $(\lambda_{i})_{i=1}^{4}$ in equations \eqref{n11}-\eqref{n12} in section \ref{sec:splitbreg} are chosen following \cite{Pascal} to optimise convergence speed and well conditioning. They were fixed as $\lambda_{1}=0.001$, $\lambda_{2}=1$, and $\lambda_{3}=\lambda_{4}=100$. Note that these parameters may affect the condition number for both system matrices 
\begin{align*}
A_{img}&=\lambda_{1}R^{*}R-\lambda_{3}\diverg\cdot\nabla+\lambda_{4}I\\
A_{sin}&=(1+\lambda_{1})g -\lambda_{2}g\diverg\cdot\nabla
\end{align*}
in \eqref{sinogram_subproblem} and \eqref{image_subproblem} and hence the convergence rates of iterative solvers used to solve them are affected by this choice.

Finally, we observe that after $150$ Split Bregman iterations, there are no significant changes in the reconstructed image and therefore we choose a stopping criteria of either at most $K=400$ iterations or we stop at iteration $K$ where for the first time we have
$$
\frac{\norm{2}{\widetilde{u}^{K+1}-\widetilde{u}^{K}}}{\norm{2}{\widetilde{u}^{K+1}}}<10^{-4}
$$ 
where 
$\widetilde{u}^{K+1}$ the regularised image. To evaluate the quality of reconstructed images we choose the \emph{Signal-to-Noise Ratio} (SNR) as a quality measure. The SNR is defined as
\begin{equation}
 SNR=20\log\bigg(\frac{\norm{2}{u}}{\norm{2}{u-\widetilde{u}}}\bigg)
\end{equation}

where $u$ and $\widetilde{u}$ denote the ground truth and the reconstructed image, respectively. In what follows, we first evaluate the proposed reconstruction technique \eqref{v1_1} against pure total variation regularisation on the image ($\beta=0$) for a synthetic image of two circles and for different noise levels, as well as for a real data set acquired for the Wilhelm phantom. Then, we numerically analyse the scale space properties of pure sinogram regularisation, that is for $\alpha=0$, which will be a motivation for the final section in which we discuss the merit of the proposed reconstruction method for PET data that encodes thin image structures.

\subsection{Image reconstruction from corrupted simulated and real PET data}
\label{petrecon}

We start with a discussion of numerical results obtained for simulated PET data. Figure \ref{sxima4} shows a simulated phantom of two discs with different radius and the corresponding noiseless and noisy sinograms corrupted with low and high level Poisson noise as described above. 

\begin{figure}[h!]
\begin{center}
\begin{subfigure}[h!]{3cm}
                \centering
                \includegraphics[scale=0.18]{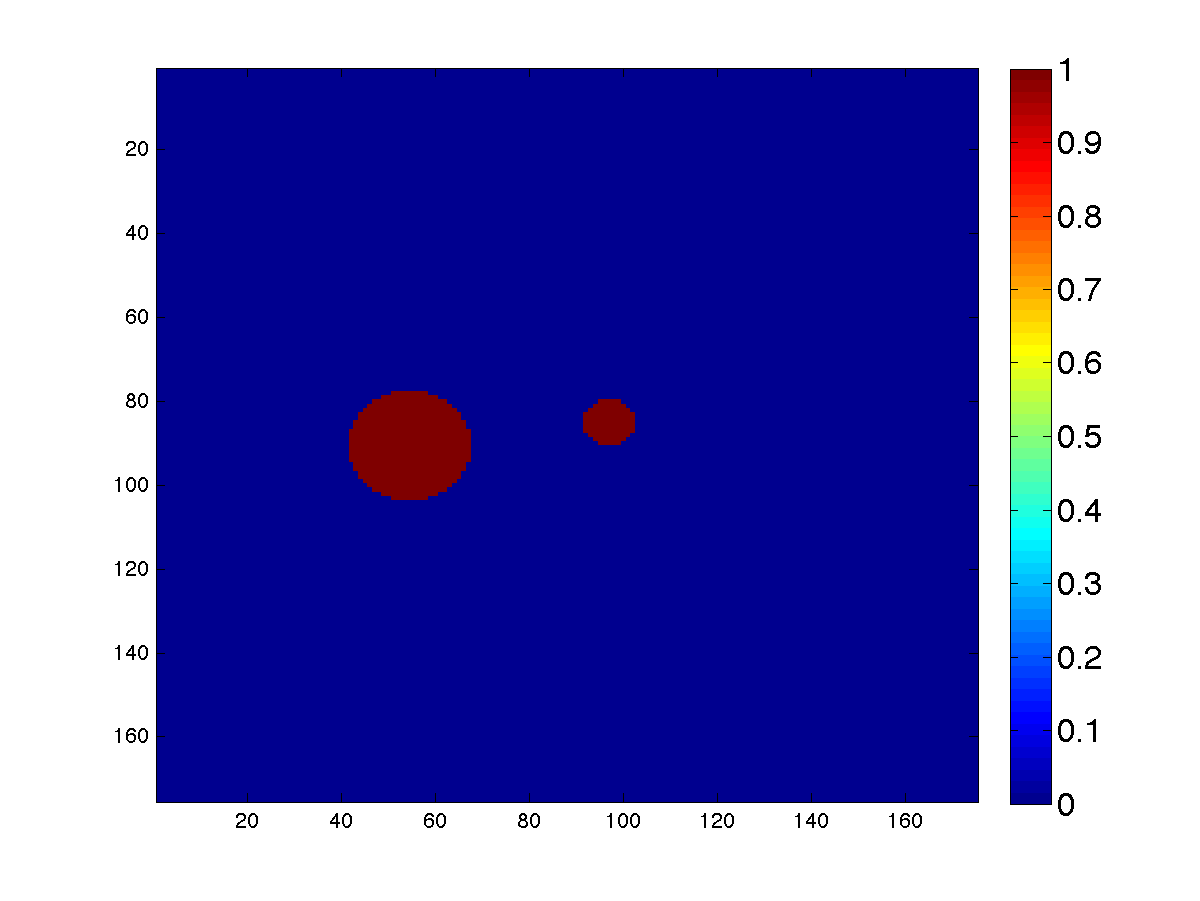}
								\caption{2 discs \\ ~}
\end{subfigure}
\begin{subfigure}[h!]{3cm}
                \centering
                \includegraphics[scale=0.18]{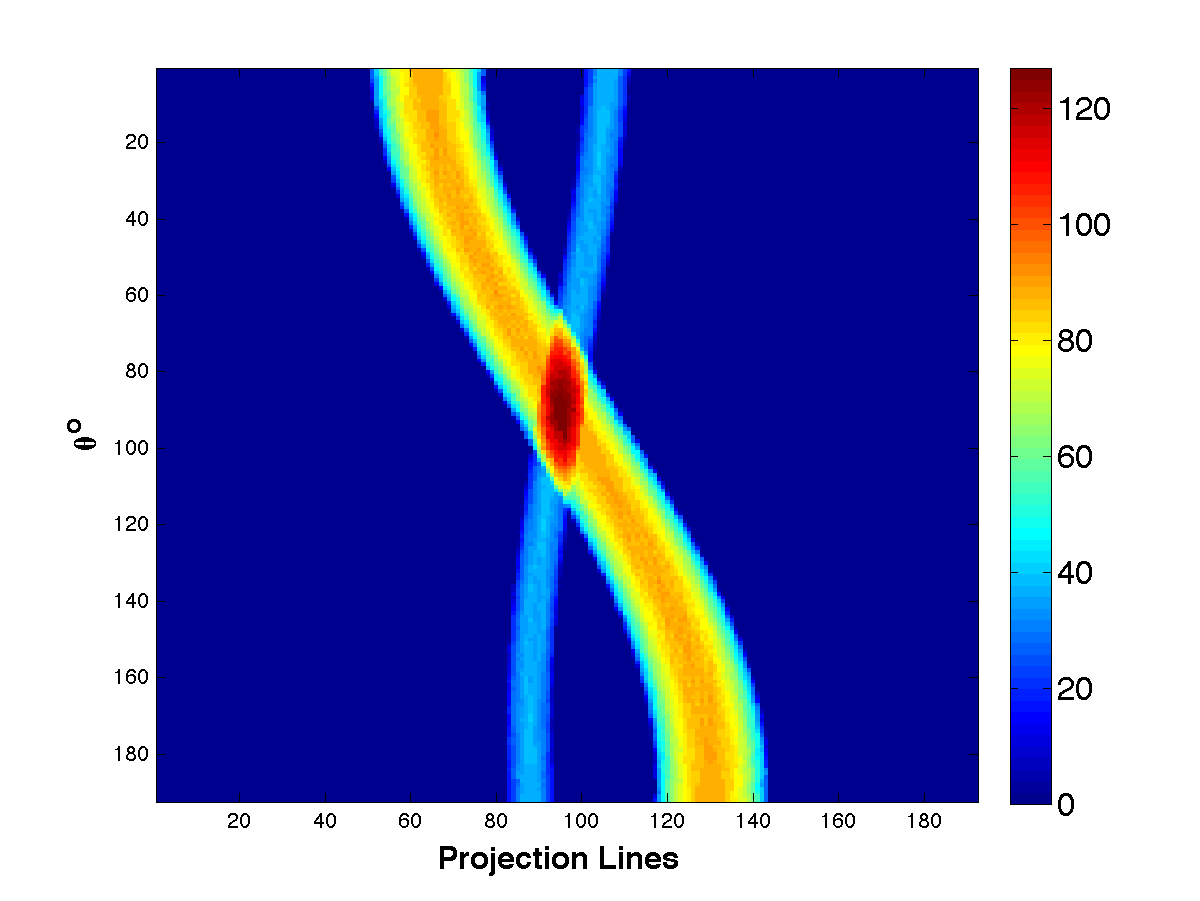}
								 \caption{Noiseless Sinogram}
\end{subfigure}
\begin{subfigure}[h!]{3cm}
                \centering
                \includegraphics[scale=0.18]{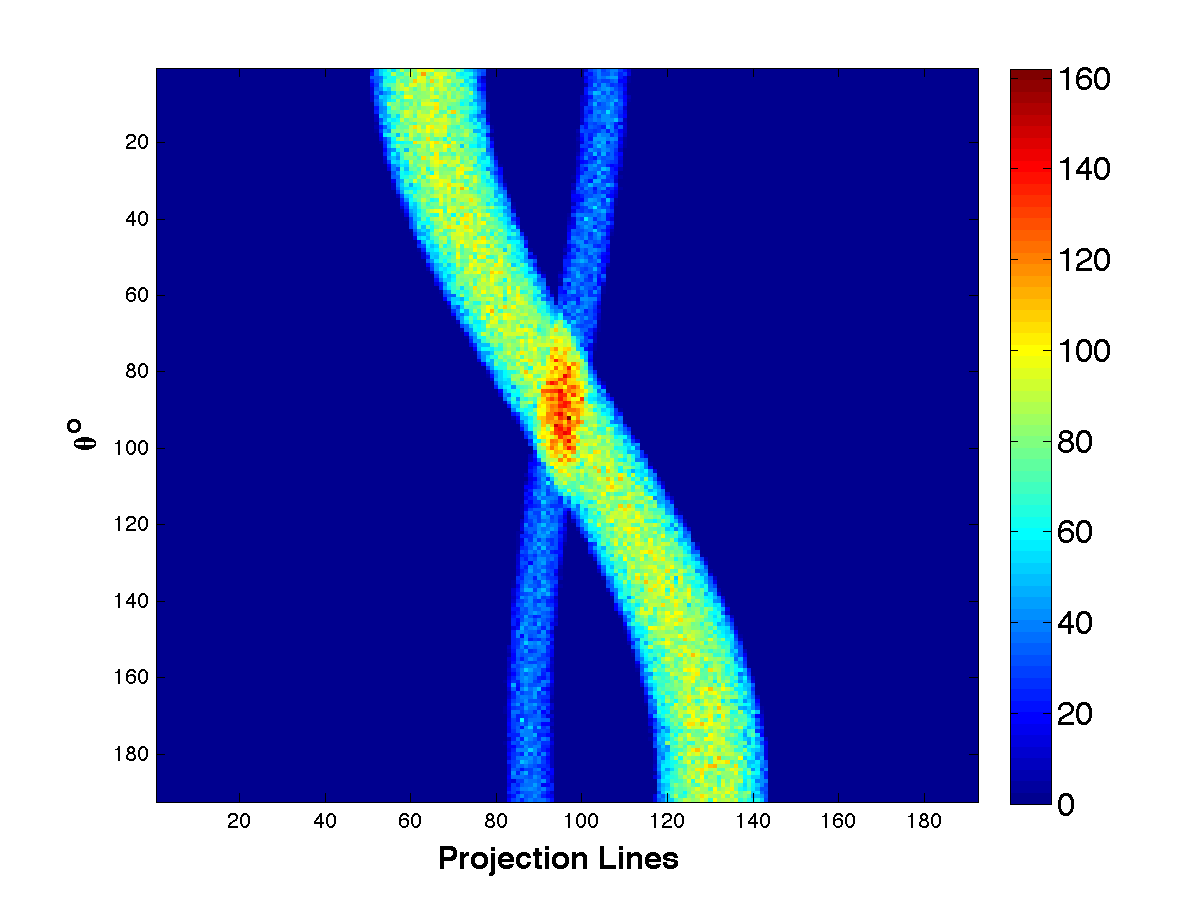}
								\caption{Low Noise: SNR=18.5246}
								\label{sxima4:c}
\end{subfigure}
\begin{subfigure}[h!]{3cm}
                \centering  
                \includegraphics[scale=0.18]{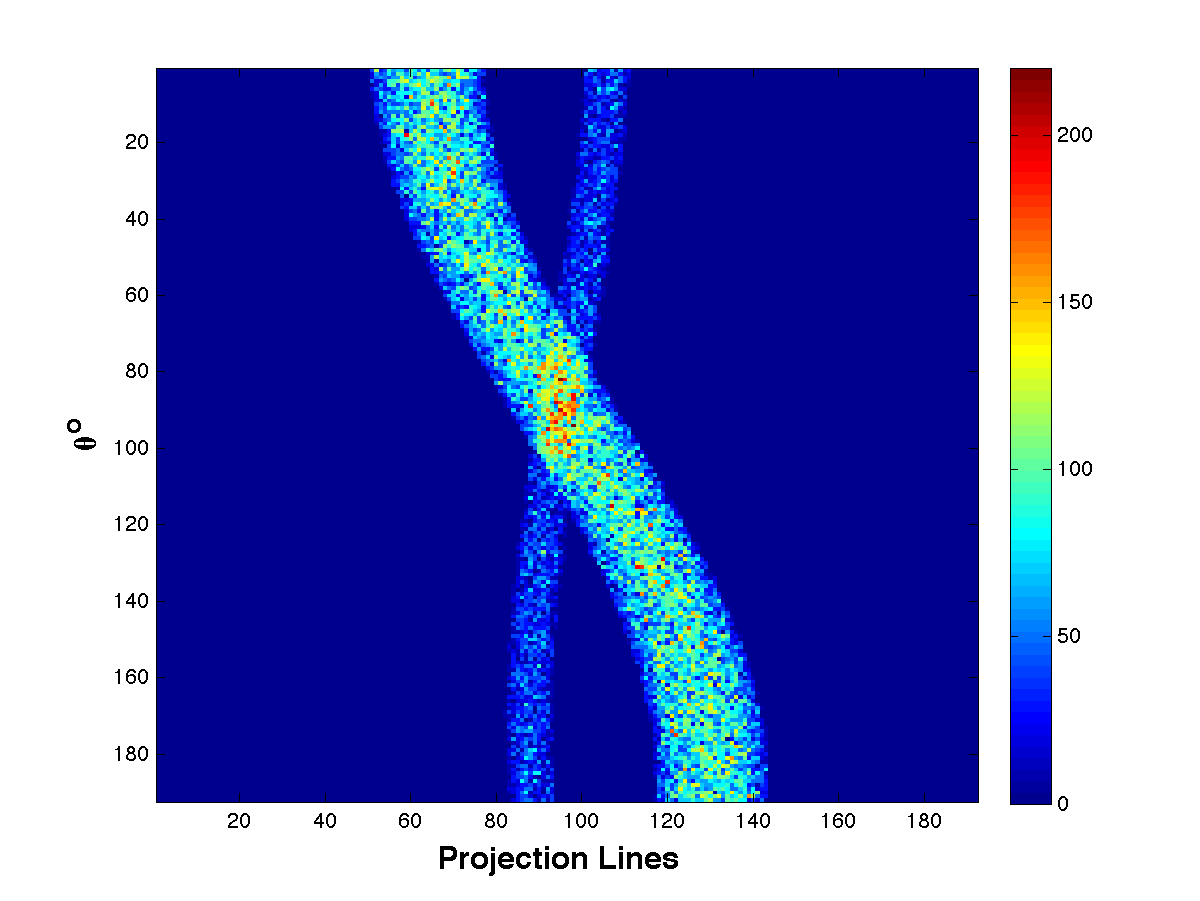}
								\caption{High Noise: SNR=8.6814}
								\label{sxima4:d}
\end{subfigure}%
\caption{The phantom image includes 2 discs of radius $r_{1}=26$ and $r_{2}=11$ pixels. Its sinogram has 192 angles and $192$ rays with low and high noise.}
\label{sxima4}
\end{center}
\end{figure}

First, we evaluate the proposed algorithm for reconstructing an image from the sinogram corrupted by low level Poisson noise with SNR=18.5246, see Figure \eqref{sxima4:c}. The proposed reconstruction algorithm with joint total variation regularisation of image and sinogram (that is $\alpha,\beta>0$) is compared with the algorithm that uses pure total variation regularisation of the image (that is $\alpha>0$ and $\beta=0$). Both reconstruction strategies are tested for a range of parameters $\alpha, \beta$ and in each case the reconstruction is found which has the highest SNR value. For $\beta=0$ we computed the reconstructed image for $\alpha=3,4,5,6,7$. The optimal reconstructed image in terms of the best SNR$=25.8589$ is obtained for $\alpha=6$, see Figure \ref{sxima6}. Then, we test the proposed reconstruction method applying total variation regularisation on both the image and the sinogram using the same range of $\alpha=3,4,5,6,7$ and $\beta=0.001,0.005,0.01,0.05$. Here, the optimal reconstruction was obtained for $\alpha=6$ and $\beta=0.001$ with SNR$=25.3127$, see Figure \ref{sxima6}. In Table \ref{Table_SNR_not_0} a full list of tested parameters and SNRs for corresponding reconstructed images is given. The results do not indicate a significant difference between the algorithm with and without total variation regularisation on the sinogram, both visually and also in terms of the SNR. Indeed, in the low noise case additional total variation regularisation on the sinogram produces even slightly worse results in terms of SNR than using no regularisation on the sinogram at all.

\begin{figure}[h!]
\begin{center}
\begin{subfigure}[h!]{5cm}
\captionsetup{width=.6\textwidth}
                \centering
                \includegraphics[scale=0.2]{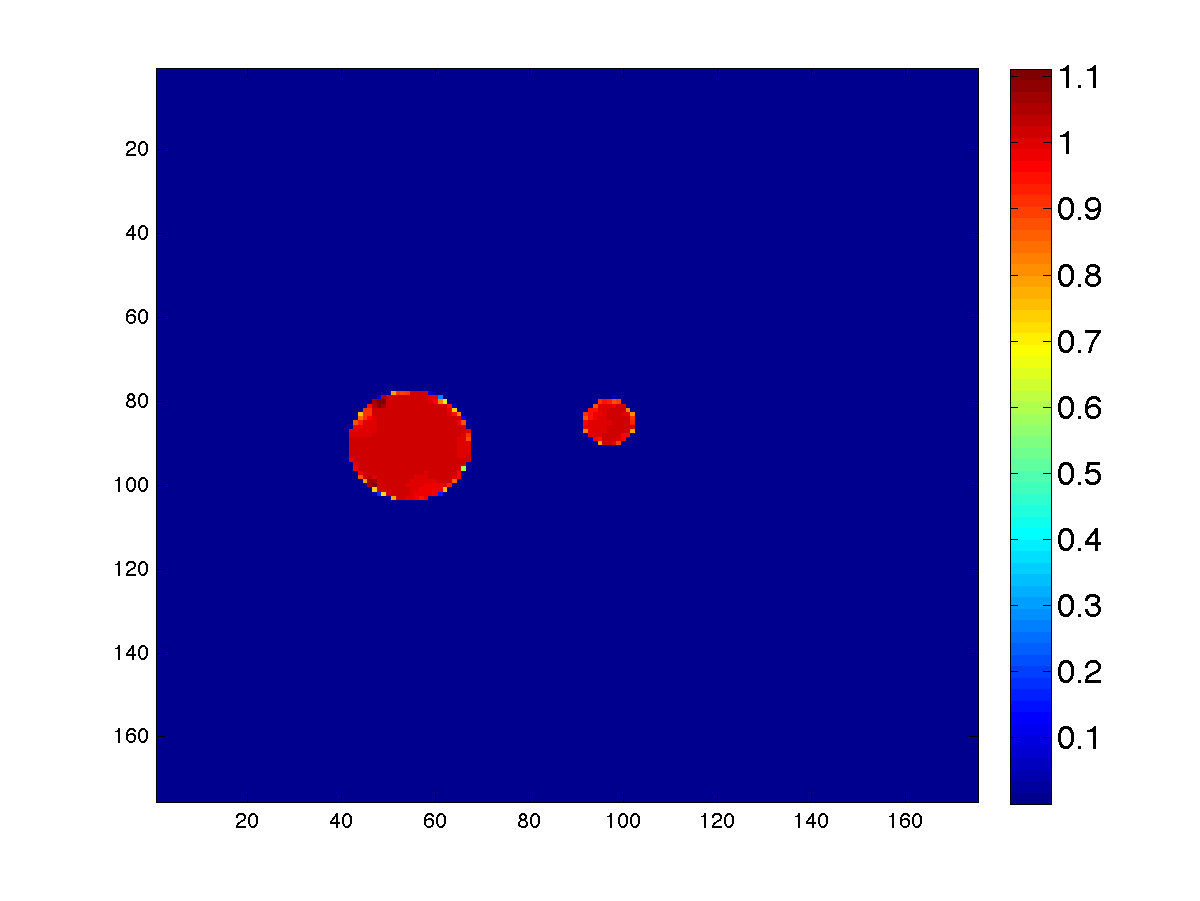}
                \caption{$\alpha=6$, $\beta=0$\\SNR=25.8589}
\end{subfigure}
\begin{subfigure}[h]{0.3\textwidth}
\captionsetup{width=.7\textwidth}
                \centering
                \includegraphics[scale=0.2]{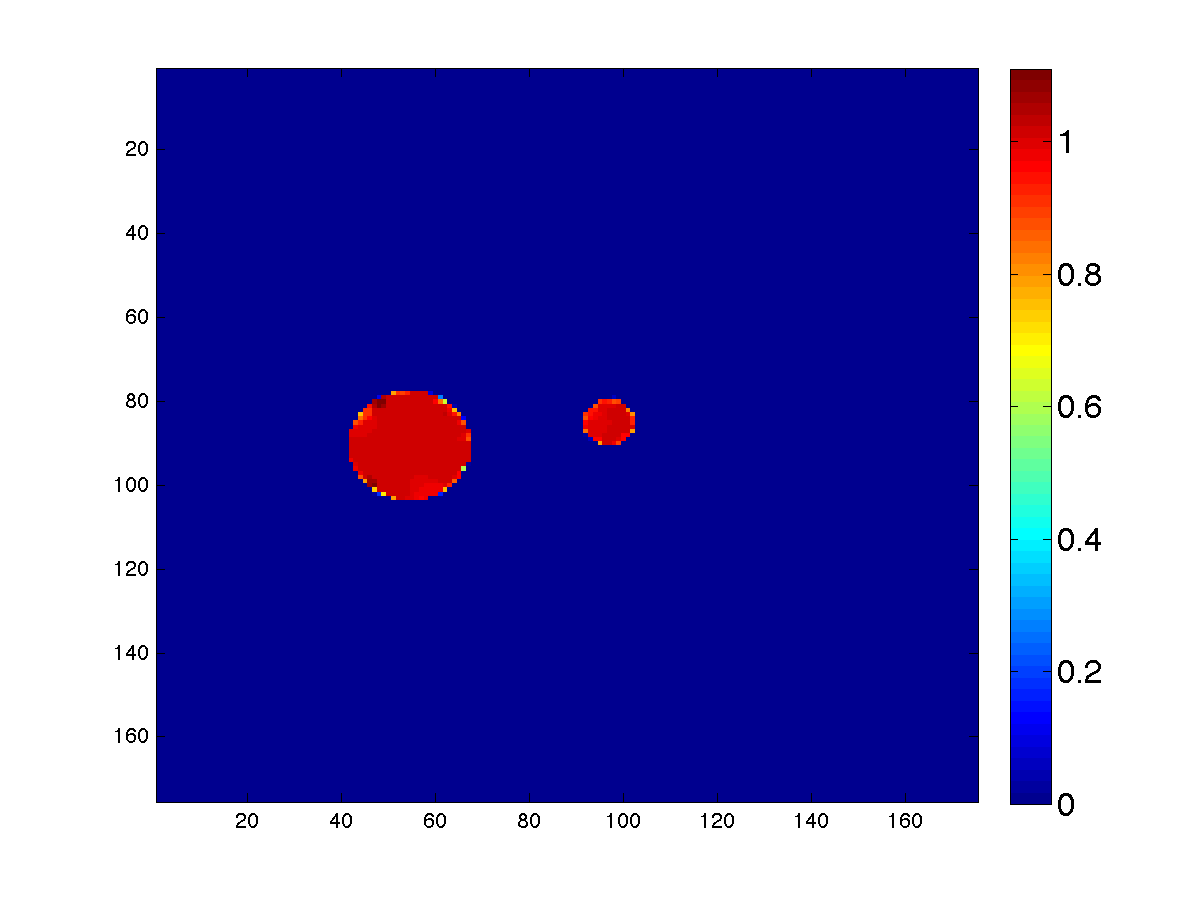}
                \caption{$\alpha=6$, $\beta=10^{-3}$\\SNR=25.3127}
\end{subfigure}
\begin{subfigure}[h]{0.3\textwidth}
                \centering
                \vspace{-0.3cm}
                \includegraphics[scale=0.2]{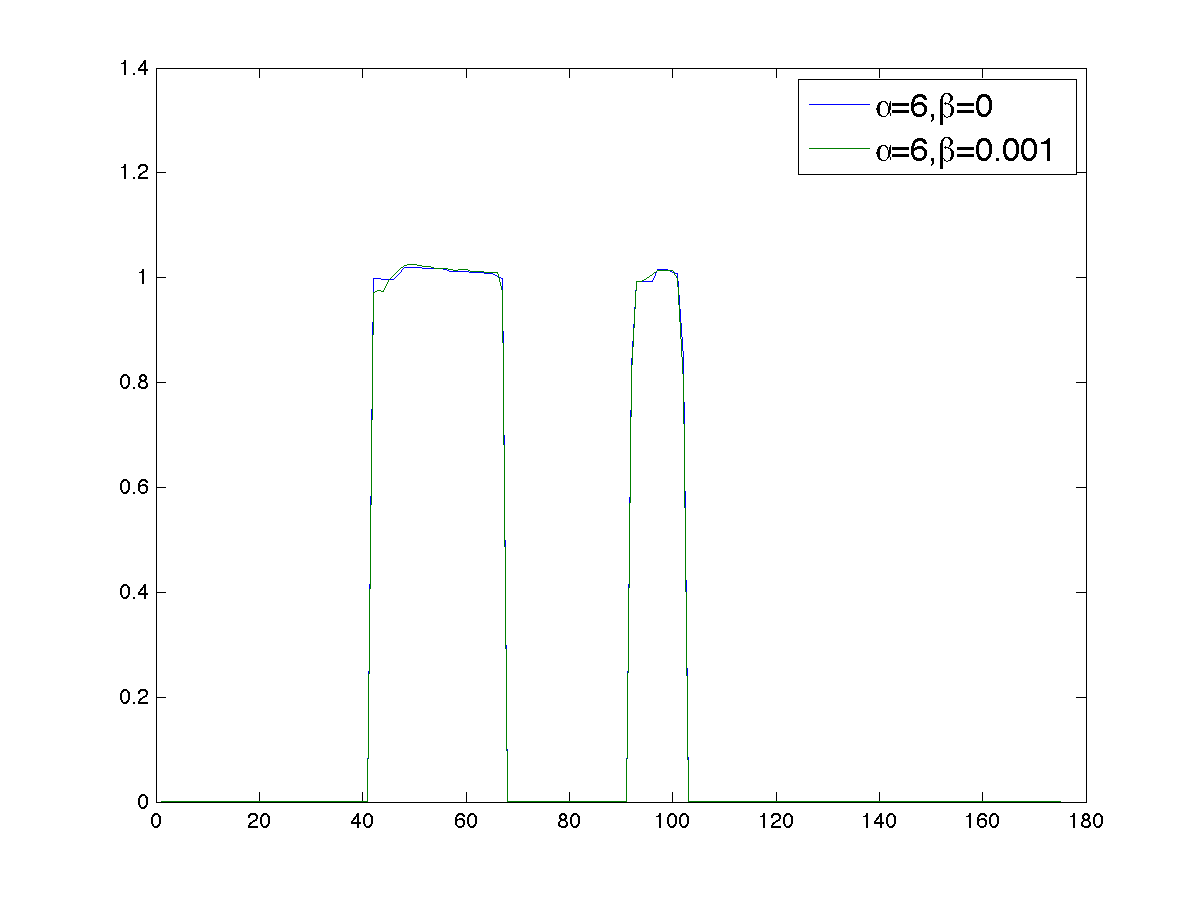}
                \caption{Middle line profiles}
\end{subfigure}
\caption{Low Level Noise: Optimal reconstruction results of the 2 discs image with sinogram shown in Figure \eqref{sxima4:c} with and without sinogram regularisation and a comparison of the line profiles for the two results.}
\label{sxima6}
\end{center}
\end{figure}

\begin{table}[h!]
\begin{center}
\begin{tabular}{|c|c|c|c|c|c|c|c|}
\cline{3-8}
\multicolumn{2}{c|}{} & \multicolumn{6}{c|}{$\beta$} \\
\cline{3-8}
\multicolumn{2}{c|}{} & 0 & 0.001 & 0.005 & 0.01 & 0.05 & 0.1  \\
\hline
\multirow{5}{*}{$\alpha$} & 3 & 24.0819 & 22.0172 & 22.4415 & 22.8894 & 23.2414 & 21.6533 \\
\cline{2-8}
& 4 & 25.3682 & 24.0926 & 24.2951 & 24.4801 & 23.6303 & 21.9382 \\
\cline{2-8}
& \textbf{5} & \textbf{25.7867} & 25.0829 & 25.0779 & 25.0469 & 23.9432 & 22.0367 \\
\cline{2-8}
& \textbf{6} & \textbf{25.8589} & \textbf{25.3127} & 24.7787 & 25.0602 & 24.0095 & 22.1034 \\
\cline{2-8}
& 7 & 25.7436 & 24.8499 & 24.8278 & 25.0148 & 23.9662 & 22.2289 \\
\cline{1-8}
\end{tabular}
\end{center}
\caption{Low Level Noise for simulated example in Figure \ref{sxima4}: SNRs of reconstructed images for different combinations of $\alpha$ and $\beta$ values.}
\label{Table_SNR_not_0}
\end{table}

The TV regularisation on the sinogram gains importance in the reconstruction algorithm when the noise in the corruption of the sinogram is increased. The sinogram with high level noise is shown in Figure \eqref{sxima4:d} and has SNR=8.6814.  We tested the proposed method for $\alpha=250, 275, 300, 325, 350$ and $\beta=0, 0.001, 0.01, 0.05, 0.1$. The results are reported in Table \ref{High_Noise_Table_SNR_not_0}. 

\begin{table}[h!]
\begin{center}
\begin{tabular}{|c|c|c|c|c|c|c|c|}
\cline{3-8}
\multicolumn{2}{c|}{} & \multicolumn{6}{c|}{$\beta$} \\
\cline{3-8}
\multicolumn{2}{c|}{} & 0 & 0.001 & 0.005 & 0.01 & 0.05 & 0.1  \\
\hline
\multirow{5}{*}{$\alpha$} & \textbf{250} & \textbf{10.9544} & \textbf{10.9665} & 10.9557 & 10.9464 & 10.8531 & 10.8058 \\
\cline{2-8}
& 275 & 10.9502 & 10.9599 & 10.9501 & 10.9381 & 10.8595 & 10.8013 \\
\cline{2-8}
& \textbf{300} & \textbf{10.9425} & 10.9543 & 10.9415 & 10.9257 & 10.8267 & 10.7777 \\
\cline{2-8}
& 325 & 10.9167 & 10.9551 & 10.9434 & 10.9283 & 10.8101 & 10.7293 \\
\cline{2-8}
& 350 & 10.8784 & 10.9289 & 10.9165 & 10.9014 & 10.7946 & 10.7104 \\
\cline{1-8}
\end{tabular}
\end{center}
\caption{High Level Noise for simulated example in Figure \ref{sxima4}: SNRs of reconstructed images for different combinations of $\alpha$ and $\beta$ values.}
\label{High_Noise_Table_SNR_not_0}
\end{table}

\begin{figure}[h!]
\begin{center}
\begin{subfigure}[h]{5cm}
\captionsetup{width=.7\textwidth}
                \centering
                \includegraphics[scale=0.2]{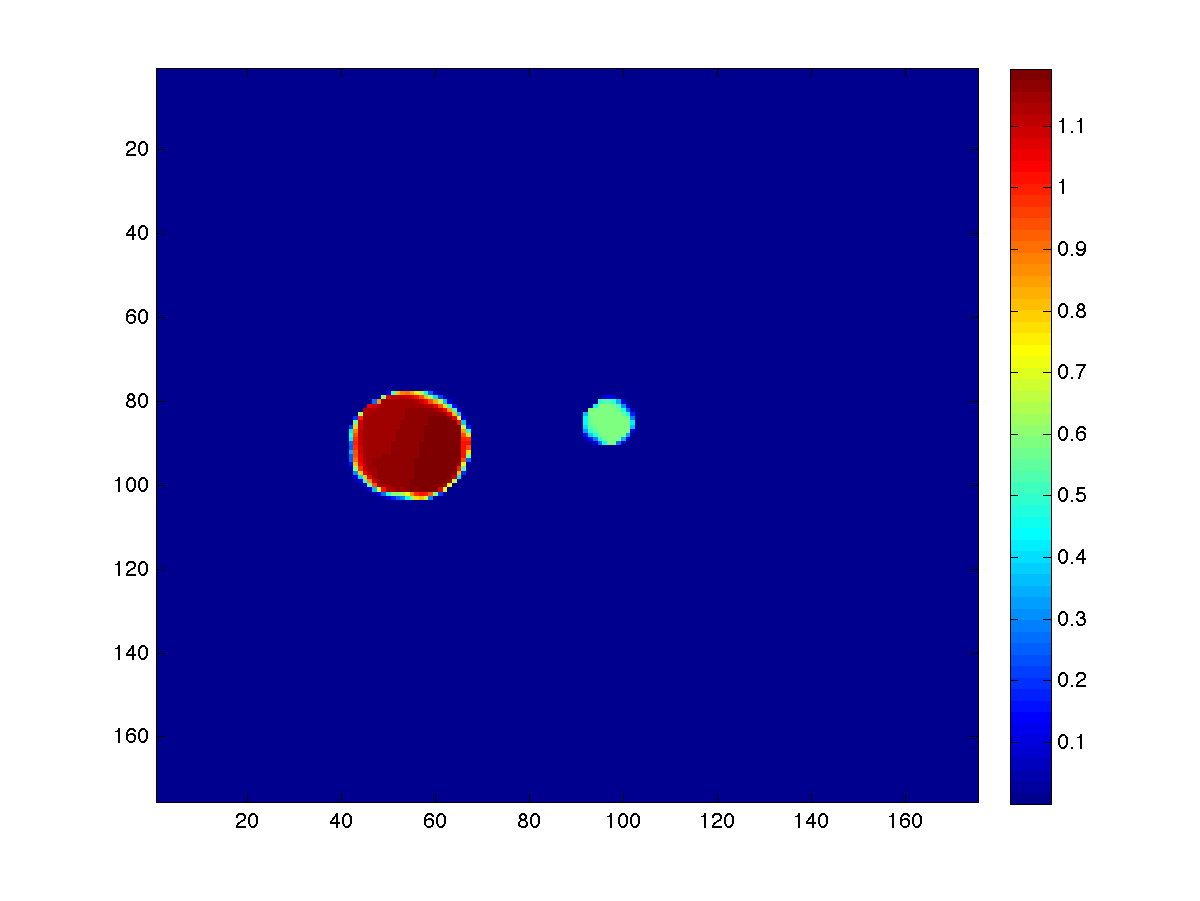}
                \caption{$\alpha=250$, $\beta=0$ \\SNR=10.9544}
\end{subfigure}
\begin{subfigure}[h]{5cm}
\captionsetup{width=.7\textwidth}
                \centering
                \includegraphics[scale=0.2]{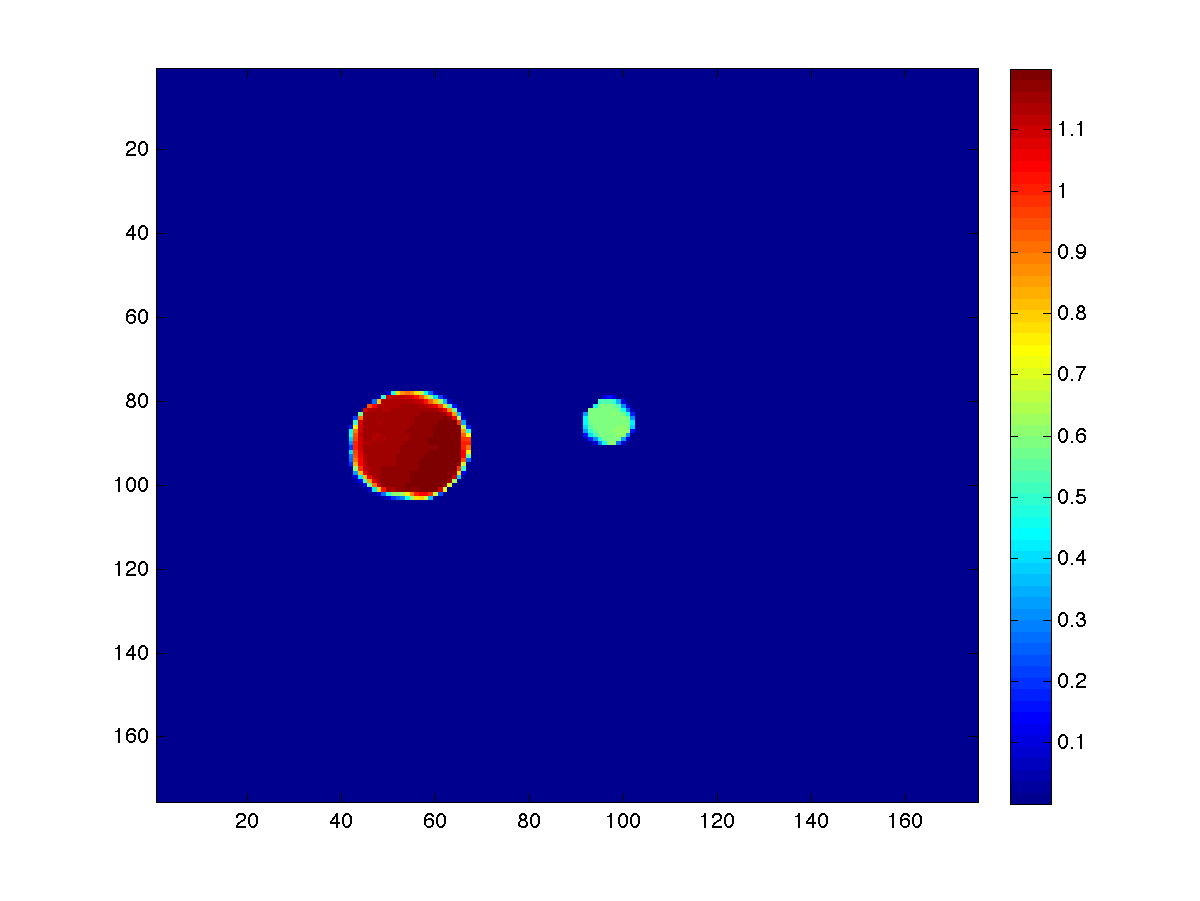}
                \caption{$\alpha=250$, $\beta=10^{-3}$ \\SNR=10.9665}
                 \label{sxima8:b}
\end{subfigure}
\begin{subfigure}[h]{5cm}
\captionsetup{width=.7\textwidth}
                \centering
                \vspace{-0.3cm}
                \includegraphics[scale=0.2]{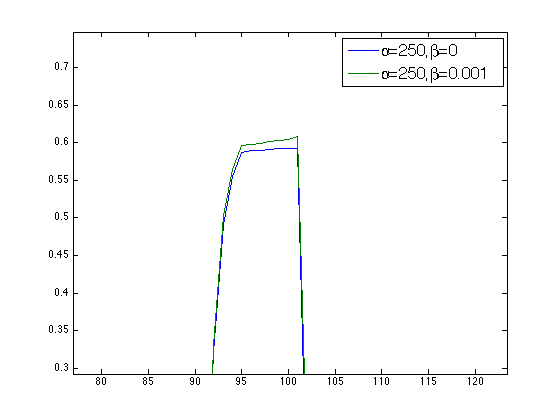}
                \caption{Zoom In: Middle line profiles}
                \label{sxima8:c}
\end{subfigure}
\caption{High Level Noise for simulated example in Figure \ref{sxima4}: Best SNRs with/without total variation regularisation on the sinogram and the middle line profiles of the reconstructed images.}
\label{sxima8}
\end{center}
\end{figure}


The highest SNR is obtained when $\alpha=250$ and $\beta=0.001$, cf. Figure \eqref{sxima8:b}. Although, it is hard to distinguish any difference between the cases of $\beta$, we observe that the extra penalisation on the sinogram produces better results in terms of the SNR value, see Figure \ref{sxima8}. The increase in SNR for $\beta>0$ can be seen when comparing the middle line profiles of the reconstructed images with and without sinogram regularisation in Figure \eqref{sxima8:c}. 


As a second example for our evaluation of the algorithm for PET reconstruction we consider real PET data obtained from scanning a self-built phantom of a human breast with a small source which simulates a lesion, compare Figure \eqref{wilhelm}. The data has been acquired with a Siemens Biograph Sensation 16 PET/CT scanner (Siemens Medical Solutions) located at the University Hospital in M\"unster. From the 3D PET data we used only one sinogram slice. The 2D sinogram dimension is $192\times192$ with a pixel size of $3.375mm^2$. The size of the reconstructed image is $175\times175$, covering a field of view of $590.625mm$ in diameter. The 2D slice of the noisy sinogram which has been used in our computations is shown in Figure \eqref{real_data:noisy}. Reconstructions obtained from the proposed algorithm, with and without sinogram regularisation, are shown in Figure \ref{real_data}. The additional regularisation of the sinogram seems to allow for smoother image structures (such as the boundary of the red lesion) and results in a slight reduction of the stair casing effect of total variation regularisation. 
  
\begin{figure}[h!]
\begin{subfigure}[h]{7.5cm}
\centering
\includegraphics[scale=0.15]{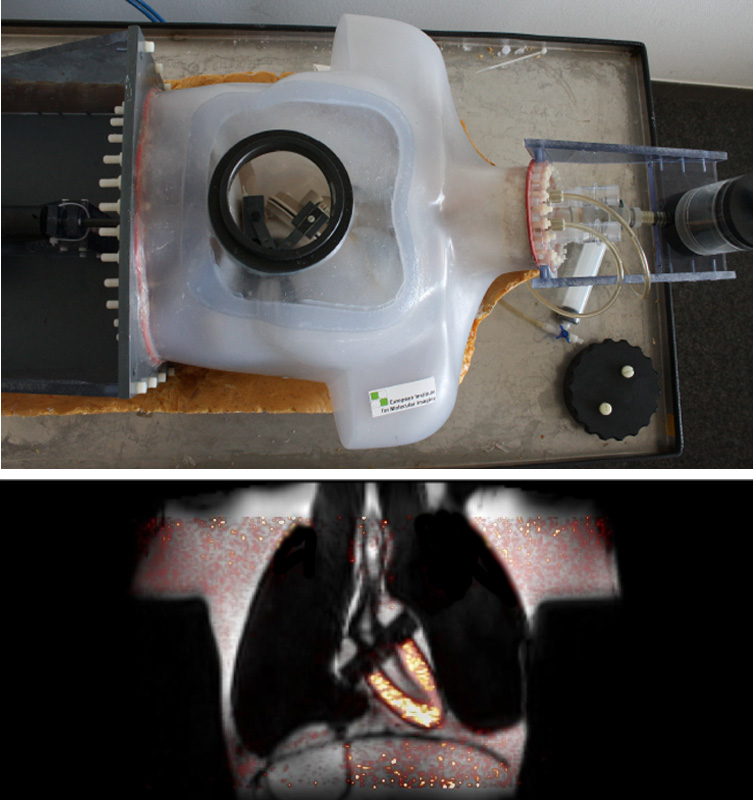}
\caption{Top: Phantom "Wilhelm", consisting of a plastic torso and inserts for the lungs, heart and liver. Bottom: Phantom reconstruction with combined PET-MRI. Data courtesy of the European Institute for Molecular Imaging (EIMI), M\"unster.}	
\label{wilhelm}	
\end{subfigure}		
\begin{subfigure}[h]{7.5cm}
                \centering  
               \includegraphics[scale=0.4]{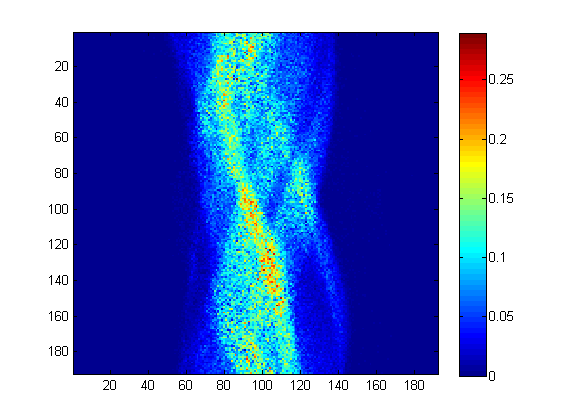}
               \caption{Noisy sinogram.}
               \label{real_data:noisy}  
               \end{subfigure}	
               \caption{Real PET data.}
\end{figure}

\begin{figure}[h!]
\begin{subfigure}[h]{7.5cm}
                \centering  
               \includegraphics[scale=0.25]{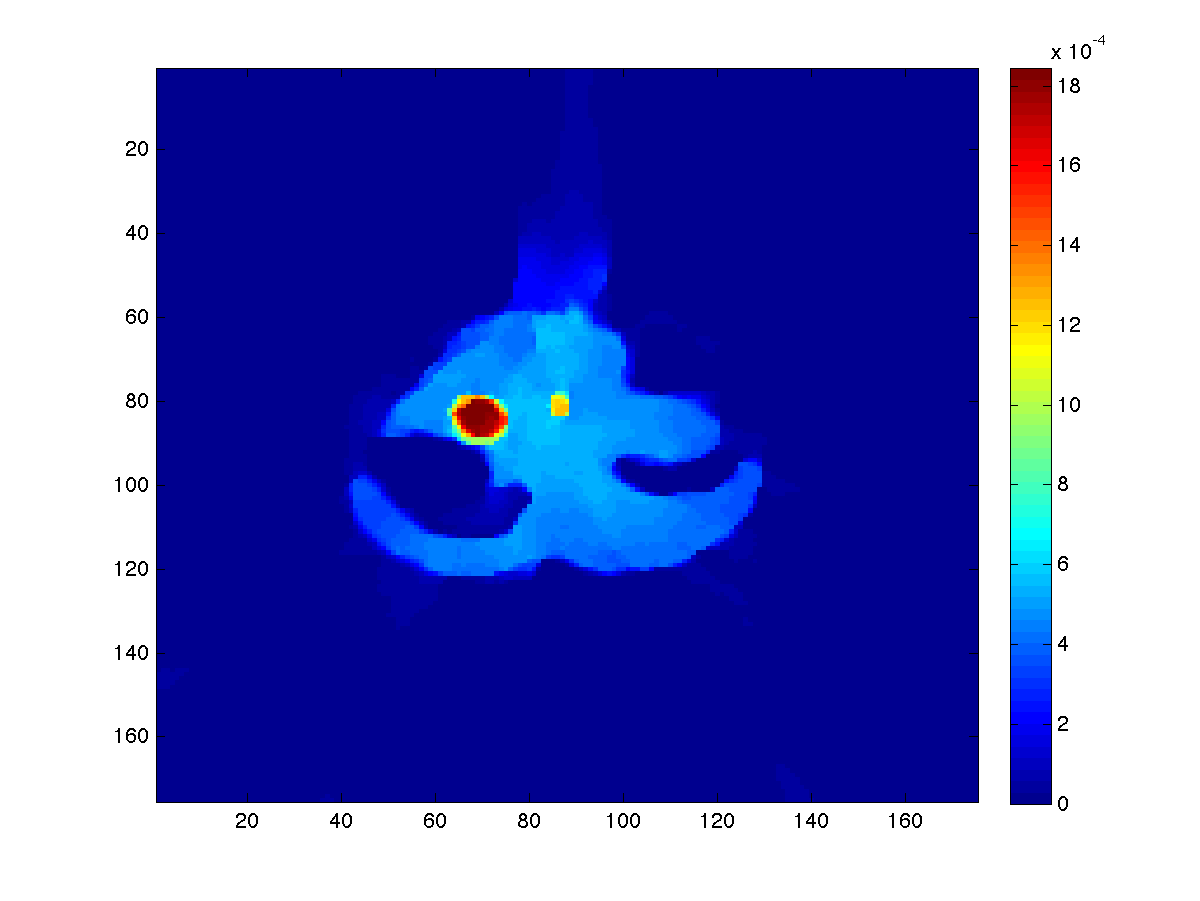}
               \caption{$\alpha=5$, $\beta=0$}
               \label{real_data:rec1}  
\end{subfigure}
\begin{subfigure}[h]{7.5cm}
               \centering  
               \includegraphics[scale=0.25]{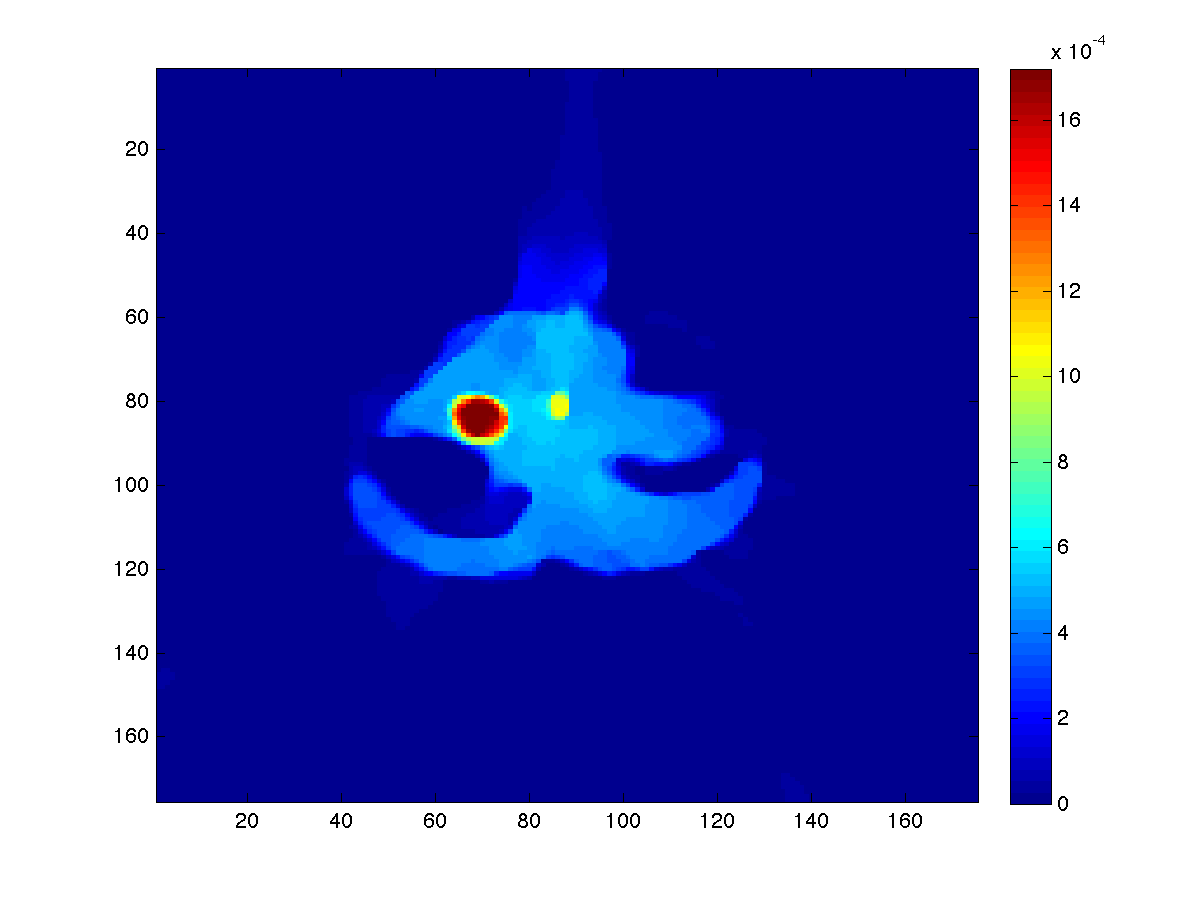}
               \caption{$\alpha=4$, $\beta=0.1$}
							 \label{real_data:rec2}  
\end{subfigure}
\caption{Real Data: Best TV regularised reconstructions for noisy slice in Figure \ref{real_data:noisy}.}
\label{real_data}
\end{figure}

In the following two sections we will aim to improve our understanding of this new sinogram regularisation, taking the analytic solution of section \ref{explicit} as a starting point. A thorough numerical discussion of this example in section \ref{scalespace} leeds us to section \ref{thinrecon} where the benefits of total variation regularisation of the sinogram for the reconstruction of thin objects are discussed.

\subsection{Scale space of sinogram regularisation}
\label{scalespace}

Following up on the computations in section \ref{explicit}, we now discuss how the regularisation on the sinogram effects the backprojected image. Let us recall that every point $(\theta,s)$ on the sinogram corresponds to a line $s=x\cos\theta+y\sin\theta$ that passes through a point $(x,y)$ on the image, with a distance $s$ from the origin and normal to the direction $(cos\theta,\sin\theta)$. Moreover (compare Thirion \cite{Thirion}), every point on an edge in the sinogram corresponds to a line in the object space which is 
tangent to the boundary of the object. To further understand how sinogram regularisation acts, we consider the effect of the regularisation when reconstructing an image from simulated noise-free Radon data. To this end we set $\alpha=0$, regularise the noise-free sinogram with different values of $\beta$, and apply FBP to the regularised sinogram to obtain the corresponding reconstructed image. We call the set of reconstructed images from regularised sinograms with varying $\beta$ regularisation, the scale space of total variation regularisation of the sinogram.

Considering the reconstruction method \eqref{v1_1} for $\alpha=0$ results in the following weighted total variation denoising problem for the sinogram $g$
\begin{equation}
	\argmin_{v\geq0\mbox{ a.e }}\beta\norm{1}{\nabla v}+\sum_{k,l}\frac{(g-v)^{2}}{g}
	\label{ROF_problem}
\end{equation}
where $\norm{1}{\cdot}$ is the discrete $l^{1}$ norm as defined before. Similar to before, we solve \eqref{ROF_problem} by a Split Bregman technique, introducing two more variables $w=\nabla v$ and $\widetilde{v}=v$. Then, starting with initial conditions $b_{1}^{0}\in\re^{2 k\times l}$ and $b_{2}^{0}\in\re^{k\times l}$, we iteratively solve for $k=1,2,\ldots$

\begin{align}
v^{k+1}&=\argmin_{v}\frac{\lambda_{1}}{2}\norm{2}{b_{1}^{k}+\nabla v-w^{k}}^{2}+\frac{\lambda_{2}}{2}\norm{2}{b_{2}^{k}+v-\widetilde{v}^{k}}^{2}
\label{sub1}\\
\widetilde{v}^{k+1}&=\argmin_{\widetilde{v}\geq 0}\frac{1}{2}\int\frac{(g-\widetilde{v})^{2}}{g}+\frac{\lambda_{2}}{2}\norm{2}{b_{2}^{k}+v^{k+1}-\widetilde{v}}^{2}\label{sub2}\\
w^{k+1}&=\argmin_{w}\beta\norm{1}{w}+\frac{\lambda_{1}}{2}\norm{2}{b_{1}^{k}+\nabla v^{k+1}-w}^{2}\label{sub3}\\
b_{1}^{k+1}&=b_{1}^{k}+\nabla w^{k+1}-v^{k+1}\\
b_{2}^{k+1}&=b_{2}^{k}+v^{k+1}-\widetilde{v}^{k+1}
\end{align}

Note that, as before, in the solution of \eqref{image_subproblem} a simple backprojection of the sinogram is used and we set $\lambda_{1}=\lambda_{2}=1$. Moreover, since we do not apply any positivity constraint on the image as it is done in the full algorithm used in section \ref{petrecon}, we might observe small negative values in the reconstructed images presented in the following.

\begin{figure}[h!]
\begin{center}
\begin{subfigure}[h]{3.7cm}
                \centering 
                \caption{$\beta=10^{-3}$}               
                \includegraphics[scale=0.2]{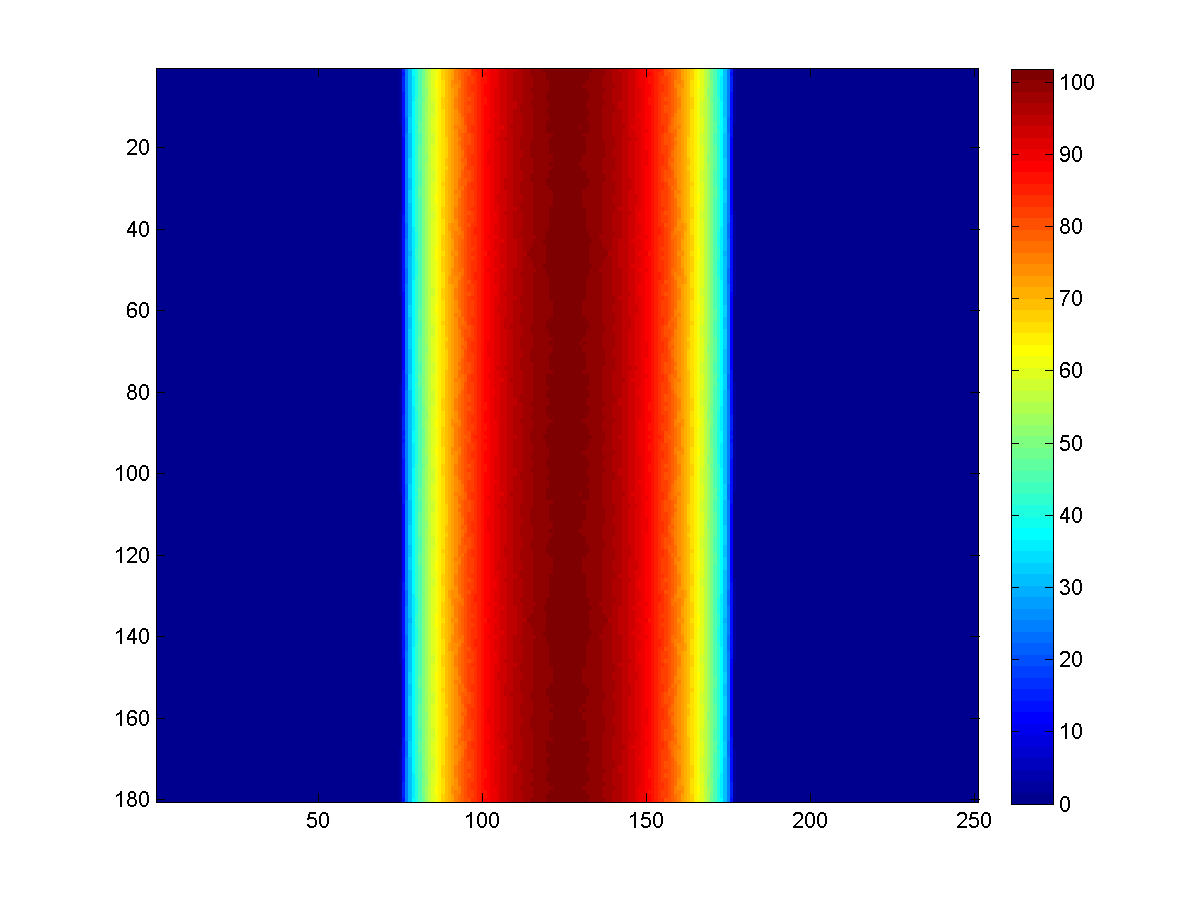}
\end{subfigure}%
\begin{subfigure}[h]{3.7cm}
                \centering
                \caption{$\beta$=20}  
 \includegraphics[scale=0.2]{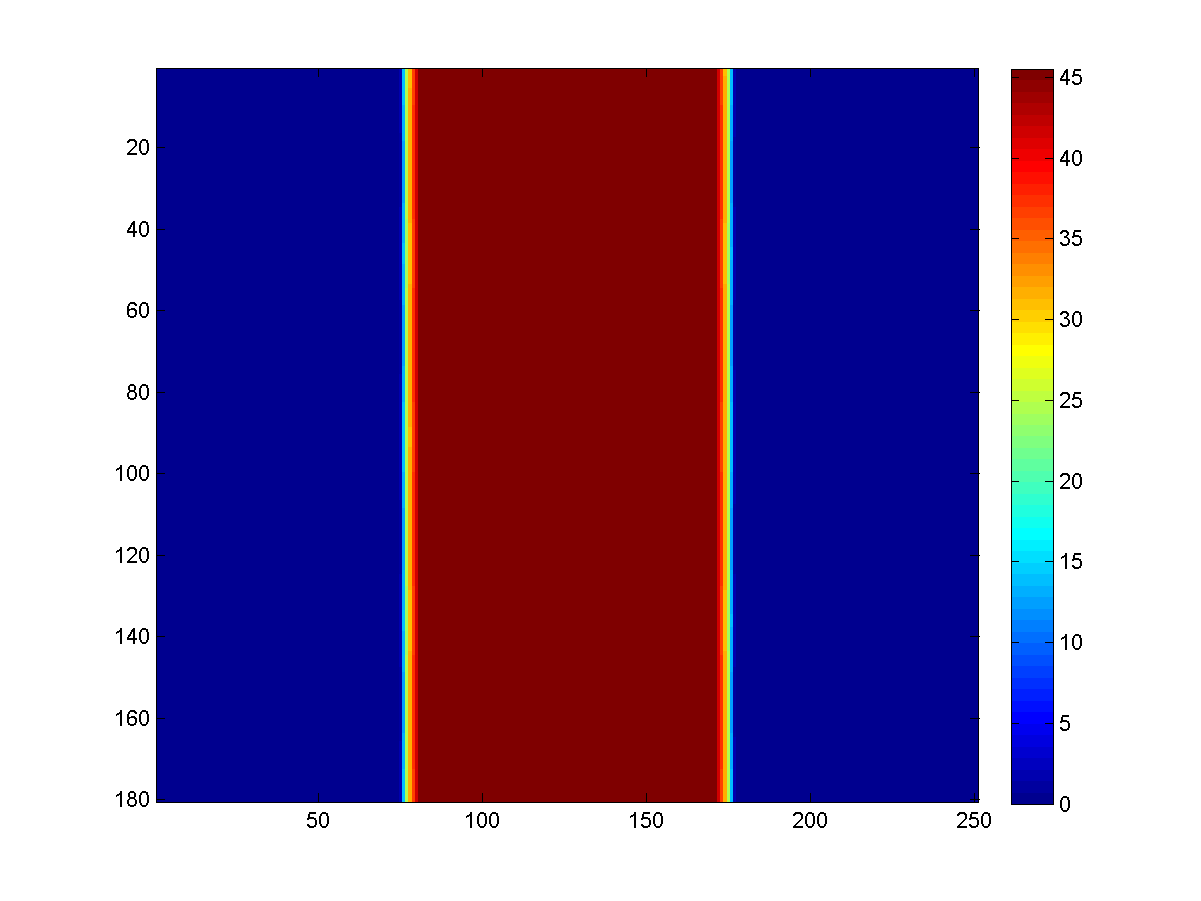}
\end{subfigure}
\begin{subfigure}[h]{3.7cm}
                \centering
                \caption{$\beta$=45}  
 \includegraphics[scale=0.2]{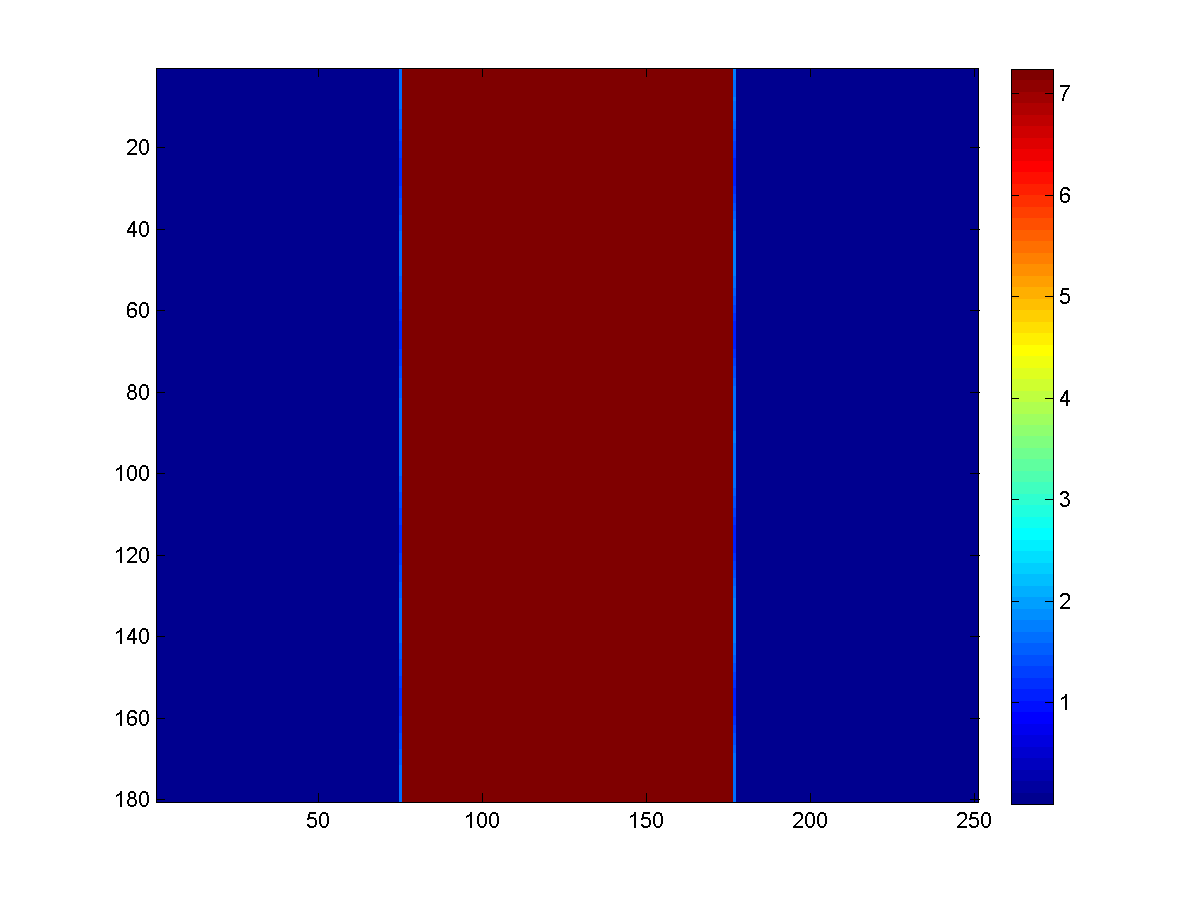}
\end{subfigure}
\begin{subfigure}[h]{3.7cm}
                \centering
                \caption{$\beta$=50.5}             
                 \includegraphics[scale=0.2]{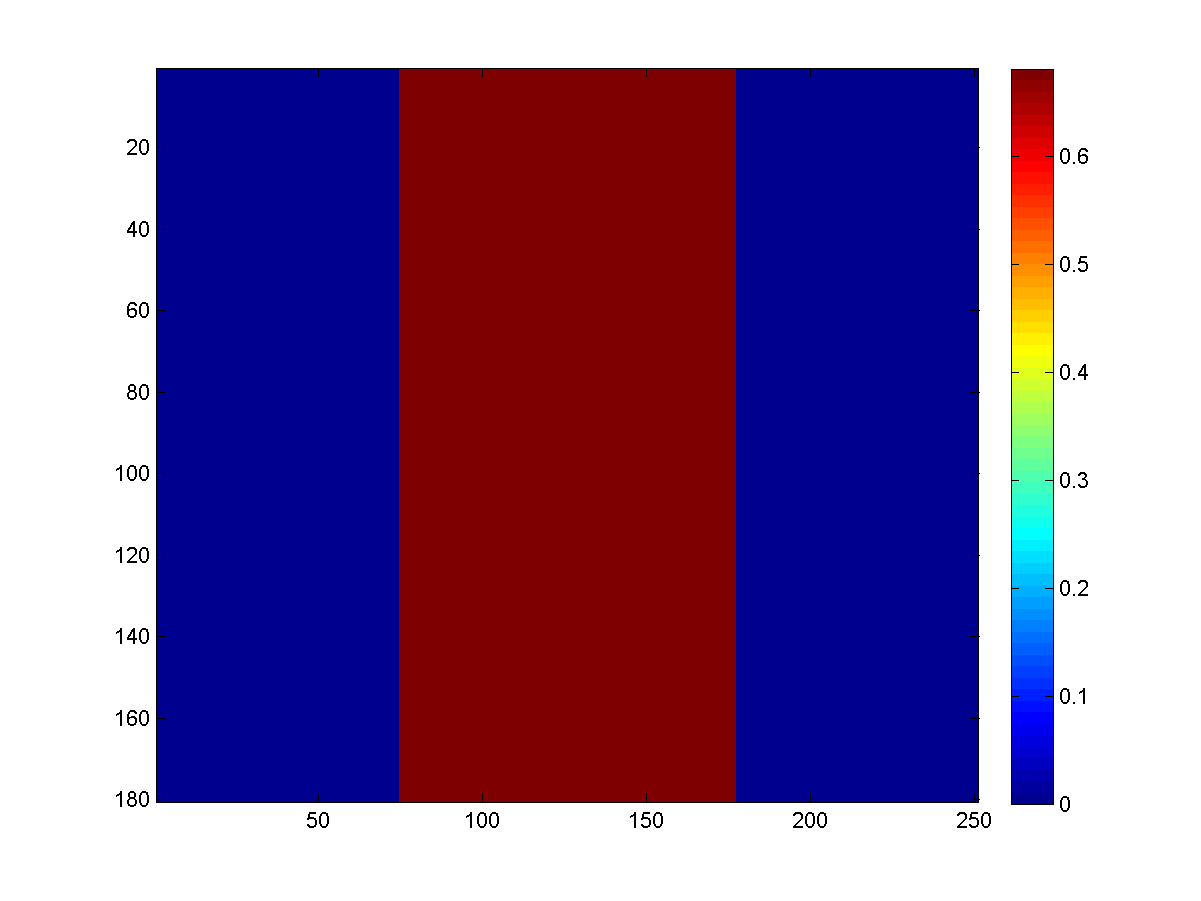}
\end{subfigure}\\
\begin{subfigure}[h]{3.7cm}
                \centering 
                \includegraphics[scale=0.2]{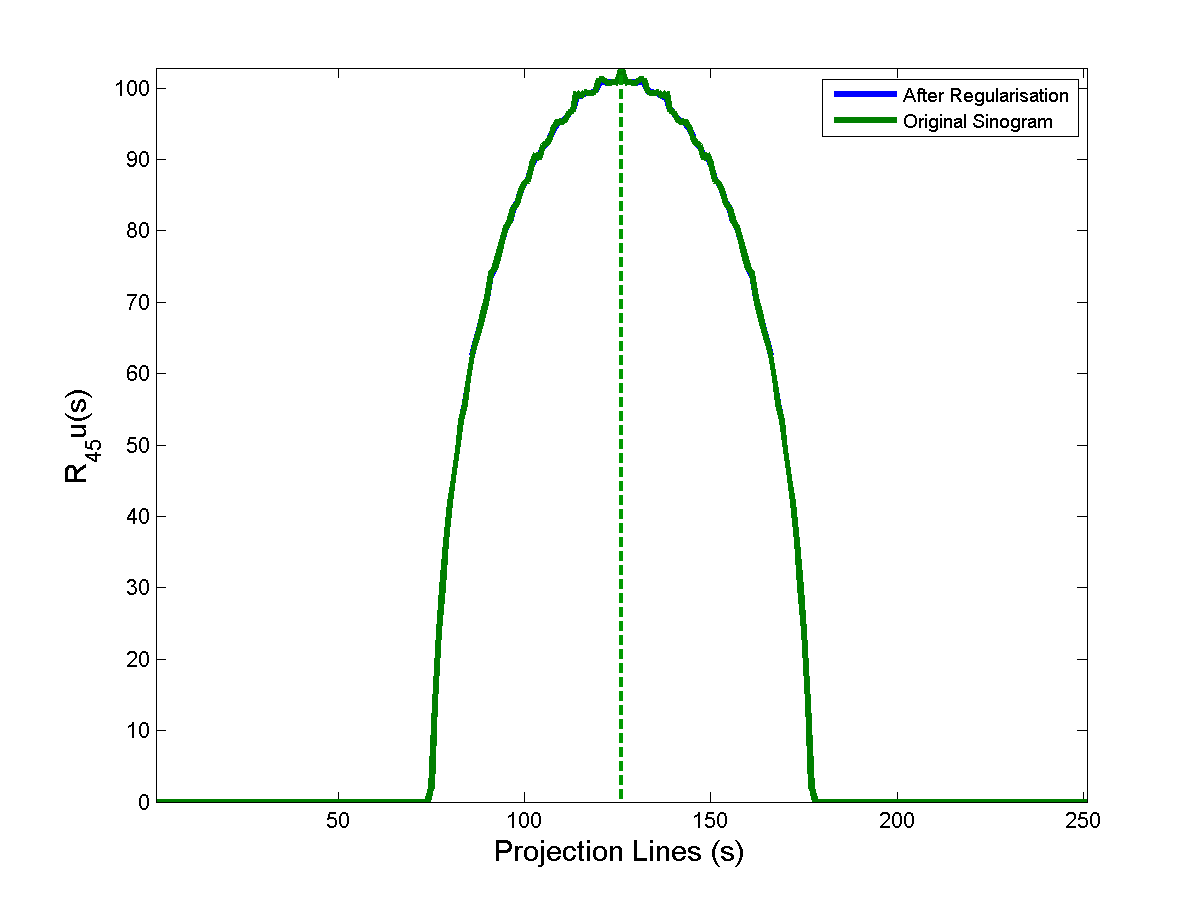}
\end{subfigure}
\begin{subfigure}[h]{3.7cm}
                \centering  
                \includegraphics[scale=0.2]{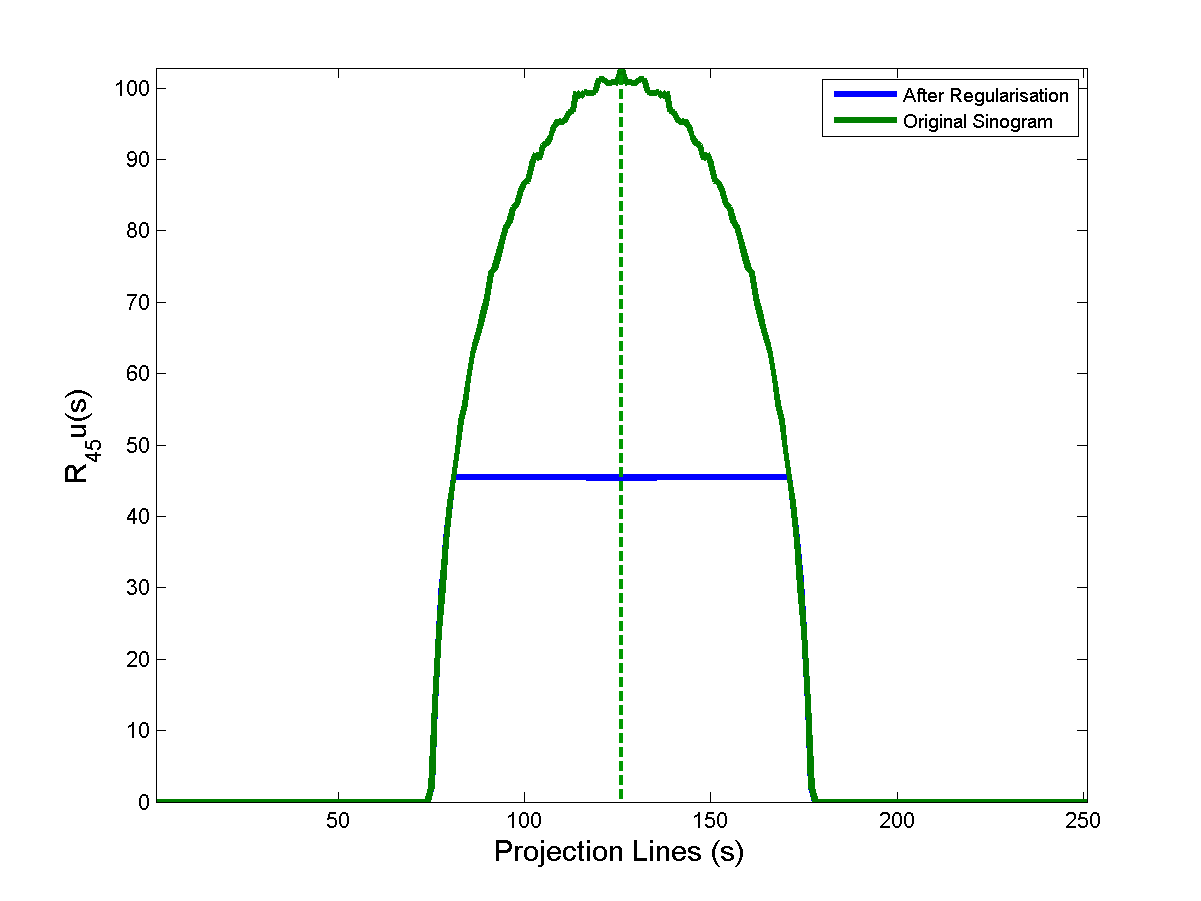}
\end{subfigure}
\begin{subfigure}[h]{3.7cm}
                \centering              
                \includegraphics[scale=0.2]{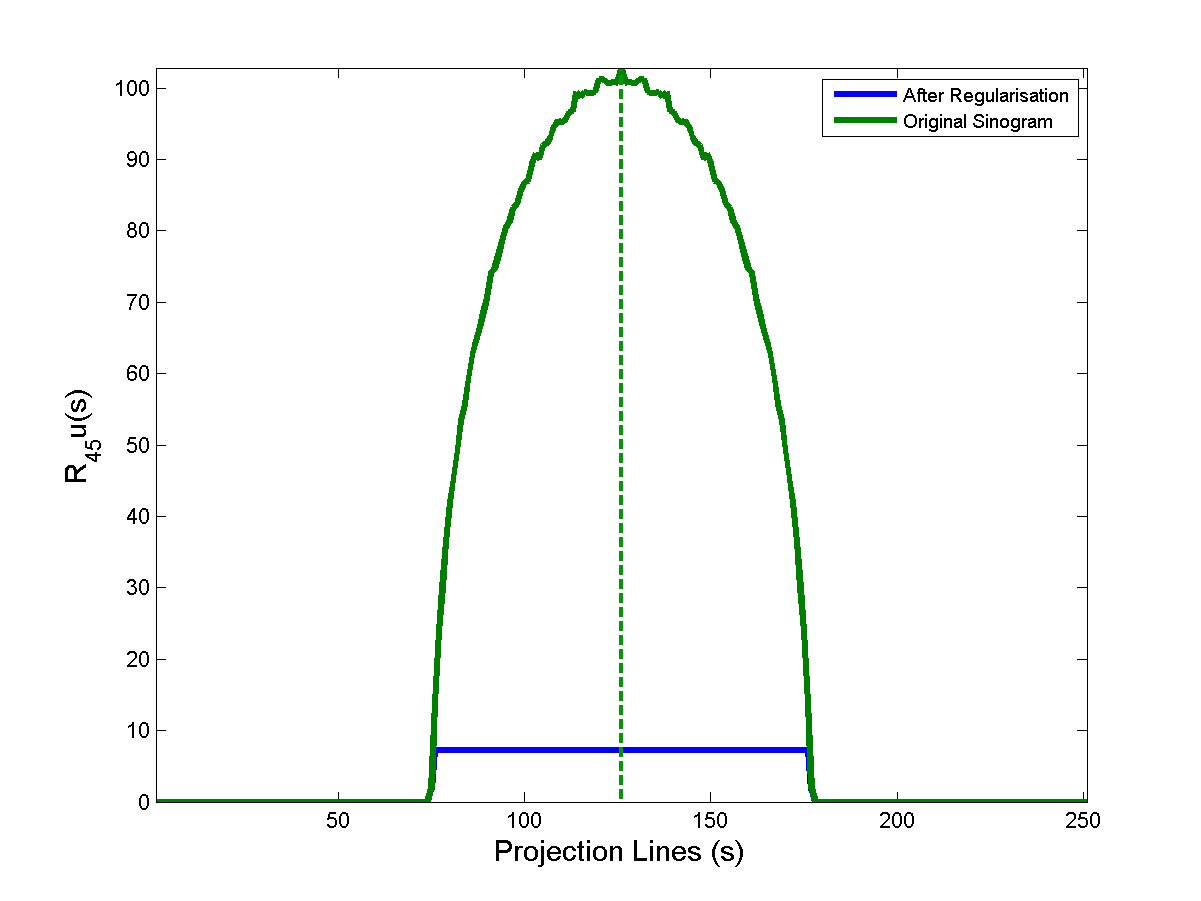}
\end{subfigure}
\begin{subfigure}[h]{3.7cm}
                \centering 
                 \includegraphics[scale=0.2]{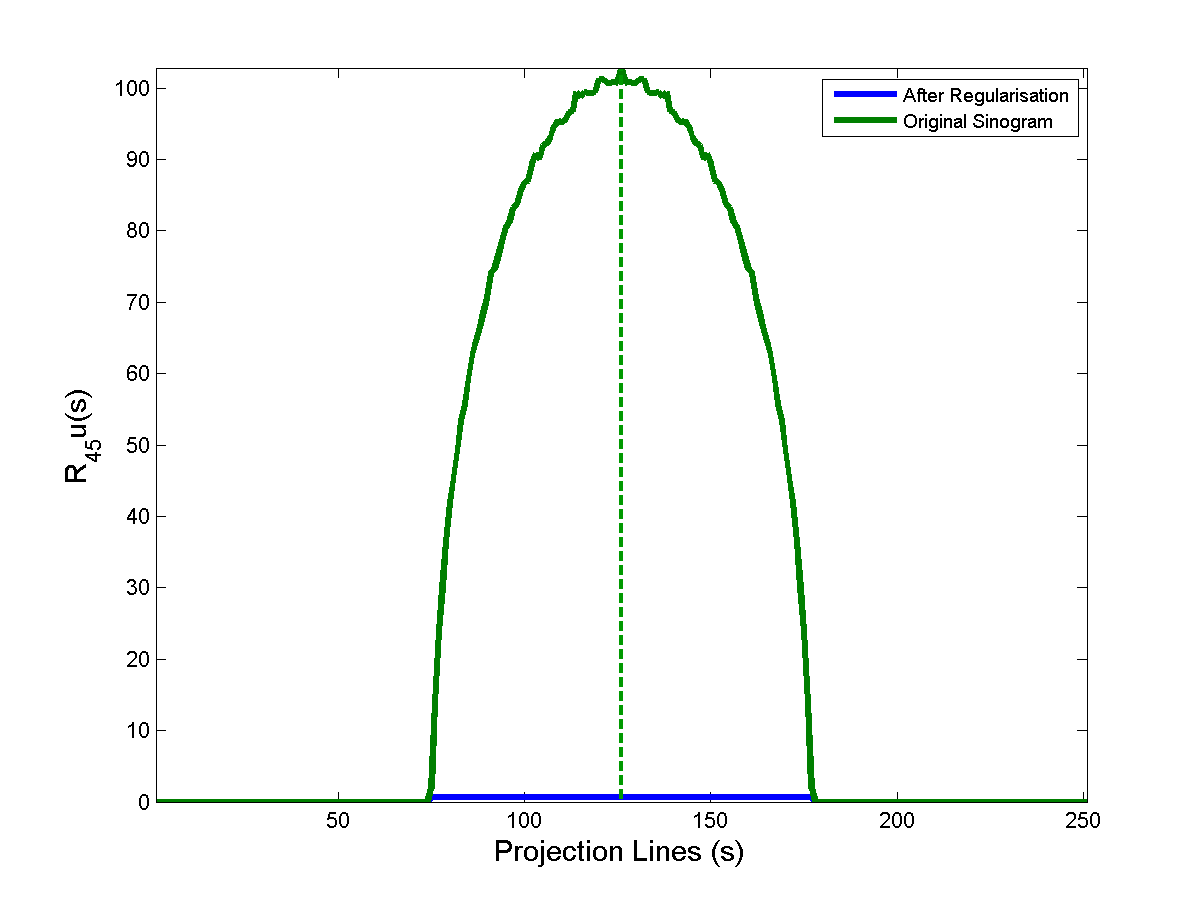}
\end{subfigure}\\
\begin{subfigure}[h]{3.7cm}
                \centering              
                \includegraphics[scale=0.2]{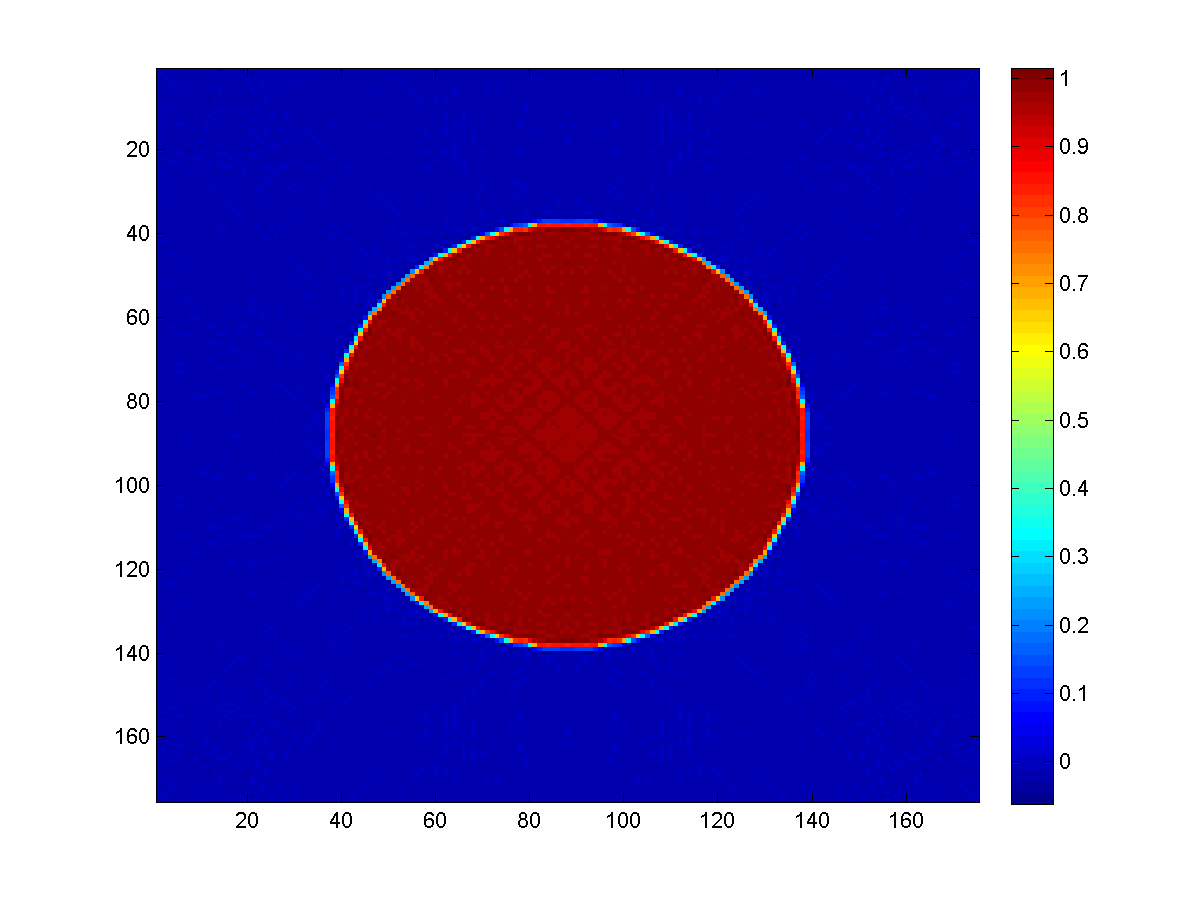}
\end{subfigure}
\begin{subfigure}[h]{3.7cm}
                \centering          
                \includegraphics[scale=0.2]{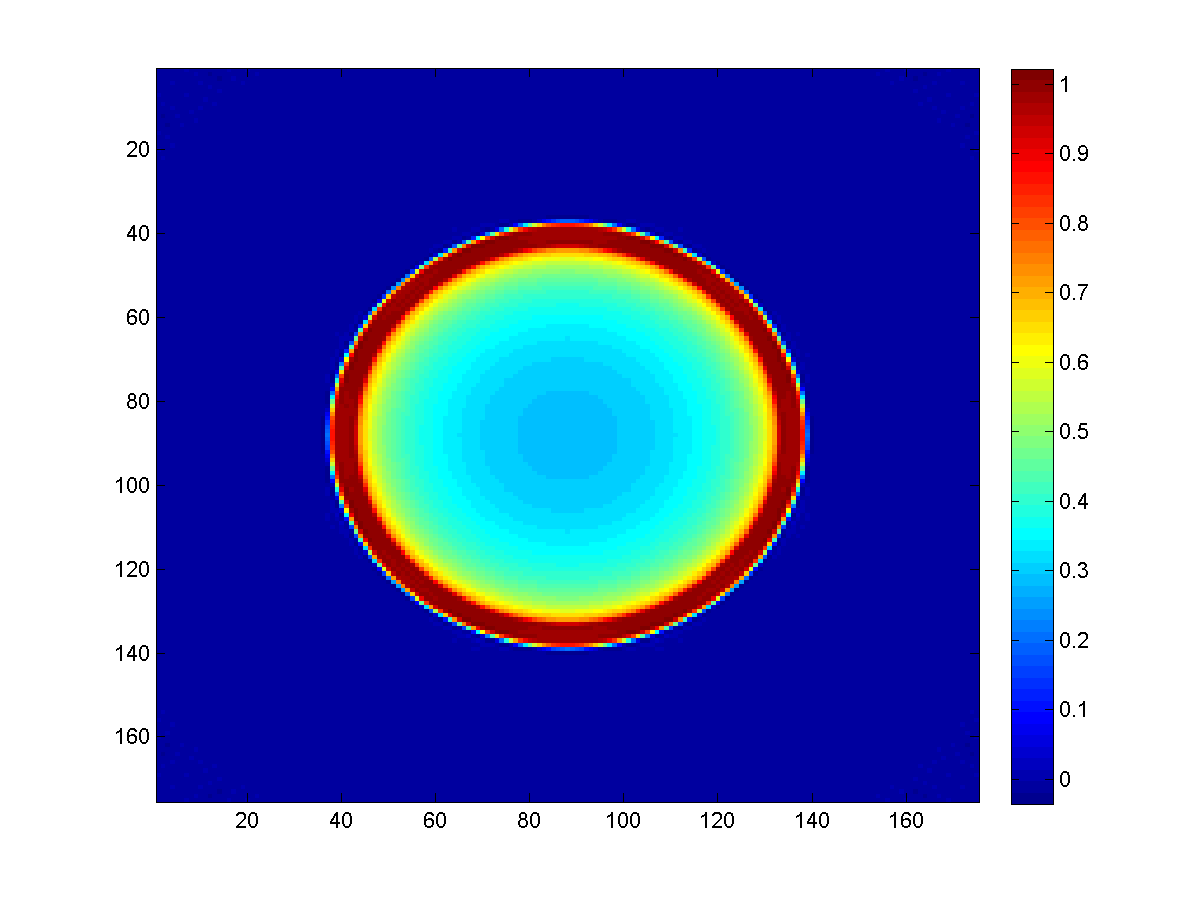}
\end{subfigure}
\begin{subfigure}[h]{3.7cm}
                \centering                            
                 \includegraphics[scale=0.2]{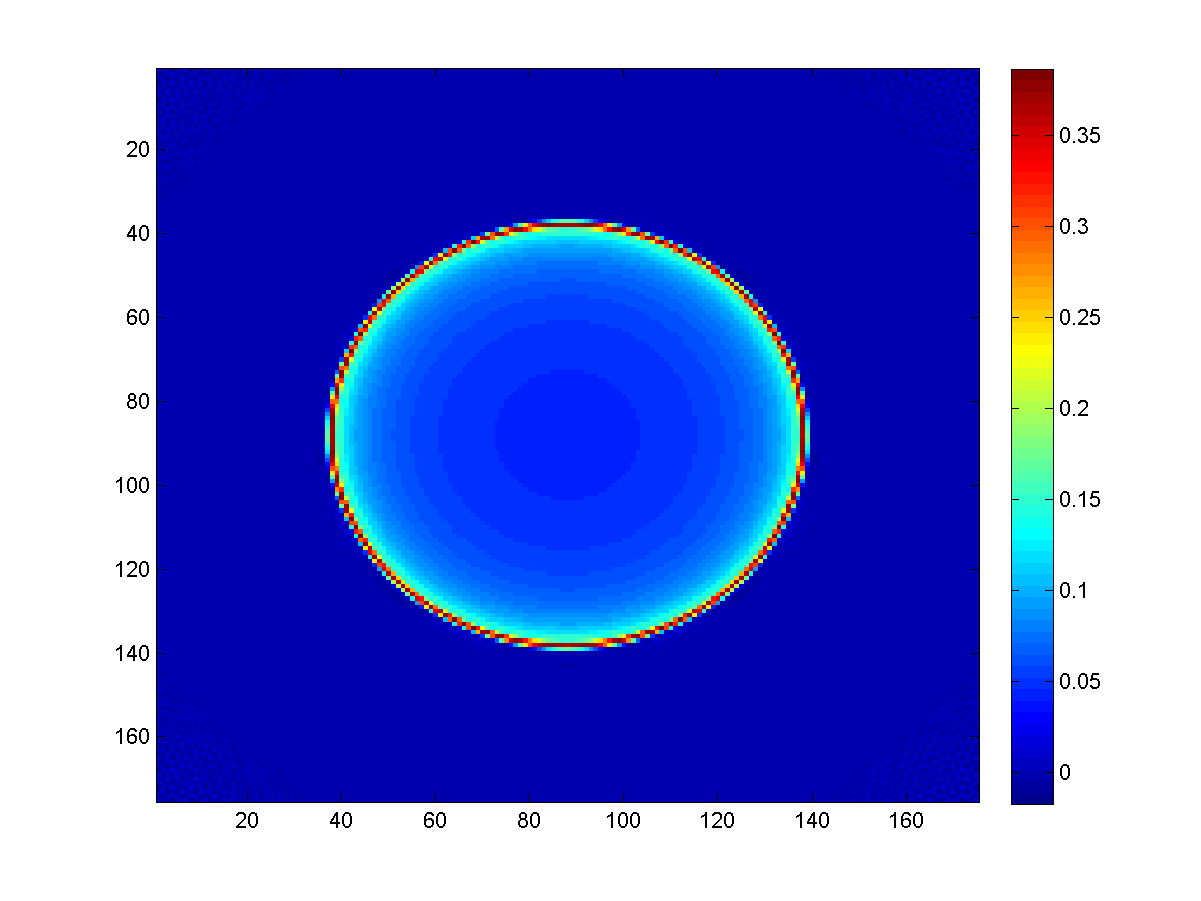}
\end{subfigure}
\begin{subfigure}[h]{3.7cm}
                \centering                           
                 \includegraphics[scale=0.2]{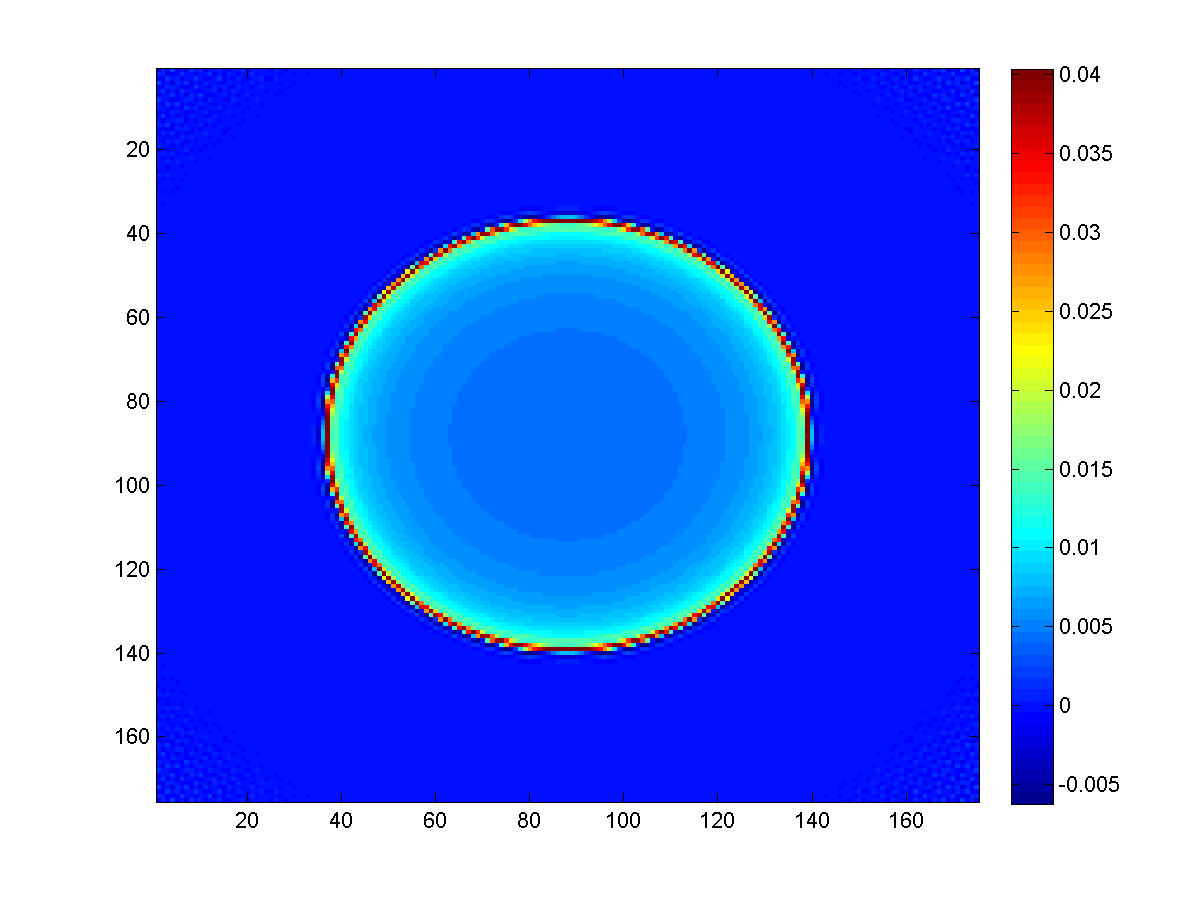}
\end{subfigure}\\
\begin{subfigure}[h]{3.7cm}
                 \centering                            
                  \includegraphics[scale=0.2]{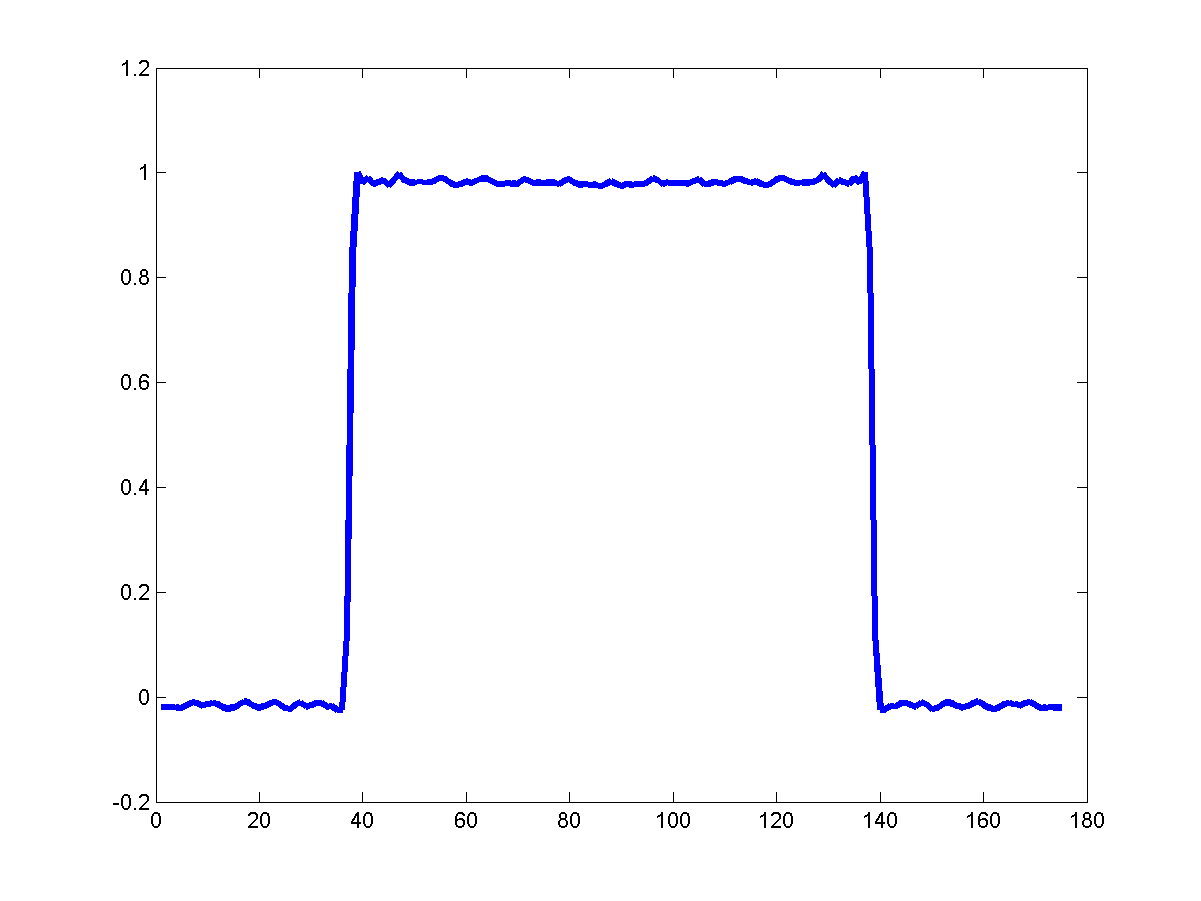}
\end{subfigure}
\begin{subfigure}[h]{3.7cm}
                 \centering                            
                  \includegraphics[scale=0.2]{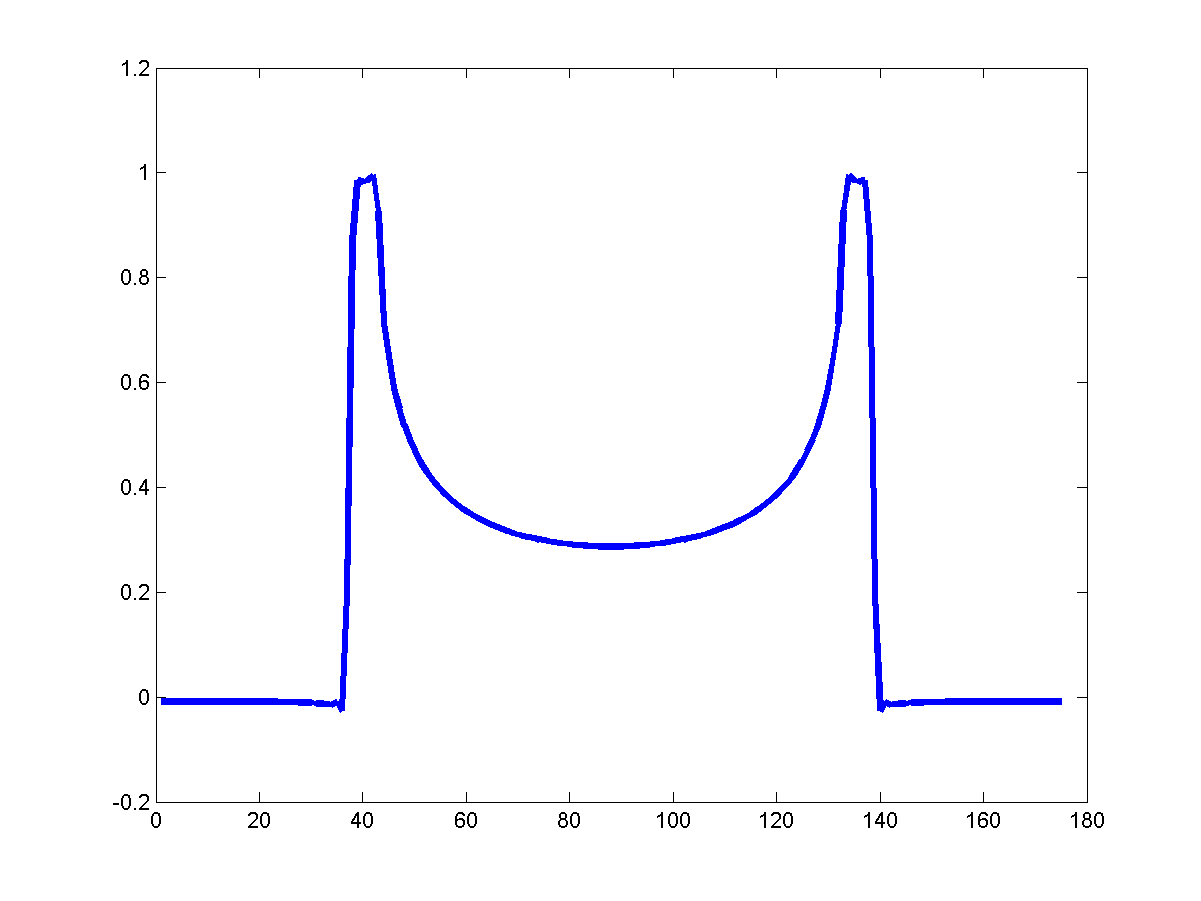}
\end{subfigure}
\begin{subfigure}[h]{3.7cm}
                 \centering                            
                  \includegraphics[scale=0.2]{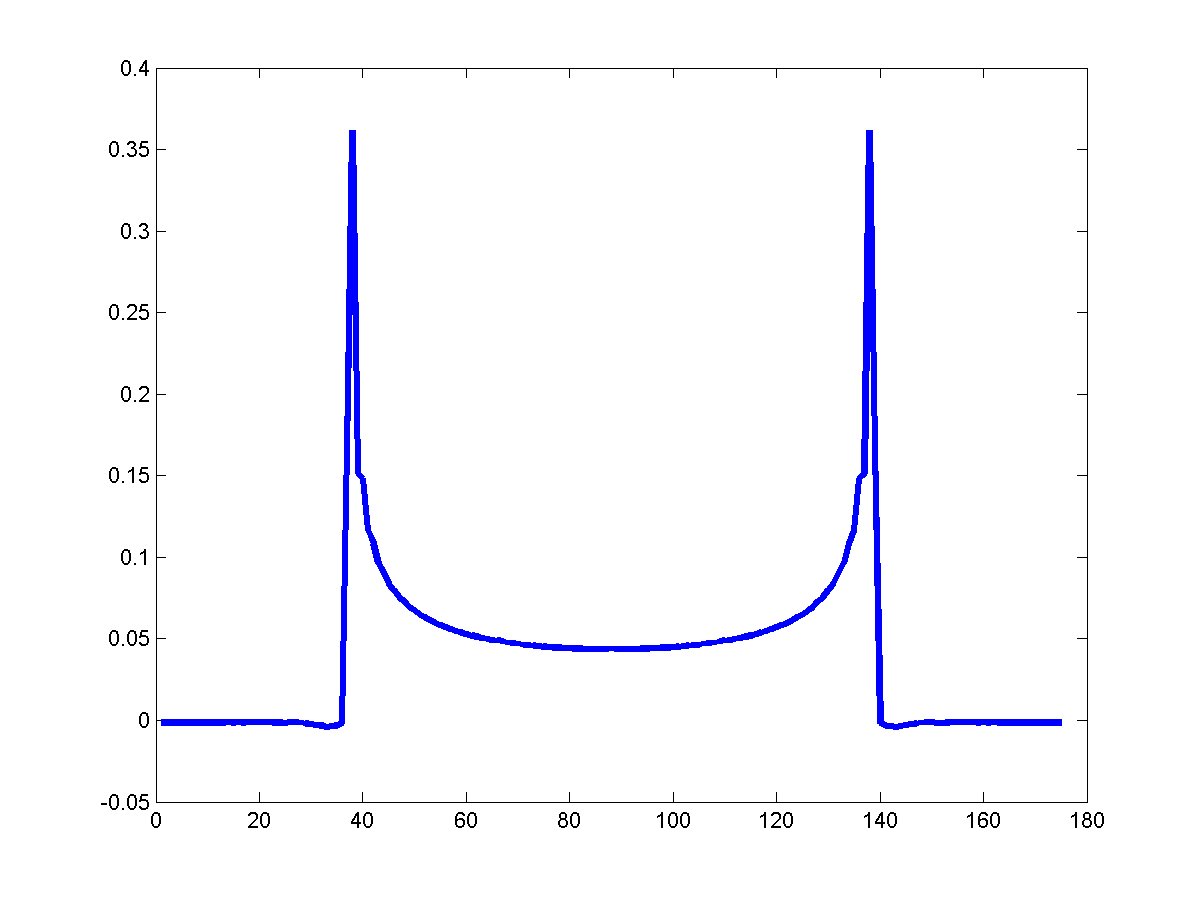}
\end{subfigure}
\begin{subfigure}[h]{3.7cm}
                 \centering                             
                 \includegraphics[scale=0.2]{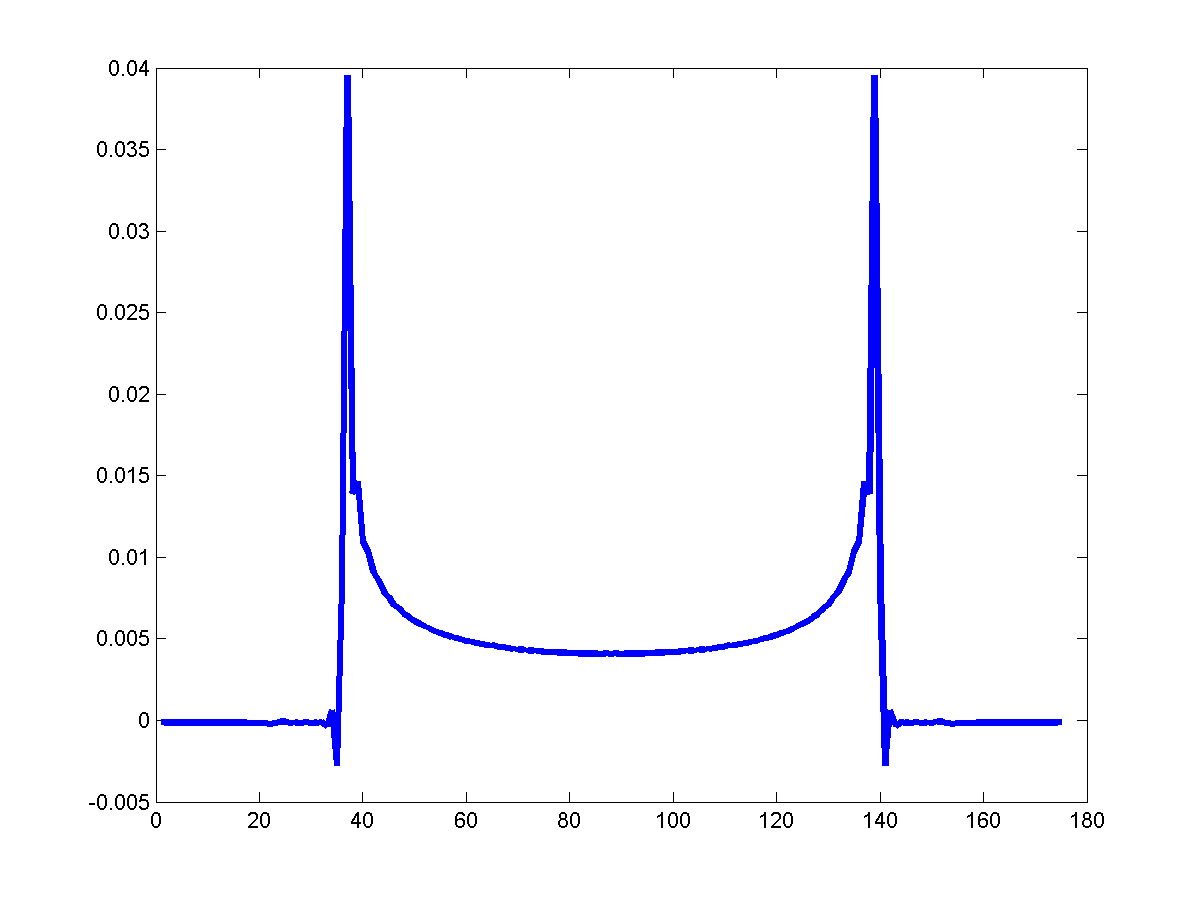}
\end{subfigure}
\caption{Sinogram regularisation with different values of $\beta$ (first row) and the corresponding filtered backprojected images (third row). The second row represents a $45^{o}$ comparison of the original sinogram and the sinogram after regularisation. The highest value of the sinogram is $102.8$. The fourth row represents the middle line profiles of the reconstructed images in the third row.}
\label{sxima11b}
\end{center}
\end{figure}

First, we consider image functions with radial symmetry such as in section \ref{explicit} equation \eqref{candidate}. Figure \ref{sxima11b} shows the numerically computed regularised sinograms and corresponding images for an original image of a disc with radius $r=50.5$. Here we have used MATLAB's built-in function \textit{iradon} with a \textit{Ram-Lak} filter and \textit{spline} interpolation to compute the FBP of the regularised sinogram. Moreover, Table \ref{table5}, shows the correspondence of the numerical solution with the analytic solution in section \ref{explicit} for three discs of radii $r=15.5, 30.5$ and $50.5$. Here, $\delta^{an}$ and $\delta^{num}$ denote the analytic and numerical $\delta$, respectively, in the expression of the regularised solution in \eqref{candidate}. As predicted from the computations in section \ref{explicit}, we see that with increasing regularisation parameter $\beta$ the regularised image more and more emphasises the boundary of the disc. 


\begin{table}[h!]
\begin{center}
\begin{tabular}{cc|c|c|c|c|c|c|c|c}
\cline{1-9}
\multicolumn{1}{ |c| }{\multirow{3}{*}{$r=15.5$} } &
\multicolumn{1}{ |c| }{$\beta$} & $10^{-3}$ & 0.1 & 1 & 5 & 10 & 15 & 15.5 &     \\ \cline{2-9}
\multicolumn{1}{ |c  }{}                        &
\multicolumn{1}{ |c| }{$\delta^{an}$} & 30.94 &	29.88	& 25.84	& 16.09	& 7.59 & 0.64	& 0.084	&     \\ \cline{2-9}
\multicolumn{1}{ |c  }{}                        &
\multicolumn{1}{ |c| }{$\delta^{num}$} & 31.32 &	29.76 &	25.71 &	15.96	& 7.37 &	0.67 &	0.37 &\\ \cline{1-9}
\multicolumn{1}{ |c| }{\multirow{3}{*}{$r=30.5$} } &
\multicolumn{1}{ |c| }{$\beta$} & $10^{-3}$ & 1 & 10 & 15 & 20 & 25 & 30.5 &     \\ \cline{2-9}
\multicolumn{1}{ |c  }{}                        &
\multicolumn{1}{ |c| }{$\delta^{an}$} & 60.93 &	59.6	& 31.35	& 22.37	& 14.46 & 7.27	& 0.09	&     \\ \cline{2-9}
\multicolumn{1}{ |c  }{}                        &
\multicolumn{1}{ |c| }{$\delta^{num}$} & 61.98 &	54.58 &	31.42 &	22.47	& 14.55 &	7.34 &	0.65 &\\ \cline{1-9}
\multicolumn{1}{ |c| }{\multirow{3}{*}{$r=50.5$} } &
\multicolumn{1}{ |c| }{$\beta$} & $10^{-3}$ & 1 & 10 & 20 & 30 & 45 & 50.5 &     \\ \cline{2-9}
\multicolumn{1}{ |c  }{}                        &
\multicolumn{1}{ |c| }{$\delta^{an}$} & 100.92 &	93.33	& 65.74	& 45.46	& 28.71 & 7.16	& 0.12	&     \\ \cline{2-9}
\multicolumn{1}{ |c  }{}                        &
\multicolumn{1}{ |c| }{$\delta^{num}$} & 101.83 &	93.26 &	65.75 &	45.41	& 28.82 &	7.24 &	0.68 &\\ \cline{1-9}
\end{tabular}
\end{center}
\caption{Comparison of analytic and numerical computations of sinogram regularisation for three test images of characteristic functions of circles with radii $r=15.5, 30.5$ and $50.5$. The parameters $\delta^{an}$ and $\delta^{num}$ denote the analytic and numerical $\delta$, respectively, in the expression of the regularised solution in \eqref{candidate}. Compare also Figure \ref{sxima11b} for regularised reconstructions for the circle with radius $r=50.5$.}
\label{table5}
\end{table}

%

Going beyond radial symmetry we consider three additional examples where the sinogram depends on the angle $\theta$. First, we simply consider the image that we used in the previous section in Figure \ref{sxima4} without adding additional noise to its sinogram. The effect of $\beta$ regularisation in this case is presented in Figure \ref{sxima13}. We see that as we increase $\beta$ we loose details in the image, starting again from the inner structure of the discs, while enhancing the boundaries of the objects. Here, the connection of the choice of $\beta$ with the radius of every circle is clearly visible. More precisely, for $\beta<r_2$ the boundary of the smaller circle is enhanced and for $r_2<\beta<r_1$ the small circle is lost and the boundary of the larger circle is enhanced.

\begin{figure}[h!]
\begin{center}
\begin{subfigure}[h]{3cm}
                \centering 
                \caption{$\beta=10^{-3}$}               
                \includegraphics[scale=0.15]{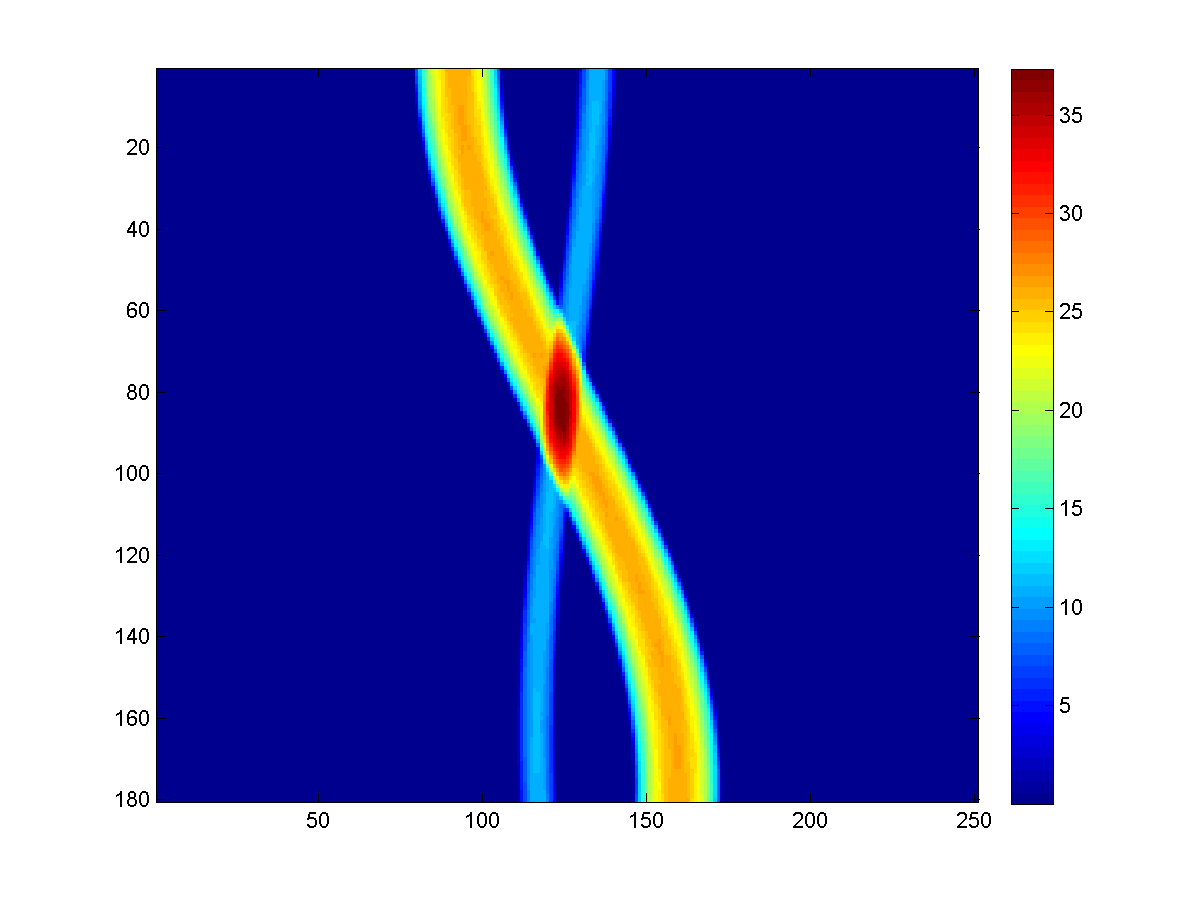}
\end{subfigure}%
\begin{subfigure}[h]{3cm}
                \centering
                \caption{$\beta$=1}  
                \includegraphics[scale=0.15]{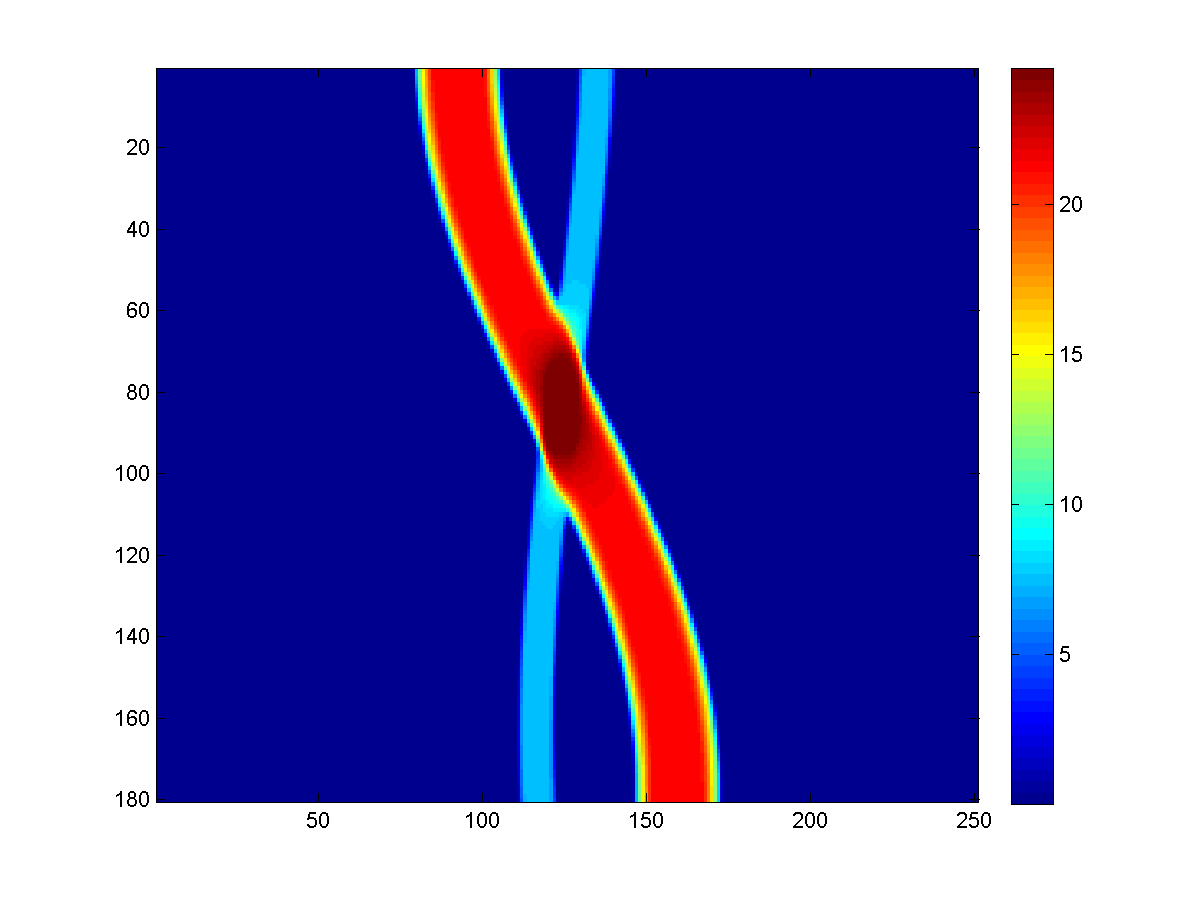}
\end{subfigure}
\begin{subfigure}[h]{3cm}
                \centering
                \caption{$\beta$=3}                
                \includegraphics[scale=0.15]{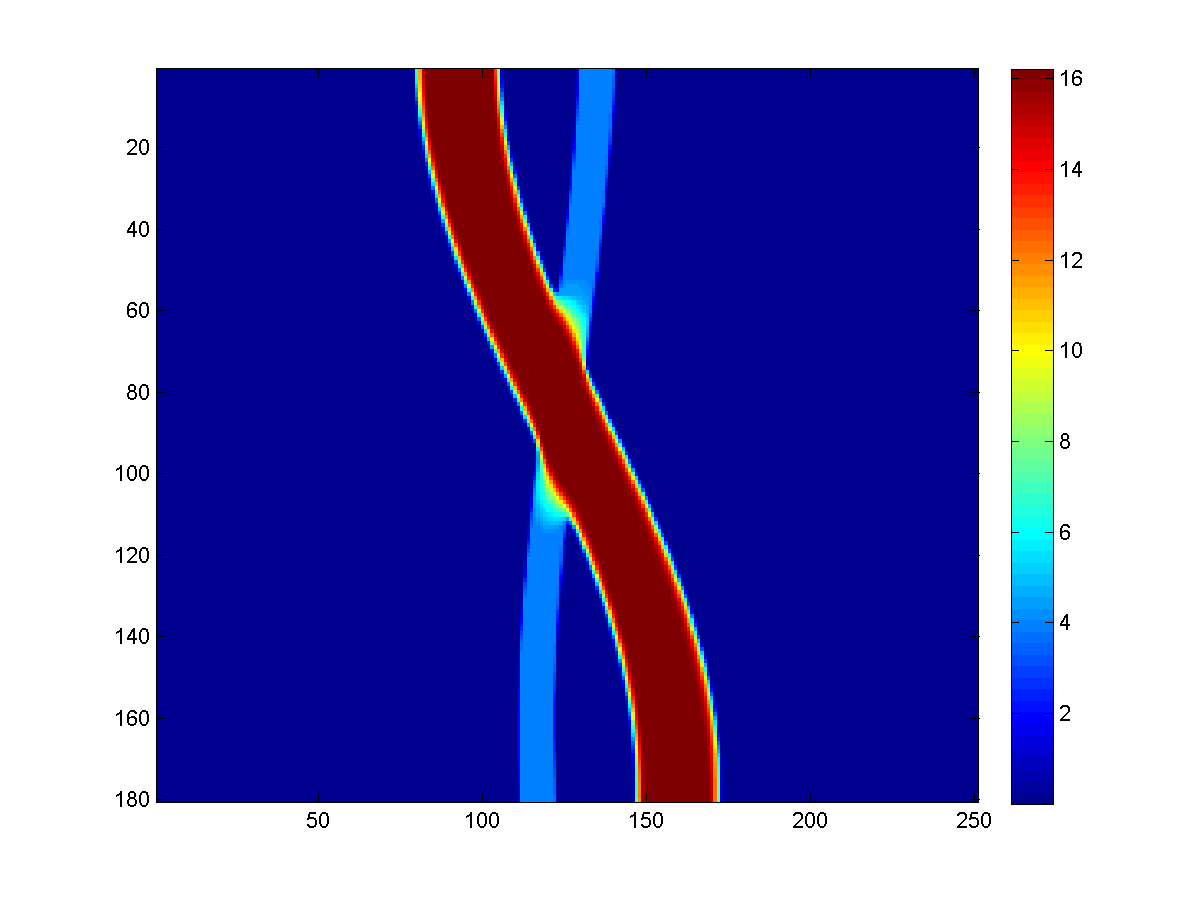}
\end{subfigure}
\begin{subfigure}[h]{3cm}
                \centering
                \caption{$\beta$=7}              
                 \includegraphics[scale=0.15]{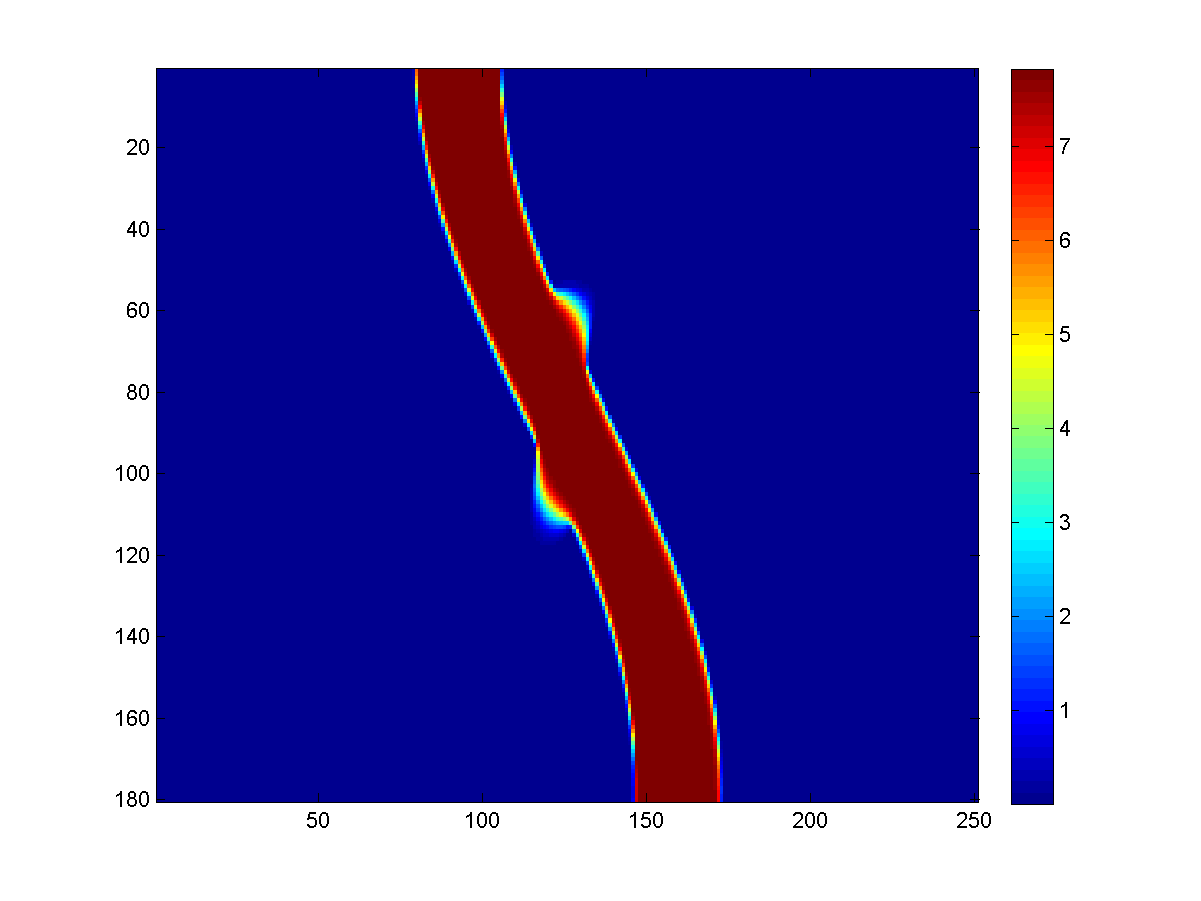}
\end{subfigure}
\begin{subfigure}[h]{3cm}
                \centering
                \caption{$\beta$=12}         
                \includegraphics[scale=0.15]{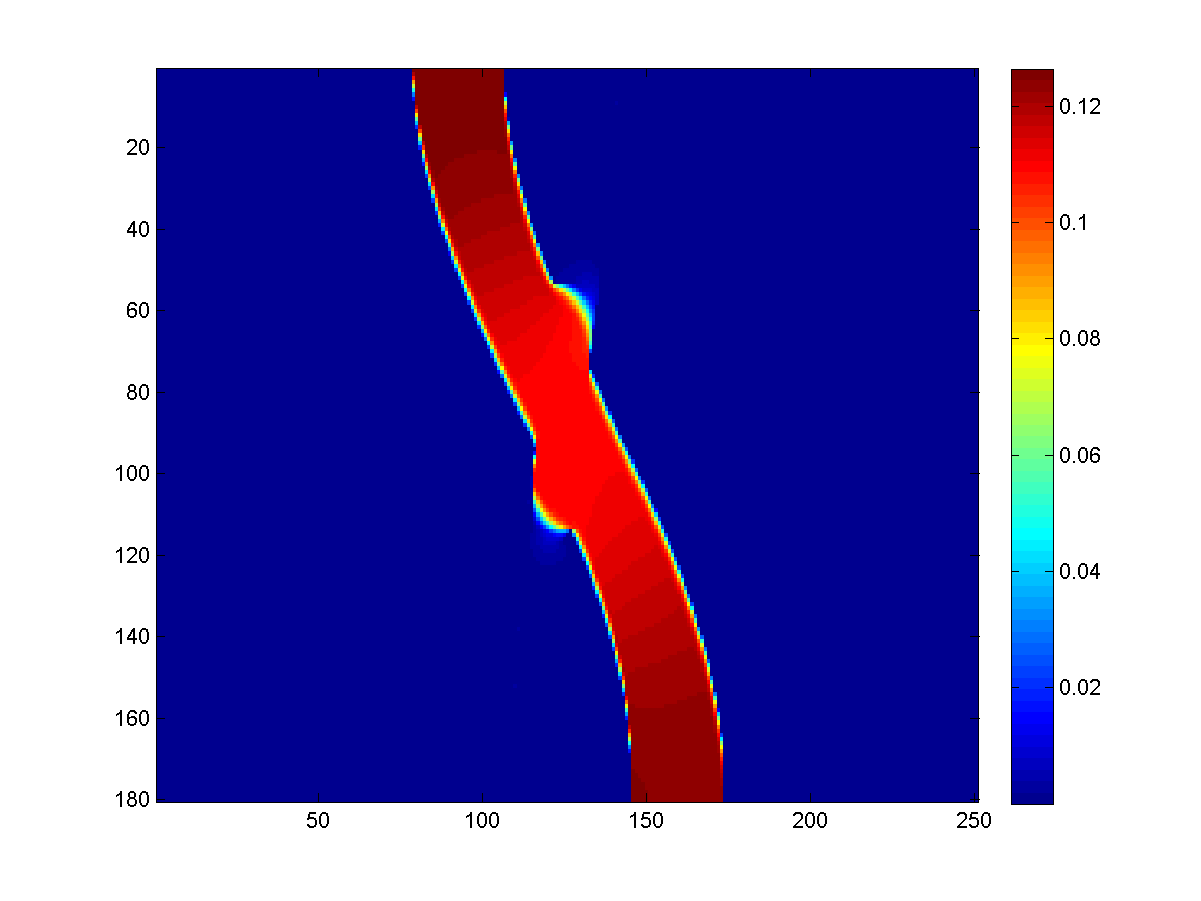}
\end{subfigure}\\
\begin{subfigure}[h]{3cm}
                \centering 
                \includegraphics[scale=0.15]{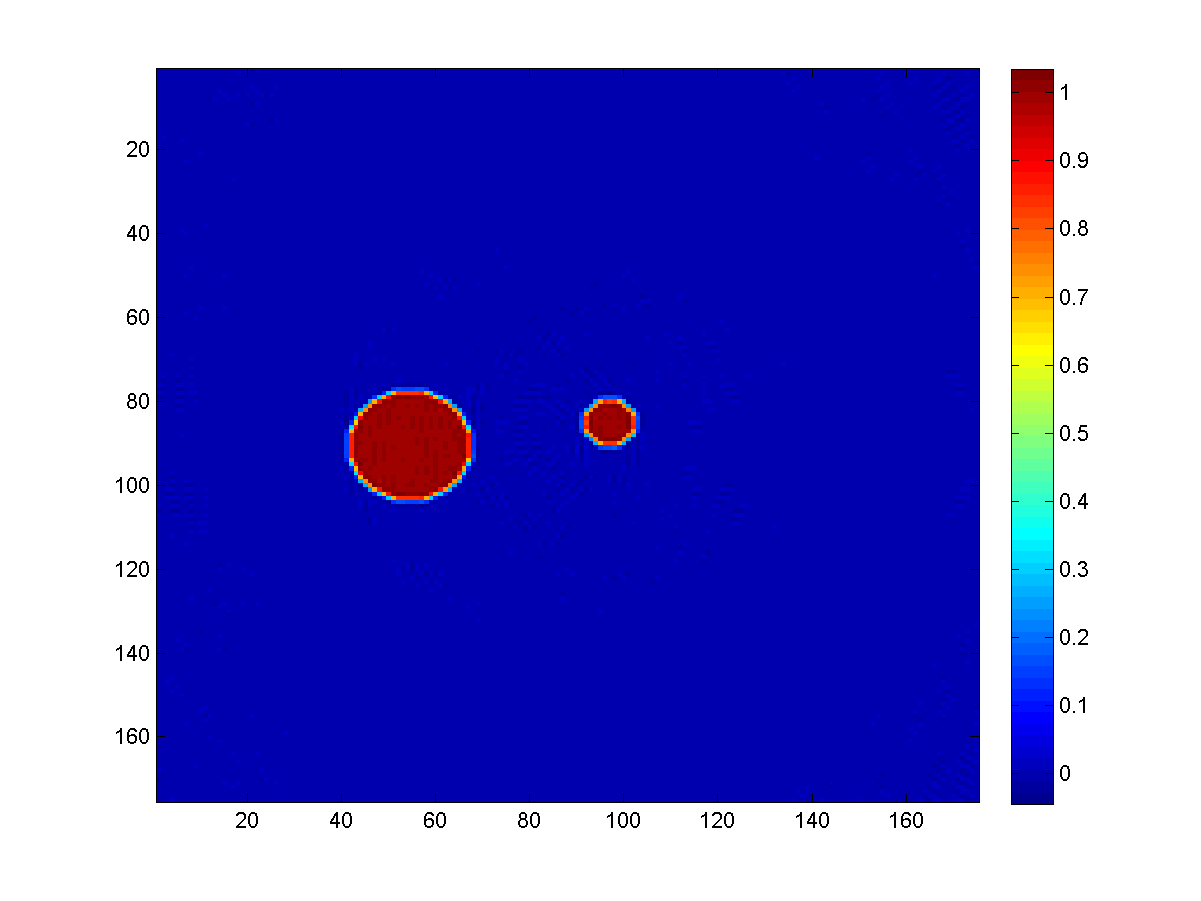}
\end{subfigure}
\begin{subfigure}[h]{3cm}
                \centering 
                \includegraphics[scale=0.15]{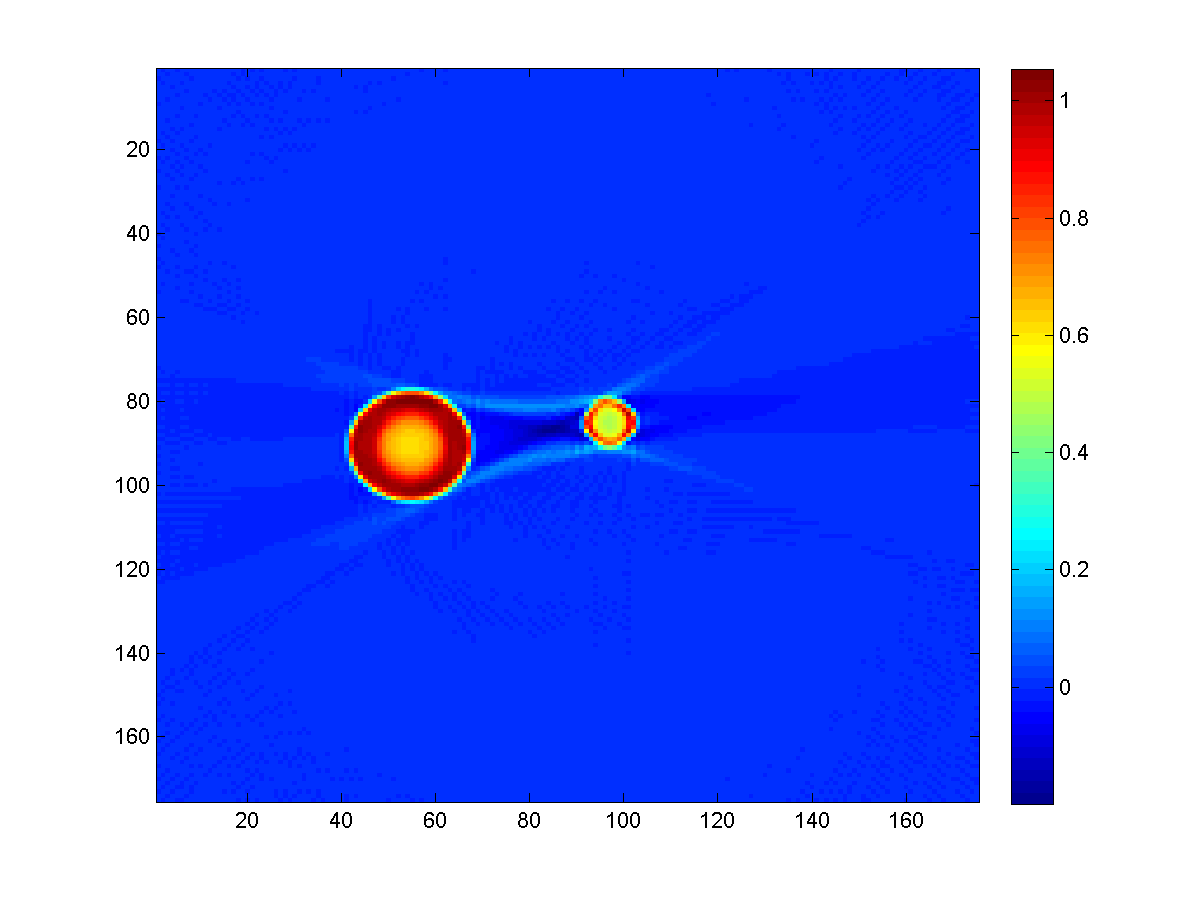}
\end{subfigure}
\begin{subfigure}[h]{3cm}
                \centering 
                \includegraphics[scale=0.15]{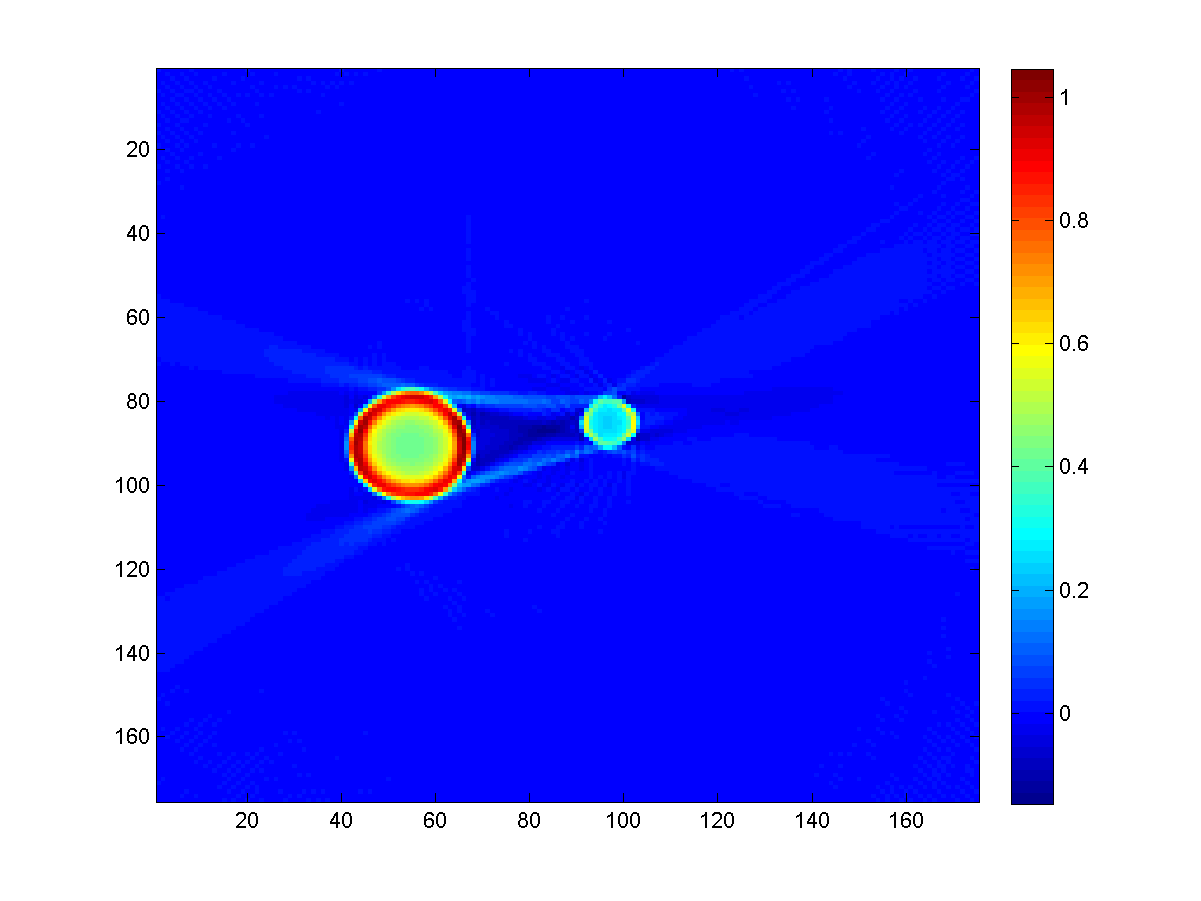}
\end{subfigure}
\begin{subfigure}[h]{3cm}
                \centering 
                \includegraphics[scale=0.15]{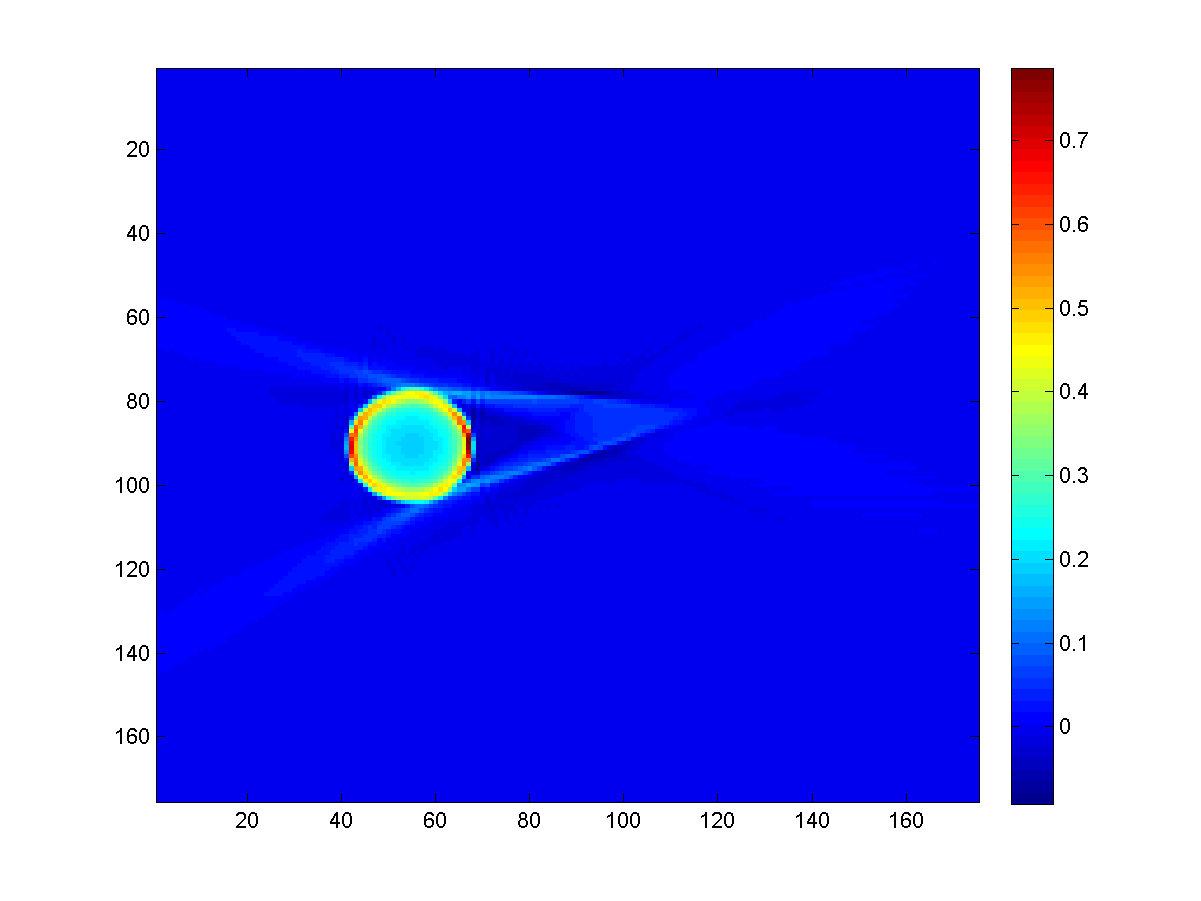}
\end{subfigure}
\begin{subfigure}[h]{3cm}
                \centering 
                \includegraphics[scale=0.15]{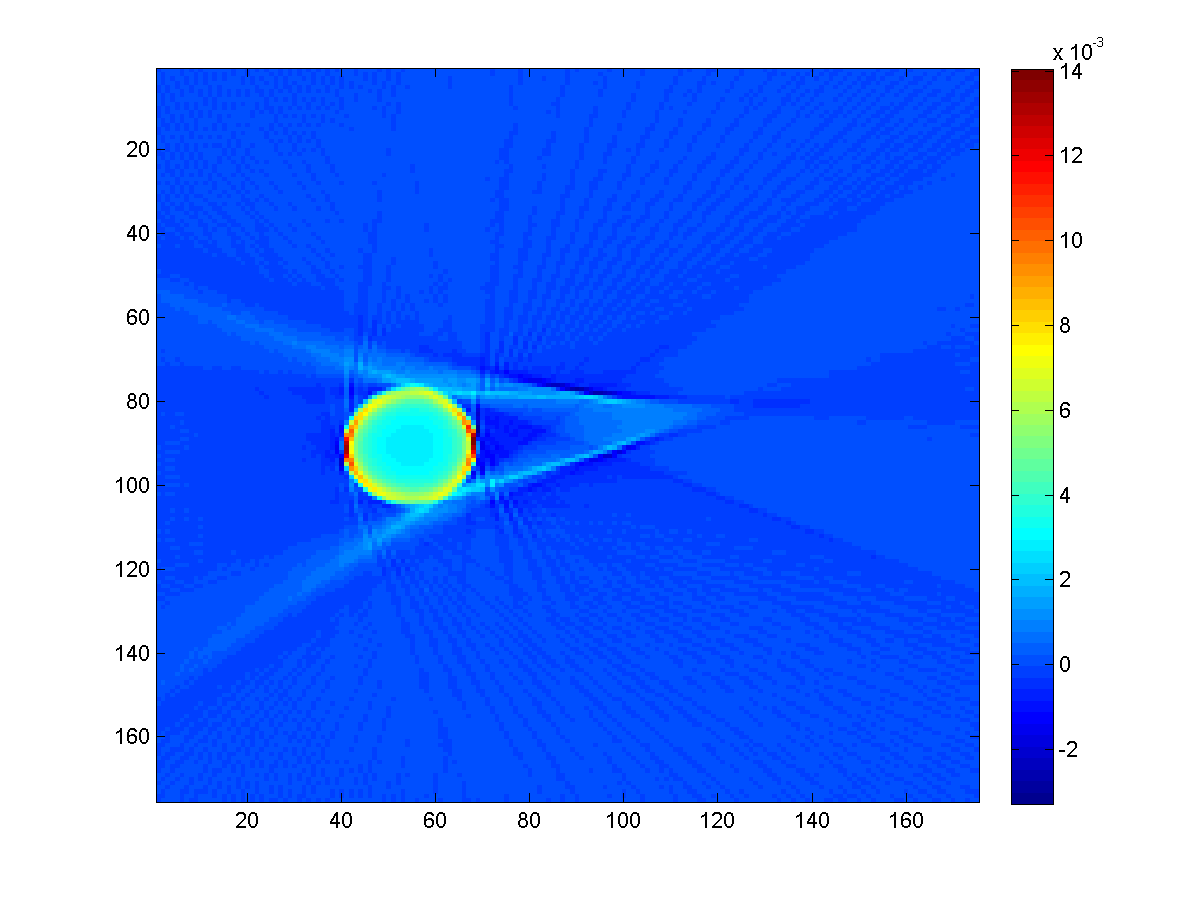}
\end{subfigure}
\caption{Sinogram regularisation with different values of $\beta$ and the corresponding filtered backprojected images using MATLAB's \textit{iradon} built-in function. The radii for the discs are $r_{1}=13$ and $r_{2}=5.5$.}
\label{sxima13}
\end{center}
\end{figure}

\begin{figure}[h!]
\begin{center}
\begin{subfigure}[h]{3cm}
                \centering  
               \includegraphics[scale=0.2]{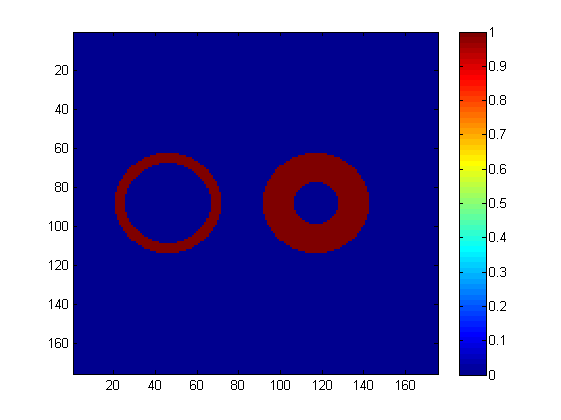}
               \caption{2 rings}
               \label{2rings_star:1}
\end{subfigure}
\begin{subfigure}[h]{3cm}
                \centering            
                  \includegraphics[scale=0.2]{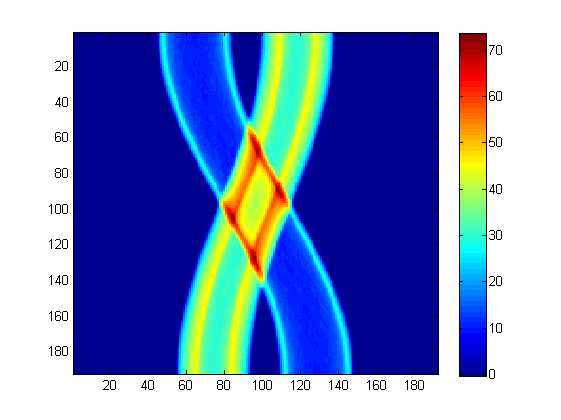}
                  \caption{Sinogram}  
                  \label{2rings_star:2}
\end{subfigure} 
\begin{subfigure}[h]{3cm}
                \centering 
               \includegraphics[scale=0.15]{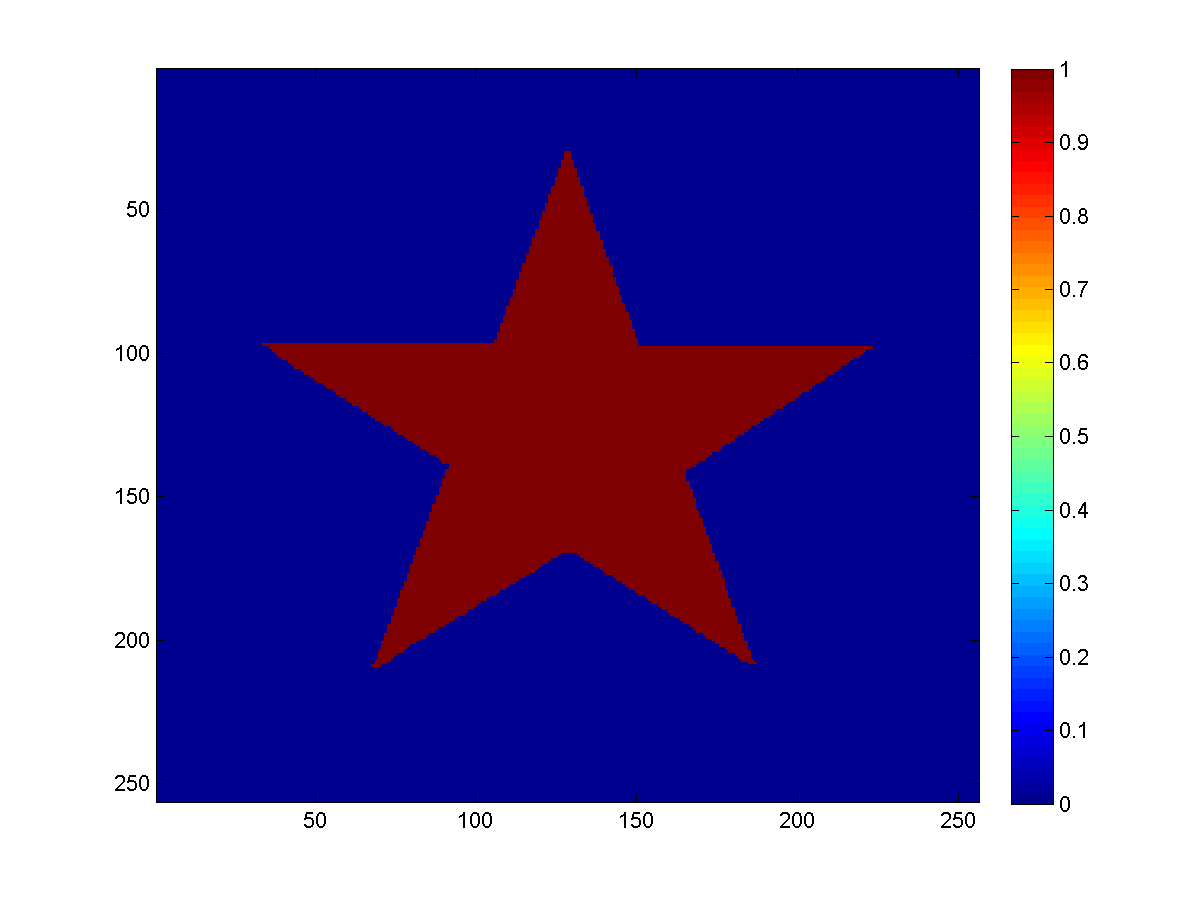}
               \caption{Star} 
               \label{2rings_star:3}
\end{subfigure}
\begin{subfigure}[h]{3cm}
                \centering          
                \includegraphics[scale=0.15]{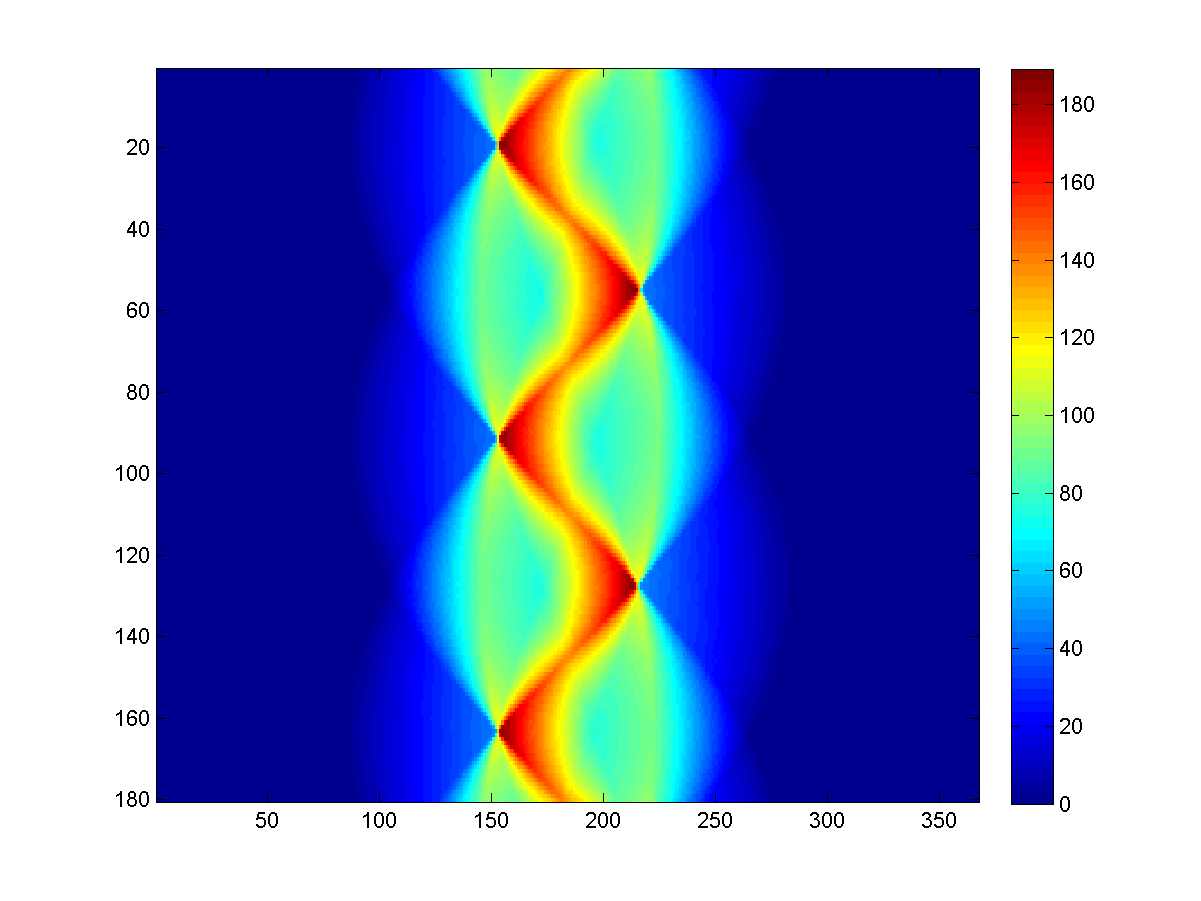}
                \caption{Sinogram}   
                \label{2rings_star:4} 
\end{subfigure}
\caption{2 rings with different annulus regions and its sinogram ((a) and (b)), star-shaped image of 5 points and its sinogram ((c) and (d)).}
\label{2rings_star}
\end{center}
\end{figure}

In Figure \ref{2rings_star}, we present two more test images. The first one is an image of two rings with the same outer radius but with different annulus regions, compare Figure \eqref{2rings_star:1}. A similar scale-space analysis as for the previous examples is carried out in Figure \ref{sxima16}. Additionally to the enhancement of the outer boundaries of the two rings we see that for increasing $\beta$ regularisation the reconstructed image approaches the convex hull of the two rings. This is even more apparent for the last example of a star-shaped object in Figure \eqref{2rings_star:3}. See Figure \ref{sxima16i} in particular.

\begin{figure}[h!]
\begin{center}
\begin{subfigure}[h]{3cm}
                \centering
                \caption{$\beta$=0.001}  
 \includegraphics[scale=0.15]{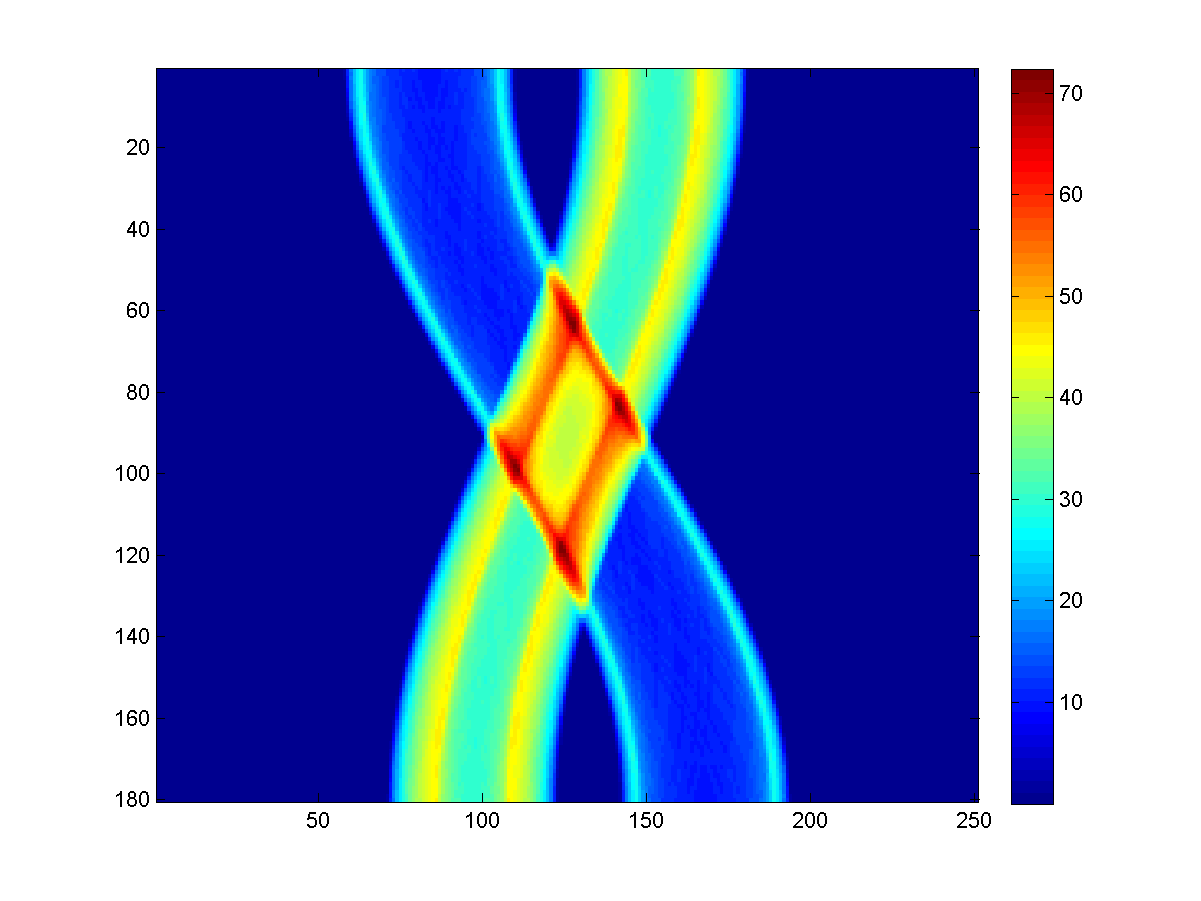}
\end{subfigure}
\begin{subfigure}[h]{3cm}
                \centering
                \caption{$\beta$=1}   \includegraphics[scale=0.15]{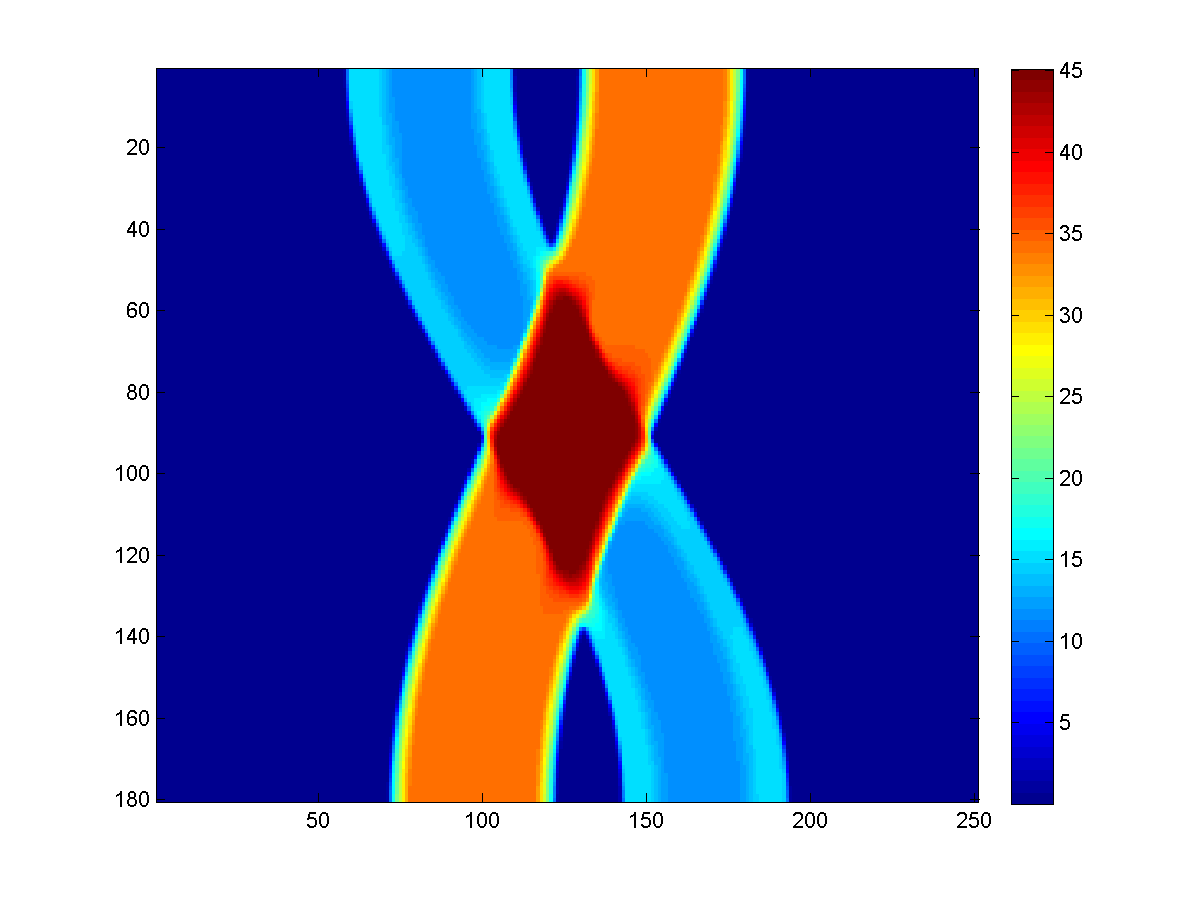}
\end{subfigure}
\begin{subfigure}[h]{3cm}
                \centering
                \caption{$\beta$=10}   \includegraphics[scale=0.15]{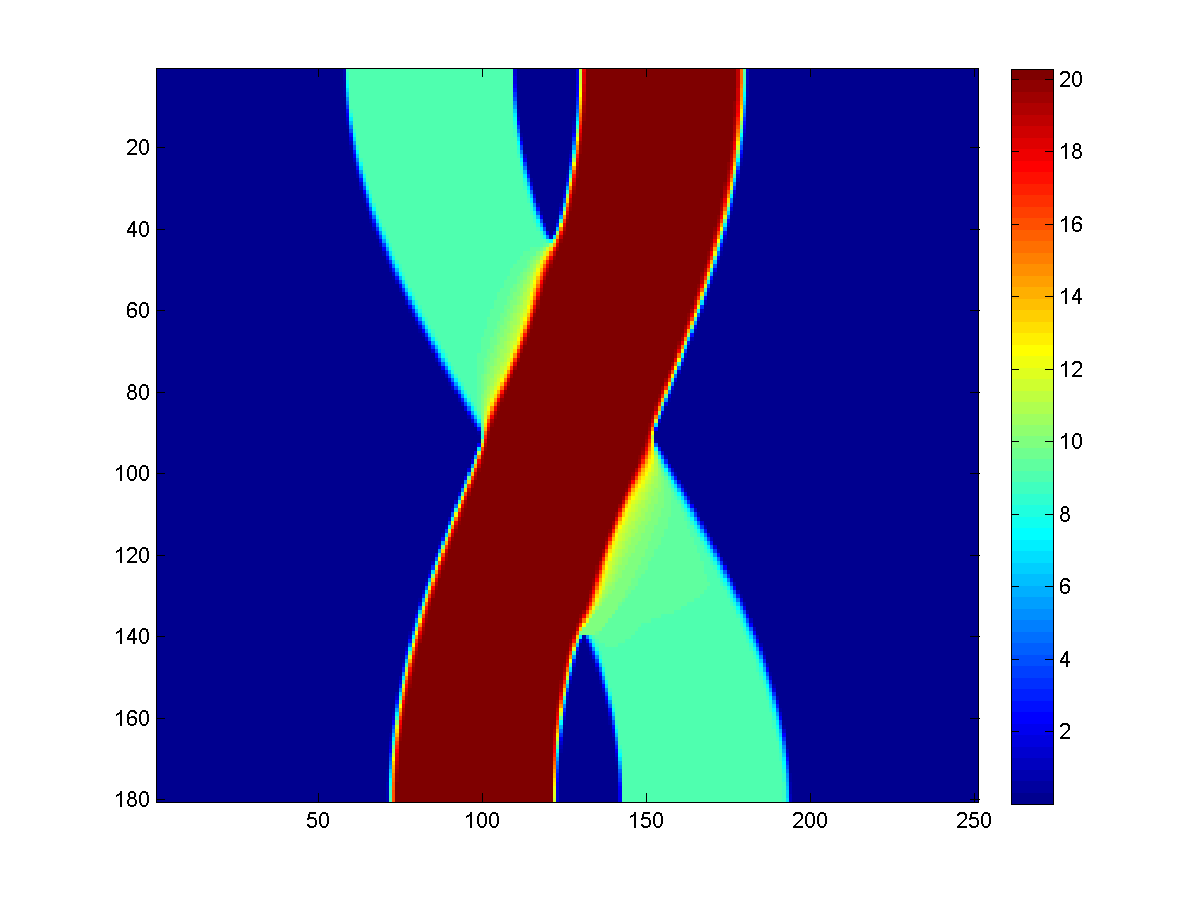}
\end{subfigure}
\begin{subfigure}[h]{3cm}
                \centering 
                \caption{$\beta$=25.5} \includegraphics[scale=0.15]{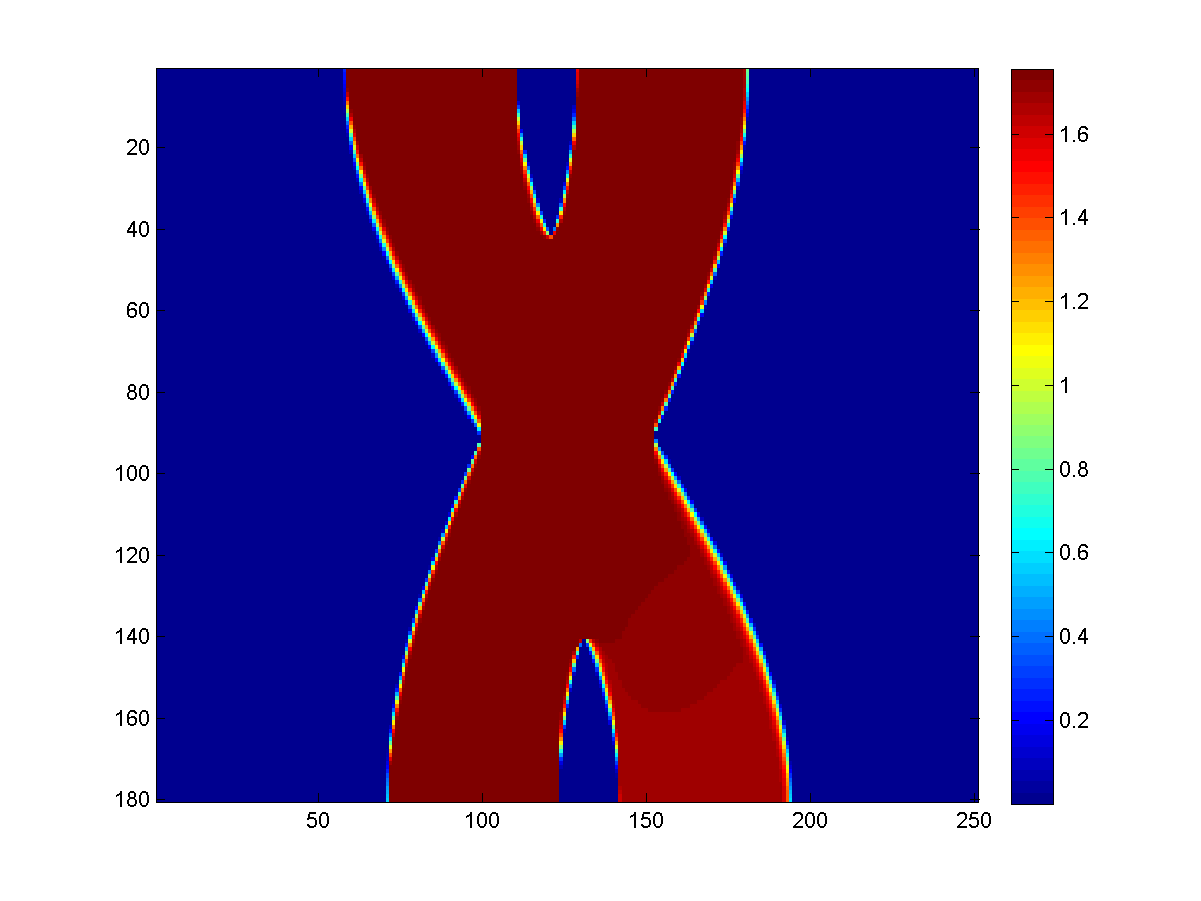}
\end{subfigure}\\
\begin{subfigure}[h]{3cm}
                \centering
 \includegraphics[scale=0.15]{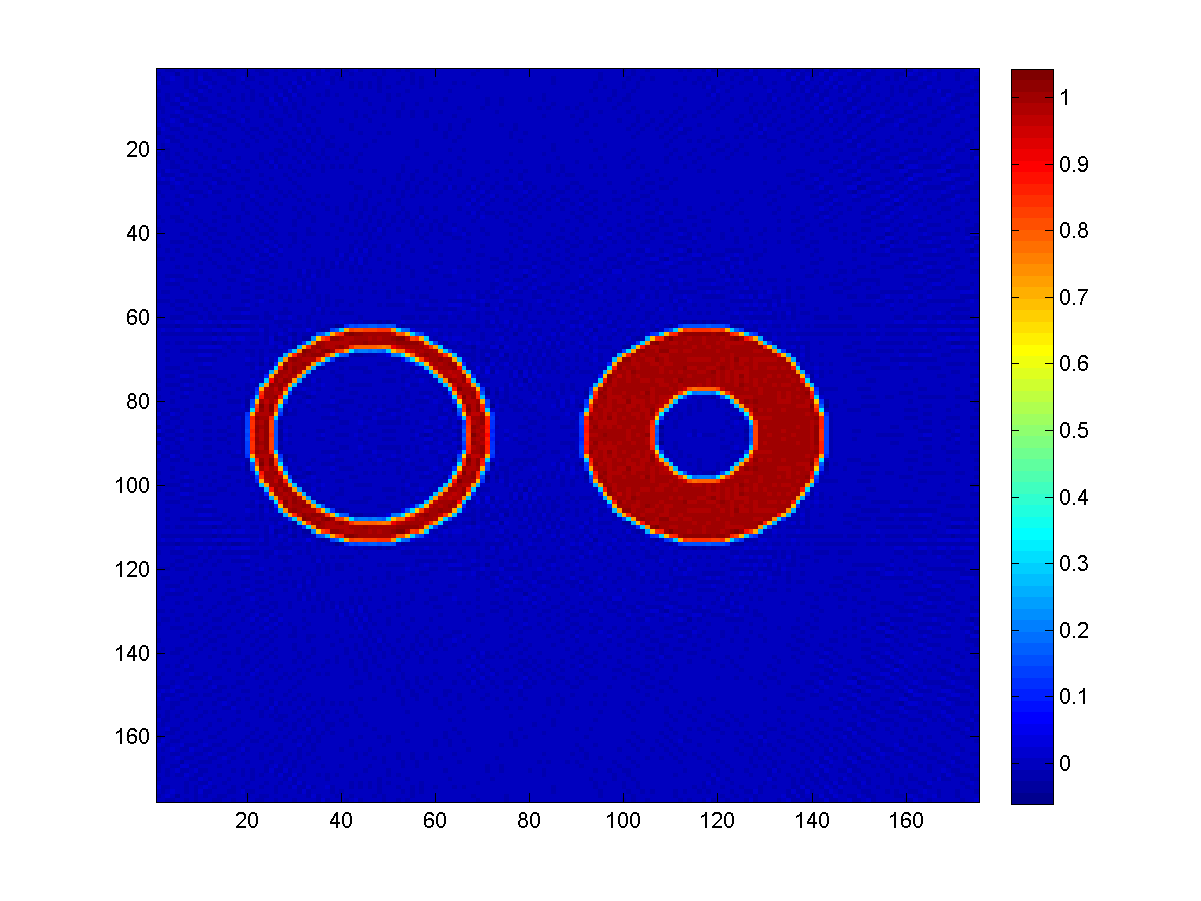}
\end{subfigure}
\begin{subfigure}[h]{3cm}
                \centering 
                \includegraphics[scale=0.15]{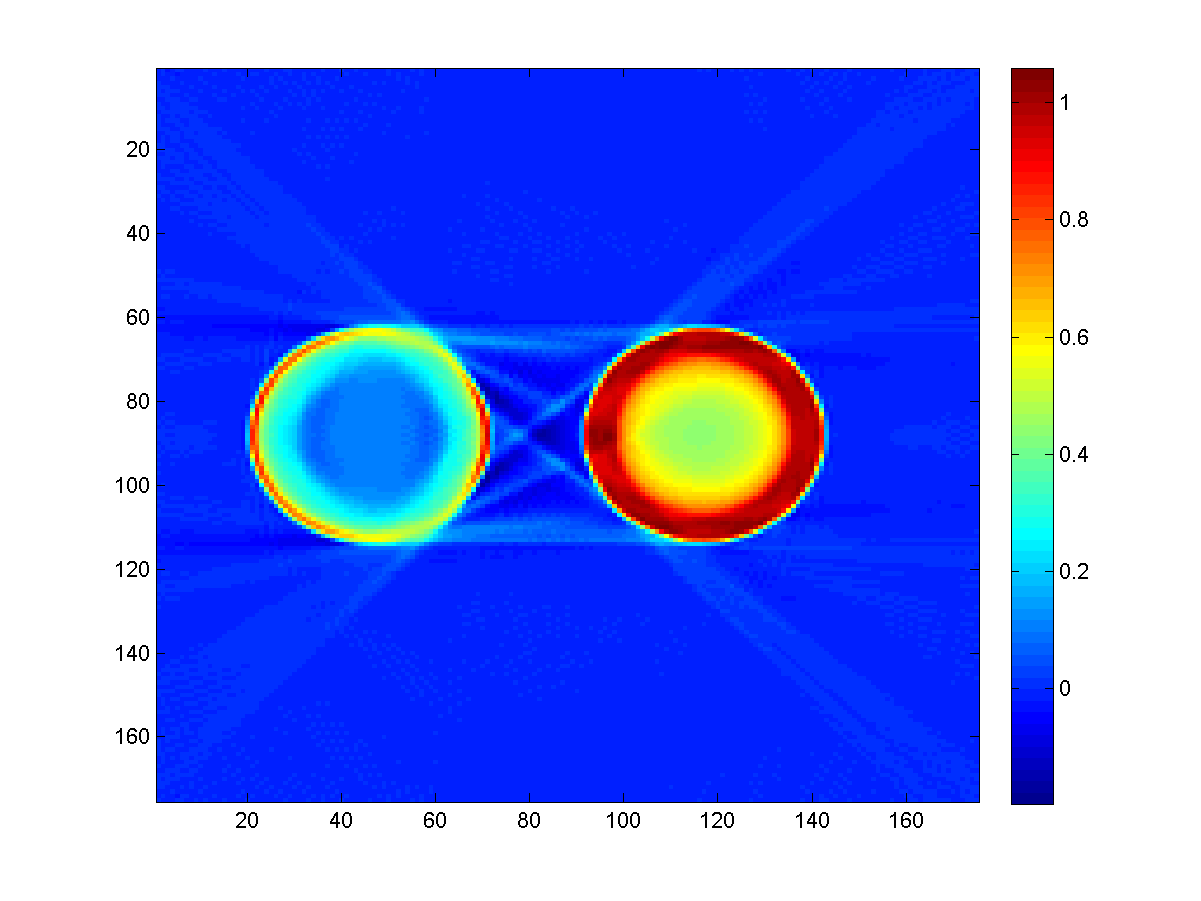}
\end{subfigure}
\begin{subfigure}[h]{3cm}
                \centering 
                \includegraphics[scale=0.15]{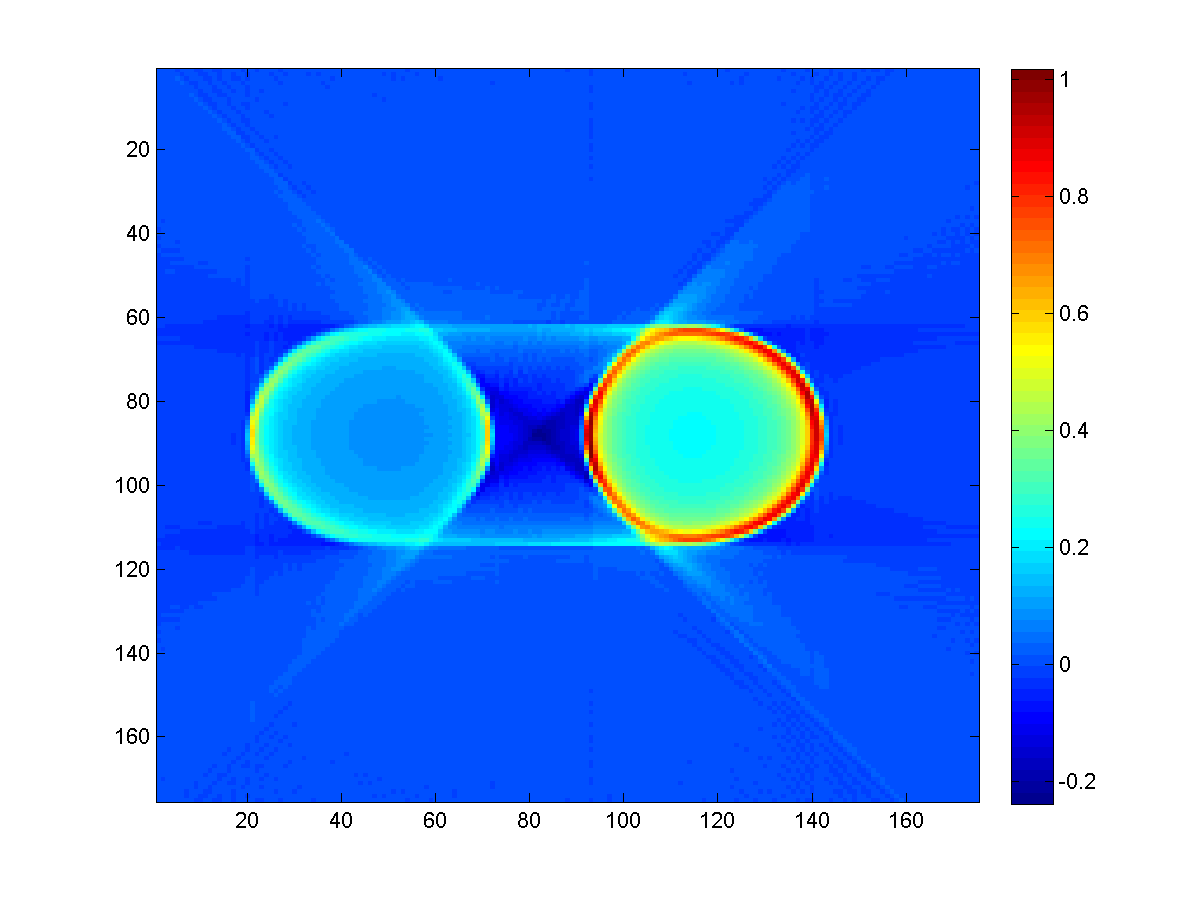}
\end{subfigure}
\begin{subfigure}[h]{3cm}
                \centering   \includegraphics[scale=0.15]{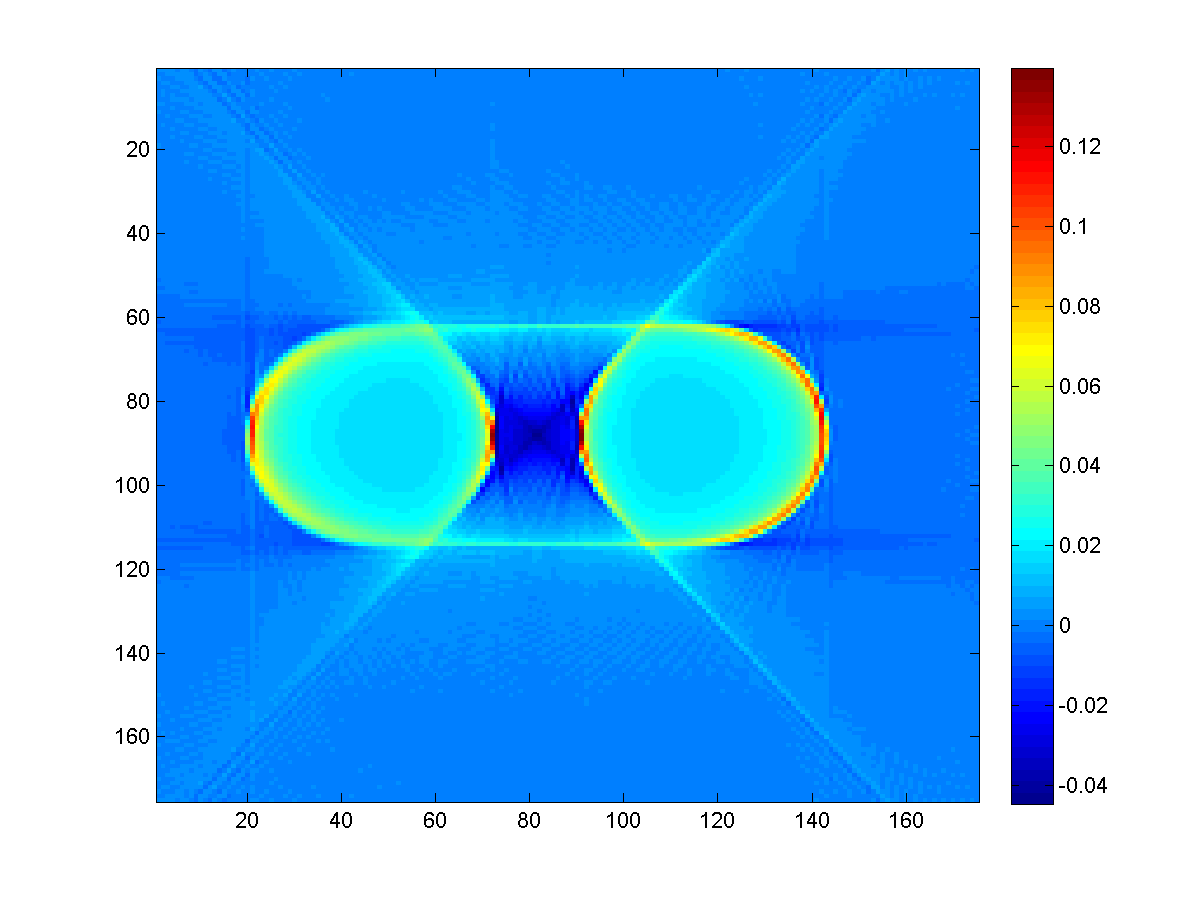}
\end{subfigure}
\caption{2 rings with different annulus regions: The outer radius for both rings is r=25.5 and the inner radii are $r_{1}=21$ and $r_{2}=11$. For figures (a)-(d), we present the sinogram regularisation for increasing values of $\beta$ with the corresponding filtered backprojected using MATLAB's \textit{iradon} built-in function.}
\label{sxima16}
\end{center}
\end{figure}

\begin{figure}[h!]
\begin{center}
\begin{subfigure}[h]{3cm}
                \centering
                \caption{$\beta$=0.001}  
                \includegraphics[scale=0.15]{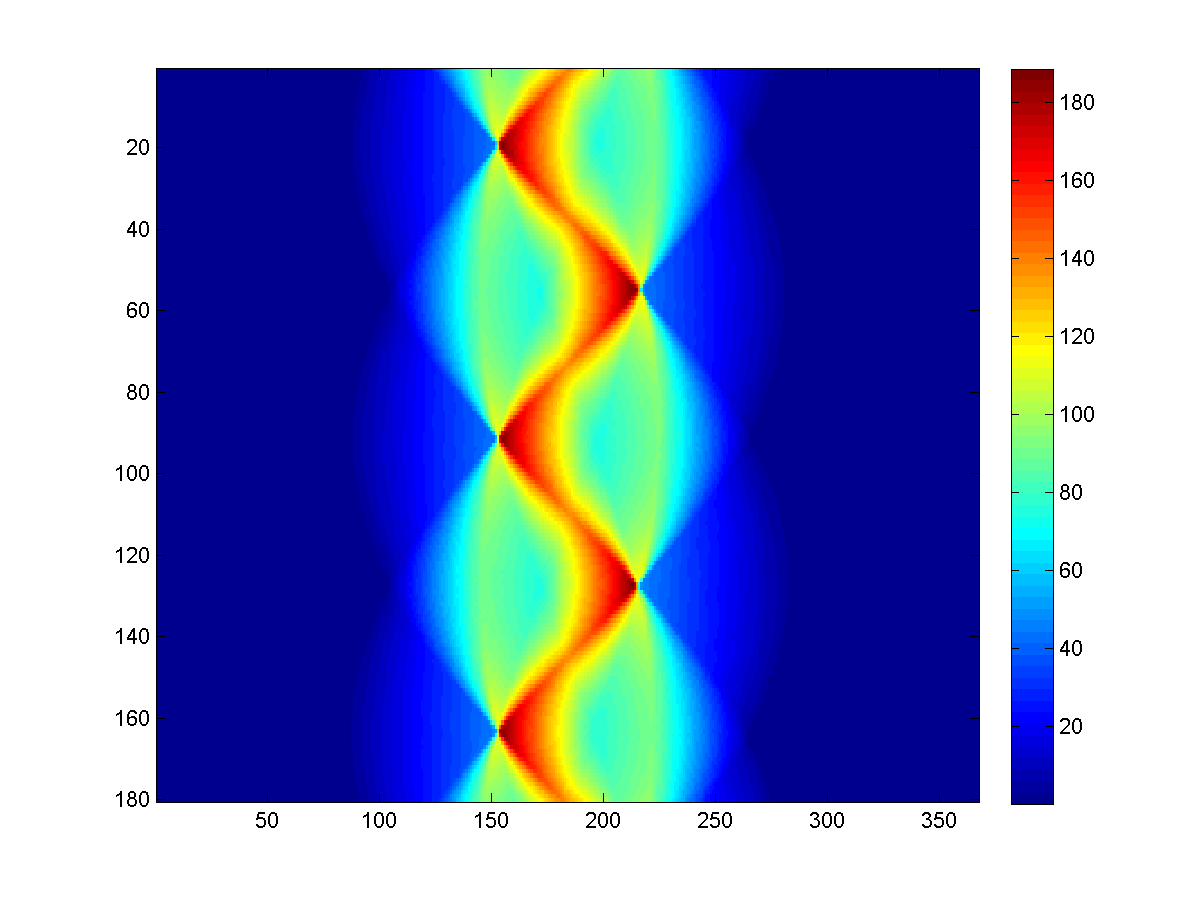}
\end{subfigure}
\begin{subfigure}[h]{3cm}
                \centering
                \caption{$\beta$=0.1}   \includegraphics[scale=0.15]{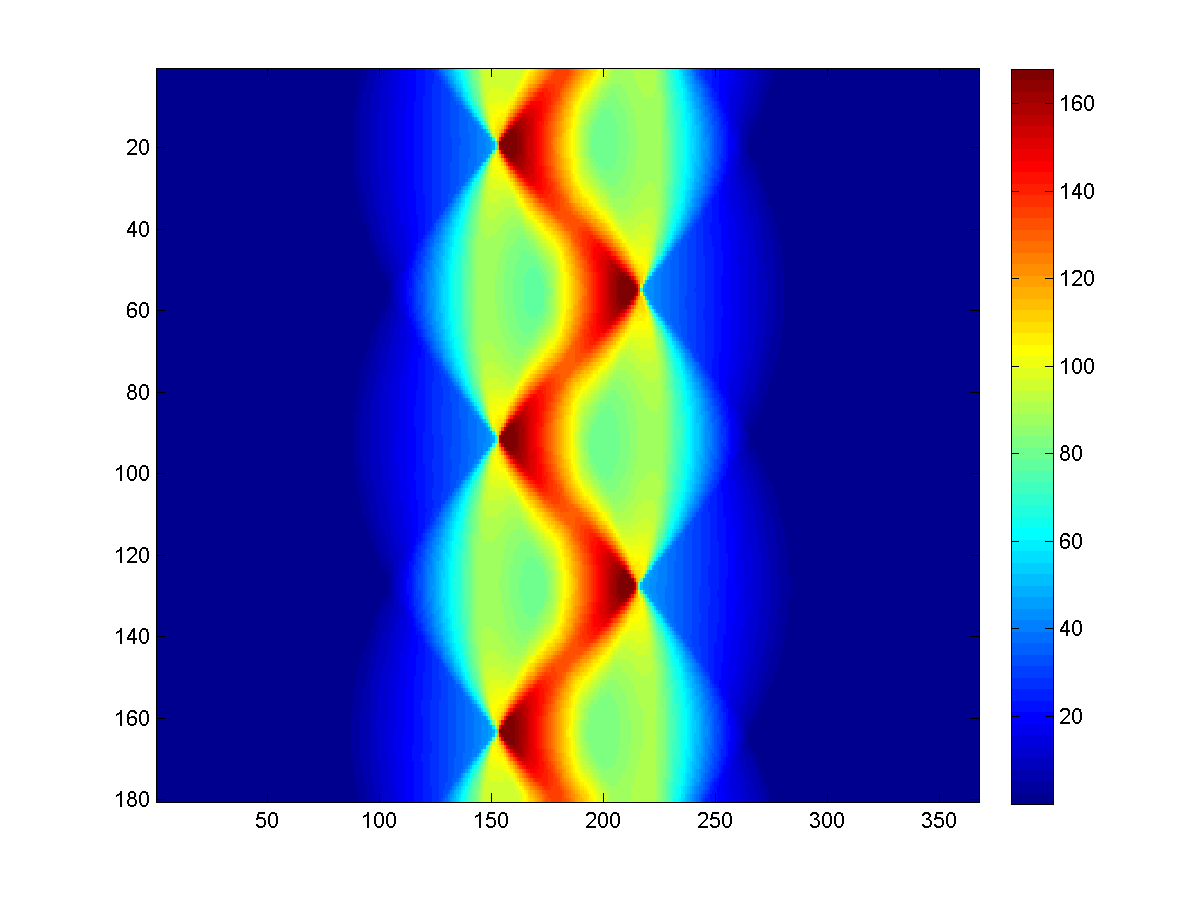}
\end{subfigure}
\begin{subfigure}[h]{3cm}
                \centering
                \caption{$\beta$=1}   \includegraphics[scale=0.15]{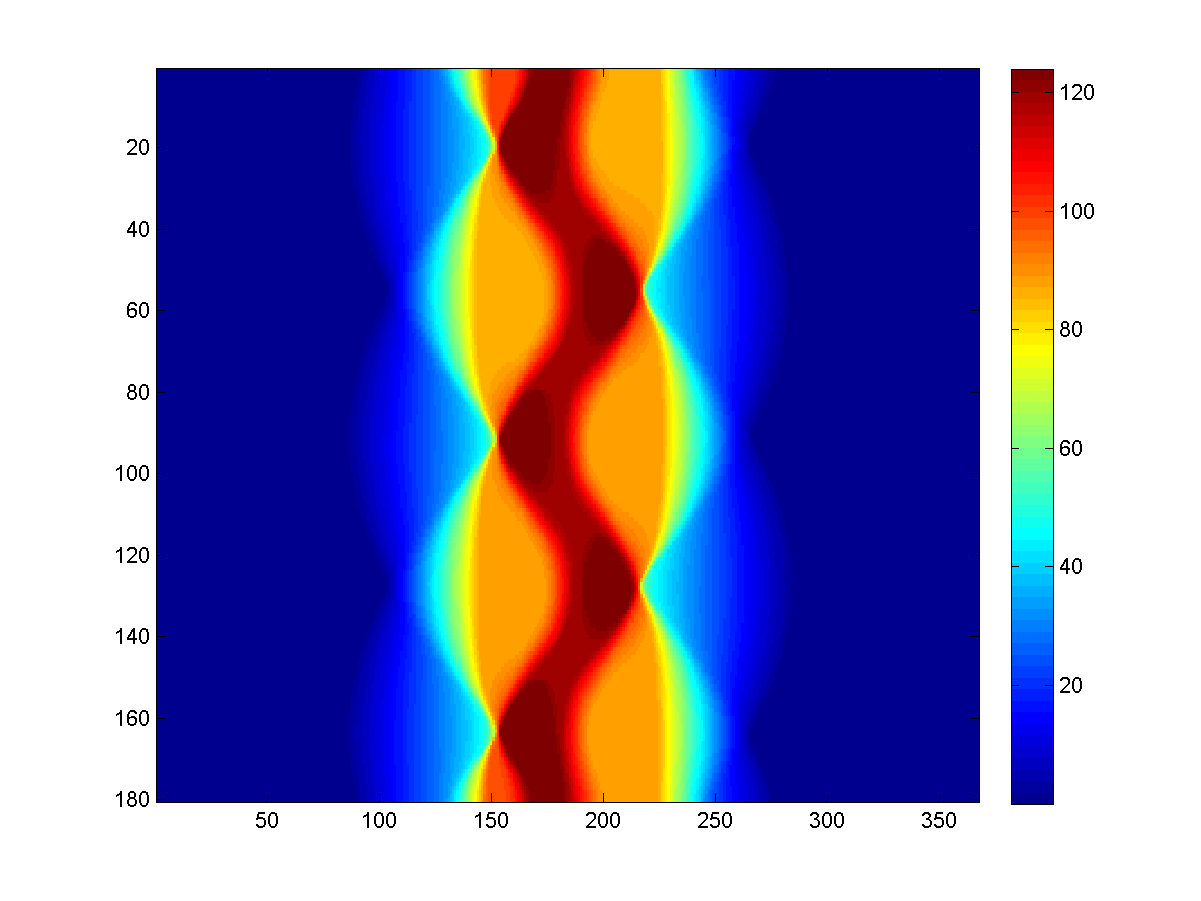}
\end{subfigure}
\begin{subfigure}[h]{3cm}
                \centering 
                \caption{$\beta$=10} \includegraphics[scale=0.15]{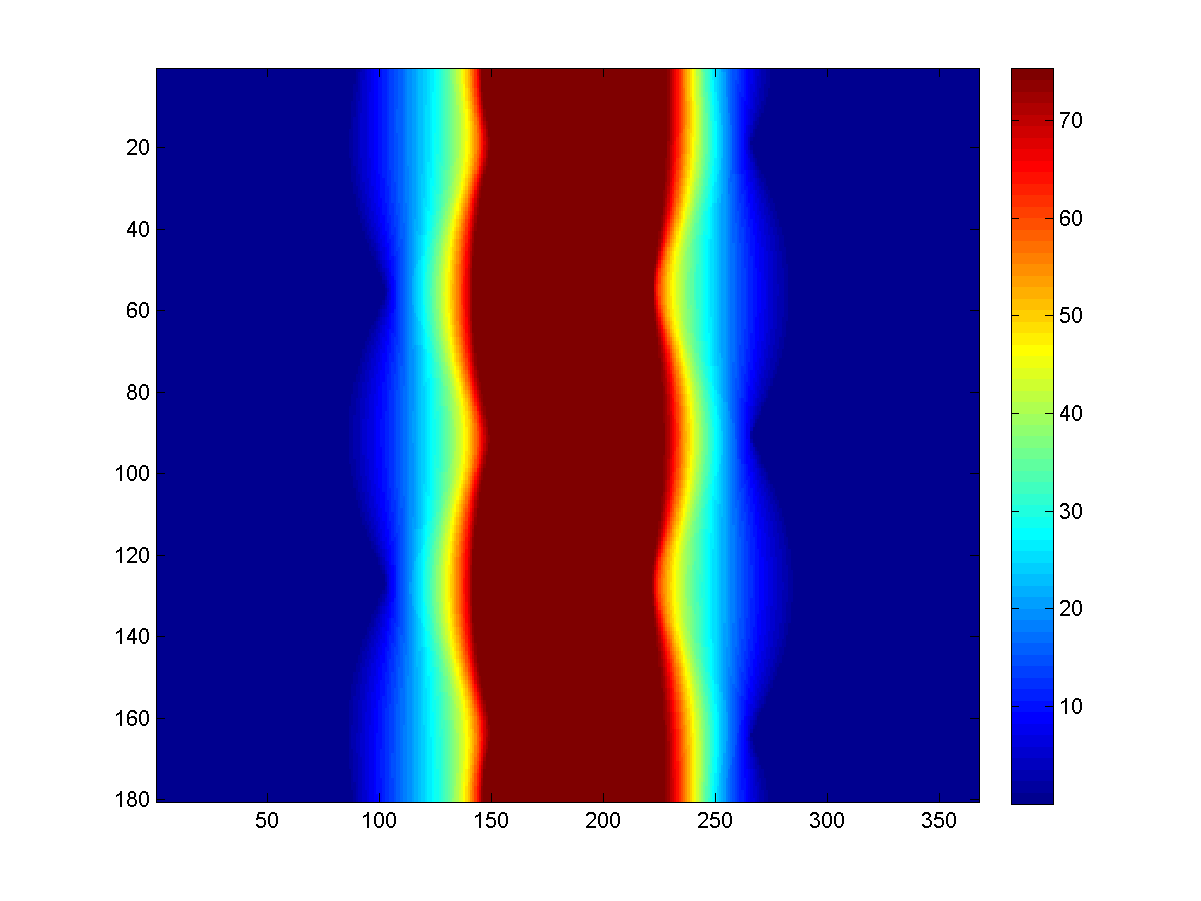}
\end{subfigure}
\begin{subfigure}[h]{3cm}
                \centering 
                \caption{$\beta$=50} \includegraphics[scale=0.15]{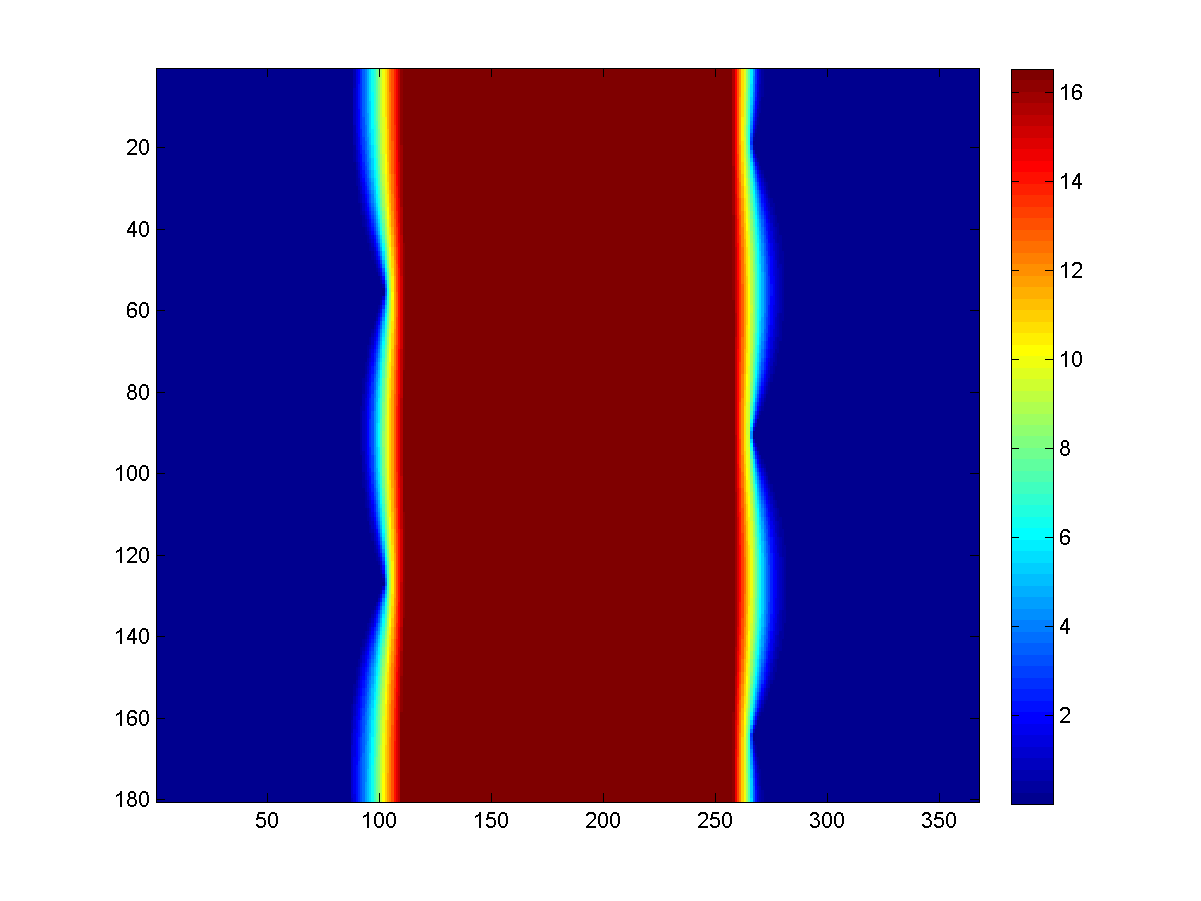}
\end{subfigure}
\begin{subfigure}[h]{3cm}
                \centering
 \includegraphics[scale=0.15]{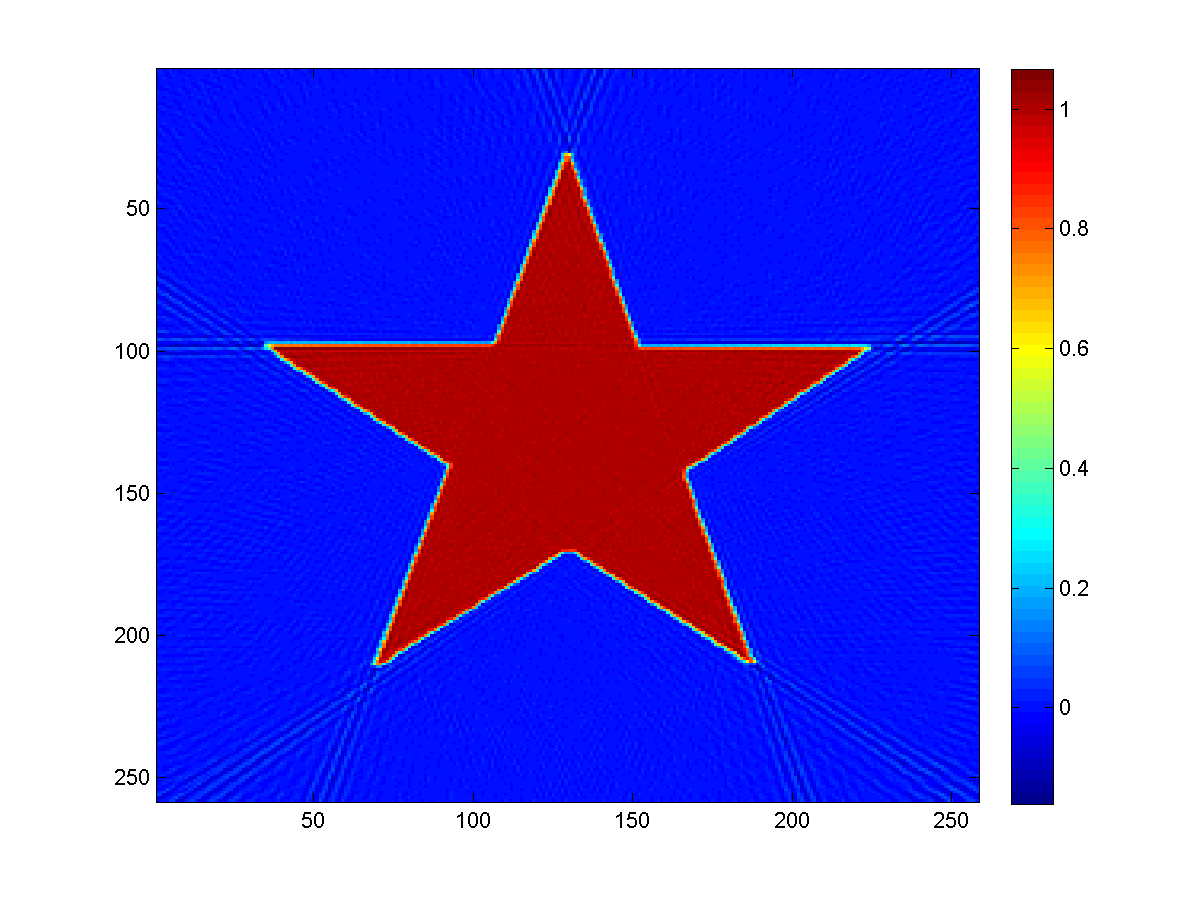}
\end{subfigure}
\begin{subfigure}[h]{3cm}
                \centering 
                \includegraphics[scale=0.15]{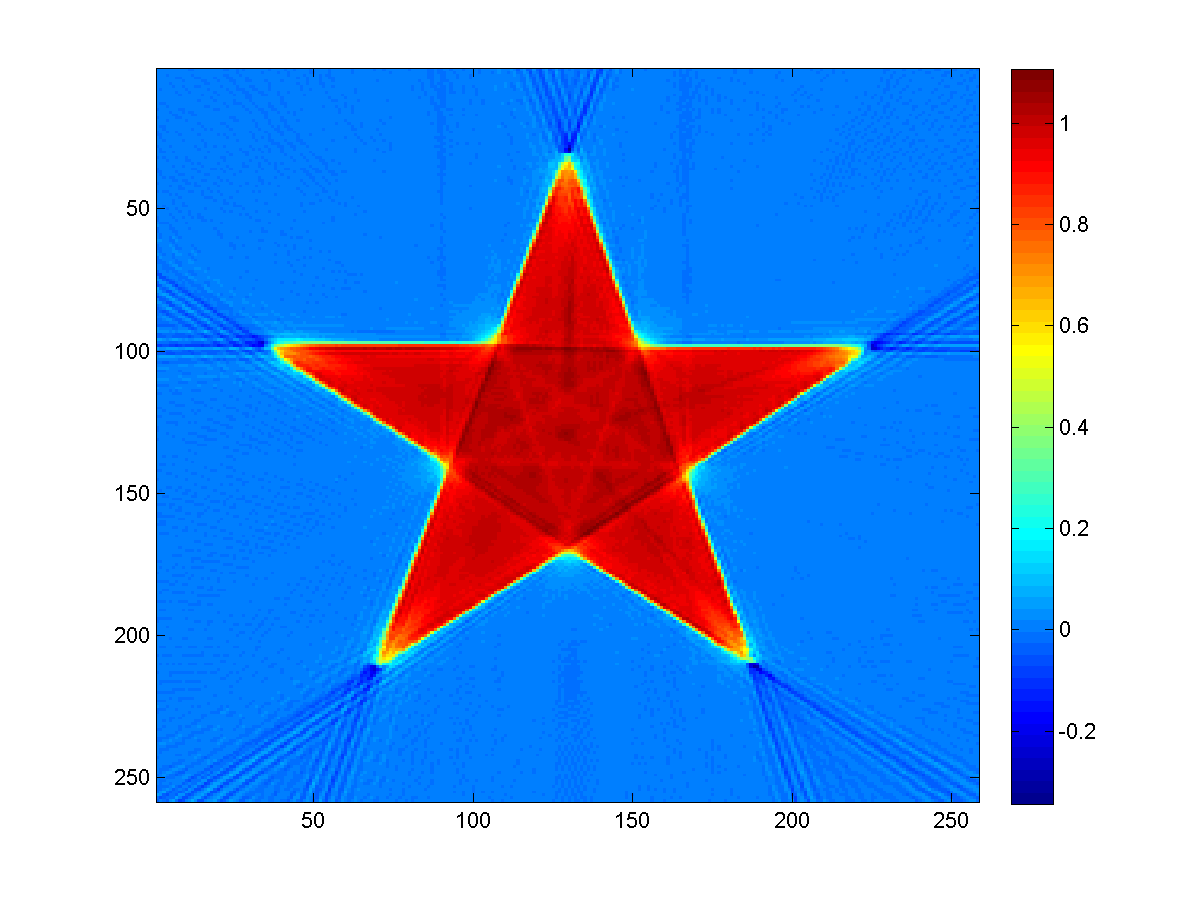}
\end{subfigure}
\begin{subfigure}[h]{3cm}
                \centering 
                \includegraphics[scale=0.15]{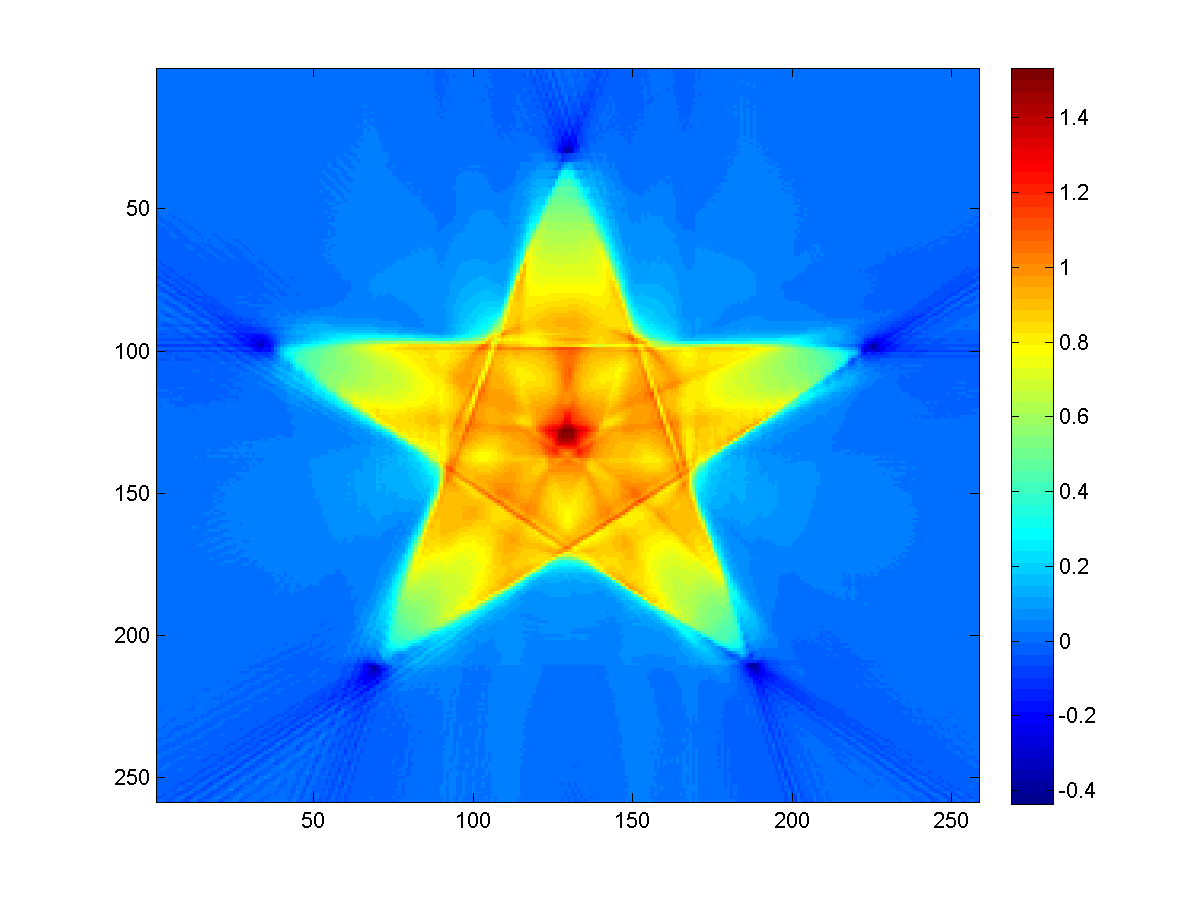}
\end{subfigure}
\begin{subfigure}[h]{3cm}
                \centering   \includegraphics[scale=0.15]{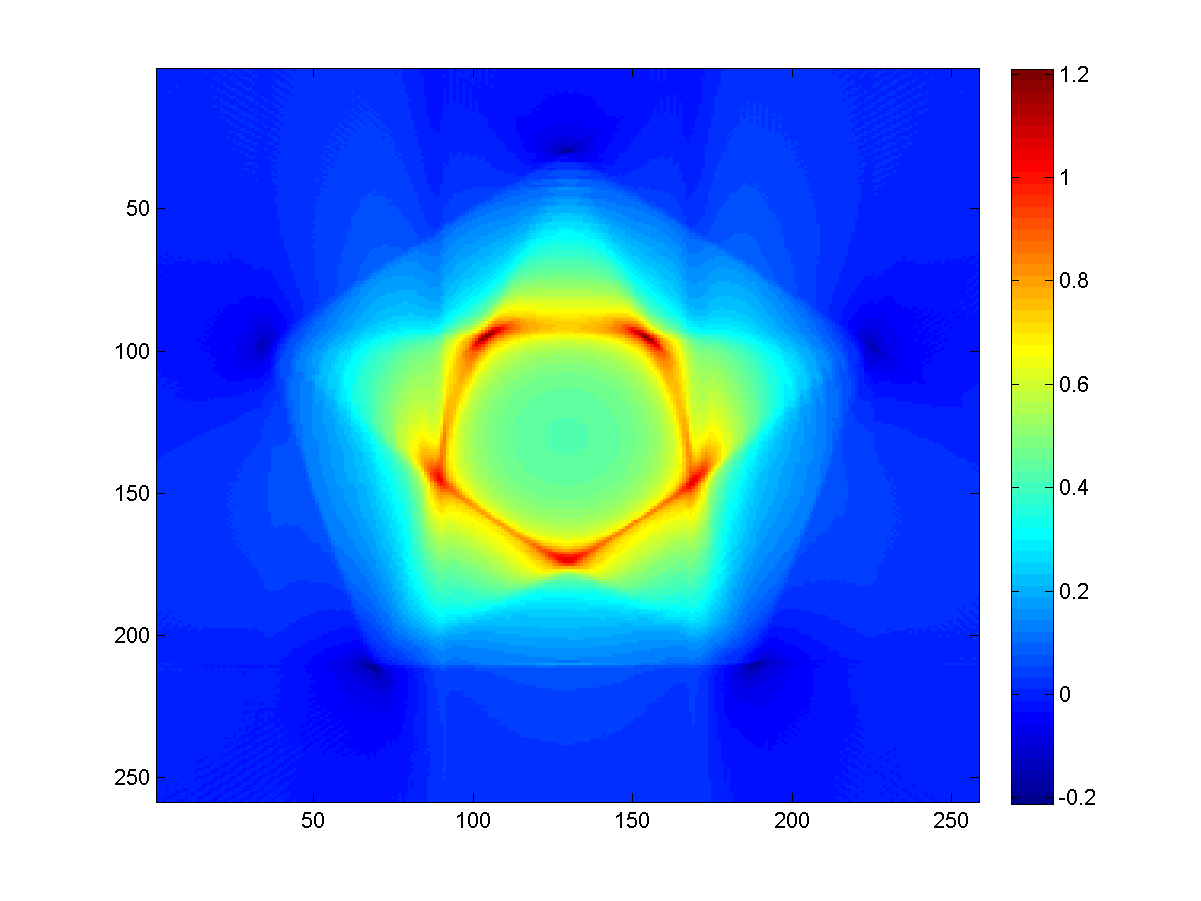}
\end{subfigure}
\begin{subfigure}[h]{3cm}
                \centering   \includegraphics[scale=0.15]{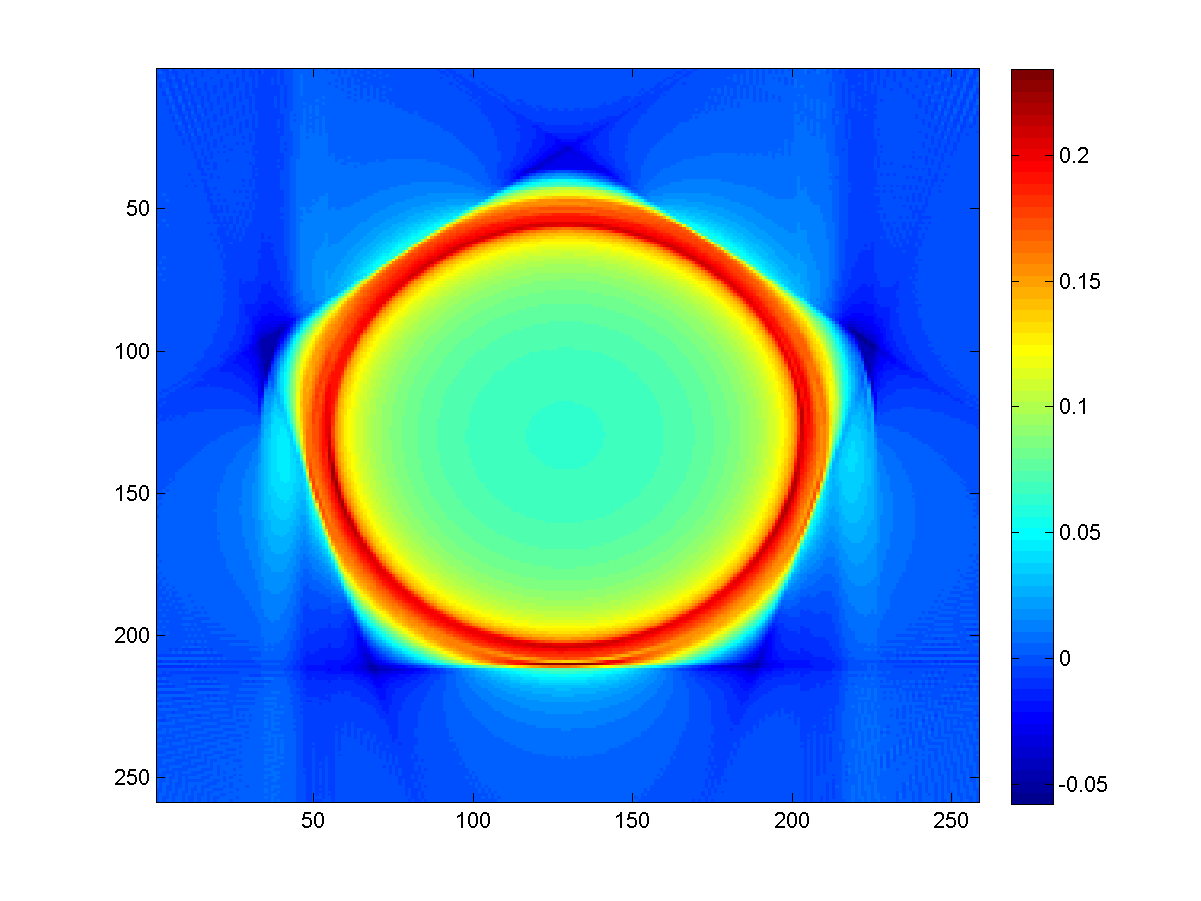}
\end{subfigure}
\caption{Star-shaped image of 5 corners: In Figures (a)-(d), we present the sinogram regularisation for increasing values of $\beta$ with the corresponding filtered backprojected image using MATLAB's built-in function \textit{iradon}.}
\label{sxima16i}
\end{center}
\end{figure}

The conclusion of this section is the motivation for the next section at the same time. Analysing the effect of total variation regularisation on the sinogram by considering its scale space and its effect on the reconstructed image we have seen in Figure \ref{sxima11b} -- \ref{sxima16i} the potential use of this method for the enhancement and detection of object boundaries. As we will see in the next section, this effect can be exploited for enhancing thin structures in images obtained from Radon measurements.

\subsection{Thin Structure Reconstruction}
\label{thinrecon}

In what follows, we discuss how total variation regularisation of the sinogram can improve the quality of the reconstruction in comparison with pure total variation regularisation of the image in the presence of \textit{thin} structures in the image. Our first example is a thin rectangular frame in Figure \ref{sxima17}. Similarly as in section \ref{petrecon}, we start by finding an optimal value of $\alpha$ with $\beta=0$, in terms of SNR. Then, we select a range of $\alpha$ values close to this optimal one and we allow strictly positive values for $\beta$. The noise that is added on the sinogram, is generated by MATLAB's \textit{imnoise} routine, with a $10^{12}$ scaling factor, see the beginning of section \ref{numerical} for more explanation. The test image that is shown in Figure \ref{sxima17} has 50 pixels width and 100 pixels length and the rectangular frame has a width of 2 pixels. In Figure \ref{sxima18i}, we first present some of the results obtained with pure total variation regularisation on the image, that is when $\beta=0$. As we increase the $\alpha$ parameter, we observe that the best SNR corresponds to $\alpha$=5 with SNR=19.9764. 
That is because for small values of $\alpha$ we observe that the large-scale structure of the object is still intact, with the cost that noise is still present in the reconstructed image, see Figures \ref{sxima18i} (\subref{sxima18i:1})-(\subref{sxima18i:3}). However, with higher values of $\alpha$ noise is further eliminated but at the expense of a significant loss of contrast and some unpleasant artifacts along the boundaries of the frame, see Figure \ref{sxima18i} (\subref{sxima18i:4})-(\subref{sxima18i:6}).

\begin{figure}[h!]
\begin{center}
\begin{subfigure}[h]{5cm}
                \centering
                \includegraphics[scale=0.2]{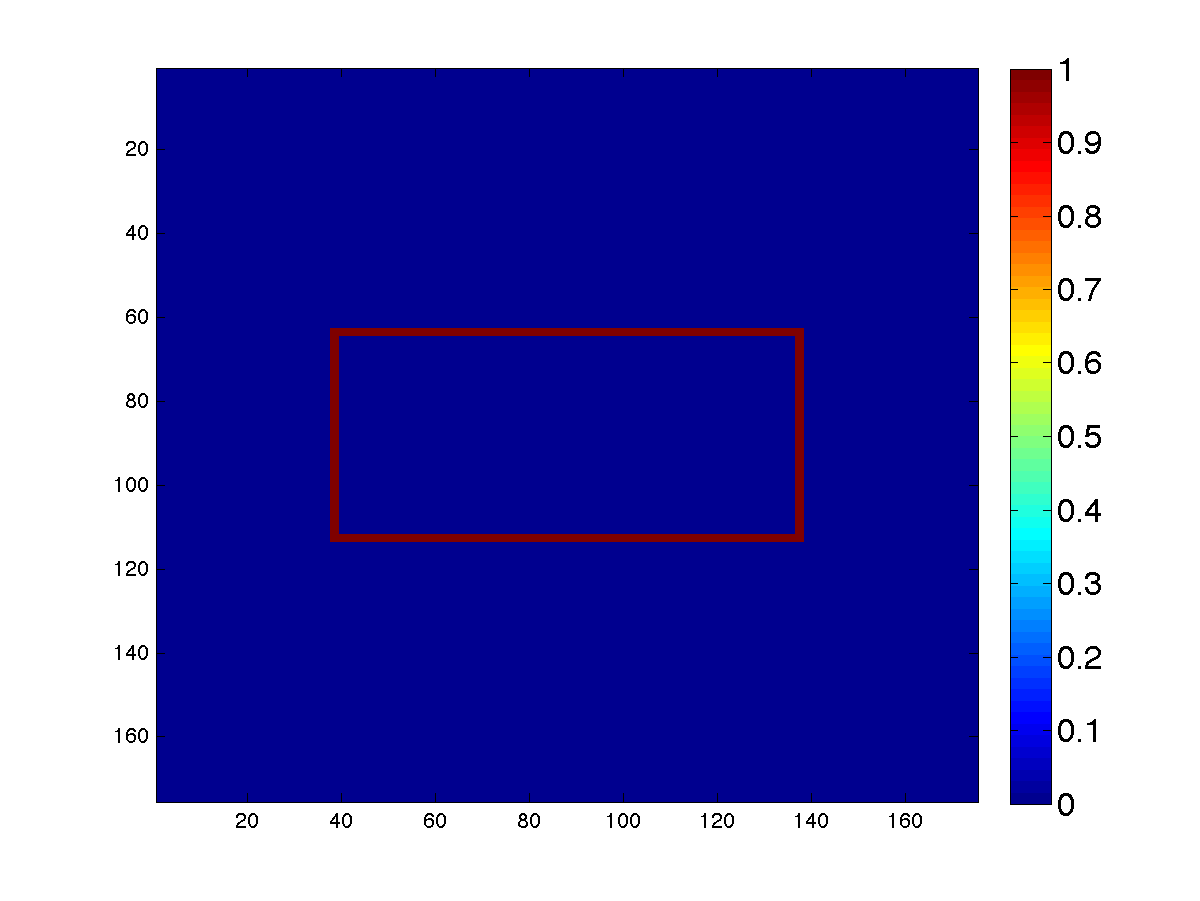}
		\caption{Thin Rectangle}
\end{subfigure}
\begin{subfigure}[h]{5cm}
                \centering
                \includegraphics[scale=0.2]{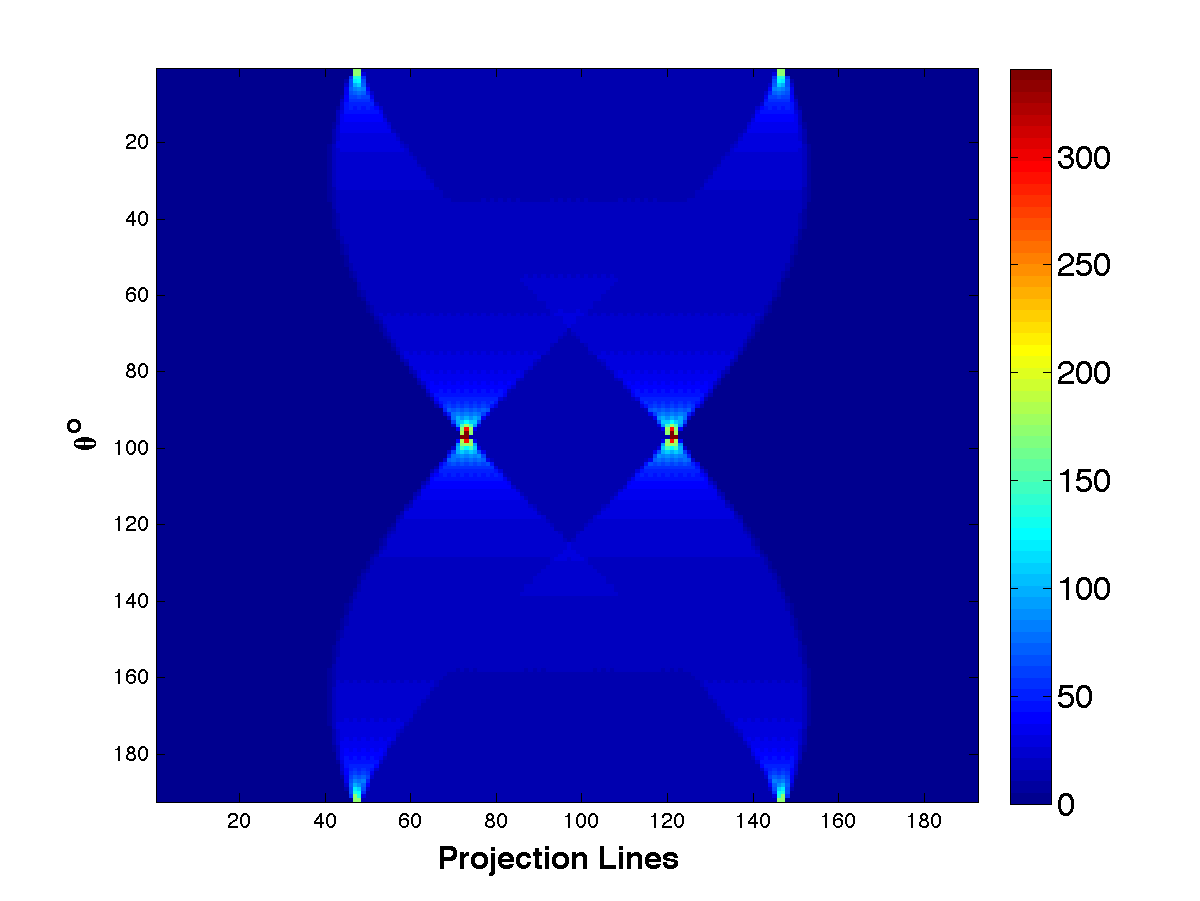}
		\caption{Noiseless Sinogram}
\end{subfigure}
\begin{subfigure}[h]{5cm}
                \centering
                \includegraphics[scale=0.2]{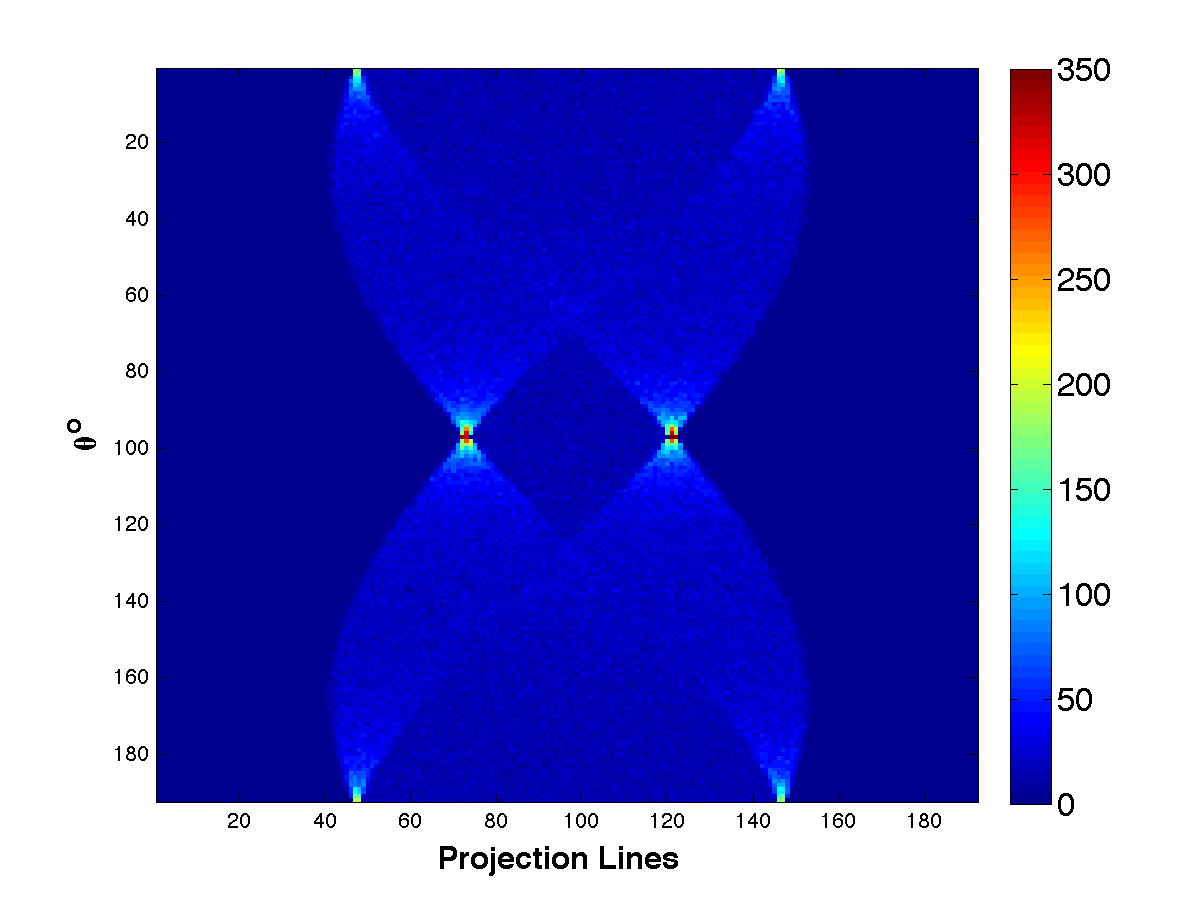}
		\caption{Low noise: SNR=14.9146}
\end{subfigure}
\caption{A thin rectangle of 50 pixels width and 100 pixels length with 2 pixels length on the boundaries. The corresponding noiseless and noisy sinograms with $10^{12}$ scaling factor in \it{imnoise}.}
\label{sxima17}
\end{center}
\end{figure}


\begin{figure}[h!]
\begin{center}
\begin{subfigure}[h]{5cm}
                \centering
                \includegraphics[scale=0.25]{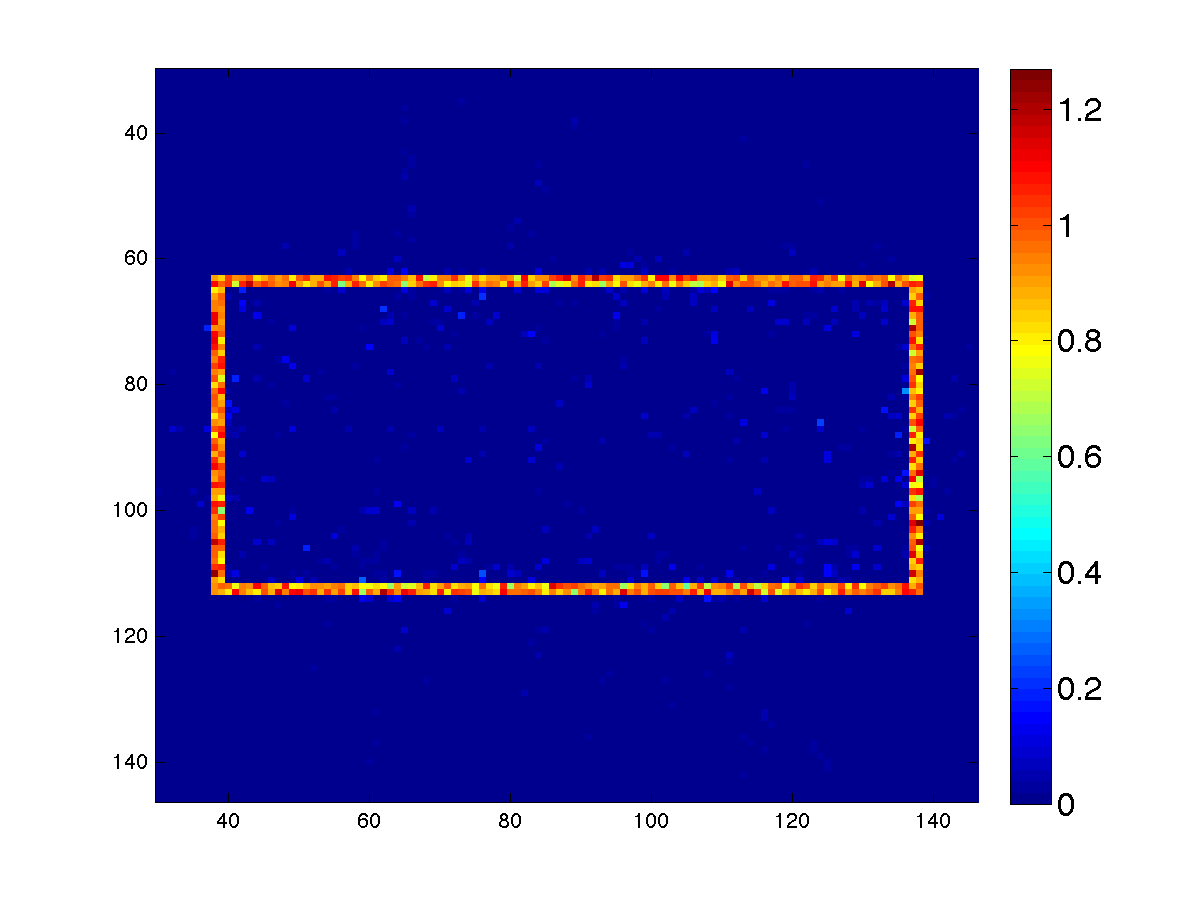}
		\caption{$\alpha=1$, $\beta=0$\\SNR=16.5399}
		\label{sxima18i:1}
\end{subfigure}
\begin{subfigure}[h]{5cm}
                \centering
                \includegraphics[scale=0.25]{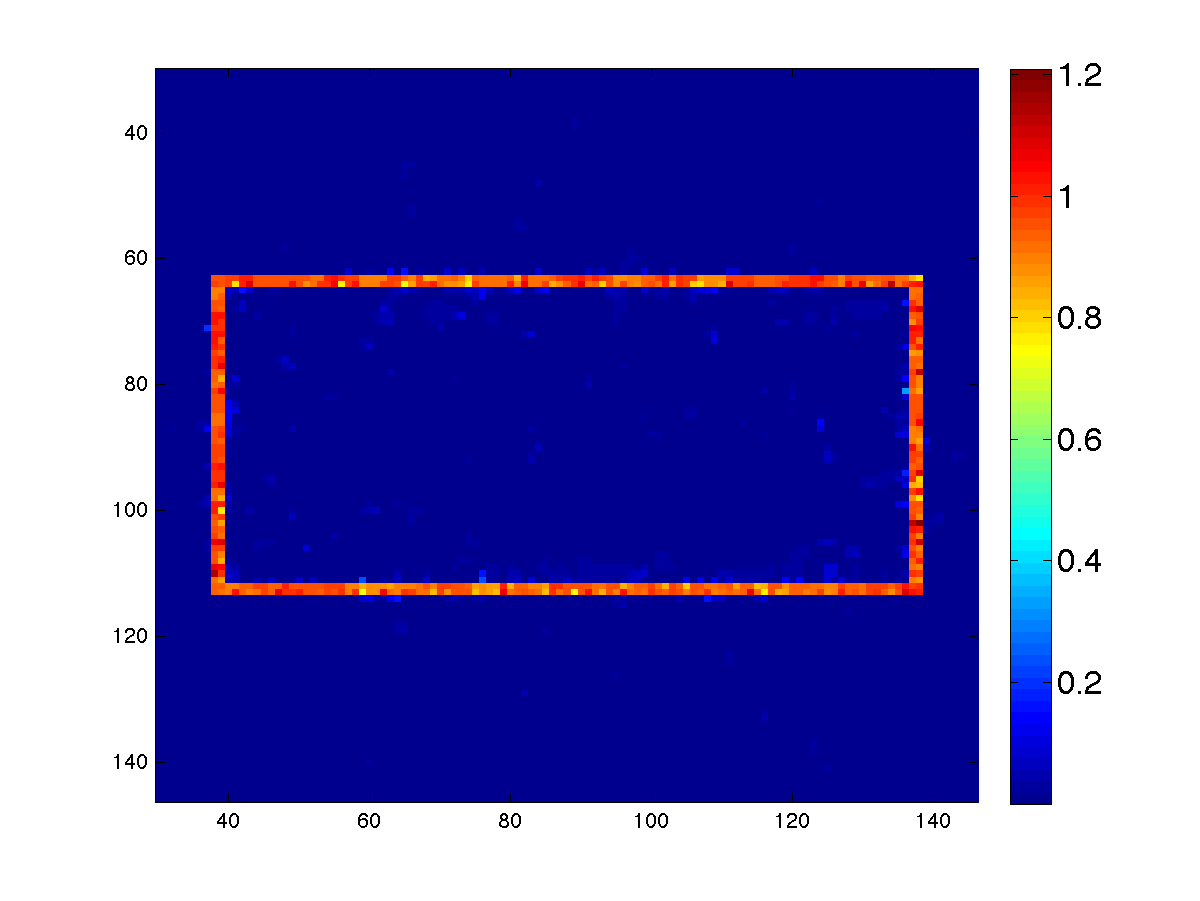}
		\caption{$\alpha=5$, $\beta=0$\\SNR=19.9764}
\end{subfigure}
\begin{subfigure}[h]{5cm}
                \centering
                \includegraphics[scale=0.25]{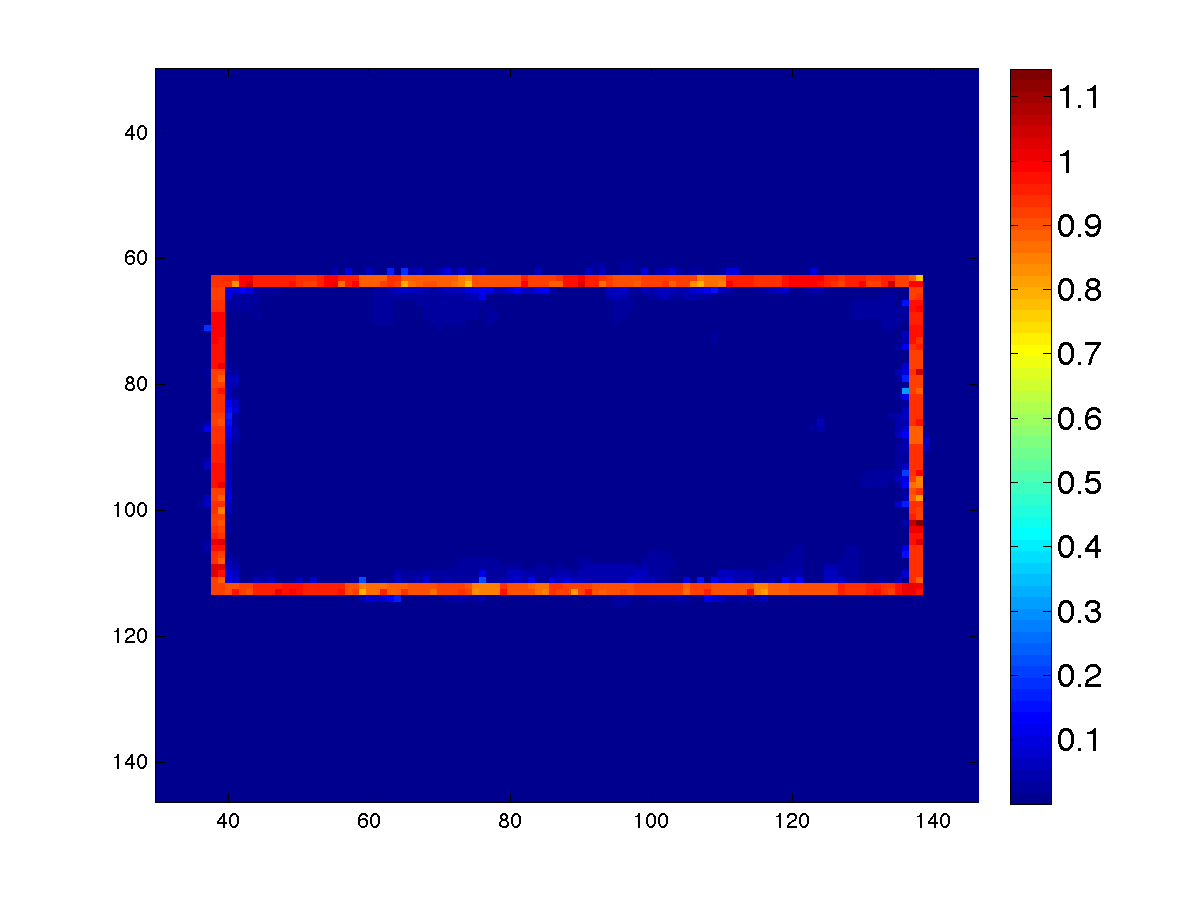}
		\caption{$\alpha=10$, $\beta=0$\\SNR=19.8700}
		\label{sxima18i:3}
\end{subfigure}
\begin{subfigure}[h]{5cm}
                \centering
                \includegraphics[scale=0.25]{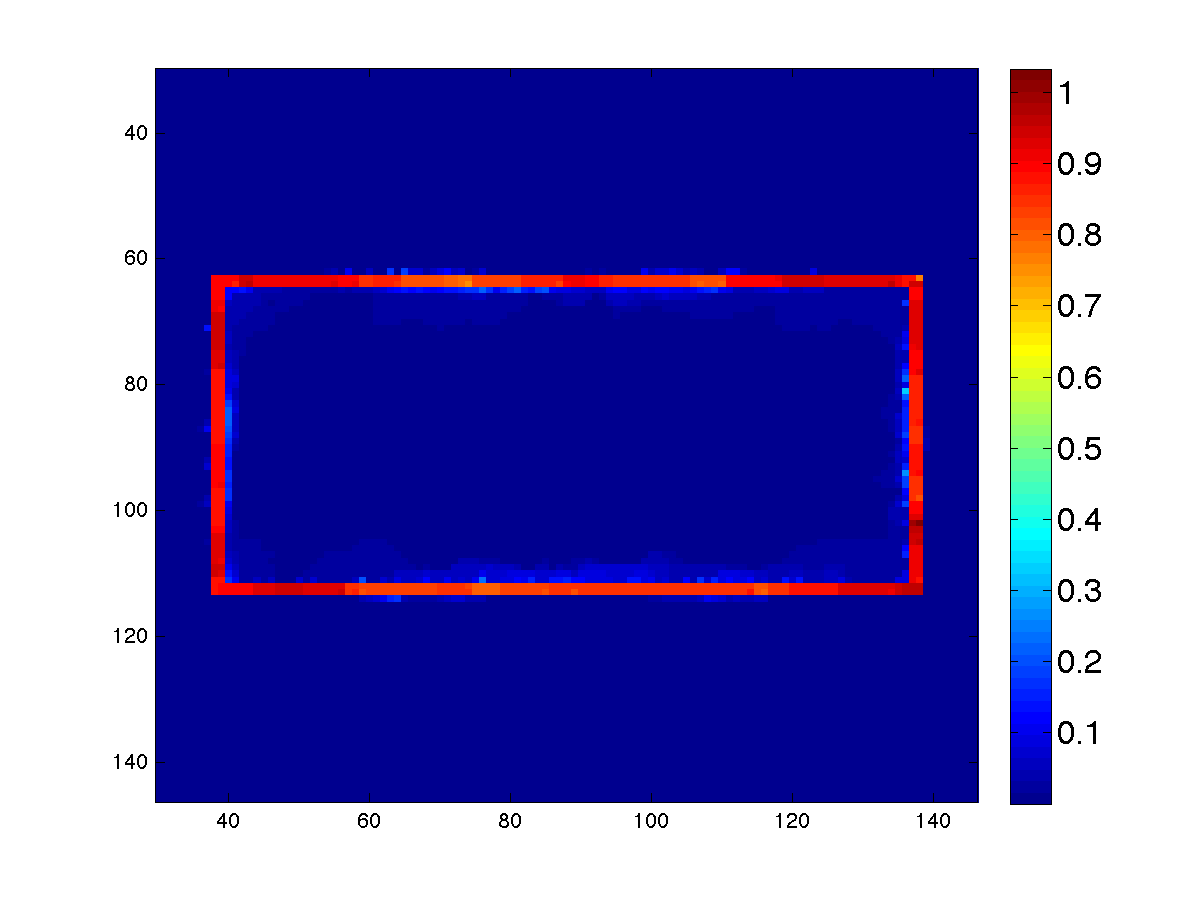}
		\caption{$\alpha=20$, $\beta=0$\\SNR=16.2678}
		\label{sxima18i:4}
\end{subfigure}
\begin{subfigure}[h]{5cm}
                \centering
                \includegraphics[scale=0.25]{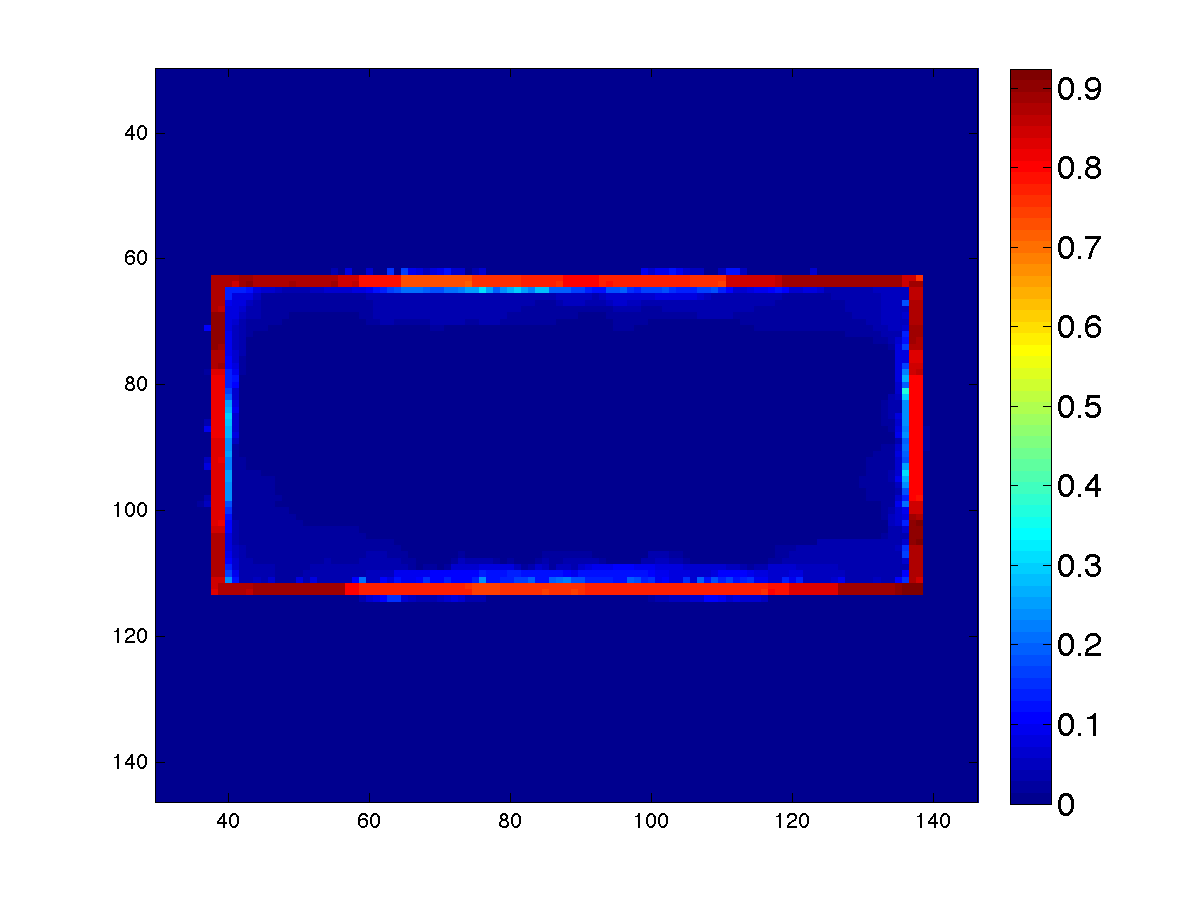}
		\caption{$\alpha=30$, $\beta=0$\\SNR=13.054}
\end{subfigure}
\begin{subfigure}[h]{5cm}
                \centering
                \includegraphics[scale=0.25]{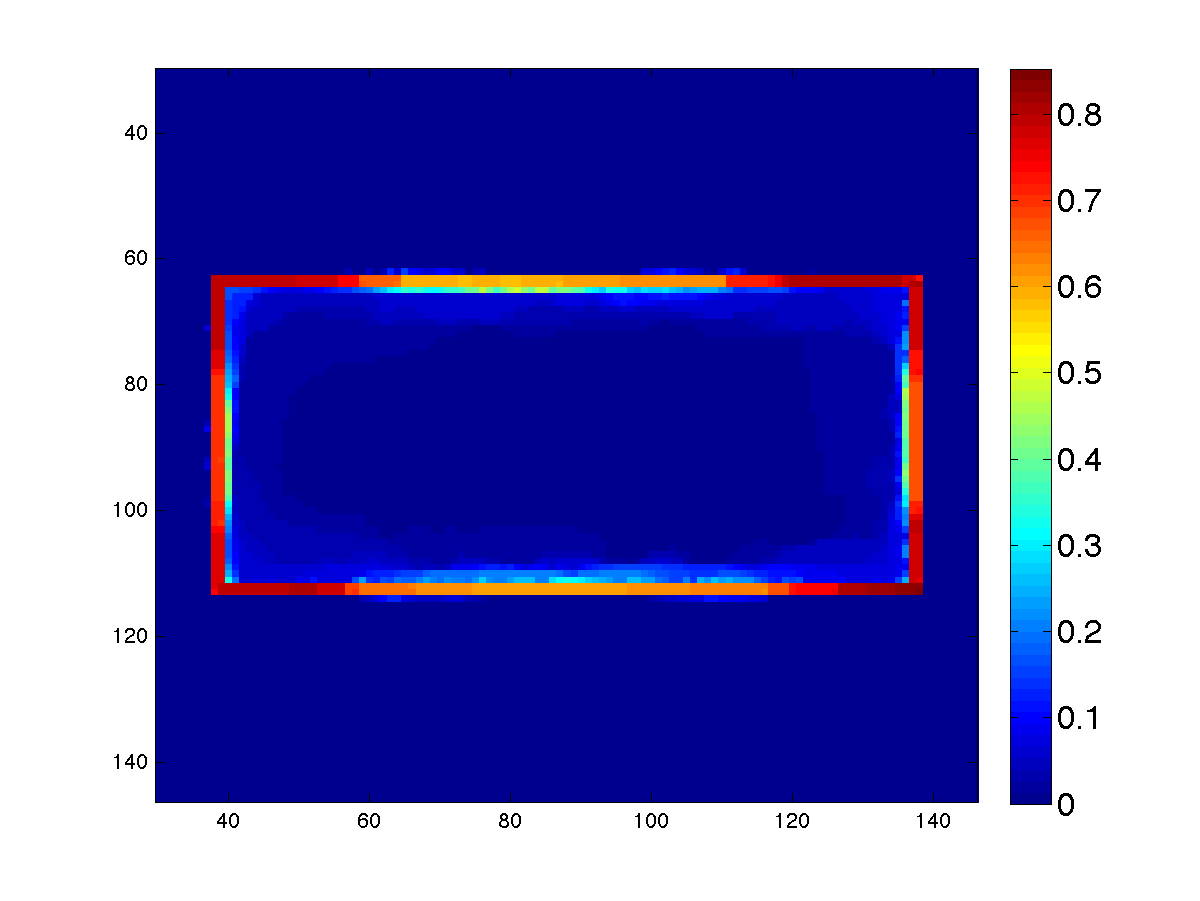}
		\caption{$\alpha=50$, $\beta=0$\\SNR=8.6456}
		\label{sxima18i:6}
\end{subfigure}
\caption{Thin Rectangle: Reconstruction without total variation regularisation on the sinogram and different parameters of $\alpha$. }
\label{sxima18i}
\end{center}
\end{figure}

\begin{table}[h!]
\begin{center}
\begin{tabular}{|c|c|c|c|c|c|c|}
\cline{3-7}
\multicolumn{2}{c|}{} & \multicolumn{5}{c|}{$\beta$} \\
\cline{3-7}
\multicolumn{2}{c|}{} & 0 & 0.005 & 0.01 & 0.05 & 0.1  \\
\hline
\multirow{7}{*}{$\alpha$} & 2 & 17.6798 & 18.0078 & 19.7238 & \textbf{24.5981} & 24.2978  \\
\cline{2-7}
& 3 & 18.6444 & 18.9855 & 20.6460 & 23.9028 & 24.2647 \\
\cline{2-7}
& 4 & 19.4269 & 19.7539 & 21.5305 & 23.9178 & 23.2860  \\
\cline{2-7}
& \textbf{5} & \textbf{19.9764} & 20.2979 & 21.7962 & 23.6466  & 22.8525   \\
\cline{2-7}
& 6 & 20.2583 & 20.5771 & 21.9057 & 23.2213 & 22.3440  \\
\cline{2-7}
& \textbf{7} & \textbf{20.4471} & 20.8665 & 21.8372 & 22.7554 & 21.8147  \\
\cline{2-7}
& 8 & 20.3511 & 20.3276 & 20.9859 & 22.2391 & 21.2477  \\
\cline{1-7}
\end{tabular}
\end{center}
\caption{Thin Rectangle: SNR with $\beta\neq0$.}
\label{thin_rec_SNR_not_0}
\end{table}

\begin{figure}[h!]
\begin{center}
\begin{subfigure}[h]{6cm}
                \centering
                \includegraphics[scale=0.3]{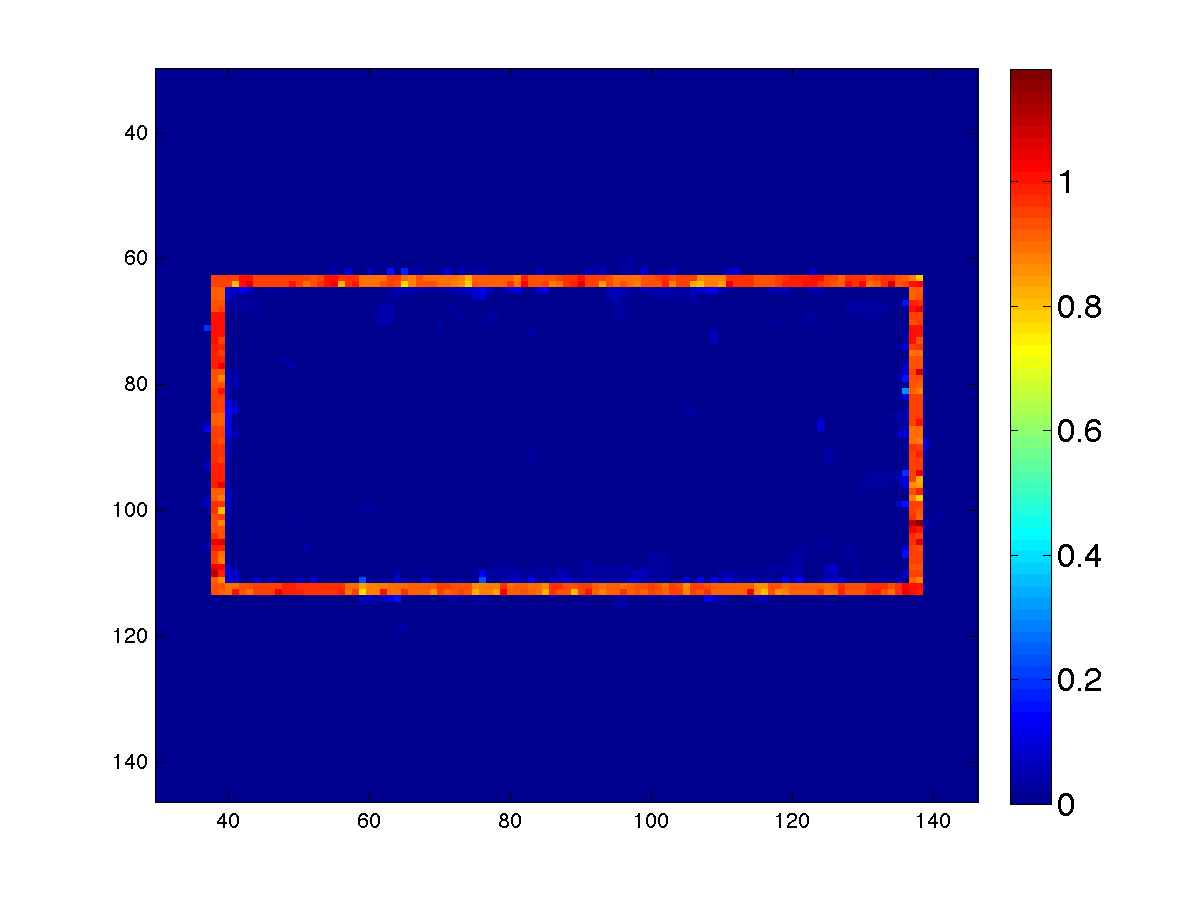}
								\caption{$\alpha=7$, $\beta=0$\\SNR=20.4471}
\end{subfigure}
\begin{subfigure}[h]{6cm}
                \centering
                \includegraphics[scale=0.3]{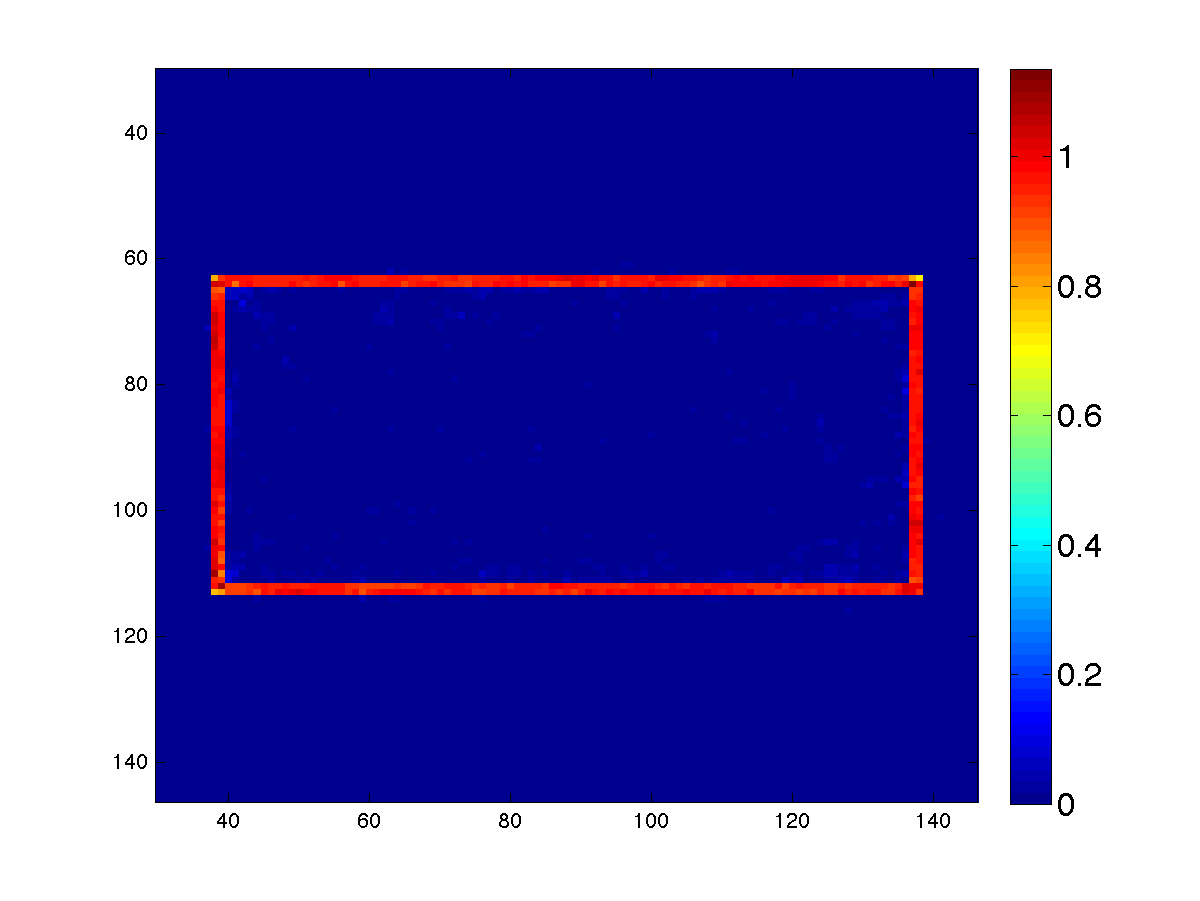}
								\caption{$\alpha=2$, $\beta=0.05$\\SNR=24.5981}
\end{subfigure}
\caption{Thin Rectangle: Best reconstructions with and without total variation regularisation on the sinogram as reported in Table \ref{thin_rec_SNR_not_0}.}
\label{sxima18ii}
\end{center}
\end{figure}

If we switch on total variation regularisation on the sinogram, that is taking $\beta >0$, we obtain results which are greatly improved both in terms of the SNR of the reconstructed images but also -- visually -- in terms of finding the right balance of eliminating the noise and accurately preserving the thin structures, see Table \ref{thin_rec_SNR_not_0} and Figure \ref{sxima18ii}. This observation is confirmed by a second example of an image of two thin straight lines which cross, compare Figure \ref{sxima19a}. The width of the thin lines is 3 pixels. The length of the horizontal line is 121 pixels and of the vertical line is 100 pixels. The noise, added on the sinogram, is generated with the same scaling factor of $10^{12}$ as before. Again, we observe that for positive values of $\beta$, we obtain much better reconstructions with almost all noise eliminated while keeping the boundaries of the thin structures intact, see Figure \ref{sxima19}. 

\begin{figure}[h!]
\begin{center}
\begin{subfigure}[h]{5cm}
                \centering
                \includegraphics[scale=0.25]{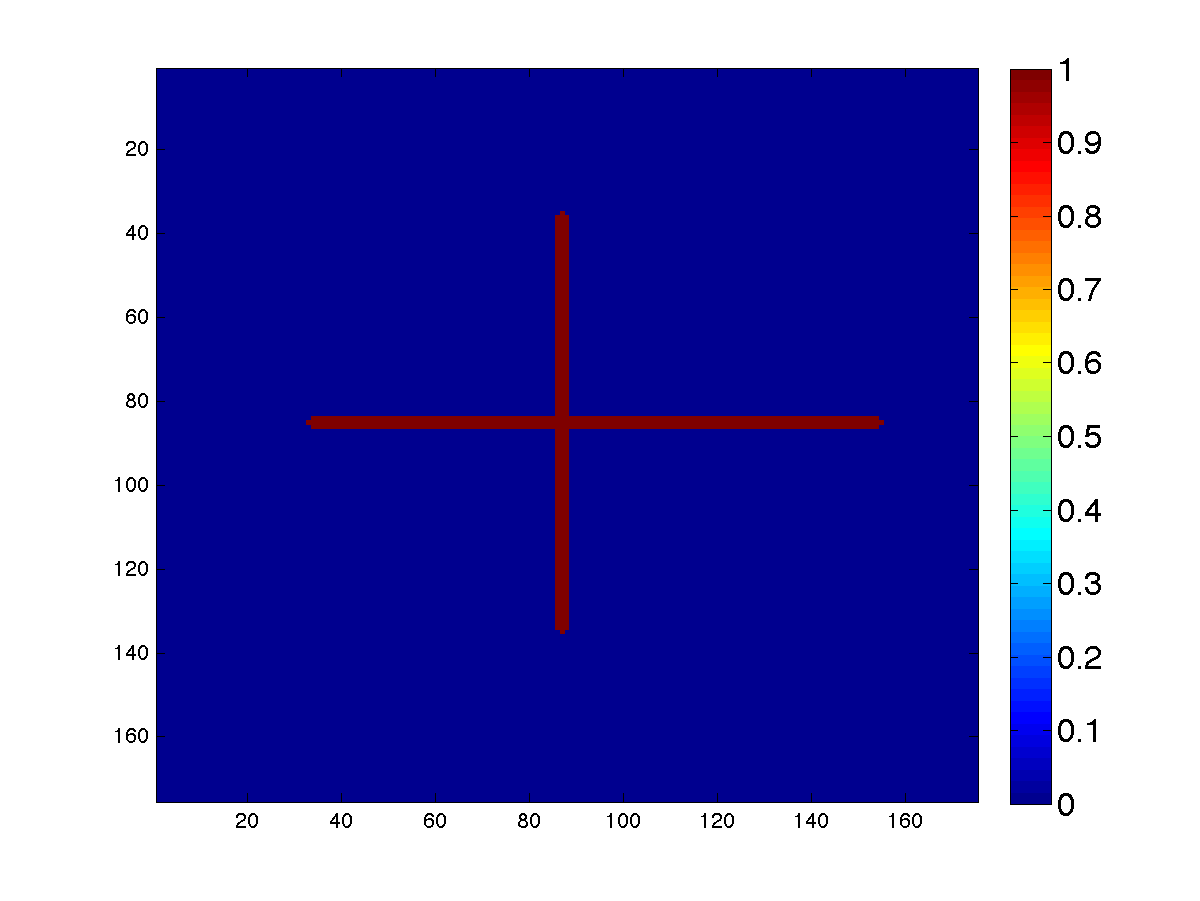}
		\caption{Cross}
\end{subfigure}
\begin{subfigure}[h]{5cm}
                \centering
                \includegraphics[scale=0.25]{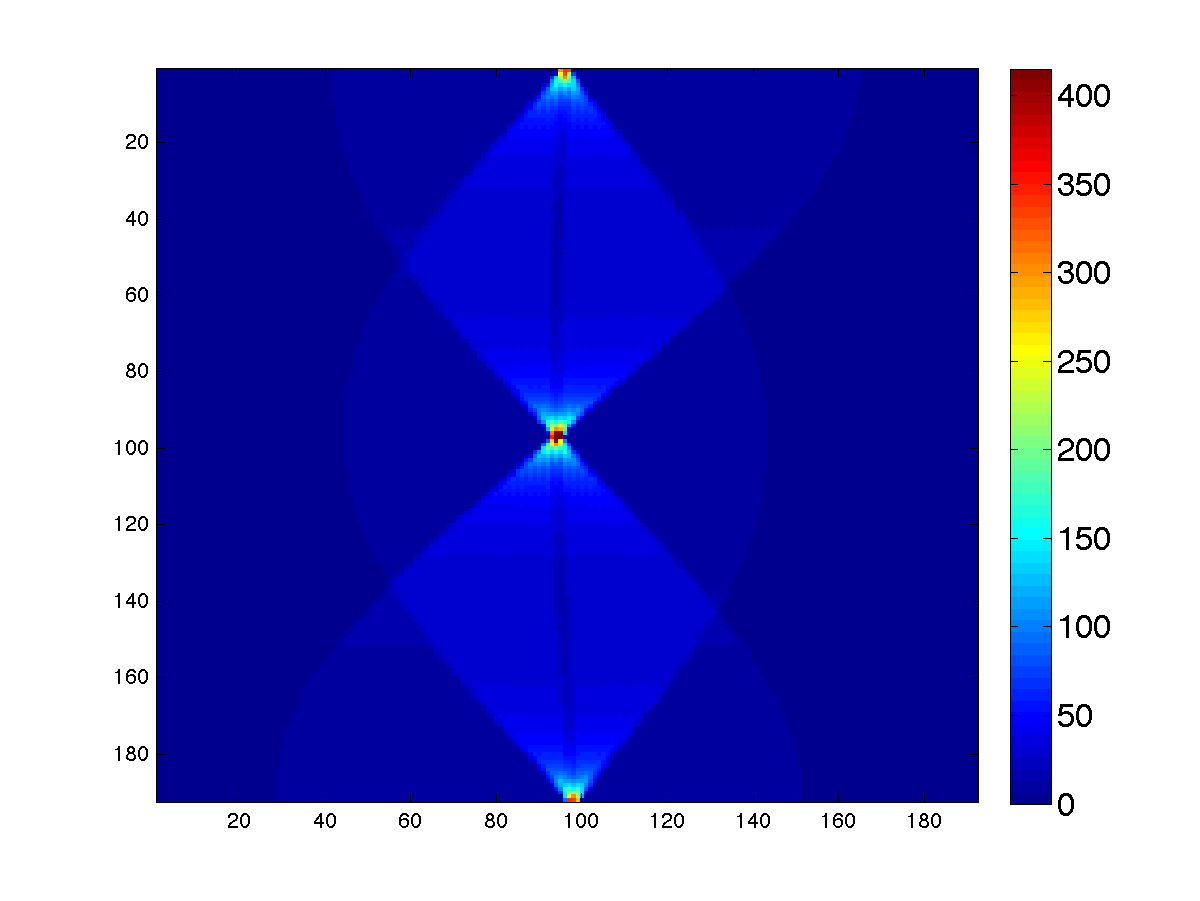}
		\caption{Sinogram}					
\end{subfigure}
\begin{subfigure}[h]{5cm}
                \centering
                \includegraphics[scale=0.25]{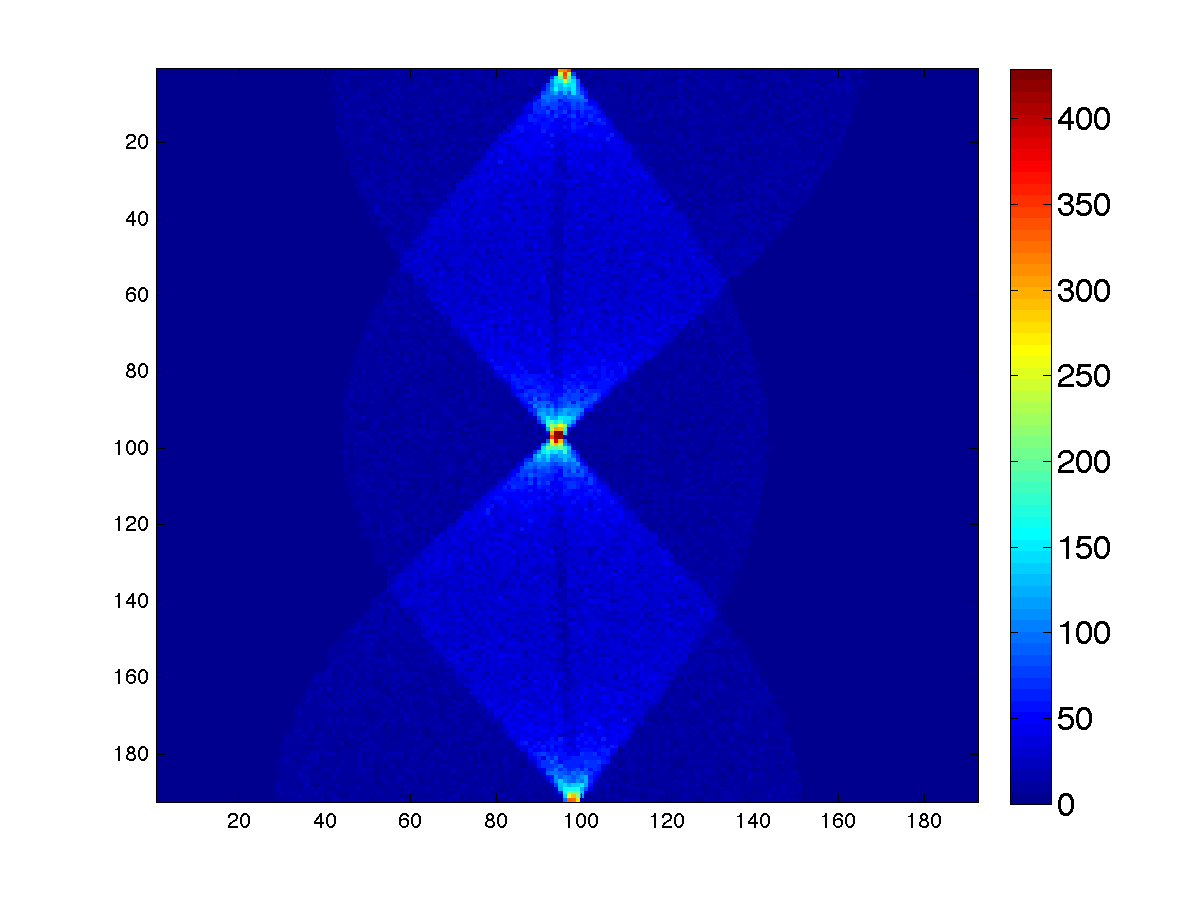}
		 \caption{SNR=16.1538}					
\end{subfigure}
\end{center}
\caption{Test image of two thin crossing lines and its noiseless and noisy sinograms respectively.}
\label{sxima19a}
\end{figure}

\begin{figure}[h!]
\begin{center}
\begin{subfigure}[h]{7cm}
                \centering
                \includegraphics[scale=0.35]{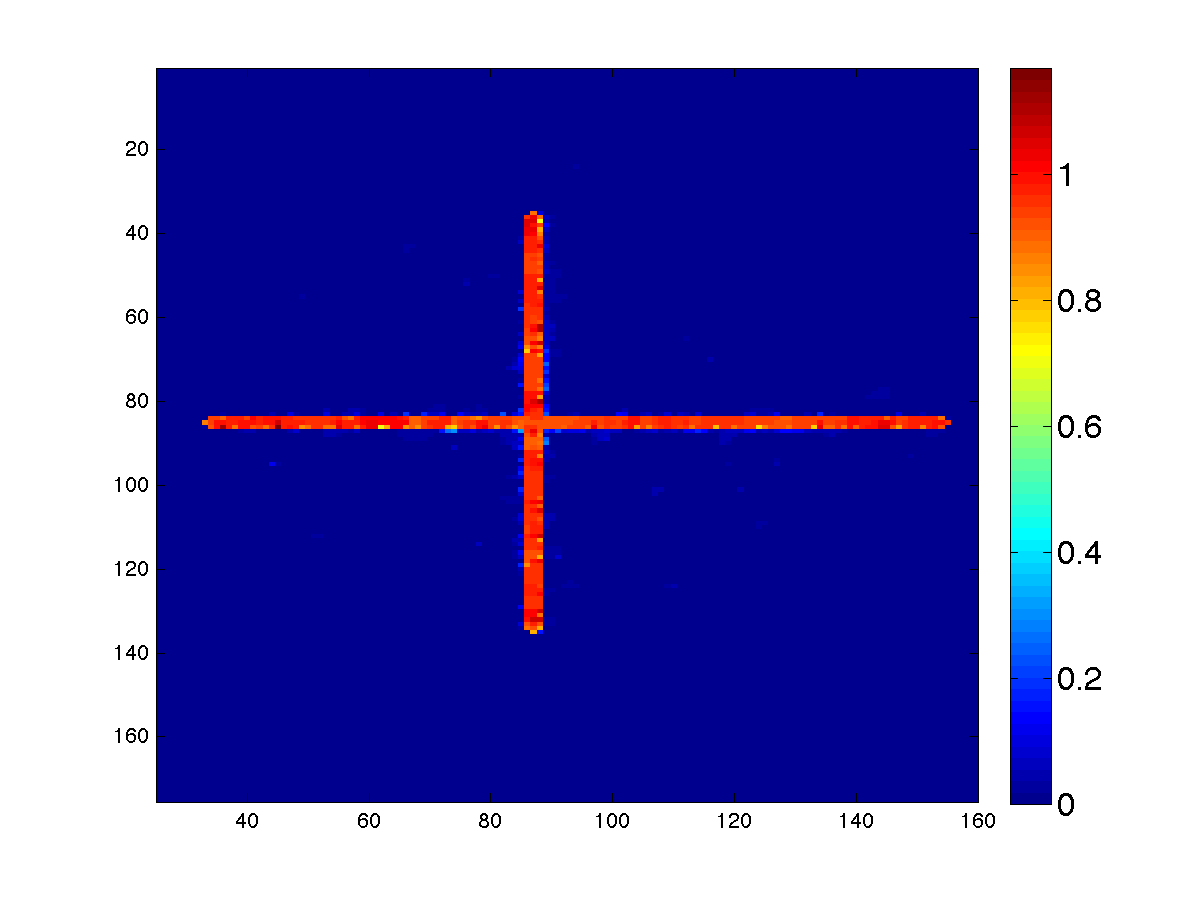}
								\caption{$\alpha=7$, $\beta=0$\\SNR=20.6859}
\end{subfigure}
\begin{subfigure}[h]{7cm}
                \centering
                \includegraphics[scale=0.35]{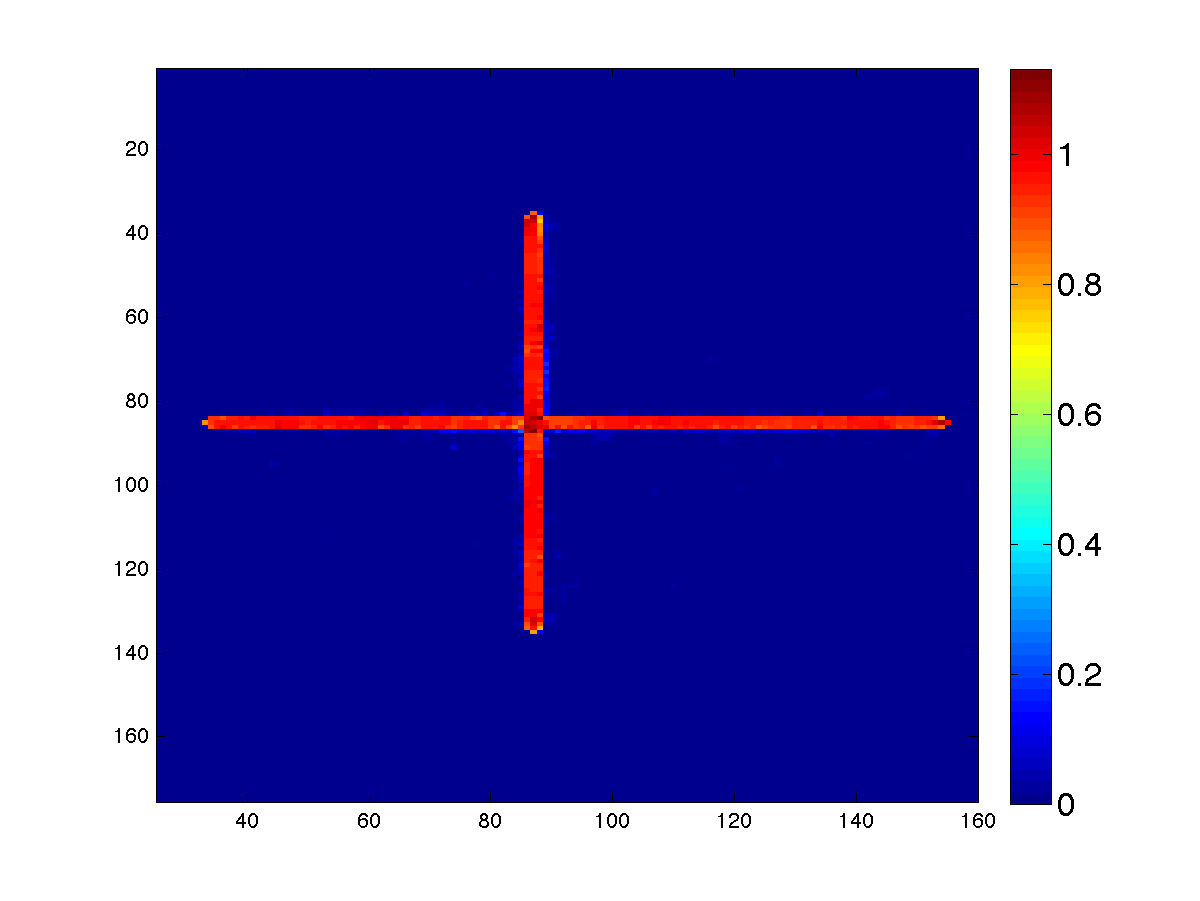}
								\caption{$\alpha=5$, $\beta=0.05$\\SNR=22.8333}
\end{subfigure}
\end{center}
\caption{Reconstruction for the noisy sinogram in Figure \ref{sxima19a} that correspond to the best SNR for both cases of $\beta$.}
\label{sxima19}
\end{figure}

%



\begin{figure}[h!]
                \centering
                \includegraphics[scale=0.4]{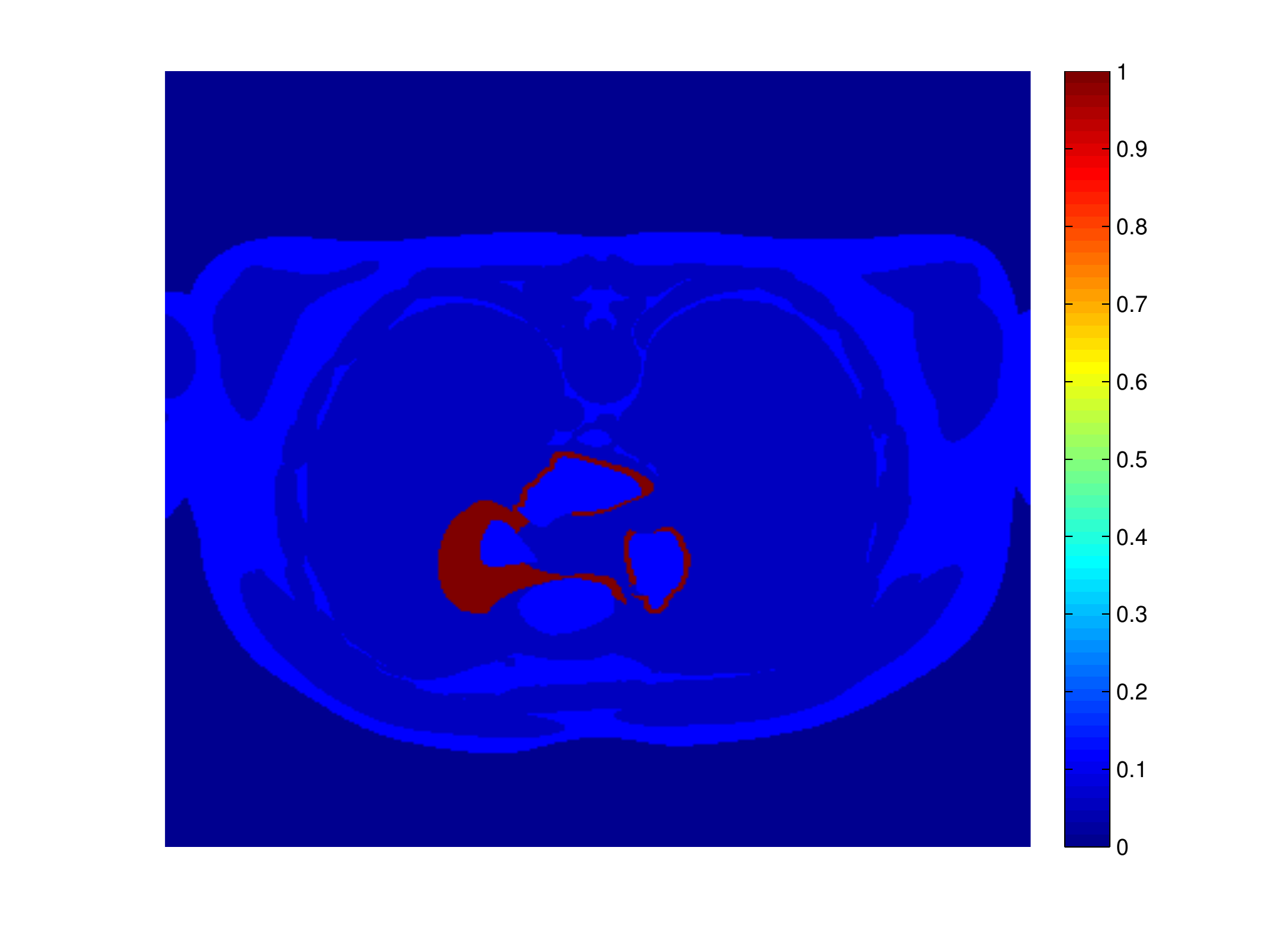}
                \caption{XCAT phantom}
                 \label{xcat1}
\end{figure}

We also apply our method to a more realistic PET phantom for visualising activity of the human heart. The XCAT phantom is a 3D phantom. For our purpose we used one z-slice through the centre of the phantom which represents a transverse plane view of the human body, see Figure \eqref{xcat1}. In particular, we can see the activity of the heart through the myocardium (the muscle surrounding the heart) in red. We focus on regions where thin structures are observed, see Figure \eqref{xcat2:right}-\eqref{xcat2:upper} and add the usual level of Poisson noise to their corresponding sinograms, see Figure  \eqref{xcat2:right_sinogram}-\eqref{xcat2:upper_sinogram}. In Figures \eqref{xcat:right1}-\eqref{xcat:upper2} we present our best reconstructions for these two different data-regions in terms of the SNR values for both cases of with and without sinogram regularisation. It is obvious that the best reconstructions are achieved when there is no regularisation on the sinogram. That is because for increasing values of $\beta$ a smoothing on the originally blocky boundaries is enforced and hence the SNR value is reduced. Indeed, as we show in the following experiments this is only true if the initial data that we start our experiments with is of low resolution and the thin structures have \emph{blocky} instead of smooth boundaries. If we change our experiment to the consideration of a high resolution version of the XCAT phantom with thin structures as in Figures \eqref{xcat2:right}-\eqref{xcat2:upper} but with medically more realistic smooth boundaries, the positive effect of the TV sinogram regularisation can be observed. As it is expected, regularising only on the image space creates a rather unpleasant staircasing effect along the boundaries which is clearly eliminated when we combine the regularisation on both spaces, see Figure \ref{xcat4}. Indeed, a significant increase of the SNR when turning on the TV regularisation on the sinogram ($\beta>0$) can be observed. 


\begin{figure}[h!]
\begin{center}
\begin{subfigure}[h]{7cm}
                \centering
                \includegraphics[scale=0.4]{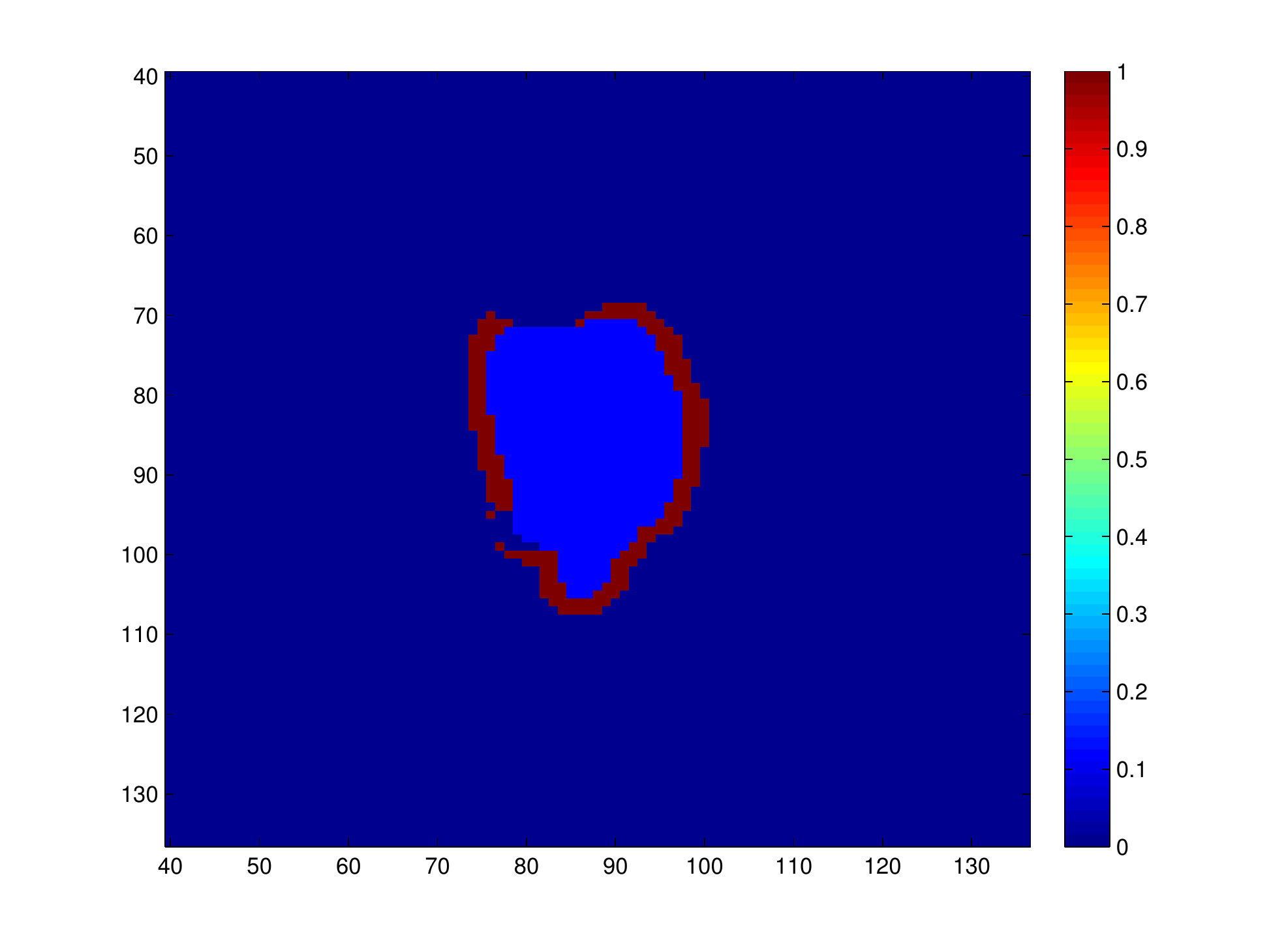}
		\caption{Zoom in}	
		\label{xcat2:right}				
\end{subfigure}
\begin{subfigure}[h]{7cm}
                \centering
                \includegraphics[scale=0.4]{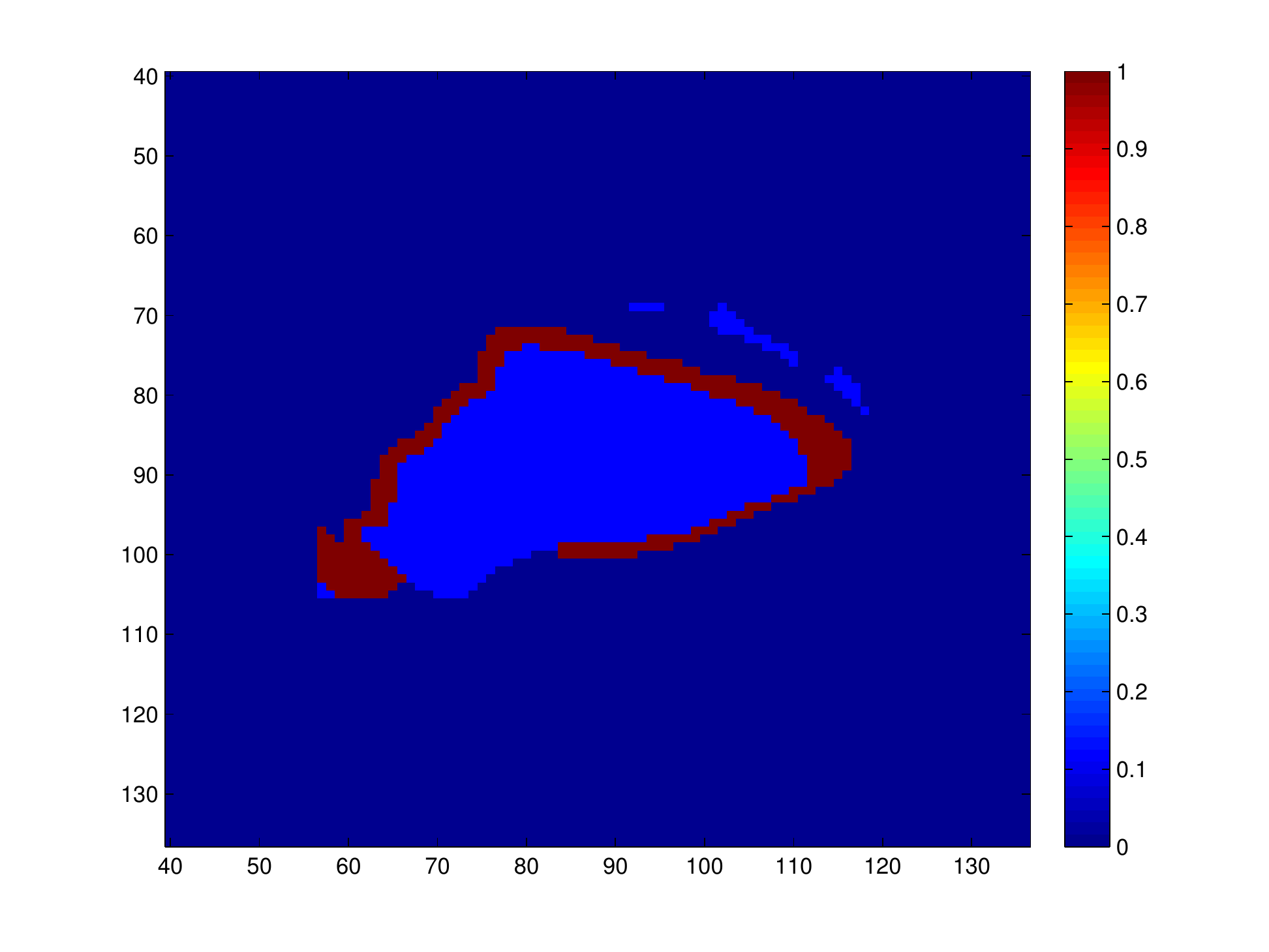}
		\caption{Zoom in}	
		\label{xcat2:upper}						
\end{subfigure}\\
\begin{subfigure}[h]{7cm}
                \centering
                 \includegraphics[scale=0.4]{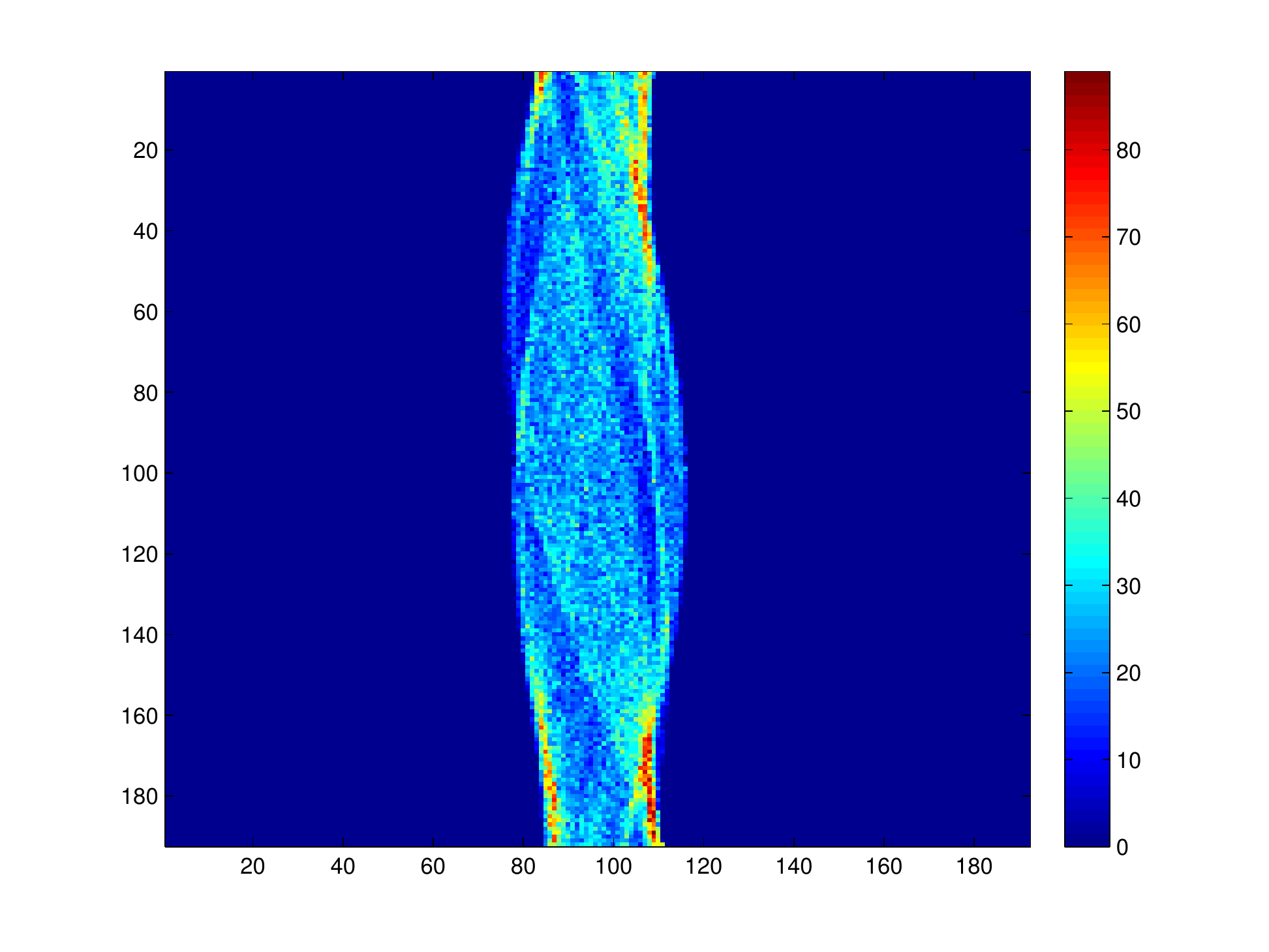}
		\caption{Noisy sinogram of (a)}
		\label{xcat2:right_sinogram}
\end{subfigure}
\begin{subfigure}[h]{7cm}
                \centering
                \includegraphics[scale=0.4]{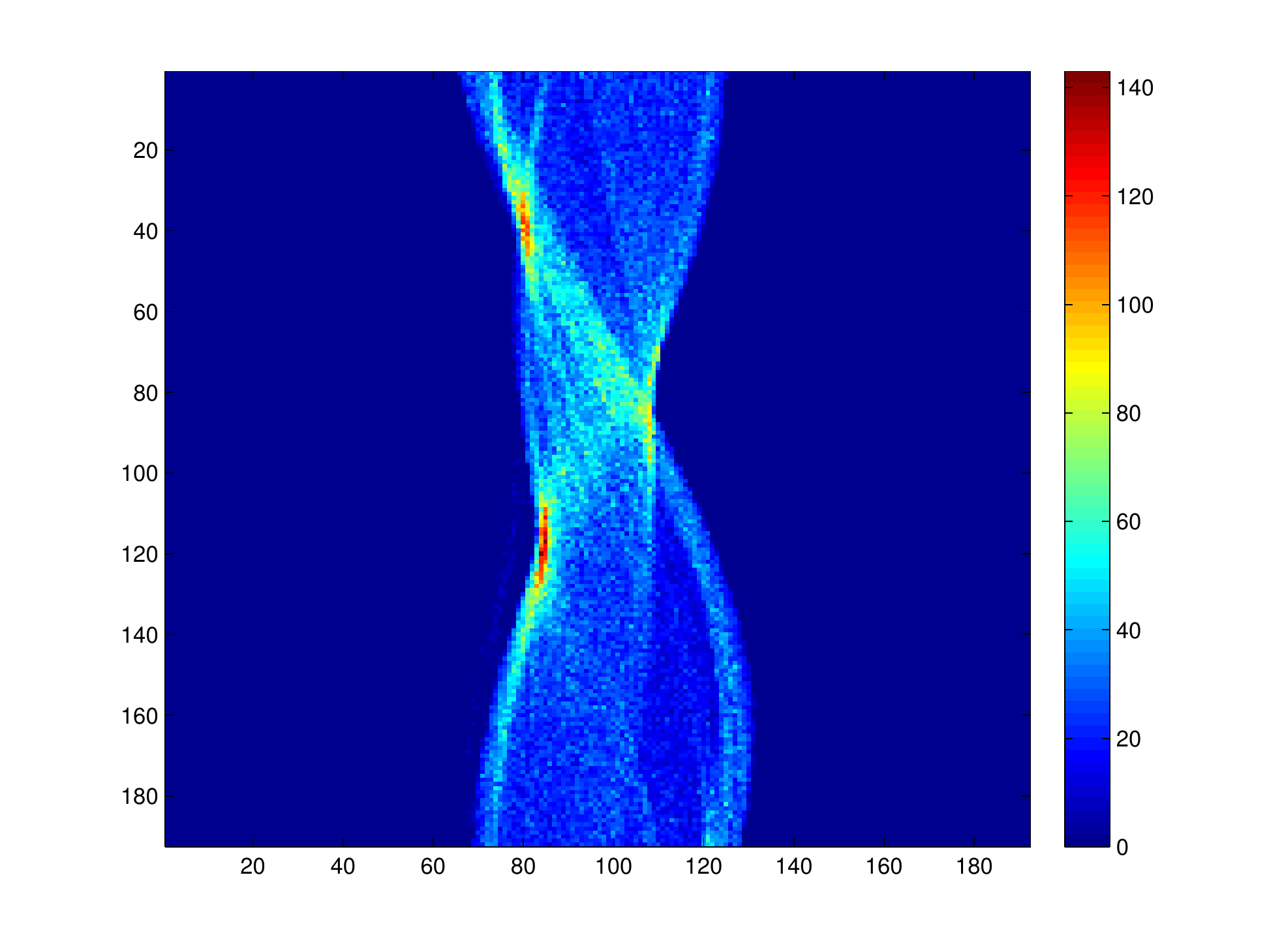}
		\caption{Noisy sinogram of (b)}
		\label{xcat2:upper_sinogram}					
\end{subfigure}
\caption{Selected regions of the XCAT phantom with the corresponding noisy sinograms.}
\label{xcat2}
\end{center}
\end{figure}

\begin{figure}[h!]
\begin{center}
\begin{subfigure}[h]{3.5cm}
                \centering
                \includegraphics[scale=0.2]{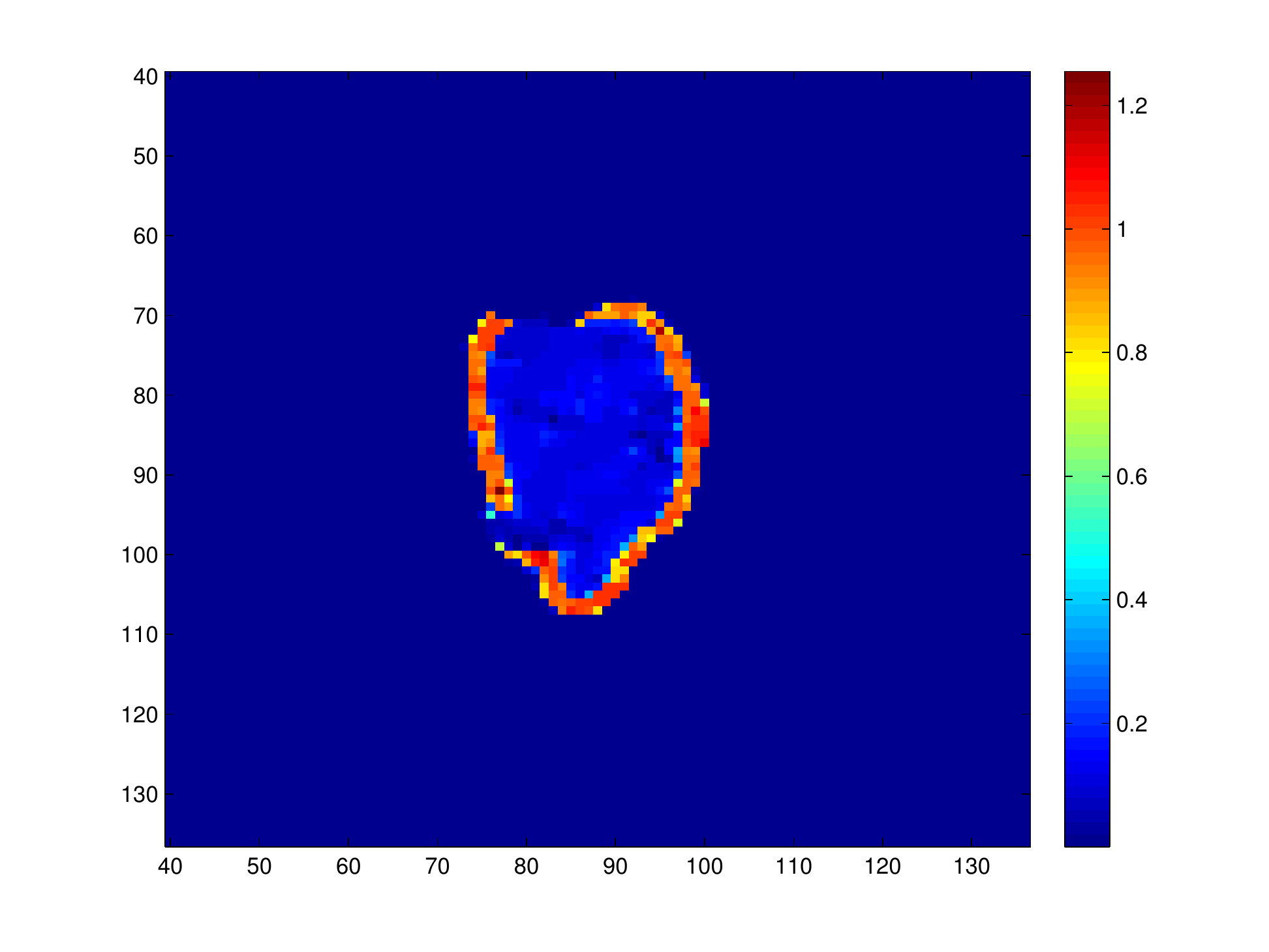}
		\caption{$\alpha=5$, $\beta=0$\\ SNR=17.49887}	
		\label{xcat:right1}				
\end{subfigure}
\begin{subfigure}[h]{3.5cm}
                \centering
                \includegraphics[scale=0.2]{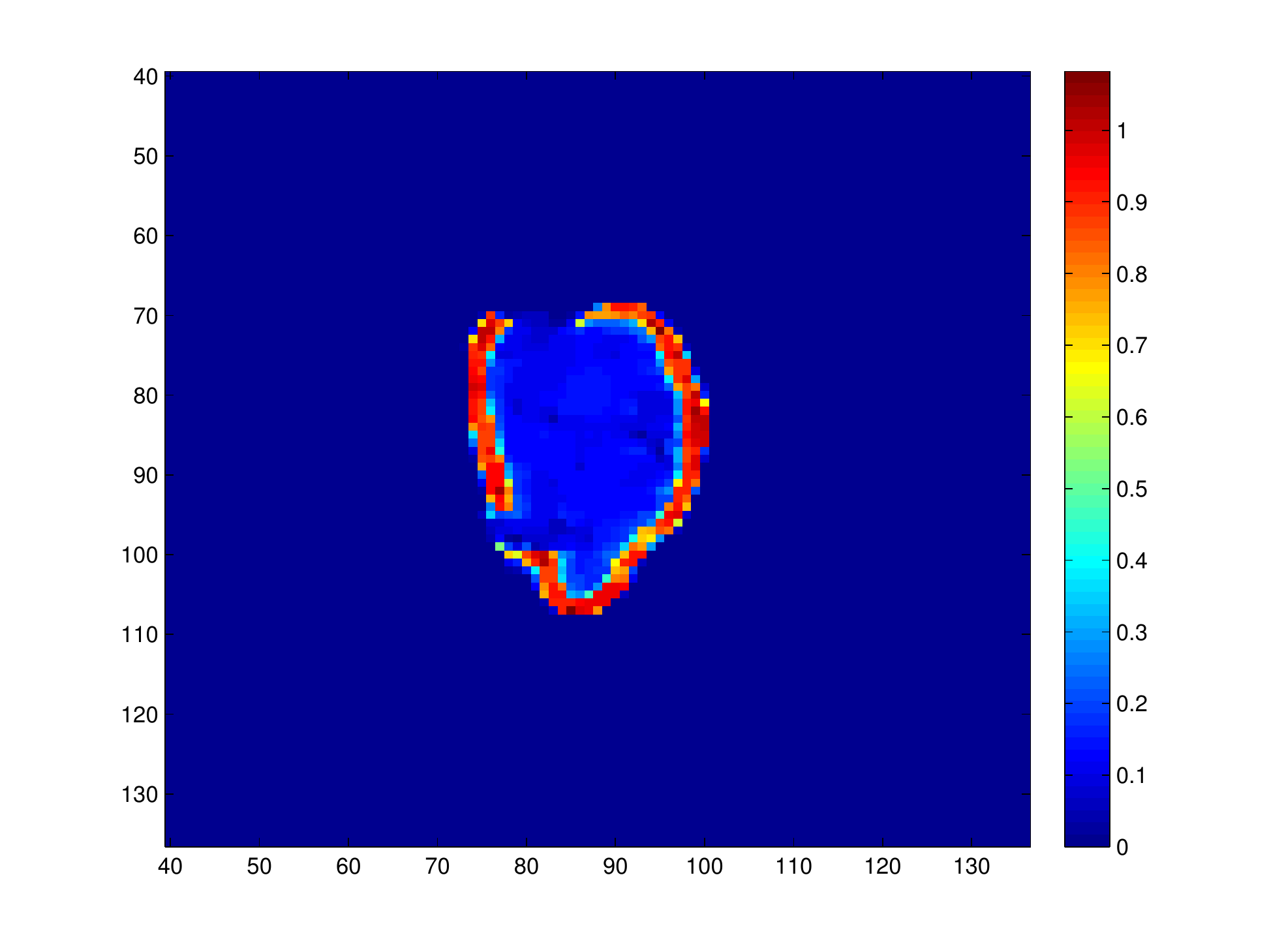}
		\caption{$\alpha=5$, $\beta=0.05$\\SNR=13.4267}	
		\label{xcat:right2}						
\end{subfigure}
\begin{subfigure}[h]{3.5cm}
                \centering
                 \includegraphics[scale=0.2]{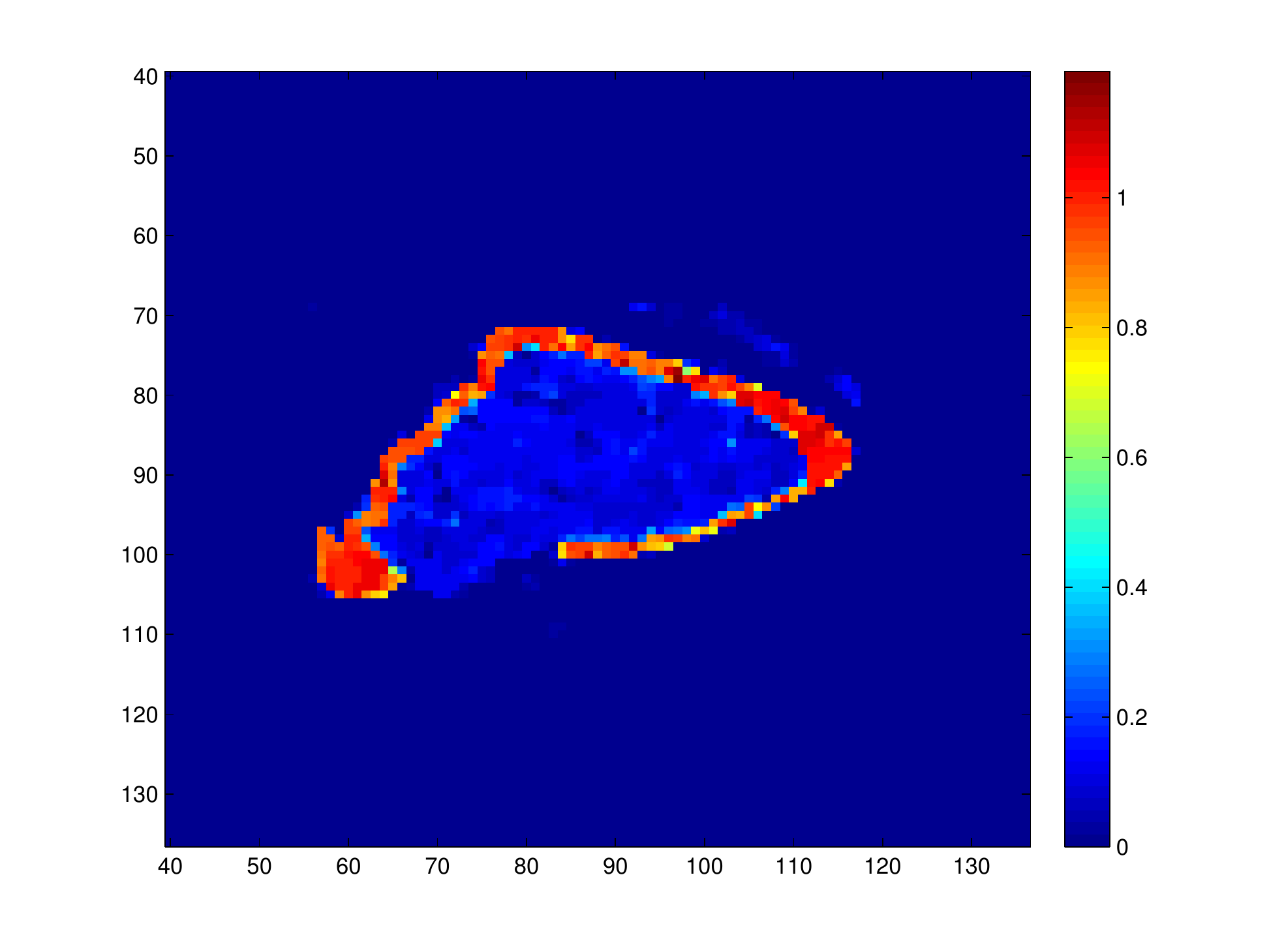}
		\caption{$\alpha=4$, $\beta=0$\\SNR=16.8721}		
		\label{xcat:upper1}
\end{subfigure}
\begin{subfigure}[h]{3.5cm}
                \centering
                \includegraphics[scale=0.2]{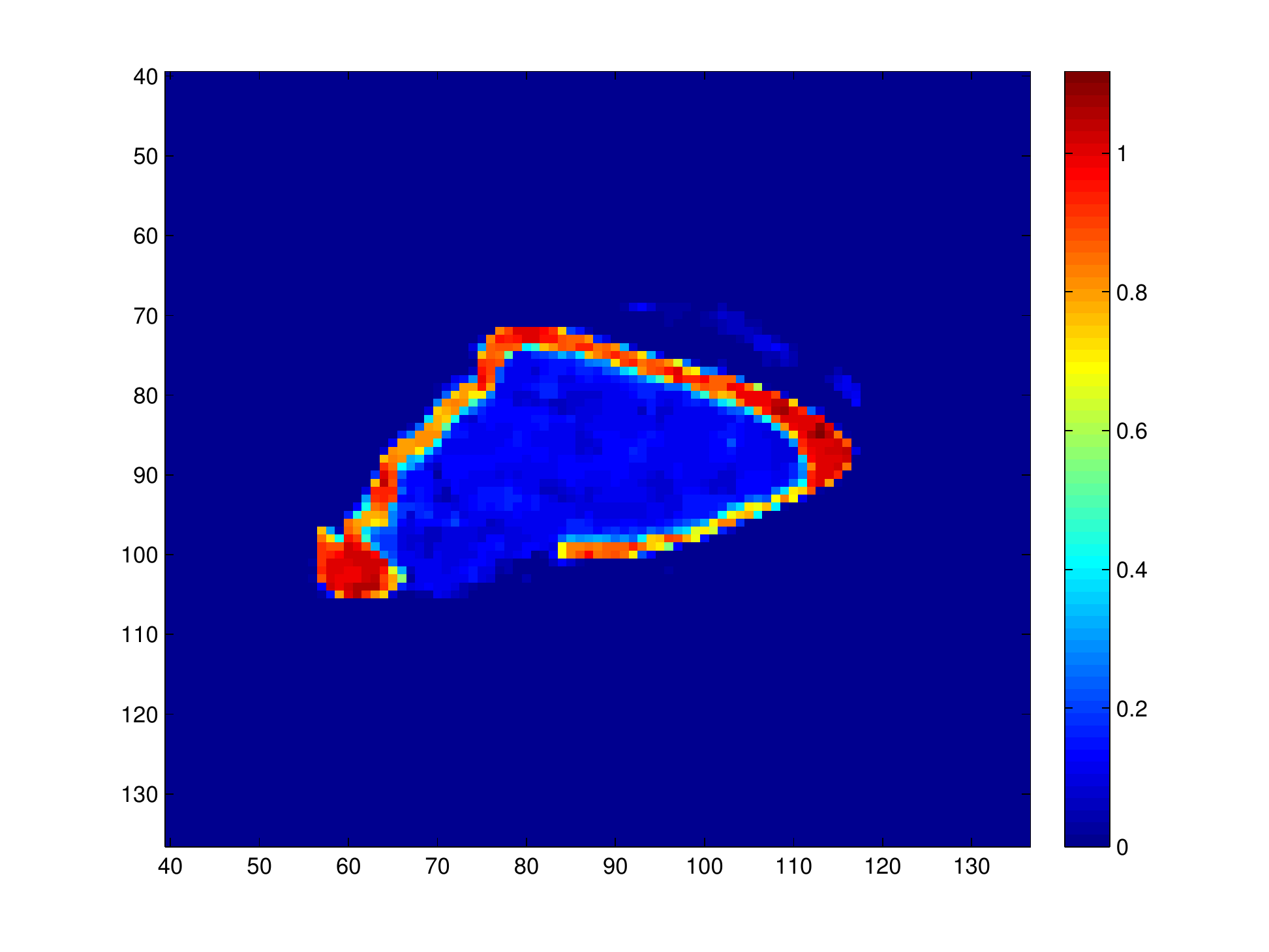}
		\caption{$\alpha=4$, $\beta=0.05$\\SNR=13.1124}	
		\label{xcat:upper2}					
\end{subfigure}
\caption{Reconstructions with and without total variation regularisation on the details of the XCAT sinogram in Figure \ref{xcat2}.}
\label{xcat3}
\end{center}
\end{figure}

\begin{figure}[h!]
\begin{center}
\begin{subfigure}[h]{7cm}
                \centering
                \includegraphics[scale=0.4]{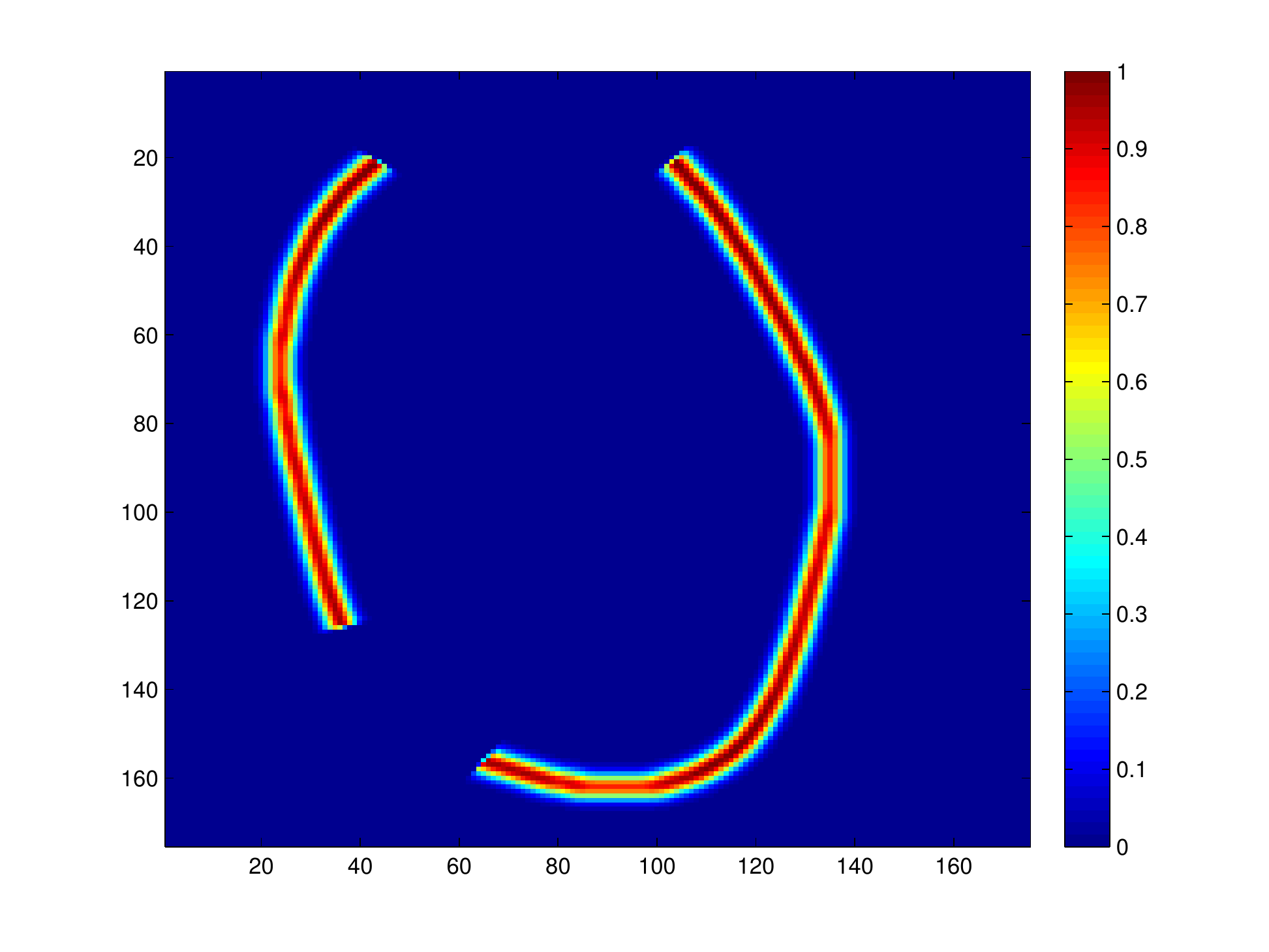}
		\caption{Smooth version of Figure \eqref{xcat2:right}}	
		\label{xcat3:right1_smooth}				
\end{subfigure}
\begin{subfigure}[h]{7cm}
                \centering
                \includegraphics[scale=0.4]{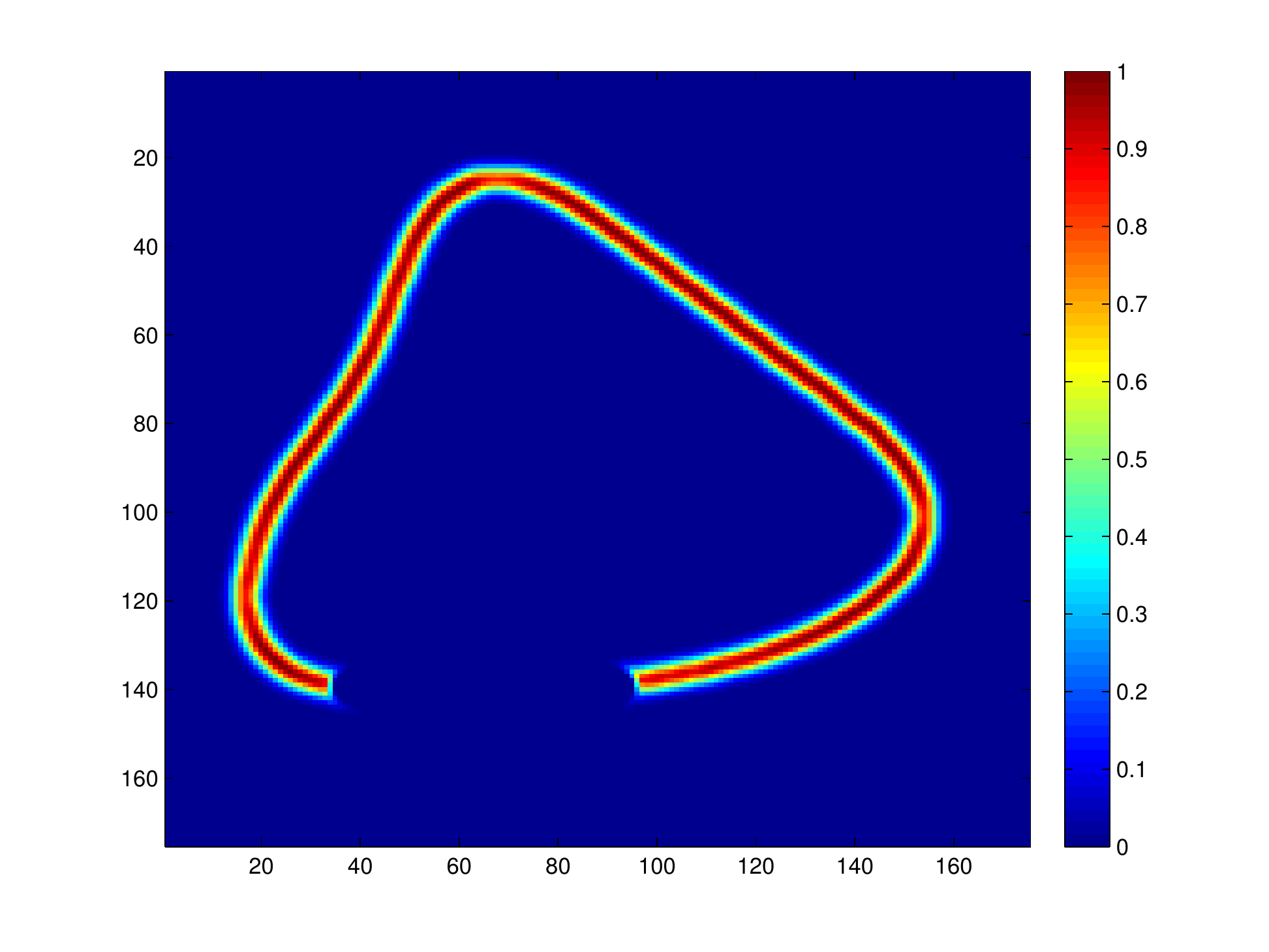}
		\caption{Smooth version of Figure \eqref{xcat2:upper}}	
		\label{xcat3:right1_smooth}						
\end{subfigure}\\
\begin{subfigure}[h]{7cm}
                \centering
                \includegraphics[scale=0.4]{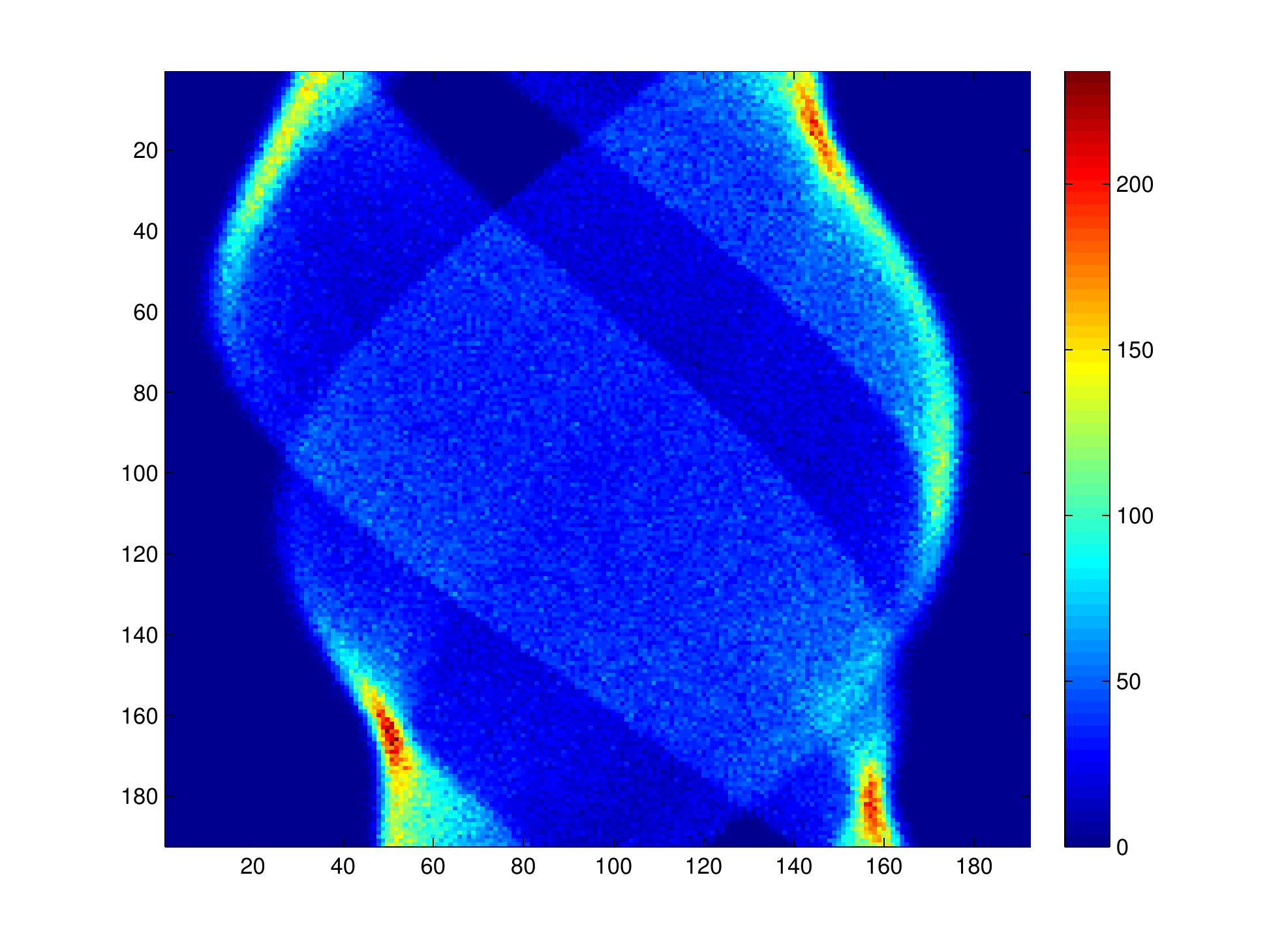}
		\caption{Noisy sinogram of (a)}		
		\label{xcat3:right1_smooth_sin}
\end{subfigure}
\begin{subfigure}[h]{7cm}
                \centering
                \includegraphics[scale=0.4]{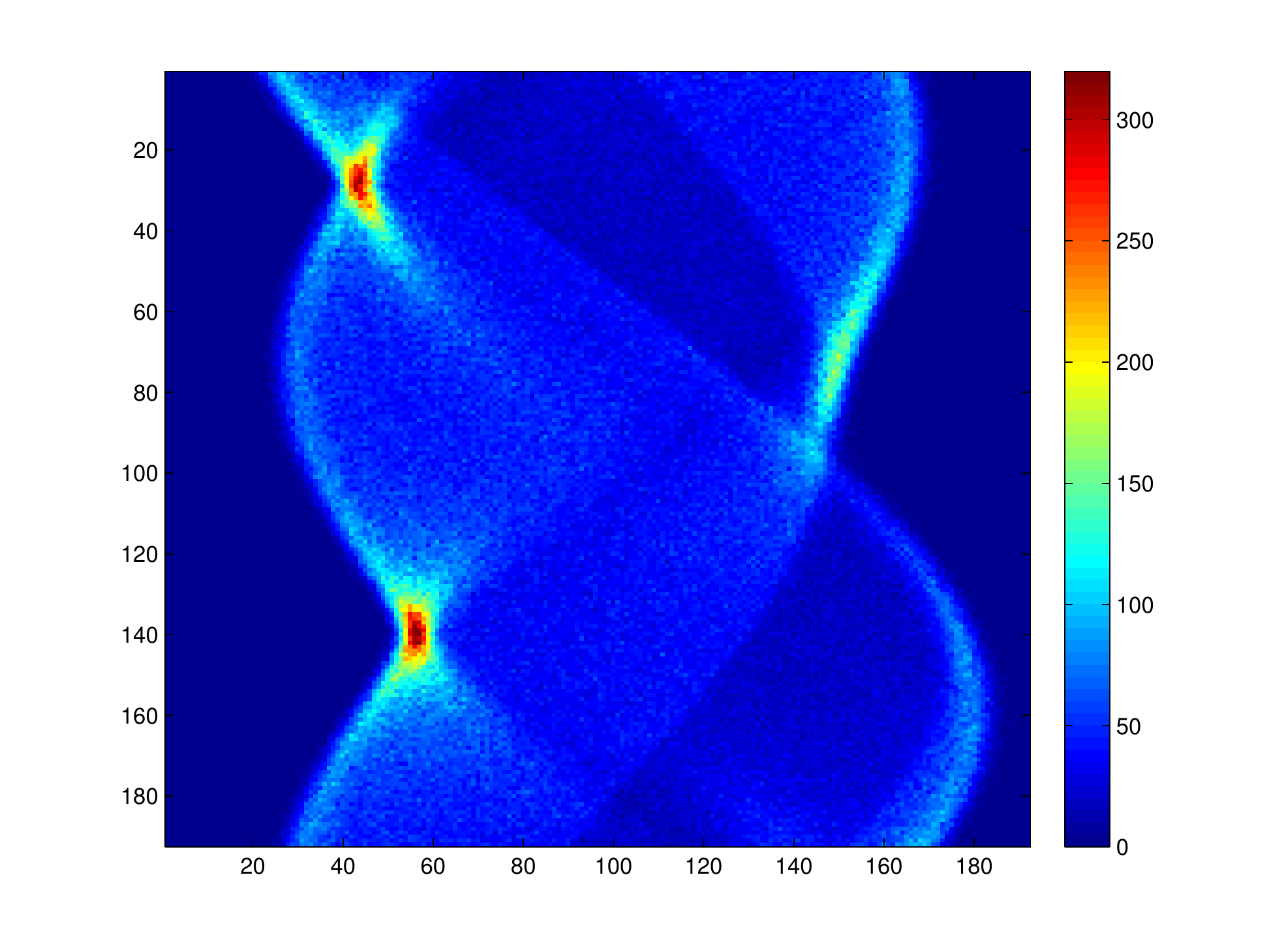}
		\caption{Noisy sinogram of (b)}
		\label{xcat3:right1_smooth_sin}				
\end{subfigure}
\caption{High resolution XCAT: smooth versions of Figures \eqref{xcat2:right}-\eqref{xcat2:upper} and their noisy sinograms}
\label{xcat3}
\end{center}
\end{figure}

\begin{figure}[h!]
\begin{center}
\begin{subfigure}[h]{7cm}
                \centering
                \includegraphics[scale=0.4]{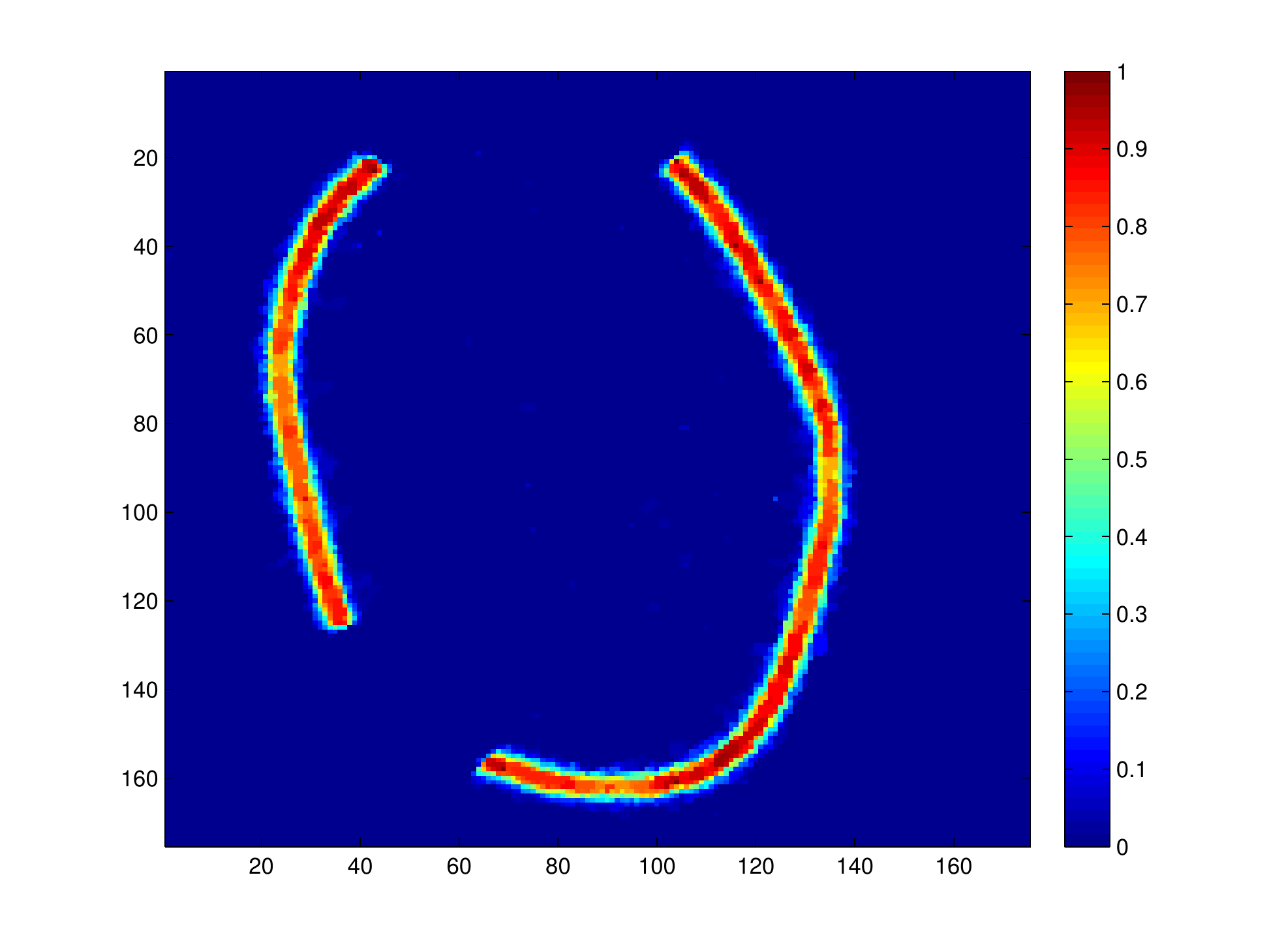}
		\caption{$\alpha=6$,  $\beta=0$\\SNR=17.7647}	
		\label{xcat4:right1}				
\end{subfigure}
\begin{subfigure}[h]{7cm}
                \centering
                 \includegraphics[scale=0.4]{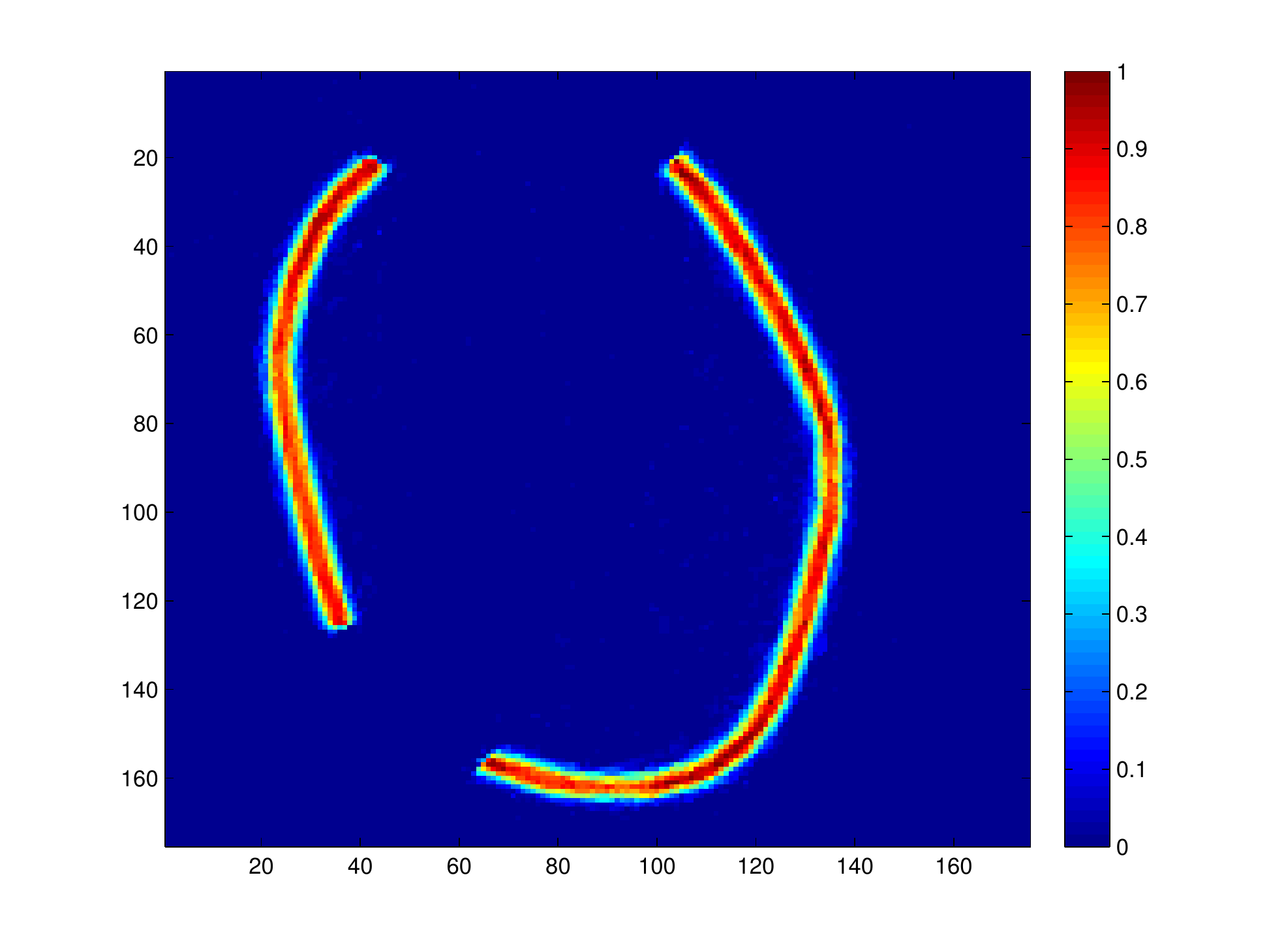}
		\caption{$\alpha=2$, $\beta=0.05$\\SNR=19.5103}	
		\label{xcat4:right2}						
\end{subfigure}\\
\begin{subfigure}[h]{7cm}
                \centering
                \includegraphics[scale=0.4]{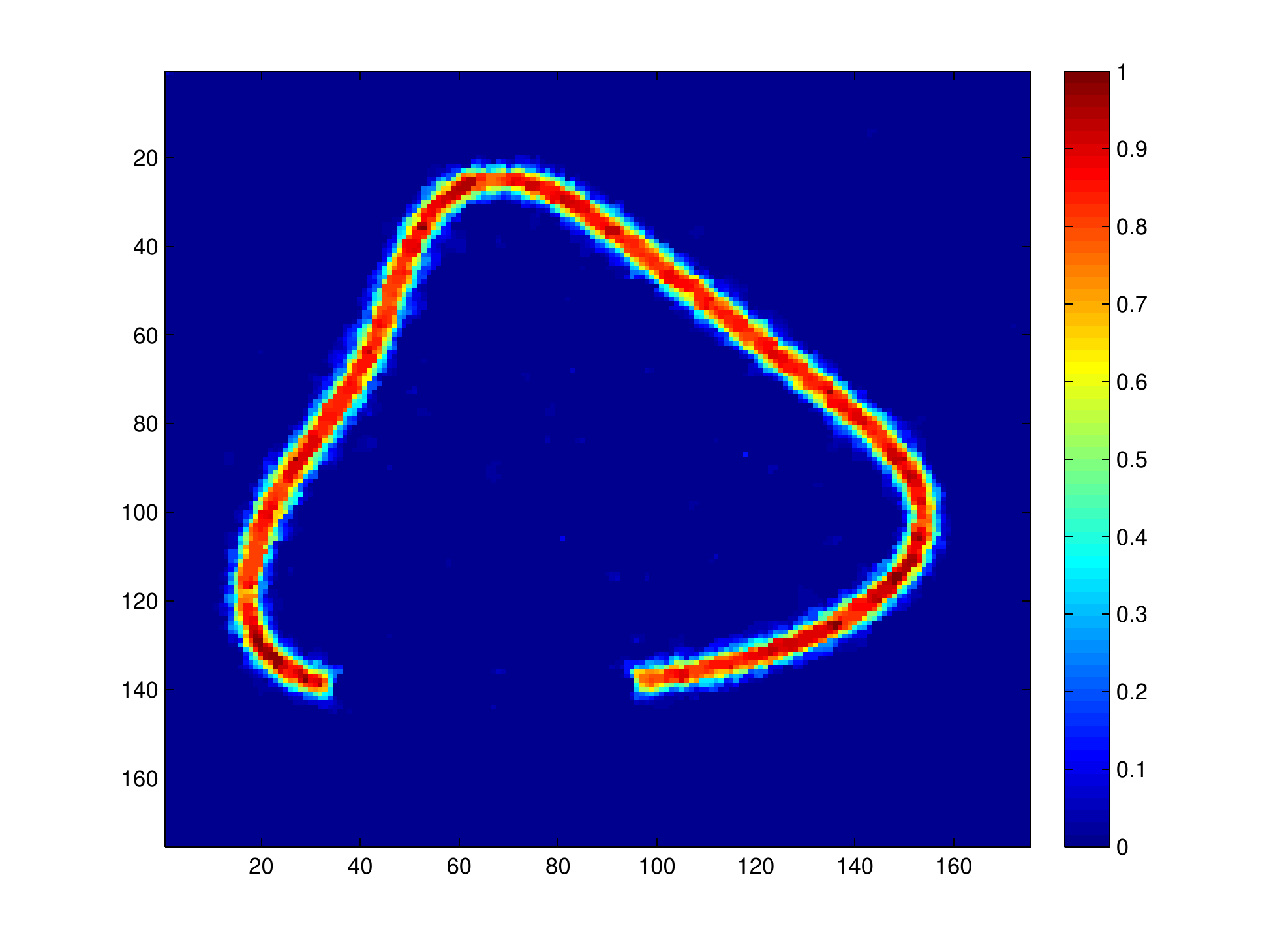}
		\caption{$\alpha=5$, $\beta=0$\\SNR=17.3795}		
		\label{xcat4:upper1}
\end{subfigure}
\begin{subfigure}[h]{7cm}
                \centering
                 \includegraphics[scale=0.4]{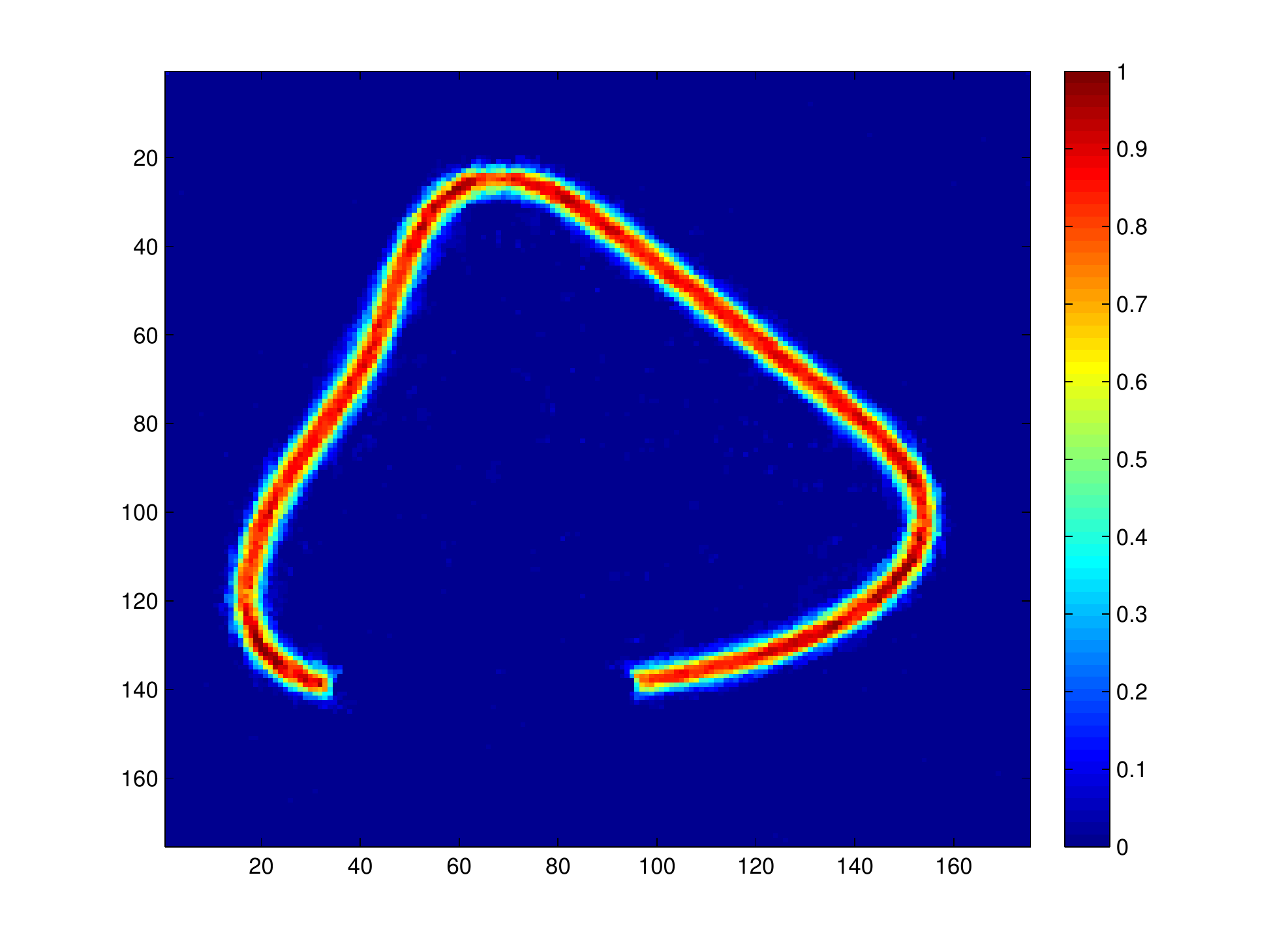}
		\caption{$\alpha=2$, $\beta=0.05$\\SNR=19.1820}	
		\label{xcat4:upper2}				
\end{subfigure}
\caption{Reconstructions for the structures in Figure \ref{xcat3} with and without total variation regularisation on the sinogram. Smoothing along the boundaries is achieved when sinogram regularisation is active, resulting in a significant improvement of the SNR.}
\label{xcat4}
\end{center}
\end{figure}

\section{Conclusion}

We present a combined approach of total variation regularisation of both the image and the sinogram for PET reconstruction. We prove existence, uniqueness and stability results for our proposed model with an additional error analysis through Bregman distance. Our explicit reconstruction of total variation regularisation, directly on the sinogram space, provides us with a new insight on how PET reconstruction could be improved and in which cases. 

We compute an optimal solution of the weighted-ROF model for a sinogram of disc in $\mathbb R^2$ and find analytically the corresponding solution on the image space via the Radon transform.  The weighted L$^{2}$ fidelity behaves as an approximation of the Poisson noise model given by the Kullback-Leibler divergence and allows us to find a crucial relation between the regularising parameter $\beta$ and the support of our object.  This connection could be verified numerically when appropriate values of $\beta$ are chosen to be close to the radius r and tend to approximate the boundaries or the convex hull of the reconstructed object. Hence, a combined penalisation on both the image and the sinogram space leads us to an enhancement and detection of object boundaries, specifically for images where \emph{thin structures} are present.

In real PET data thin structures will only make up parts of the image which will in general consist of small and larger scale objects as well as background. Our experiments for the cropped thin structures of the XCAT phantom in Figure \ref{xcat1} suggest TV regularisation on a targeted local Radon transform instead of the full Radon transform that allows to increase the regularisation on the sinogram in regions with thin structures.

\ack
This work has been financially supported by the King Abdullah University of Science and Technology (KAUST) Award No. KUK-I1-007-43, the EPSRC first grant EP/J009539/1, the Royal Society International Exchange Award Nr. IE110314, and the German Science Foundation (DFG) through the Collaborative Research Centre SFB 656 subproject B2 and Cells-in-Motion Cluster of Excellence (EXC 1003 – CiM), University of M\"unster. This work has been carried out while the second author was with the Institute of Computational Mathematics, University of M\"unster.

\section*{References}
\bibliographystyle{iopstyle}
\bibliography{refs}

\end{document}